\newcommand{\bfH}{{\mathbf{H}}}
\newcommand{\bfM}{{\mathbf{M}}}
\newcommand{\supp}{{\mathrm{supp}}}
\title{Optimal estimation of Gaussian mixtures via denoised method of moments}
\author{Yihong Wu and Pengkun Yang\thanks{Y.~Wu is with the Department of Statistics and Data Science, Yale University, New Haven, CT, \texttt{yihong.wu@yale.edu}.
P.~Yang is with the Department of Electrical Engineering, Princeton University, Princeton, NJ, \texttt{pengkuny@princeton.edu}. 
This work is supported in part by the NSF Grant CCF-1527105, an NSF CAREER award CCF-1651588, and an Alfred Sloan fellowship.
}}
\date{\today}
\begin{document}

\maketitle

\begin{abstract}
The Method of Moments  \cite{Pearson1894} is one of the most widely used methods in statistics for parameter estimation, by means of solving the system of equations that match the population and estimated moments. However, in practice and especially for the important case of mixture models, one frequently needs to contend with the difficulties of non-existence or non-uniqueness of statistically meaningful solutions, as well as the high computational cost of solving large polynomial systems. Moreover, theoretical analyses of the method of moments are mainly confined to asymptotic normality style of results established under strong assumptions.

This paper considers estimating a $k$-component Gaussian location mixture with a common (possibly unknown) variance parameter. To overcome the aforementioned theoretic and algorithmic hurdles, a crucial step is to denoise the moment estimates by projecting to the truncated moment space (via semidefinite programming) before solving the method of moments equations. Not only does this regularization ensures existence and uniqueness of solutions, it also yields fast solvers by means of Gauss quadrature. Furthermore, by proving new moment comparison theorems in the Wasserstein distance via polynomial interpolation and majorization techniques, we establish the statistical guarantees and adaptive optimality of the proposed procedure, as well as oracle inequality in misspecified models. These results can also be viewed as provable algorithms for Generalized Method of Moments \cite{Hansen1982} which involves non-convex optimization and lacks theoretical guarantees. 
\end{abstract}

\tableofcontents

\section{Introduction}
\label{sec:intro}

\subsection{Gaussian mixture model}
Consider a $k$-component Gaussian location mixture model, where each observation is distributed as
\begin{equation}
    \label{eq:model}
    X\sim \sum_{i=1}^k w_i N(\mu_i, \sigma^2).
\end{equation}
Here $w_i$ is the mixing weight such that $w_i\ge 0$ and $\sum_i w_i = 1$, $\mu_i$ is the mean (center) of the $i\Th$ component, and $\sigma$ is the common standard deviation. Equivalently, we can write the distribution of an observation $ X $ as a convolution
\begin{equation}
    \label{eq:model-conv}
    X\sim\nu*N(0,\sigma^2),
\end{equation}
where $\nu = \sum_{i=1}^k w_i \delta_{\mu_i}$ denotes the \emph{mixing distribution}. Thus, we can write $X=U+\sigma Z$, where $U\sim\nu$ is referred to as the latent variable, and $Z$ is standard normal and independent of $U$.

Generally speaking, there are three formulations of learning mixture models:
\begin{itemize}
    \item \textbf{Parameter estimation}:
    estimate the means $\mu_i$'s and the weights $w_i$'s up to a global permutation, and possibly also $\sigma^2$.
    \item \textbf{Density estimation}:
    estimate the probability density function of the Gaussian mixture under certain loss such as $L_2$ or Hellinger distance. This task is further divided into the cases of \emph{proper} and \emph{improper} learning, depending on whether the estimator is required to be a $k$-Gaussian mixture or not; in the latter case, there is more flexibility in designing the estimator but less interpretability.
    \item \textbf{Clustering}: estimate the latent variable of each sample (i.e.~$U_i$, if the $i$th sample is represented as $X_i=U_i+\sigma Z_i$) with a small misclassification rate.
\end{itemize}
It is clear that clustering necessarily relies on the separation between the clusters; 
however, as far as estimation is concerned, both parametric and non-parametric, no separation condition should be needed and one can obtain accurate estimates of the parameters even when clustering is impossible. Furthermore, one should be able to learn from the data the order of the mixture model, that is, the number of components. However, in the present literature, most of the estimation procedures with finite sample guarantees are either clustering-based, or rely on separation conditions in the analysis (e.g.~\cite{balakrishnan2017statistical,lu2016statistical,hopkins2018mixture}). Bridging this conceptual divide is one of the main motivations of the present paper.

Existing methodologies for mixture models are largely divided into likelihood-based and moment-based methods; see \prettyref{sec:related} for a detailed review. Among likelihood-based methods, the \emph{Maximum Likelihood Estimate} (MLE) is not efficiently computable due to the non-convexity of the likelihood function. The most popular heuristic procedure to approximate the MLE is the \emph{Expectation-Maximization} (EM) algorithm \cite{DLR1977}; however, absent separation conditions, no theoretical guarantee is known in general. Moment-based methods include the classical \emph{method of moments} \cite{Pearson1894} and many extensions \cite{Hansen1982,AGHKT2014}; however, the usual method of moments suffers from many issues as elaborated in the next subsection. In the theoretical computer science literature, \cite{KMV2010,MV2010,HP15} proposed moment-based polynomial-time algorithms with provable guarantees; 
however, these methods are typically based on grid search and far from being practical. 
Finding theoretically sound, numerically stable, and computationally efficient version of the method of moments is a major objective of this paper.

\subsection{Failure of the classical method of moments}
\label{sec:mmfail}
The method of moments, commonly attributed to Pearson \cite{Pearson1894}, produces an estimator by equating the population moments to the sample moments.
While conceptually simple, this method suffers from the following problems, especially in the context of mixture models:
\begin{itemize}
    \item \emph{Solvability}: 
    the method of moments entails solving a multivariate polynomial system, in which one frequently encounters non-existence or non-uniqueness of statistically meaningful solutions.
    \item \emph{Computation}: solving moment equations can be computationally intensive. For instance, for $k$-component Gaussian mixture models, the system of moment equations consist of $2k-1$ polynomial equations with $2k-1$ variables.
    \item \emph{Accuracy}: 
    existing statistical literature on the method of moments \cite{VdV00,Hansen1982} either shows mere consistency under weak assumptions, or proves asymptotic normality assuming very strong regularity conditions (so that the delta method works), which generally do not hold in mixture models since the convergence rates can be slower than parametric.
    Some results on nonparametric rates are known (cf.~\cite[Theorem 5.52]{VdV00} and \cite[Theorem 14.4]{Kosorok}) but the conditions are extremely hard to verify.
\end{itemize}

To explain the failure of the vanilla method of moments in Gaussian mixture models, we analyze the following simple two-component example:
\begin{example}
    Consider a Gaussian mixture model with two unit variance components: $X\sim w_1 N(\mu_1, 1)+w_2 N(\mu_2, 1)$. Since there are three parameters $\mu_1,\mu_2$ and $w_1=1-w_2$, we use the first three moments and solve the following system of equations:
    \begin{equation}
        \begin{aligned}
            \Expect_n[X]&=\Expect[X]=w_1\mu_1+w_2\mu_2,\\
            \Expect_n[X^2]&=\Expect[X^2]=w_1\mu_1^2+w_2\mu_2^2+1,\\
            \Expect_n[X^3]&=\Expect[X^3]=w_1\mu_1^3+w_2\mu_2^3+3(w_1\mu_1+w_2\mu_2),
        \end{aligned}    
        \label{eq:MM-X}
    \end{equation}
    where $\Expect_n[X^i] \triangleq \frac{1}{n} \sum_{j=1}^n X_j^i$ denotes the $i^{\rm th}$ moment of the empirical distribution from $n$ \iid~samples. The right-hand sides of \prettyref{eq:MM-X} are related to the moments of the mixing distribution by a linear transformation, which allow us to equivalently rewrite the moment equations \prettyref{eq:MM-X} as:
    \begin{equation}
        \begin{aligned}
            \Expect_n[X]&=\Expect[U]=w_1\mu_1+w_2\mu_2,\\
            \Expect_n[X^2-1]&=\Expect[U^2]=w_1\mu_1^2+w_2\mu_2^2,\\
            \Expect_n[X^3-3X]&=\Expect[U^3]=w_1\mu_1^3+w_2\mu_2^3,
        \end{aligned}
        \label{eq:MM-U}
    \end{equation}
    where $U\sim w_1\delta_{\mu_1}+w_1\delta_{\mu_2}$. 
		It turns out that with finitely many samples, there is always a non-zero chance that \prettyref{eq:MM-U} has no solution; even with infinite samples, it is possible that the solution does not exist with constant probability. To see this, note that, from the first two equations of \prettyref{eq:MM-U}, the solution does not exist whenever
    \begin{equation}
        \label{eq:CS-fail}
        \Expect_n[X^2]-1<\Expect_n^2[X],
    \end{equation}
    that is, the Cauchy-Schwarz inequality fails. Consider the case $\mu_1=\mu_2=0$, i.e., $X\sim N(0,1)$. Then \prettyref{eq:CS-fail} is equivalent to
    \[
        n(\Expect_n[X^2]-\Expect_n^2[X])\le n,
    \]
    where the left-hand side follows the $\chi^2$-distribution with $n-1$ degrees of freedom. Thus, \prettyref{eq:CS-fail} occurs with probability  approaching $\frac{1}{2}$ as $n$ diverges, according to the central limit theorem.
\end{example}

In view of the above example, we note that the main issue with the classical method of moments is the following: although individually each moment estimate is accurate ($\sqrt{n}$-consistent), jointly they do not correspond to the moments of any distribution. Moment vectors satisfy many geometric constraints, \eg, the Cauchy-Schwarz and \Holder{} inequalities, and lie in a convex set known as the \emph{moment space}. Thus for any model parameters, with finitely many samples the method of moments fails with nonzero probability whenever the noisy estimates escape the moment space; even with infinitely many samples, it also provably happens with constant probability when the order of the mixture model is strictly less than $k$, or equivalently, the population moments lie on the boundary of the moment space (see \prettyref{lmm:boundary} for a justification).

\subsection{Main results}
\label{sec:main}
In this paper, we propose the \emph{denoised method of moments} (DMM), which consists of
three main steps: (1) compute noisy estimates of moments, e.g., the unbiased estimates; (2) jointly denoise the moment estimates by projecting them onto the moment space; (3) execute the usual method of moments. 
It turns out that the extra step of projection resolves the three issues of the vanilla version of the method of moments identified in \prettyref{sec:mmfail} simultaneously:
\begin{itemize}
    \item \emph{Solvability}: a unique statistically meaningful solution is guaranteed to exist by the classical theory of moments;
    \item \emph{Computation}: the solution can be found through an efficient algorithm (Gauss quadrature) instead of invoking generic solvers of polynomial systems;
    \item \emph{Accuracy}: the solution provably achieves the optimal rate of convergence, and automatically adaptive to the clustering structure of the population.
\end{itemize}
We emphasize that the denoising (projection) step is explicitly carried out via a convex optimization in \prettyref{sec:known}, and implicitly used in analyzing Lindsay's algorithm \cite{Lindsay1989} in \prettyref{sec:unknown}, when the variance parameter is known and unknown, respectively. 


Following the framework proposed in \cite{Chen95,HK2015}, in this paper we consider the estimation of the mixing distribution, rather than estimating the parameters of each component. The main benefits of this formulation include the following:
\begin{itemize}
    \item Assumption-free: to recover individual components it is necessary to impose certain assumptions to ensure identifiability, such as lower bounds on the mixing weights and separations between components, none of which is needed for estimating the mixing distribution.
    Furthermore, under the usual assumption such as separation conditions, statistical guarantees on estimating the mixing distribution can be naturally translated to those for estimating the individual parameters.
    \item Inference on the number of components: this formulation allows us to deal with misspecified models and estimate the order of the mixture model.
\end{itemize}
Equivalently, estimating the mixing distribution can be viewed as a deconvolution problem, where the goal is to recover the distribution $\nu$ based on observations drawn from the convolution \prettyref{eq:model-conv}.

In this framework, a meaningful and flexible loss function for estimating the mixing distribution is the \emph{$1$-Wasserstein distance} (see \prettyref{sec:why-wass} for a justification in the context of mixture models), defined by 
\begin{equation}
    \label{eq:w1}
    W_1(\nu,\nu')\triangleq \inf \{\Expect[\Norm{X-Y}]: X\sim \nu, Y\sim\nu'\},
\end{equation}
where the infimum is taken over all couplings, i.e., joint distributions of $X$ and $Y$ which are marginally distributed as $\nu$ and $\nu'$ respectively.
In one dimension, the $W_1$ distance coincides with the $L_1$-distance between the cumulative distribution functions (CDFs) \cite{villani.topics}.

Next we present the theoretical results, which can be classified into two categories:
\begin{itemize}
\item To estimate the mixing distribution, our methodology produces moment-based estimators that are optimal in both worst-case (\prettyref{thm:main-W1})
 and adaptive sense (\prettyref{thm:main-W1-adaptive}), for both known and unknown $\sigma$.

\item To estimate the mixture density, the same procedure produces a \emph{proper} estimate that attains the optimal parametric rate (\prettyref{thm:main-density}), despite the fact that the mixing distribution can only be estimated at a non-parametric rate. Moreover, the procedure is robust to model misspecification (\prettyref{thm:oracle}).

\end{itemize}

Throughout the paper, we assume that the number of components satisfies
\begin{equation}
k = O\pth{\frac{\log n}{\log\log n}}.
\label{eq:k}
\end{equation}
If the order of mixture is large, namely, $k\ge \Omega(\frac{\log n}{\log\log n})$, including continuous mixtures, then one can approximate it by a finite mixture with $O(\frac{\log n}{\log\log n})$ components and estimate the mixing distribution using the DMM estimator. Furthermore, this method is optimal (see \prettyref{thm:large-k} at the end of this subsection).
Our main result is the following theorem: 
\begin{theorem}[Optimal rates]
    \label{thm:main-W1}
    Suppose that $|\mu_i|\le M$ for $M\ge 1$ and $\sigma$ is bounded by a constant, and both $k$ and $M$ are given.
    \begin{itemize}
        \item If $\sigma$ is known, then there exists an estimator $\hat \nu$ computable in 
				$O(kn)$ time
				such that, with probability at least $1-\delta$,
        \begin{equation}
            \label{eq:known-variance-W1}
            W_1(\nu,\hat \nu)\le O\pth{Mk^{1.5}\pth{\frac{n}{\log(1/\delta)}}^{-\frac{1}{4k-2}}}.
        \end{equation}
        
        \item If $\sigma$ is unknown, then there exists an estimator $(\hat \nu,\hat \sigma)$ computable in 
				$O(kn)$ time such that, with probability at least $1-\delta$,
        \begin{equation}
            \label{eq:unknown-variance-W1}
            W_1(\nu,\hat \nu)\le O\pth{Mk^{2}\pth{\frac{n}{\log(1/\delta)}}^{-\frac{1}{4k}}},
        \end{equation}
        and 
        \begin{equation}
            \label{eq:unknown-variance-sigma}        
            |\sigma^2-\hat \sigma^2| \leq  O\pth{M^2k\pth{\frac{n}{\log(1/\delta)}}^{-\frac{1}{2k}}}.
        \end{equation}
    \end{itemize}           
\end{theorem}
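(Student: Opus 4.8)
The plan is to run the \emph{denoised method of moments} — form unbiased estimates of the low-order moments of $\nu$, project them onto the moment space, then read off an atomic distribution by Gauss quadrature — and then convert accuracy of the denoised moments into a $W_1$ bound via a moment comparison theorem; I treat the two regimes in turn. When $\sigma$ is known, write $X=U+\sigma Z$ with $U\sim\nu$. For each $r$ there is an explicit degree-$r$ polynomial $p_r(\cdot\,;\sigma)$ (a rescaled Hermite polynomial) with $\Expect[p_r(X;\sigma)]=\Expect[U^r]=m_r(\nu)$, so the sample averages $\hat m_r=\Expect_n[p_r(X;\sigma)]$, $r=1,\dots,2k-1$, are unbiased for the first $2k-1$ moments of $\nu$ and are computed in $O(kn)$ time via the three-term recurrence. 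Using $|\mu_i|\le M$, $\sigma=O(1)$, a truncation argument and Bernstein's inequality (whose remainder is negligible under \prettyref{eq:k}), with probability $\ge1-\delta$ the rescaled errors satisfy $\max_{r\le2k-1}|\hat m_r-m_r(\nu)|/M^r\le\epsilon$ with $\epsilon=k^{O(k)}\sqrt{\log(1/\delta)/n}$; the only consequence used below is $\epsilon^{1/(2k-1)}=O(\sqrt k)\,(n/\log(1/\delta))^{-1/(4k-2)}$. Denoise by Euclidean-projecting the rescaled vector $(\hat m_1/M,\dots,\hat m_{2k-1}/M^{2k-1})$ onto the truncated moment space $\mathcal M_{2k-1}([-1,1])$, a convex body cut out by positive semidefiniteness of two Hankel matrices; this is a size-$O(k)$ semidefinite program, costing $\poly(k)=o(kn)$. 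Since the rescaled moments of $\nu$ lie in $\mathcal M_{2k-1}([-1,1])$, the projection $\tilde m$ is within $\ell_\infty$-distance $O(\sqrt k\,\epsilon)$ of them, and, being a genuine truncated moment vector, is realized (classical moment theory: the rank of the associated Hankel matrix equals the least number of atoms) by a unique measure with at most $k$ atoms, extracted by Gauss quadrature (eigenvalues of the Jacobi matrix) in $O(k^3)$ time. Rescaling back by $M$ gives the estimator $\hat\nu$, with total runtime $O(kn)$.

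It then remains to bound $W_1(\nu,\hat\nu)$, which equals $M$ times the $W_1$-distance of the two rescaled measures. The crux is a \emph{moment comparison theorem in Wasserstein distance}: two probability measures on $[-1,1]$, each with at most $k$ atoms, whose first $2k-1$ moments differ by at most $\Delta$, are within $W_1$-distance $O(\poly(k))\,\Delta^{1/(2k-1)}$. Applying it with $\Delta=O(\sqrt k\,\epsilon)$ and using $\epsilon^{1/(2k-1)}=O(\sqrt k)\,(n/\log(1/\delta))^{-1/(4k-2)}$ yields \prettyref{eq:known-variance-W1}, the $k^{1.5}$ prefactor coming from the theorem's polynomial-in-$k$ factor together with these $O(\sqrt k)$ terms. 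I would prove the comparison theorem from the dual formula $W_1(\mu,\mu')=\sup\{\int f\,d(\mu-\mu'):f\ \text{1-Lipschitz}\}$: since $\mu-\mu'$ is supported on at most $2k$ points, replace a 1-Lipschitz $f$ by the degree-$(2k-1)$ polynomial interpolating it there (which leaves $\int f\,d(\mu-\mu')$ unchanged), then argue dichotomously — either the two atom sets nearly coincide, so $W_1$ is tiny, or, via polynomial interpolation and majorization bounds on divided differences, a low-degree polynomial certifies a large moment gap — and balance the cases to produce the exponent $1/(2k-1)$. Obtaining this inequality in the sharp, stable form required is the main obstacle.

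When $\sigma$ is unknown, I run the same scheme preceded by an estimate of $\sigma^2$ following Lindsay \cite{Lindsay1989}. Estimate the first $2k$ raw moments by $\Expect_n[X^r]$; for a candidate variance $s\ge0$, let $\hat m_r(s)$ be the unique polynomial in $\Expect_n[X],\dots,\Expect_n[X^r]$ and $s$ that is unbiased for $m_r(\nu)$ when $s=\sigma^2$, and form the $(k+1)\times(k+1)$ Hankel matrix $\bfH(s)=(\hat m_{i+j}(s))_{0\le i,j\le k}$ with $\hat m_0=1$. Since the order-$k$ moment matrix of a $k$-atomic measure is positive semidefinite of rank exactly $k$, the true variance obeys $\sigma^2=\max\{s\ge0:\bfH(s)\succeq0\}$ at the population level, so I define $\hat\sigma^2$ to be the largest $s\ge0$ with $\bfH(s)\succeq0$ (equivalently, an appropriate root of the degree-$k$ polynomial $\det\bfH(s)$), capped at a crude upper bound such as $\Expect_n[X^2]$. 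Then rerun the known-$\sigma$ procedure with $\hat\sigma^2$ in place of $\sigma^2$: Lindsay's construction returns a measure $\hat\nu$ with at most $k$ atoms and moments $\hat m_r(\hat\sigma^2)$, the PSD-Hankel constraint now playing the role of the explicit denoising projection. The raw moments cost $O(kn)$ and everything else $\poly(k)$.

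Two estimates then finish the proof. For \prettyref{eq:unknown-variance-sigma}: on the $(1-\delta)$-event that the empirical raw moments are within $\epsilon'=k^{O(k)}\sqrt{\log(1/\delta)/n}$ of the truth, $\bfH(s)$ is within $\poly(M,k)\,\epsilon'$ of its population version, which degenerates to rank $k$ at $s=\sigma^2$ through a determinant of size $k+1$; a standard stability bound for polynomial roots then gives $|\hat\sigma^2-\sigma^2|=O(\poly(M,k))\,(\epsilon')^{1/k}=O(M^2k)\,(n/\log(1/\delta))^{-1/(2k)}$. For \prettyref{eq:unknown-variance-W1}: by construction $\hat\nu*N(0,\hat\sigma^2)$ has its first $2k$ moments equal to $\Expect_n[X^r]$, while $\nu*N(0,\sigma^2)$ has them equal to $\Expect[X^r]$, so the two agree up to $\epsilon'$; feeding this into a moment comparison theorem for Gaussian location mixtures with an unknown common variance — which treats the pair $(\nu,\sigma^2)$ as a $2k$-parameter object pinned down by $2k$ moments, hence with exponent $1/(2k)$ — gives $W_1(\nu,\hat\nu)=O(\poly(k))\,M(\epsilon')^{1/(2k)}=O(Mk^2)\,(n/\log(1/\delta))^{-1/(4k)}$. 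The hard parts will again be the two moment comparison theorems, and — for this last bound — recognizing that the comparison must be carried out in the observation domain, where $\hat\nu*N(0,\hat\sigma^2)$ matches the empirical moments to within $\epsilon'$, rather than on the de-biased moments, which carry the inflated error $(\epsilon')^{1/k}$ inherited from estimating $\sigma^2$ and would yield only the exponent $1/(2k(2k-1))$.
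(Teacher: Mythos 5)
Your known-variance argument is essentially the paper's: unbiased Hermite-polynomial moment estimates, Euclidean projection onto the truncated moment space via SDP, Gauss quadrature, and then a $W_1$ moment-comparison bound with exponent $\frac{1}{2k-1}$ (your sketch via the dual formula and interpolation of a modified Lipschitz function is one of the routes the paper itself takes to \prettyref{prop:stable1}). The $W_1$ part of your unknown-variance argument also has the right structure: you correctly insist that the comparison be done in the observation domain, where $\hat\nu*N(0,\hat\sigma^2)$ matches the first $2k$ empirical moments exactly, and then invoke a comparison theorem for location mixtures with a common unknown variance — this is precisely \prettyref{prop:accuracy2}.

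The genuine gap is your derivation of \prettyref{eq:unknown-variance-sigma} by ``standard stability of polynomial roots'' applied to $\det\bfH(s)$. That argument implicitly assumes the population determinant vanishes at $s=\sigma^2$ like a simple (or low-multiplicity) root, which fails exactly in the worst-case instances the theorem must cover. If the components overlap (e.g.\ $\nu=\delta_\mu$, so the data are a single Gaussian but you fit $k$ components), the population de-biased moments at parameter $s$ are those of $N(\mu,\sigma^2-s)$, whose Hankel determinant is $c_k(\sigma^2-s)^{k(k+1)/2}$; the root at $\sigma^2$ has multiplicity $k(k+1)/2>k$, and a perturbation of size $\epsilon'$ of the determinant only constrains the root to within $(\epsilon')^{2/(k(k+1))}\asymp n^{-1/(k(k+1))}$, strictly weaker than the claimed $n^{-1/(2k)}$; Lindsay's own consistency proof needed a multiplicity-one hypothesis which the paper explicitly refuses to assume. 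The correct route does not look at the determinant at all: from the exact matching \prettyref{eq:MM-sigma} one compares the two mixtures directly, deconvolves by the smaller variance, evaluates the polynomial $P(x)=\prod_i(x-\hat x_i)^2$ that vanishes on the $k$-atomic side, and uses $\min\{\Expect[p^2(Z)]:p\text{ monic, }\deg p=k\}=k!$ (\prettyref{lmm:monic}) to get $k!\,|\sigma^2-\hat\sigma^2|^{k}\le 2^{2k}\epsilon'$, i.e.\ \prettyref{lmm:tau}; the variance rate and the $W_1$ rate then come out of the same proposition. Two secondary issues in the same part: replacing the smallest positive root of $\hat d_k$ by ``the largest $s$ with $\bfH(s)\succeq 0$, capped at $\Expect_n[X^2]$'' changes the estimator, and the almost-sure solvability statement you rely on (that the de-biased moments at $\hat\sigma$ are realized by a distribution with \emph{exactly} $k$ atoms, so all $2k$ equations hold) is proved in the paper for the smallest-root choice via a joint-density argument (\prettyref{lmm:U-exist}); with your definition the needed rank condition $\det\bfM_{k-1}>0$, $\det\bfM_k=0$ can fail (e.g.\ when the cap binds and $\bfH$ stays positive definite), and the $2k$-th moment equation is then not matched. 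Finally, applying the comparison proposition requires controlling the support of $\hat\nu$, which is not a priori bounded; the paper needs a tail/truncation argument for this, which your sketch omits.
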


For fixed for constant $k$, the above convergence rates are minimax optimal as shown in \prettyref{sec:lb}; in the case of known $\sigma$, the optimality of \prettyref{eq:known-variance-W1} has been previously shown in \cite{HK2015}, while the matching lower bounds for \prettyref{eq:unknown-variance-W1}--\prettyref{eq:unknown-variance-sigma} are new. 

Note that the results in \prettyref{thm:main-W1} are proved under the worst-case scenario where the centers can be arbitrarily close, \eg, components completely overlap. It is reasonable to expect a faster convergence rate when the components are better separated, and, in fact, a parametric rate in the best-case scenario
where the components are fully separated and weights are bounded away from zero. 
To capture the clustering structure of the mixture model, we introduce the following definition:
\begin{definition}
    \label{def:sep}
    The Gaussian mixture \prettyref{eq:model} has $k_0$ $(\gamma,\omega)$-separated clusters if there exists a partition $S_1,\dots,S_{k_0}$ of $[k]$ such that
    \begin{itemize}
        \item $|\mu_i - \mu_{i'}| \ge \gamma$ for any $i\in S_\ell$ and $i'\in S_{\ell'}$ such that $\ell\ne \ell'$;
        \item $\sum_{i\in S_\ell}w_i\ge \omega$ for each $\ell$.
    \end{itemize}
    In the absence of the minimal weight condition (i.e.~$\omega=0$), we say the Gaussian mixture has $k_0$ $\gamma$-separated clusters. 	
\end{definition}

The next result shows that the DMM estimators attain the following adaptive rates: 
\begin{theorem}[Adaptive rate]
    \label{thm:main-W1-adaptive}
    Under the conditions of \prettyref{thm:main-W1}, suppose there are $k_0$ $(\gamma,\omega)$-separated clusters such that $\gamma\omega\ge C\epsilon$ for some absolute constant $C>2$, 
		where $\epsilon$ denotes the right-hand side of \prettyref{eq:known-variance-W1} and \prettyref{eq:unknown-variance-W1} when $\sigma$ is known and unknown, respectively.
    \begin{itemize}
        \item If $\sigma$ is known, then, with probability at least $1-\delta$,\footnote{Here $O_k(\cdot)$ denotes a constant factor that depends on $k$ only.}
        \begin{equation}
            \label{eq:known-variance-W1-adaptive}
            W_1(\nu,\hat \nu)\le  O_k\pth{M\gamma^{-\frac{2k_0-2}{2(k-k_0)+1}}\pth{\frac{n}{\log(k/\delta)}}^{-\frac{1}{4(k-k_0)+2}}}.
        \end{equation}
        \item If $\sigma$ is unknown, then, with probability at least $1-\delta$,\footnote{Note that the estimation rate for the mean part $\nu$ is the square root of the rate for estimating the variance parameter $\sigma^2$. Intuitively, this phenomenon is due to the infinite divisibility of the Gaussian distribution: note that for the location mixture model $\nu * N(0,\sigma^2)$ with $\nu\sim N(0,\epsilon^2)$ and $\sigma^2=1$ has the same distribution as that of $\nu\sim \delta_0$ and $\sigma^2=1+\epsilon^2$.}
        \begin{equation}
            \label{eq:unknown-variance-W1-adaptive}
            \sqrt{|\sigma^2-\hat \sigma^2|},~W_1(\nu,\hat \nu)\le  O_k\pth{M\gamma^{-\frac{k_0-1}{k-k_0+1}}\pth{\frac{n}{\log(k/\delta)}}^{-\frac{1}{4(k-k_0+1)}}}.
        \end{equation}
    \end{itemize}
\end{theorem}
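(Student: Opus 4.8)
The plan is to reduce \prettyref{thm:main-W1-adaptive} to a single analytic inequality --- a refined moment comparison theorem in the Wasserstein distance --- and to supply its two ingredients, both of which are already in hand: concentration of the denoised moment vector and the worst-case guarantee of \prettyref{thm:main-W1}. Recall from the construction of the DMM estimator (\prettyref{sec:known} for known $\sigma$, \prettyref{sec:unknown} for unknown $\sigma$) that $\hat\nu$, and in the unknown case $(\hat\nu,\hat\sigma)$, matches the first $2k-1$ moments (resp.\ the first $2k$ moments together with $\sigma^2$) of $\nu$ up to $\delta_m=O_{k,M}(\sqrt{\log(k/\delta)/n})$ with probability $1-\delta$: the Hermite-debiased empirical moments concentrate at this rate, the true moment vector of $\nu$ lies in the (truncated) moment space, and the projection onto that space is nonexpansive. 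Hence it suffices to prove that if $\nu$ has $k_0$ $(\gamma,\omega)$-separated clusters on $[-M,M]$, if $\gamma\omega\ge C\epsilon$, and if $\nu'$ is supported on $[-M,M]$ with at most $k$ atoms and first $2k-1$ moments within $\delta_m$ of those of $\nu$, then $W_1(\nu,\nu')\le O_k\!\left(M\,(\delta_m\,\gamma^{-(2k_0-2)})^{1/(2(k-k_0)+1)}\right)$; substituting $\delta_m\asymp_{k,M}\sqrt{\log(k/\delta)/n}$ and using $4(k-k_0)+2=2(2(k-k_0)+1)$ then gives \prettyref{eq:known-variance-W1-adaptive}.

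To prove this refined comparison I would first localize $\nu'$ using \prettyref{thm:main-W1}: since $W_1(\nu,\nu')\le\epsilon$ and moving the mass $\ge\omega$ that $\nu$ places on a single cluster by more than $\gamma/3$ already costs more than $\epsilon<\gamma\omega/C$, the measure $\nu'$ must place mass $w(S_\ell)\pm O(\epsilon/\gamma)$ in the $(\gamma/3)$-neighborhood of each cluster and at most $O(k_0\epsilon/\gamma)$ mass outside these neighborhoods; in particular $\nu'$ has at least one atom near every cluster, so by the pigeonhole bound ($k$ atoms split among $k_0$ nonempty groups) at most $k-k_0+1$ of its atoms lie near any given cluster. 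Thus the signed measure $\nu-\nu'$ is, up to $O(k_0\epsilon/\gamma)$ total variation, supported on $k_0$ macro-clusters that are $\gamma$-separated, with at most $|S_\ell|+(k-k_0+1)$ atoms in cluster $\ell$, at most $2k$ atoms overall, and first $2k-1$ moments of size $O(\delta_m)$. Now bound $W_1(\nu,\nu')=\sup_{\mathrm{Lip}(f)\le 1}\int f\,d(\nu-\nu')$ by the dual formulation. For a fixed $1$-Lipschitz $f$, merge atoms of $\nu-\nu'$ that lie within a common macro-cluster and are $\le\eta$-close for a free micro-scale $\eta$ (each such merge transports mass at cost $O(\eta)$, and the number of surviving micro-gaps per macro-cluster is governed by the resolvable within-cluster order $k-k_0+1$), then interpolate $f$ at the surviving atoms by a polynomial $P$ of degree at most $2k-1$. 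Moment matching gives $|\int P\,d(\nu-\nu')|\le\|P\|_1\,\delta_m$, where $\|P\|_1$ is the coefficient $\ell_1$-norm, and $\int f\,d(\nu-\nu')=\int P\,d(\nu-\nu')+O(k\eta)$. The key estimate is $\|P\|_1\le\gamma^{-(2k_0-2)}\,\eta^{-2(k-k_0)}\,\mathrm{poly}(k,M)$: the two factors come respectively from the $\gamma$-sized macro-gaps and the $\eta$-sized micro-gaps, and the macro-exponent is $2(k_0-1)$ rather than $k_0-1$ because the interpolant must be engineered --- via a nonnegative \emph{cluster-selector} factor of the form $\prod_{\ell'}\!\left((x-a_{\ell'})(x-b_{\ell'})/\gamma^2\right)$ with $[a_{\ell'},b_{\ell'}]$ the convex hull of cluster $\ell'$ --- so as to stay \emph{bounded}, not merely small, off each cluster; Markov--Bernstein inequalities convert $\|P\|_\infty$ on $[-M,M]$ into $\|P\|_1$. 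Optimizing $k\eta+\gamma^{-(2k_0-2)}\eta^{-2(k-k_0)}\delta_m$ over $\eta$ gives $\eta\asymp(\gamma^{-(2k_0-2)}\delta_m)^{1/(2(k-k_0)+1)}$ and the claimed bound, the $O(Mk_0\epsilon/\gamma)$ de-localization residual being of lower order under $\gamma\omega\ge C\epsilon$.

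For unknown $\sigma$ the same scheme applies to the DMM/Lindsay estimator of \prettyref{sec:unknown}, which matches $2k$ moments and returns $\hat\sigma$; the only change is that the within-cluster subproblems are themselves Gaussian location mixtures with an unknown common variance, so a sub-mixture with $p\le k-k_0+1$ atoms now requires $2p$ matched moments instead of $2p-1$, raising the resolvable order from $2(k-k_0)+1$ to $2(k-k_0+1)$ while leaving the $\gamma^{-(2k_0-2)}$ budget unchanged; this yields \prettyref{eq:unknown-variance-W1-adaptive} for $W_1(\nu,\hat\nu)$. Applying the comparison to $f(x)=x^2$ --- equivalently, reading off the degree-$2$ coordinate, which is where $\sigma^2$ enters the moment equations --- gives $|\hat\sigma^2-\sigma^2|=O_k\!\left(M^2\gamma^{-2(k_0-1)/(k-k_0+1)}(n/\log(k/\delta))^{-1/(2(k-k_0+1))}\right)$, the square of the $W_1$ rate and hence consistent with the footnote: $\hat\sigma^2$ absorbs a second-moment (variance-scale) discrepancy whereas $W_1$ is a first-moment (standard-deviation-scale) quantity.

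The main obstacle is the refined comparison theorem, and within it the sharp $\gamma$-dependence. One must split the degree budget exactly as $2k-1=2(k_0-1)+(2(k-k_0)+1)$ for known $\sigma$ (and $2k=2(k_0-1)+2(k-k_0+1)$ for unknown $\sigma$), spending $2(k_0-1)$ on separating the $k_0$ macro-clusters and the rest on resolving the atoms inside them, and the interpolation-plus-majorization must act cluster-wise --- each macro-cluster treated as a single unit whose internal geometry is handled at the micro-scale $\eta$ --- so that the cost is $\gamma^{-(2k_0-2)}$ and not the crude $\gamma^{-2(k-1)}$ one gets from a naive product over all other atoms. A secondary and more routine difficulty is deducing the localization of $\nu'$ from \prettyref{thm:main-W1} and, when $\sigma$ is unknown, ruling out that $\hat\sigma$ inflates to swallow an under-separated cluster; both are precisely what the hypothesis $\gamma\omega\ge C\epsilon$ prevents, by forcing $\nu'$ to reproduce the location and weight of every cluster.
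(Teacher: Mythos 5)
Your treatment of the known-variance case is essentially the paper's argument: you localize $\hat\nu$ via the worst-case bound of \prettyref{thm:main-W1} together with $\gamma\omega\ge C\epsilon$ (so every cluster of $\nu$ captures at least one atom of $\hat\nu$, and the union of supports has $2(k_0-1)$ atoms at distance $\Omega(\gamma)$ from any given atom), and then you feed the moment accuracy $\delta_m=O_{k,M}(\sqrt{\log(k/\delta)/n})$ into a refined comparison theorem whose degree budget is split as $2(k_0-1)$ macro plus $2(k-k_0)+1$ micro. This is exactly the role of \prettyref{prop:stable1-separation} in the paper; the only difference is that you run it through the dual formulation \prettyref{eq:W1-dual} with an $\eta$-merging of nearby atoms (the analogue of the paper's ``third proof'' of \prettyref{prop:stable1-union}), whereas the paper's proof of \prettyref{prop:stable1-separation} goes through the CDF identity \prettyref{eq:W1-CDF} and Newton-form divided differences, tracking which inverse gaps are of size $\gamma$ versus the local gap. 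Your coefficient bound $\gamma^{-(2k_0-2)}\eta^{-2(k-k_0)}\cdot\mathrm{poly}(k,M)$ and the optimization over $\eta$ reproduce the right exponents; making the cluster-selector construction rigorous is exactly the divided-difference bookkeeping the paper does in \prettyref{lmm:divided}, so this half is a correct variant.

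The unknown-variance half has a genuine gap. Your mechanism for $|\sigma^2-\hat\sigma^2|$ --- ``apply the comparison to $f(x)=x^2$ / read off the degree-2 coordinate'' --- cannot deliver the claimed squared rate. From matching mixture moments one only gets $|\sigma^2-\hat\sigma^2|\le \delta_m+|m_2(\nu)-m_2(\hat\nu)|$, and the identity $m_2(\pi)=m_2(\nu)+\sigma^2$ makes this circular: bounding $|m_2(\nu)-m_2(\hat\nu)|$ by $2M\,W_1(\nu,\hat\nu)$ yields only the \emph{first} power of the $W_1$ rate, not its square. The correct route (the paper's \prettyref{prop:accuracy2-separation}) is structurally different from your ``within-cluster subproblems with unknown variance'' picture: assuming w.l.o.g.\ $\hat\sigma\le\sigma$, set $\tau^2=|\sigma^2-\hat\sigma^2|$ and compare the $k$-atomic $\hat\nu$ with the \emph{smoothed} distribution $\nu*N(0,\tau^2)$, whose moments are close by the Hermite transfer (\prettyref{lmm:Hermite-moments}); because one of the two measures is no longer atomic, you need the one-sided adaptive comparison \prettyref{prop:stable2-separation} with $2k$ moments --- this, not a per-cluster moment count, is what produces the exponent $2(k-k_0+1)$. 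Finally, the bound on $\tau$ comes from the lower bound $W_1(\nu*N(0,\tau^2),\hat\nu)\ge c_k\tau$ (\prettyref{lmm:tau-W1}): a $k$-atomic distribution cannot approximate a $\tau$-smoothed one to accuracy $o_k(\tau)$, so the $W_1$ bound on the deconvolved pair forces $\tau\le O_k(\rho)$ and hence $|\sigma^2-\hat\sigma^2|\le O_k(\rho^2)$, with $W_1(\nu,\hat\nu)$ recovered by the triangle inequality through $\nu*N(0,\tau^2)$. You also need (as in the worst-case proof for Lindsay's estimator) the tail/truncation argument of \prettyref{lmm:tail-hatU}, since $\hat\nu$ is not a priori supported on a bounded interval; your sketch skips this as well.
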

For fixed $k,k_0$ and $\gamma$, the rate in \prettyref{eq:known-variance-W1-adaptive} is minimax optimal 
 in view of the lower bounds in \cite{HK2015}; we also provide a simple proof in \prettyref{rmk:lb-adaptive-known} by extending the lower bound argument in \prettyref{sec:lb}. 
If $\sigma$ is unknown, we do not have a matching lower bound for \prettyref{eq:unknown-variance-W1-adaptive}. In fact, in the fully-separated case ($k_0=k$), \prettyref{eq:unknown-variance-W1-adaptive} reduces to $n^{-\frac{1}{4}}$ while the parametric rate is clearly achievable. 
Let us emphasize that, for known $\sigma$, the rates \prettyref{eq:known-variance-W1} and \prettyref{eq:known-variance-W1-adaptive} for fixed $k,k_0$ and $\gamma$ have been previously obtained in \cite{HK2015} by means of the computationally expensive minimum distance estimator; for unknown $\sigma$, the results in \prettyref{eq:unknown-variance-W1}, \prettyref{eq:unknown-variance-sigma}, and \prettyref{eq:unknown-variance-W1-adaptive} are new.

Next we discuss the implication on density estimation (\emph{proper} learning), where the goal is to estimate the density function of the Gaussian mixture by another $k$-Gaussian mixture density. 
Given that the estimated mixing distribution $\hat \nu$ from \prettyref{thm:main-W1}, a natural density estimate is the convolution $\hat f =\hat \nu * N(0,\sigma^2)$. \prettyref{thm:main-density} below shows that the density estimate $\hat f$ is  $O(\frac{1}{\sqrt{n}})$-close to the true density $f$ in the total variation distance $\TV(f,g) \triangleq \frac{1}{2} \|f-g\|_1$.
\begin{theorem}[Density estimation]
    \label{thm:main-density}
    Under the conditions of \prettyref{thm:main-W1}, denote the density of the underlying model by $f = \nu * N(0,\sigma^2)$. If $\sigma$ is given, then there exists an estimate $\hat f$ such that
    \[
        \TV(\hat f,f)\le O_k(\sqrt{\log(1/\delta)/n}),
    \]
    with probability $1-\delta$.
\end{theorem}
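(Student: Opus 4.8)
The plan is to bound $\TV(\hat f,f)$ directly by the accuracy of the raw moment estimates, exploiting that convolution with a Gaussian is an extremely smoothing operation. Recall the density estimate is $\hat f=\hat\nu*N(0,\sigma^2)$, where $\hat\nu$ is the mixing distribution produced by the DMM procedure: a distribution supported on at most $k$ points in $[-M,M]$ whose first $2k-1$ moments equal the projected (denoised) empirical moments. Since projection onto the moment space is a contraction and the unbiased moment estimates are $\sqrt{n}$-consistent (the moment-concentration bound already used to establish \prettyref{thm:main-W1}), with probability $1-\delta$ we have
\begin{equation}
\epsilon\triangleq\max_{0\le j\le 2k-1}|m_j(\nu)-m_j(\hat\nu)|=O_k\pth{\sqrt{\log(1/\delta)/n}},
\label{eq:dmm-density-mom}
\end{equation}
where $m_j(\cdot)$ denotes the $j$th moment (and $m_0(\nu)=m_0(\hat\nu)=1$). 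It thus suffices to show $\TV(\hat f,f)=O_k(\epsilon)$, i.e., the density is learned at the parametric rate even though $W_1(\nu,\hat\nu)$ is only $n^{-\Theta(1/k)}$.

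First I would pass to the dual formulation
\begin{equation}
\TV(\hat f,f)=\sup_{\|g\|_\infty\le 1/2}\int g\,(f-\hat f)=\sup_{\|g\|_\infty\le 1/2}\int h\,d(\nu-\hat\nu),\qquad h\triangleq g*\phi_\sigma,
\label{eq:dmm-density-dual}
\end{equation}
with $\phi_\sigma$ the $N(0,\sigma^2)$ density (using that $\phi_\sigma$ is even). The test function $h$ is real-analytic with tightly controlled derivatives: $\|h^{(j)}\|_\infty\le\frac12\|\phi_\sigma^{(j)}\|_1=\frac{1}{2\sigma^{j}}\Expect|He_j(Z)|\le\frac{\sqrt{j!}}{2\sigma^{j}}$, where $He_j$ is the degree-$j$ Hermite polynomial and $Z\sim N(0,1)$. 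Next, let $y_0,\dots,y_{m-1}$, with $m\le 2k$, enumerate the union of the atoms of $\nu$ and of $\hat\nu$, and let $P$ be the polynomial of degree at most $m-1\le 2k-1$ interpolating $h$ at $y_0,\dots,y_{m-1}$. Since $h-P$ vanishes at every atom of both $\nu$ and $\hat\nu$, we get $\int(h-P)\,d(\nu-\hat\nu)=0$, so that, writing $[x^j]P$ for the coefficient of $x^j$,
\begin{equation}
\int h\,d(\nu-\hat\nu)=\int P\,d(\nu-\hat\nu)=\sum_{j=0}^{2k-1}[x^j]P\cdot\pth{m_j(\nu)-m_j(\hat\nu)},
\label{eq:dmm-density-interp}
\end{equation}
reducing everything to a bound on the monomial coefficients of $P$ that is uniform over $g$.

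To control these coefficients I would use the Newton form $P(x)=\sum_{j=0}^{m-1}h[y_0,\dots,y_j]\prod_{i=0}^{j-1}(x-y_i)$: the mean-value property of divided differences gives $|h[y_0,\dots,y_j]|\le\frac1{j!}\|h^{(j)}\|_\infty\le\frac{1}{2\sigma^{j}\sqrt{j!}}$, while the coefficient of $x^l$ in $\prod_{i=0}^{j-1}(x-y_i)$ is $\pm$ an elementary symmetric polynomial in nodes lying in $[-M,M]$, hence of absolute value at most $\binom{j}{l}M^{j-l}\le(2M)^j$ (using $M\ge 1$). Summing over $j\ge l$ then yields $|[x^l]P|\le\sum_{j=0}^{2k-1}\frac{(2M/\sigma)^{j}}{2\sqrt{j!}}=:A_k$, which is $O_k(1)$ since $M$ and $\sigma$ are constants; hence $\sum_{l=0}^{2k-1}|[x^l]P|\le 2kA_k=O_k(1)$, uniformly in $g$ and in the placement of the atoms. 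Substituting into \prettyref{eq:dmm-density-interp}, taking the supremum in \prettyref{eq:dmm-density-dual}, and invoking \prettyref{eq:dmm-density-mom} gives $\TV(\hat f,f)\le 2kA_k\,\epsilon=O_k(\sqrt{\log(1/\delta)/n})$.

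The hard part is making the coefficient bound uniform in the spacing of the atoms: it must not degrade when atoms of $\nu$ and $\hat\nu$ nearly coincide, which is precisely the regime in which $W_1(\nu,\hat\nu)$ is genuinely nonparametric. This is where the analyticity of $h=g*\phi_\sigma$ is indispensable, since it lets divided differences be bounded by sup-norms of derivatives regardless of node separation. The naive alternative --- controlling the higher moments of $\nu-\hat\nu$ through the moment-to-parameter map and then reusing the Hermite expansion of the Gaussian density --- fails, because that map is not Lipschitz near the boundary of the moment space; the interpolation argument never touches it. The only other ingredient is the moment-concentration estimate \prettyref{eq:dmm-density-mom}, which is inherited from the analysis underlying \prettyref{thm:main-W1}.
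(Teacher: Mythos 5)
Your argument is correct, and it reaches the stated conclusion by a genuinely different route from the paper. The paper proves the stronger bound $\chi^2(\hat f\|f)+\chi^2(f\|\hat f)\le O_k(\log(1/\delta)/n)$: it expands the density difference in the Hermite orthogonal basis (\prettyref{lmm:chi2-moments}), which converts the $\chi^2$-divergence into a weighted sum of squared moment differences of \emph{all} orders, and then controls the moments of order $\ge 2k$ by the first $2k-1$ via \prettyref{lmm:high-moments} (itself a Newton-form interpolation of $x^\ell$ at the at most $2k$ atoms), finishing with $\TV\le\sqrt{\chi^2/2}$. You instead dualize $\TV$ directly, push the bounded test function through the Gaussian convolution to get an analytic test function $h=g*\phi_\sigma$ with $\|h^{(j)}\|_\infty\le \sqrt{j!}/(2\sigma^j)$, and interpolate $h$ at the union of the atoms, bounding the monomial coefficients through divided differences; the key point you correctly emphasize is that the mean-value bound on divided differences is uniform in the node configuration, so nothing degrades when atoms of $\nu$ and $\hat\nu$ nearly collide. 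Both proofs are thus interpolation arguments at heart, and both inherit the same moment-accuracy input from the analysis of \prettyref{thm:main-W1}; the difference is where the Gaussian smoothing enters (orthogonal expansion of the mixture density versus smoothing of the dual test function). What each buys: the paper's $\chi^2$ route yields KL, Hellinger and $\chi^2$ guarantees for free and is reused essentially verbatim for the misspecified-model result \prettyref{thm:oracle}, whereas your route delivers only $\TV$ but avoids the Hermite expansion and the Jensen lower bound on the mixture density, and is arguably more transparent about why the parametric rate survives even when $W_1(\nu,\hat\nu)$ is nonparametric. One caveat applying equally to both arguments: the hidden constant grows like $e^{O((M/\sigma)^2)}$ (your $A_k=\sum_j (2M/\sigma)^j/(2\sqrt{j!})$, the paper's rescaling plus the factor $e^{\mathrm{var}/2}$ in \prettyref{lmm:chi2-moments}), so "$\sigma$ bounded" is really being used as "$\sigma$ a fixed known constant"; your phrasing "since $M$ and $\sigma$ are constants" is at the same level of explicitness as the paper's, but worth flagging that the constant is not uniform as $\sigma\to 0$.
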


So far we have been focusing on well-specified models.
In the case of misspecified models, the data need not be generated from a $k$-Gaussian mixture. In this case, the DMM procedure still reports a meaningful estimate that is close to the best $k$-Gaussian mixture fit of the unknown distribution. This is 
made precise by the next result of oracle inequality type. Analogous results hold for $\chi^2$-divergence, Kullback-Leibler divergence, and Hellinger distance as well.

\begin{theorem}[Misspecified model]
    \label{thm:oracle}
    Assume that $X_1,\ldots,X_n$ is independently drawn from a density $f$ which is 1-subgaussian. Suppose there exists a $k$-component Gaussian location mixture $g$ with a given variance $\sigma^2$ such that $\TV(f,g) \le \epsilon$. Then, there exists an estimate $\hat f$ such that
    \[
        \TV(\hat f,f)\le O_k\pth{\epsilon\sqrt{\log(1/\epsilon)}+\sqrt{\log(1/\delta)/n}},
    \]
    with probability $1-\delta$.
\end{theorem}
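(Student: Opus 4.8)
\emph{Proof sketch.} The plan is to run the DMM estimator of \prettyref{thm:main-density} and bound its error by comparison with the oracle mixture $g=\nu_g*N(0,\sigma^2)$, where $\nu_g$ is the mixing distribution of $g$. From $X_1,\dots,X_n$ one forms the unbiased moment estimates $\Expect_n[X^j]$, applies the Hermite (denoising) linear map to get $\tilde m=(\tilde m_1,\dots,\tilde m_{2k-1})$ (unbiased for the moments of the latent variable), projects $\tilde m$ onto the truncated moment space of measures on $[-M,M]$ to obtain a valid moment vector $\hat m$, recovers by Gauss quadrature the $(\le k)$-atomic $\hat\nu$ with moments $\hat m$, and outputs $\hat f=\hat\nu*N(0,\sigma^2)$; the truncation level is taken to be $M\asymp\sqrt{\log(1/\epsilon)}$. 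Since $\TV(\hat f,f)\le\TV(\hat f,g)+\epsilon$, it suffices to bound $\TV(\hat f,g)$.

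Three observations drive the estimate. (i) One may assume $\nu_g$ is supported on $[-M,M]$: a component of $g$ with $|\mu_i|\gg\sqrt{\log(1/\epsilon)}$ must carry total weight $O(\epsilon)$, since otherwise $g$ would place too much mass far from the origin to be $\TV$-close to the $1$-subgaussian density $f$; discarding and renormalizing such components perturbs $g$ by $O(\epsilon)$ in $\TV$. (ii) Moment perturbation: splitting $\int x^j(f-g)\,dx$ at level $T\asymp\sqrt{\log(1/\epsilon)}$ — with $\TV(f,g)\le\epsilon$ bounding the bulk by $T^j\cdot2\TV(f,g)$ and the subgaussianity of $f$ and $g$ bounding the tails — gives $|\Expect_f[X^j]-\Expect_g[X^j]|=O_k(\epsilon\,(\log(1/\epsilon))^{j/2})$ for $j\le2k-1$; since the Hermite coefficients are $O_k(1)$ for bounded $\sigma$, the population Hermite moments $\bar m$ of $f$ and the moment vector $b=(m_1(\nu_g),\dots,m_{2k-1}(\nu_g))$ then satisfy $|\bar m_j-b_j|=O_k(\epsilon M^j)$. (iii) Since $f$ is $1$-subgaussian its Hermite moments are bounded, $|\bar m_j|=O_k(1)$; hence so are $\hat m$ and the Gauss quadrature measures $\hat\nu$ and $\bar\nu$ (the latter produced by the same pipeline run on $\bar m$), which are therefore effectively supported on an interval of length $O_k(1)$.

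With these in hand I would split $\TV(\hat f,g)\le\TV(\hat f,\bar f)+\TV(\bar f,g)$, where $\bar f=\bar\nu*N(0,\sigma^2)$. For the stochastic term, the first $2k-1$ empirical moments concentrate at the parametric rate, $|\Expect_n[X^j]-\Expect_f[X^j]|=O_k(\sqrt{\log(1/\delta)/n})$ with probability $1-\delta$ (a Bernstein-type inequality for the sub-Weibull variables $X^j$, valid for $n\gtrsim_k\log(1/\delta)$), and since projection onto the moment space is a contraction and $\hat\nu,\bar\nu$ live at scale $O_k(1)$, the $\TV$-comparison estimate underlying \prettyref{thm:main-density} gives $\TV(\hat f,\bar f)=O_k(\sqrt{\log(1/\delta)/n})$. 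For the bias term, $b$ lies in the convex set onto which $\bar m$ is projected, so $\|m(\bar\nu)-b\|$ is controlled by $\|\bar m-b\|$ from (ii), and applying the $\TV$-comparison estimate to $\bar\nu$ versus $\nu_g$ — both on $[-M,M]$, with $j$-th moment discrepancy $O_k(\epsilon M^j)$, i.e.\ \emph{relative} discrepancy $O_k(\epsilon)$ — bounds $\TV(\bar f,g)$ by $O_k((M/\sigma)\cdot\epsilon)=O_k(\epsilon\sqrt{\log(1/\epsilon)})$. Adding the pieces yields $\TV(\hat f,f)=O_k(\epsilon\sqrt{\log(1/\epsilon)}+\sqrt{\log(1/\delta)/n})$; the $\chi^2$, Kullback--Leibler and Hellinger versions follow via the standard comparisons of these divergences with $\TV$ for Gaussian mixtures with bounded parameters.

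The crux I anticipate is the quantitative $\TV$-comparison estimate: one needs a bound of the form $\TV(\pi*N(0,\sigma^2),\pi'*N(0,\sigma^2))\le O_k\!\pth{(R/\sigma)\cdot\max_{1\le j\le 2k-1}|m_j(\pi)-m_j(\pi')|/R^j}$ for $(\le k)$-atomic $\pi,\pi'$ on $[-R,R]$, with the prefactor \emph{linear} in $R/\sigma$ — this is precisely what keeps the bias term at $\epsilon\sqrt{\log(1/\epsilon)}$ rather than a larger power of the logarithm, and establishing it requires running the polynomial-interpolation and majorization argument behind \prettyref{thm:main-density} with explicit control of the support radius and the noise level, in particular showing that its Taylor-type remainder is governed by the moment discrepancy itself (this is where the $(\le k)$-atomic structure enters) rather than by an uncontrolled constant. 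A secondary point is the subgaussian bookkeeping in (iii) guaranteeing that the DMM output behaves as if supported on an $O_k(1)$ interval, so that the stochastic term does not inherit an extra logarithmic factor; this follows from the boundedness of the Hermite moments of a subgaussian density and the contractivity of the moment-space projection.
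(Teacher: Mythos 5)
Your first half follows the paper's own route: form the Hermite (denoised) moment estimates, use $\TV(f,g)\le\epsilon$ together with the $1$-subgaussianity of $f$ and $g$ and a truncation argument to relate these to the moments of the mixing distribution of $g$ (your step (ii) is essentially the paper's \prettyref{lmm:subg-moments}), project onto the moment space, run Gauss quadrature, output $\hat f=\hat\nu*N(0,\sigma^2)$, and finish by the triangle inequality through $g$. The divergence, and the gap, is in how moment closeness is converted into closeness of the mixture densities. The paper does this with \prettyref{lmm:chi2-moments} (a $\chi^2$ bound by a weighted sum of squared moment differences) combined with \prettyref{lmm:high-moments} (closeness of the first $2k-1$ moments of $k$-atomic distributions on a bounded interval propagates to all higher moments) and $\TV\le\sqrt{\chi^2/2}$; no new comparison inequality is required, and the $\sqrt{\log(1/\epsilon)}$ scale never enters the comparison step. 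You instead work at the inflated support radius $M\asymp\sqrt{\log(1/\epsilon)}$ and rest the bias term on the inequality $\TV(\pi*N(0,\sigma^2),\pi'*N(0,\sigma^2))\le O_k\big((R/\sigma)\max_{j\le 2k-1}|m_j(\pi)-m_j(\pi')|/R^j\big)$ with prefactor \emph{linear} in $R/\sigma$, which you yourself flag as the crux and do not prove. This is a genuine missing step, not bookkeeping: the tools you point to do not deliver it. The Hermite/$\chi^2$ expansion yields a prefactor of order $e^{\Theta((R/\sigma)^2)}$ (both through the density lower bound and through the growth $\sim(3R)^{\ell}$ of the higher-moment differences), which at $R=M\asymp\sqrt{\log(1/\epsilon)}$ is a polynomial in $1/\epsilon$ and wipes out the claimed rate; and the interpolation/majorization machinery behind \prettyref{prop:stable1} and \prettyref{prop:stable2} controls $W_1$ with a fractional power $\delta^{1/(2k-1)}$ of the moment discrepancy, not $\TV$ linearly in $\delta$. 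So the $O_k(\epsilon\sqrt{\log(1/\epsilon)})$ bias bound is not established by your argument.

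A secondary unproved step is your claim (iii) that boundedness of the Hermite moments of $f$ makes $\hat\nu$ and $\bar\nu$ ``effectively supported on an $O_k(1)$ interval'': since you project onto the moment space over $[-M,M]$, the quadrature atoms may a priori sit anywhere in $[-M,M]$, and without a quantitative tail bound (in the spirit of \prettyref{lmm:tail}) the stochastic term inherits the same $e^{O(M^2)}$ loss through the higher-moment terms of the $\chi^2$ bound, so $\TV(\hat f,\bar f)=O_k(\sqrt{\log(1/\delta)/n})$ does not follow as stated. The paper's proof avoids both problems because the truncation at level $\sqrt{\log(1/\epsilon)}$ is confined to \prettyref{lmm:subg-moments}, which directly outputs an absolute bound on $|\tilde m_r-m_r(\nu_g)|$, after which everything proceeds exactly as in \prettyref{thm:main-density} on an $O(1)$ scale. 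To repair your version you would either have to prove the linear-in-$R/\sigma$ comparison inequality (which I do not believe is available in the paper and is not obviously true in the generality you need) or revert to the paper's $\chi^2$-based comparison.
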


 

To conclude this subsection, we present a result for estimating mixtures of an arbitrarily large order, including continuous mixtures, in the case of known variance. In this situation we apply the DMM method to produce a mixture of order $\min\{k,O(\frac{\log n}{\log\log n})\}$. The convergence rate is minimax optimal in view of the matching lower bound in \prettyref{prop:large-k-lb}.
\begin{theorem}[Higher-order mixture]
\label{thm:large-k}
Suppose $|\mu_i|\le M$ for $M\ge 1$ and $\sigma$ is a bounded constant, where $M,\sigma$ are given. Then there exists an estimate $\hat\nu$ such that, with probability at least $1-\delta$,
\[
W_1(\nu,\hat\nu)\le O\pth{M\pth{\frac{\log\log n}{\log n}+\sqrt{\frac{\log(1/\delta)}{n^{1-c}}}}},
\]
for some constant $c<1$.
\end{theorem}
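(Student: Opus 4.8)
The plan is to reduce the higher-order (or continuous) mixing distribution $\nu$ to a finite one by a truncation/quantization argument, and then invoke Theorem \ref{thm:main-W1} for a mixture of order $k' = \min\{k, c_0 \frac{\log n}{\log\log n}\}$ for a suitable constant $c_0$. First I would observe that, since $\nu$ is supported on $[-M,M]$, one can find a discrete distribution $\tilde\nu$ with at most $k'$ atoms such that $W_1(\nu,\tilde\nu) \le O(M/k')$; this is just the statement that a distribution on an interval of length $2M$ can be approximated in $W_1$ to within $M/k'$ by a $k'$-atom distribution (place atoms on a uniform grid and move mass to the nearest grid point). With the stated choice of $k'$, this deterministic approximation error is $O\pth{M \frac{\log\log n}{\log n}}$, which matches the first term in the claimed bound. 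Crucially, the observation density $f = \nu*N(0,\sigma^2)$ and $\tilde f = \tilde\nu*N(0,\sigma^2)$ are then close in, say, total variation or in all low-order moments, because convolution with a fixed Gaussian is smoothing; in particular the first $2k'-1$ moments of $\nu$ and $\tilde\nu$ differ by $O(M/k')$ in an appropriate sense.

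Next I would run the DMM estimator of Theorem \ref{thm:main-W1} (known-$\sigma$ branch), but targeting a $k'$-component mixture rather than a $k$-component one: compute the unbiased estimates of the first $2k'-1$ moments of $\nu$ from the data, project onto the moment space $\mathcal{M}_{2k'-1}([-M,M])$ via the semidefinite program, and extract the atoms via Gauss quadrature. The estimate $\hat\nu$ produced is a $k'$-atom distribution on $[-M,M]$. By the moment-comparison theorem in Wasserstein distance (the tool underlying Theorem \ref{thm:main-W1}), the error $W_1(\tilde\nu,\hat\nu)$ is controlled by the discrepancy between the (denoised) empirical moments and the true moments of $\tilde\nu$, which in turn is bounded by the statistical fluctuation of the first $2k'-1$ empirical moments plus the bias incurred by replacing $\nu$ with $\tilde\nu$. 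The statistical term is, per the analysis behind \prettyref{eq:known-variance-W1} with $k$ replaced by $k'$, of order $M (k')^{1.5} (n/\log(1/\delta))^{-\frac{1}{4k'-2}}$; with $k' \asymp \frac{\log n}{\log\log n}$ one checks $n^{-\frac{1}{4k'-2}} = \exp\pth{-\frac{\log n}{4k'-2}} = n^{-c}$ for a constant $c<1$ (and the polynomial-in-$k'$ prefactor is absorbed into a slightly larger $c$), yielding the $\sqrt{\log(1/\delta)/n^{1-c}}$ term. Combining via the triangle inequality $W_1(\nu,\hat\nu) \le W_1(\nu,\tilde\nu) + W_1(\tilde\nu,\hat\nu)$ gives the claimed bound.

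The main obstacle is bookkeeping the effect of the approximation $\nu \mapsto \tilde\nu$ through the moment-projection and quadrature pipeline: one must verify that the denoised moments remain close to those of $\tilde\nu$ (not just $\nu$), so that the moment-comparison theorem applies with $\tilde\nu$ as the reference $k'$-atom distribution, and that the additional bias $O(M/k') = O(M\frac{\log\log n}{\log n})$ introduced at the moment level does not get amplified — here the key point is that the moment-comparison inequalities in this paper degrade at worst polynomially in $k'$ and as a fixed root of the moment discrepancy, so the bias stays at the same order as the quantization error. A secondary, more delicate point is the choice of the constant $c_0$ in $k' = c_0 \frac{\log n}{\log\log n}$: it must be small enough that $n^{-\frac{1}{4k'-2}}$ and the $(k')^{O(1)}$ prefactors together stay below $n^{-c}$ for some fixed $c<1$, yet the resulting $M/k'$ term is still $O(M\frac{\log\log n}{\log n})$; balancing these two and absorbing lower-order factors is the only real computation. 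Finally, when $k \le c_0\frac{\log n}{\log\log n}$ the first term is unnecessary and the bound follows directly from Theorem \ref{thm:main-W1}, so the statement covers both regimes uniformly.
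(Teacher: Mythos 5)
Your reduction breaks at the step where you feed the quantized $\tilde\nu$ into the $k$-atomic moment comparison theorem (\prettyref{prop:stable1}). The DMM output $\hat\nu$ has moments close to those of the \emph{true} $\nu$, so the discrepancy you must plug into \prettyref{prop:stable1} against $\tilde\nu$ necessarily contains the quantization bias $|m_i(\nu)-m_i(\tilde\nu)|$, which after rescaling to $[-1,1]$ is of order $i\cdot W_1(\nu,\tilde\nu)\asymp i/k'$ --- only polynomially small in $k'$, and of constant order for $i$ near $2k'-1$. \prettyref{prop:stable1} then yields $W_1(\tilde\nu,\hat\nu)\le O\bigl(k'\,\delta^{1/(2k'-1)}\bigr)$, and raising a quantity of size $\mathrm{poly}(1/k')$ (let alone a constant) to the power $1/(2k'-1)$ gives something of order one, so the bound is $O(k')$ and vacuous. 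Your key assertion that ``the bias stays at the same order as the quantization error'' after the fixed-root degradation is exactly false: the root-type comparison is harmless only when the moment discrepancy is polynomially small in $n$, not merely polynomially small in $k'\asymp \log n/\log\log n$. There is also an arithmetic slip in the statistical term: with $k'\asymp\frac{\log n}{\log\log n}$ one has $n^{-1/(4k'-2)}=\exp(-\Theta(\log\log n))=(\log n)^{-\Theta(1)}$, which is \emph{not} $n^{-c}$ for a fixed $c<1$; the $\sqrt{\log(1/\delta)/n^{1-c}}$ term in the theorem does not come from the rate $n^{-1/(4k-2)}$ at all.

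The paper's proof needs no quantization of $\nu$. It runs DMM with $k=O(\frac{\log n}{\log\log n})$ and notes that the moment accuracy \prettyref{eq:bfm-example}--\prettyref{eq:Hoeffding-example}, namely $\|\mathbf{m}_{2k-1}(\hat\nu)-\mathbf{m}_{2k-1}(\nu)\|_2\le \sqrt{\log(2k/\delta)/n}\,(\sqrt{ck})^{2k+1}$, only requires $\nu$ to be supported on $[-M,M]$, not to be $k$-atomic. It then applies \prettyref{lmm:w1-compare} (a sharpened Kong--Valiant comparison valid for \emph{arbitrary} distributions on $[-1,1]$): $W_1(\nu,\hat\nu)\le \frac{\pi}{L+1}+2(1+\sqrt{2})^{L}\|\Delta\mathbf{m}\|_2$ with $L=2k-1$. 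The additive $\pi/(L+1)$ term produces the $\frac{\log\log n}{\log n}$ part, while the second term is \emph{linear} in the moment discrepancy with an $e^{O(k)}$ prefactor; since $(\sqrt{ck})^{2k+1}(1+\sqrt{2})^{2k-1}=n^{c'}$ for a small constant $c'$ when the constant in \prettyref{eq:k} is small, this gives $\sqrt{\log(1/\delta)/n^{1-c}}$. To repair your write-up, replace the quantization/\prettyref{prop:stable1} step by this lemma (or prove an equivalent bound via Jackson's theorem for $1$-Lipschitz functions combined with a coefficient bound for polynomials bounded on $[-1,1]$).
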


\subsection{Why Wasserstein distance?}
\label{sec:why-wass}
Throughout the paper we consider estimating the mixing distribution $\nu$ with respect to the Wasserstein distance. This is a natural criterion, which is not too stringent to yield trivial result (such as the Kolmogorov-Smirnov (KS) distance\footnote{
    Consider two mixing distributions $\delta_0$ and $\delta_{\epsilon}$ with arbitrarily small $\epsilon$, whose KS distance is always one.
    })
and, at the same time, strong enough to provide meaningful guarantees on the means and weights. In fact, the commonly used criterion $\min_\Pi\sum_i|\mu_i-\hat\mu_{\Pi(i)}|$ over all permutations $\Pi$ is precisely ($k$ times) the Wasserstein distance between two equally weighted distributions \cite{villani.topics}.

Furthermore, we can obtain statistical guarantees on the support sets and weights of the estimated mixing distribution under the usual assumptions in literature \cite{Dasgupta1999,KMV2010,HP15} that include separation between the means and lower bound on the weights. See \prettyref{sec:wass} for a detailed discussion. We highlight the following result, phrased in terms of the parameter estimation error up to a permutation:
\begin{lemma}
    \label{lmm:w1-parameters}
    Let
    \[
        \nu=\sum_{i=1}^k w_i \delta_{\mu_i}, \quad \hat \nu=\sum_{i=1}^k \hat w_i \delta_{\hat \mu_i}.
    \]
    Suppose that $W_1(\nu,\hat\nu)<\epsilon$. Let
    \[
        \epsilon_1=\min\{|\mu_i-\mu_j|,|\hat\mu_i-\hat\mu_j|:1\le i< j\le k\},\quad
        \epsilon_2=\min\{w_i,\hat w_i:i\in [k]\}.
    \]
    If $\epsilon<\epsilon_1\epsilon_2/4$, then, there exists a permutation $\Pi$ such that
    \[
        \Norm{\mu-\Pi\hat \mu}_\infty < \epsilon/\epsilon_2,\quad \Norm{w-\Pi \hat w}_\infty < 2\epsilon/\epsilon_1,
    \]
    where $\mu=(\mu_1,\ldots,\mu_k)$, $w=(w_1,\ldots,w_k)$ denote the atoms and weights of $\nu$, respectively, and $\hat\mu, \hat w$ denote those of $\hat\nu$, 
\end{lemma}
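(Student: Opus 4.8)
The plan is to fix an optimal coupling $\pi = (\pi_{ij})$ of $\nu$ and $\hat\nu$ (which exists since both distributions are finitely supported), so that $\sum_{i,j}\pi_{ij}|\mu_i - \hat\mu_j| = W_1(\nu,\hat\nu) < \epsilon$, and then to argue that the separation gap $\epsilon_1$ together with the weight floor $\epsilon_2$ forces $\pi$ to be concentrated near the graph of a permutation. Note first that the hypothesis $\epsilon < \epsilon_1\epsilon_2/4$ forces $\epsilon_1,\epsilon_2 > 0$, so $\nu$ and $\hat\nu$ genuinely have $k$ distinct atoms each.

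\emph{Step 1 (atoms are matched).} For each $i$, since $\sum_j \pi_{ij} = w_i \ge \epsilon_2$ while $\sum_j \pi_{ij}|\mu_i - \hat\mu_j| \le W_1 < \epsilon$, an averaging argument gives some $j$ with $\pi_{ij} > 0$ and $|\mu_i - \hat\mu_j| < \epsilon/\epsilon_2 < \epsilon_1/4$; symmetrically, each $j$ has such a neighbour $i$. I would then define a bipartite graph $G$ on $[k]\sqcup[k]$ with an edge $(i,j)$ whenever $|\mu_i - \hat\mu_j| < \epsilon_1/4$. By the above, every vertex of $G$ has degree at least one; on the other hand, if $i$ were adjacent to two distinct $j,j'$ then $|\hat\mu_j - \hat\mu_{j'}| < \epsilon_1/2$, contradicting $|\hat\mu_j - \hat\mu_{j'}| \ge \epsilon_1$, and the same argument rules out a $j$ adjacent to two distinct $i$'s. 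Hence every degree in $G$ equals one, i.e.\ $G$ is a perfect matching, and it defines the permutation $\Pi$. Moreover the $j$ produced above must be $\Pi(i)$, so $|\mu_i - \hat\mu_{\Pi(i)}| < \epsilon/\epsilon_2$ for all $i$, which is the claimed bound on the means.

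\emph{Step 2 (weights are matched).} With $\Pi$ in hand, for $j \ne \Pi(i)$ the triangle inequality gives $|\mu_i - \hat\mu_j| \ge |\hat\mu_{\Pi(i)} - \hat\mu_j| - |\mu_i - \hat\mu_{\Pi(i)}| > \epsilon_1 - \epsilon_1/4 = 3\epsilon_1/4$, and likewise $|\mu_{i'} - \hat\mu_{\Pi(i)}| > 3\epsilon_1/4$ for $i' \ne i$. Feeding these into $W_1 < \epsilon$ bounds the off-matching mass: $\sum_{j\ne\Pi(i)}\pi_{ij} < \frac{4\epsilon}{3\epsilon_1}$ and $\sum_{i'\ne i}\pi_{i'\Pi(i)} < \frac{4\epsilon}{3\epsilon_1}$. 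Since $w_i = \pi_{i\Pi(i)} + \sum_{j\ne\Pi(i)}\pi_{ij}$ and $\hat w_{\Pi(i)} = \pi_{i\Pi(i)} + \sum_{i'\ne i}\pi_{i'\Pi(i)}$, subtracting yields $|w_i - \hat w_{\Pi(i)}| < \frac{4\epsilon}{3\epsilon_1} < \frac{2\epsilon}{\epsilon_1}$, as desired.

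The only step that requires a genuine idea — the main obstacle — is the rigidity in Step 1: one must rule out that the optimal transport plan achieves small cost by splitting the mass of a single atom among several well-separated targets. The separation bound does exactly this (no point can lie within $\epsilon_1/4$ of two atoms of either measure), and the weight floor $\epsilon_2$ guarantees each atom carries enough mass that its contribution to the transport cost cannot be hidden below $\epsilon$. Everything after that, including pinning down the constants below the stated $\epsilon/\epsilon_2$ and $2\epsilon/\epsilon_1$, is routine bookkeeping with the triangle inequality.
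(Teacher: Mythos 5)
Your proof is correct and follows essentially the same route as the paper, which derives the result from its two coupling-based lemmas (the Hausdorff-distance bound $d_H(S,S')\le W_1/\epsilon_2$ and the Strassen/Markov-type bound on ball masses): your Step 1 inlines the first lemma plus the separation-induced matching argument, and your Step 2 replaces the second lemma by direct bookkeeping of off-matching mass in an explicit optimal coupling, even yielding the slightly better constant $4\epsilon/(3\epsilon_1)$. No gaps.
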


\subsection{Related work}
\label{sec:related}

There exist a vast literature on mixture models, in particular Gaussian mixtures, and the method of moments. For a comprehensive review see \cite{Lindsay1995,Fruhwirth2006}. Below we highlight a few existing results that are related to the present paper.

\paragraph{Likelihood-based methods.} Maximum likelihood estimation (MLE) is one of the most useful method for parameter estimation. Under strong separation assumptions, MLE is consistent and asymptotically normal \cite{RW1984}; however, those assumptions are difficult to verify, and it is computationally hard to obtain the global maximizer due to the non-convexity of the likelihood function in the location parameters.

Expectation-Maximization (EM) \cite{DLR1977} is an iterative algorithm that aims to approximate the MLE. It has been widely applied in Gaussian mixture models \cite{RW1984,XJ1996} and more recently in high-dimensional settings \cite{balakrishnan2017statistical}. In general, this method is only guaranteed to converge to a local maximizer of the likelihood function rather than the global MLE. In practice we need to employ heuristic choices of the initialization \cite{KX2003} and stopping criteria \cite{SMA2000}, as well as possibly data augmentation techniques \cite{meng1997algorithm,PL2001}. Furthermore, its slow convergence rate is widely observed in practice \cite{RW1984,KX2003}. Global convergence of the EM algorithm is recently analyzed by \cite{XHM16,DTZ17} but only in the special case of two equally weighted components. Additionally, the EM algorithm accesses the entire data set in each iteration, which is particularly expensive for large sample size and high dimensions.

Lastly, we mention the nonparametric maximum likelihood estimation (NPMLE) in mixture models proposed by \cite{KW56}, where the maximization is taken over all mixing distributions which need not be $k$-atomic. This is an infinite-dimensional convex optimization problem, which has been studied in \cite{Laird1978,Lindsay1981,Lindsay1995} and more recently in \cite{koenker2014convex} on its computation based on discretization. One of the drawbacks of NPMLE is its lack of interpretability since the solution is a discrete distribution with at most $n$ atoms cf.~\cite[Theorem 2]{koenker2014convex}. Furthermore, few statistical guarantees in terms of convergence rate are available.

\paragraph{Moment-based methods.} The simplest moment-based method is the method of moments (MM) introduced by Pearson \cite{Pearson1894}. The failure of the vanilla MM described in \prettyref{sec:mmfail} has motivated various modifications including, notably, 
the \emph{Generalized Method of Moments} (GMM) introduced by Hansen \cite{Hansen1982}. GMM is a widely used methodology for analyzing economic and financial data (cf.~\cite{Hall2005} for a thorough review). Instead of exactly solving the MM equations, GMM aims to minimize the sum of squared differences between the sample moments and the fitted moments. Despite its nice asymptotic properties \cite{Hansen1982}, GMM involves a non-convex optimization problem which is computationally challenging to solve. 
In practice, heuristics such as gradient descent are used \cite{Chausse2010} which converge slowly and lack theoretical guarantees.

For Gaussian mixture models (and more generally finite mixture models), our results can be viewed as a solver for GMM which is provably exact and computationally efficient, improving over existing heuristic methods in terms of both speed and accuracy significantly; this is another algorithmic contribution of the present paper. 
The key is to switch the view from optimizing over $k$-atomic mixing distributions (which is non-convex) to moment space (which is convex and efficiently optimizable via SDP).
We also note that minimizing the sum of squares in GMM is not crucial and minimizing any distance yields the same theoretical guarantee. We discuss the connections to GMM in details in \prettyref{sec:known}.

There are a number of recent work in the theoretical computer science literature on provable results for moment-based estimators in Gaussian location-scale mixture models, see, e.g., \cite{MV2010,KMV2010,BS10,HP15,li2017robust}.
For instance, the algorithm \cite{MV2010} is based on exhaustive search over the discretized parameter space such that the population moments is close to the empirical moments. In addition to being computationally expensive, this method achieves the estimation accuracy $n^{-C/k}$ for some constant $C$, which is suboptimal in view of \prettyref{thm:main-W1}.
By carefully analyzing Pearson's method of moments equations \cite{Pearson1894},  \cite{HP15} showed that the optimal rate for two-component location-scale mixtures is $\Theta(n^{-1/12})$; however, this approach is difficult to generalize to higher order mixtures.
Finally, for moment-based methods in multiple dimensions, such as spectral and tensor decomposition, we defer the discussion to \prettyref{sec:multiple}.

\paragraph{Minimum distance estimators.} In the case of known variance, the minimum distance estimator is studied by \cite{DK68,Chen95,HK2015}. Specifically, the estimator is a $k$-atomic distribution $\hat\nu$ such that $\hat\nu*N(0,\sigma^2)$ is the closest to the empirical distribution of the samples in certain distance. The minimax optimal rate $O(n^{-\frac{1}{4k-2}})$ for estimating the mixing distribution under the Wasserstein distance is shown in \cite{HK2015} (which corrects the previous result in \cite{Chen95}), by bounding the $W_1$ distance between the mixing distributions in terms of the KS distance of the Gaussian mixtures \cite[Lemma 4.5]{HK2015}. 
However, the minimum distance estimator is in general computationally expensive and suffers from the same non-convexity issue of the MLE. 
In contrast, denoised method of moments is efficiently computable and adaptively achieves the optimal rate of accuracy as given in \prettyref{thm:main-W1-adaptive}.
For arbitrary Gaussian location mixtures in one dimension, the minimum distance estimator was considered in \cite{edelman1988estimation} in the context of empirical Bayes. Under the assumptions of bounded first moment, it is shown in \cite[Corollary 2]{edelman1988estimation} that the mixing distribution can be estimated at rate $O((\log n)^{-1/4})$ under the $L_2$-distance between the CDFs; this loss is, however, weaker than the $W_1$-distance (i.e.~$L_1$ distance between the CDFs).

\paragraph{Density estimation}

If the estimator is allowed to be any density (\emph{improper} learning), it is known that as long as the mixing distribution has a bounded support, the rate of convergence is close to parametric regardless of the number of components. Specifically, the optimal squared $L_2$-risk is found to be $\Theta(\frac{\sqrt{\log n}}{n})$ \cite{kim2014minimax}, achieved by the kernel density estimator designed for analytic densities \cite{ibragimov2001estimation}. 
As mentioned before, \emph{proper} density estimate (which is required to be a $k$-Gaussian mixture) is more desirable for the sake of interpretability; however, 
finding the $k$-Gaussian mixture that best approximates a given function such as a kernel density estimate can be computationally challenging due to, again, the non-convexity in the location parameters. 
In this regard, another contribution of Theorems \ref{thm:main-density} and \ref{thm:oracle} is the observation that proper and near optimal estimates/approximates can be found efficiently via the method of moments.
Finally, we note that MLE for estimating the density of general Gaussian mixtures has been studied in \cite{genovese.wasserman,GV2001}.

\subsection{Notations}
A discrete distribution supported on $k$ atoms is called a $k$-atomic distribution. 
The expectation of a given function $f$ under a distribution $\mu$ is denoted by $\Expect_{\mu}f=\Expect_\mu [f(X)]=\int f(x)\mu(\diff x)$, and the subscript $\mu$ may be omitted if it is specified from the context. 
The empirical mean of $f$ from $n$ samples is denoted as $\Expect_n[f(X)] = \frac{1}{n} \sum_{i=1}^n f(X_i)$, where $X_1,\ldots,X_n$ are \iid~copies of $X$.
The $ r\Th $ moment of a distribution $ \mu $ is denoted by $ m_r(\mu) \triangleq \Expect_{\mu}X^r$.
The moment matrix associated with $ m_0,m_1,\dots,m_{2r} $ is a Hankel matrix of order $ r+1 $:
\begin{equation}
    \label{eq:moment-matrix}
    \bfM_r=
    \begin{bmatrix}
        m_0 & m_1 & \cdots & m_r \\
        m_1 & m_2 & \cdots & m_{r+1} \\
        \vdots  & \vdots  & \ddots & \vdots  \\
        m_r & m_{r+1} & \cdots & m_{2r} 
    \end{bmatrix}.
\end{equation}
For matrices $A \succeq B$ stands for $A-B$ being positive semidefinite.
The interval $[x-a,x+a]$ is abbreviated as $[x\pm a]$.
For any $x,y\in\reals$, $x\wedge y \triangleq \min\{x,y\}$ and $(x)_+ \triangleq \max\{x,y\}$.
For two vectors $x=(x_1,\dots,x_n)$ and $y=(y_1,\dots,y_n)$, let $\inner{x,y}\triangleq \sum_i x_iy_i$.
 A distribution $\pi$ is called $\sigma$-subgaussian if $\Expect_{\pi}[e^{tX}]\le \exp(t^2\sigma^2/2)$ for all $t\in\reals$.
We use standard big-$O$ notations, \eg, for two positive sequence $\{a_n\}$ and $\{b_n\}$, $a_n=O(b_n)$ if $a_n\le Cb_n$ for some constant $C>0$; $a_n=\Omega(b_n)$ if $b_n=O(a_n)$; $a_n=\Theta(b_n)$ if $a_n=O(b_n)$ and $a_n=\Omega(b_n)$. We write $a_n=O_\beta(b_n)$ if $C$ depends on another parameter $\beta$.

\subsection{Organization}
The paper is organized as follows. In \prettyref{sec:prelim} we provide some basic results of the theory of moments and the Wasserstein distance. In \prettyref{sec:w1-moments} we introduce the moment comparison theorems, which bound the Wasserstein distance between two discrete distributions in terms of the discrepancy of their moments. These are key results to prove the main theorems. In \prettyref{sec:estimate}, we propose estimation algorithms and provide their statistical guarantees. Matching minimax lower bounds are given in \prettyref{sec:lb}. \prettyref{sec:exp} contains numerical experiments and comparison with other methods such as the EM algorithm. \prettyref{sec:discuss} discusses extensions and open problems including location-scale mixtures and the multivariate case. Proofs are given in \prettyref{sec:proof}; in particular, \prettyref{sec:poly} contains a brief discussion on polynomial interpolation and majorization, which play a crucial role in the proof. 
Auxiliary results are deferred to \prettyref{app:aux}.

\section{Preliminaries}
\label{sec:prelim}

\subsection{Moment space, SDP characterization, and Gauss quadrature}
\label{sec:moment}
The theory of moments plays a key role in the developments of analysis, probability, statistics, and optimization.  See the classics \cite{ST1943,KS1953} and the recent monographs \cite{Lasserre2009,Schmudgen17} for a detailed treatment. Below, we briefly review a few basic facts that are related to this paper.

The $r\Th$ moment vector of a distribution $\pi$ is a $r$-tuple $\bfm_r(\pi)=(m_1(\pi),\dots,m_r(\pi))$. The $r\Th$ moment space on $K\subseteq \reals$ is defined as 
\[
    \calM_r(K)=\{\bfm_r(\pi):\pi \text{ is supported on }K\},
\]
which is the convex hull of $\{(x,x^2,\dots,x^r):x\in K\}$. A valid moment vector satisfies many geometric constraints such as the Cauchy-Schwarz and \Holder{} inequalities. When $K=[a,b]$ is a compact interval, $\calM_r([a,b])$ is completely described by (see \cite[Theorem 3.1]{ST1943}, and also \cite{KS1953,Lasserre2009}) the following condition:
\begin{equation}
    \label{eq:moment-psd}
    \begin{cases}
        ~\bfM_{0,r}\succeq 0,\quad (a+b)\bfM_{1,r-1}\succeq ab\bfM_{0,r-2}+\bfM_{2,r}, & \text{ $r$ even},\\
        ~b\bfM_{0,r-1}\succeq \bfM_{1,r} \succeq a\bfM_{0,r-1}, & \text{ $r$ odd},
    \end{cases}
\end{equation}
where $\bfM_{i,j}$ denotes the Hankel matrix with entries $m_i,m_{i+1},\dots,m_j$:
\[
    \bfM_{i,j}=
    \begin{bmatrix}
        m_i & m_{i+1} & \cdots & m_{\frac{i+j}{2}} \\
        m_{i+1} & m_{i+2} & \cdots & m_{\frac{i+j}{2}+1} \\
        \vdots  & \vdots  & \ddots & \vdots  \\
        m_{\frac{i+j}{2}} & m_{\frac{i+j}{2}+1} & \cdots & m_{j} 
    \end{bmatrix}.
\]

\begin{example}[Moment spaces on {$[0,1]$}]
For the first two moments, $\calM_2([0,1])$ is simply described by $m_1\ge m_2\ge 0$ and $m_2\ge m_1^2 $. 
For $r=3$, according to \prettyref{eq:moment-psd}, $\calM_3([0,1])$ is described by
    \[
        \begin{bmatrix}
            1   & m_1   \\
            m_1 & m_2  
        \end{bmatrix}
        \succeq 
        \begin{bmatrix}
            m_1 & m_2   \\
            m_2 & m_3  
        \end{bmatrix}
        \succeq 0.
    \]
    Using Sylvester's criterion (see \cite[Theorem 7.2.5]{horn-2nd}), they are equivalent to 
    \begin{align*}
        &0\le m_1\le 1, \quad m_2\ge m_3\ge 0,\\
        &m_1m_3\ge m_2^2, \quad (1-m_1)(m_2-m_3)\ge (m_1-m_2)^2.
    \end{align*}
    The necessity of the above inequalities is apparent: the first two follow from the support being $[0,1]$, and the last two follow from the Cauchy-Schwarz inequality. It turns out that they are also sufficient.
\end{example}

Moment matrices of discrete distributions satisfy more structural properties. For instances, the moment matrix of a $k$-atomic distribution of any order is of rank at most $k$, and is a deterministic function of ${\bf m}_{2k-1}$; the number of atoms can be characterized using the determinants of moment matrices (see \cite[p.~362]{Uspensky37} or \cite[Theorem 2A]{Lindsay1989}) as follows:   
\begin{theorem}
    \label{thm:supp-detM}
		$(m_1,\dots,m_{2r})$ are the first $2r$ moments
		of a distribution with exactly $ r $ points of support if and only if 
    $ \det(\bfM_{r-1})>0 $          and $ \det(\bfM_r)=0 $.
\end{theorem}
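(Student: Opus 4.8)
The plan is to characterize distributions with exactly $r$ support points via the rank of the Hankel moment matrices, and then translate the rank conditions into determinant conditions using the nested structure $\bfM_{r-1}$ being a principal submatrix of $\bfM_r$. First I would establish the general fact that for a discrete distribution $\pi = \sum_{i=1}^s w_i \delta_{x_i}$ with $s$ distinct atoms and all $w_i > 0$, the moment matrix $\bfM_j$ of order $j+1$ factors as $\bfM_j = V_j^\top D V_j$, where $V_j$ is the $s \times (j+1)$ Vandermonde-type matrix with rows $(1, x_i, x_i^2, \dots, x_i^j)$ and $D = \mathrm{diag}(w_1, \dots, w_s)$. Since $D$ has full rank $s$ and $V_j$ has rank $\min\{s, j+1\}$ (distinct nodes give full-rank Vandermonde blocks), it follows that $\mathrm{rank}(\bfM_j) = \min\{s, j+1\}$. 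In particular, if $\pi$ has exactly $r$ atoms, then $\bfM_{r-1}$ (order $r$) is nonsingular, so $\det(\bfM_{r-1}) > 0$ (positive semidefinite and full rank), while $\bfM_r$ (order $r+1$) has rank exactly $r < r+1$, so $\det(\bfM_r) = 0$. This gives the "only if" direction.

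For the converse, suppose $(m_1, \dots, m_{2r})$ are the first $2r$ moments of some distribution (not a priori discrete) with $\det(\bfM_{r-1}) > 0$ and $\det(\bfM_r) = 0$; set $m_0 = 1$. I would argue that $\bfM_{r-1} \succ 0$ forces the underlying distribution to have at least $r$ support points (a distribution on $s < r$ points would make $\bfM_{r-1}$ singular by the rank computation above, or directly: a nonzero polynomial of degree $\le r-1$ vanishing on the support would be a null vector). Conversely, $\det(\bfM_r) = 0$ together with $\bfM_{r-1} \succ 0$ means $\bfM_r$ is positive semidefinite of rank exactly $r$, with a one-dimensional kernel spanned by some vector $(c_0, \dots, c_r)$ with $c_r \neq 0$ (if $c_r = 0$ then the kernel vector would lie in the span corresponding to a singular principal submatrix $\bfM_{r-1}$, contradiction). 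The polynomial $p(x) = \sum_{j=0}^r c_j x^j$ then satisfies $\Expect_\pi[p(X)^2] = c^\top \bfM_r c = 0$ wait — more carefully, $\Expect_\pi[q(X) p(X)] = 0$ for every polynomial $q$ of degree $\le r$ by the kernel property; taking $q = p$ gives $\Expect_\pi[p(X)^2] = 0$, hence $p$ vanishes $\pi$-almost surely, so $\supp(\pi)$ is contained in the $r$ roots of $p$. Combined with the lower bound, $\pi$ has exactly $r$ support points.

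The main obstacle is the converse direction's subtlety that we are only given finitely many moments, not a distribution outright: one must first invoke the solvability of the truncated moment problem (the existence of a representing measure), which is guaranteed here precisely because the positive semidefinite Hankel conditions in \prettyref{eq:moment-psd} hold — indeed $\bfM_r \succeq 0$ with $\bfM_{r-1} \succ 0$ is exactly the condition for $(m_0, \dots, m_{2r})$ to be a moment sequence, and in the rank-deficient case the representing measure is unique and finitely supported (this is the classical flat-extension/Curto--Fialkow picture, or can be read off from \cite{ST1943,KS1953}). I would cite the existence of the representing measure from the references already invoked for \prettyref{eq:moment-psd} and then run the kernel-polynomial argument above. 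The bookkeeping on why the kernel vector of $\bfM_r$ has nonzero leading coefficient, and why exactly $r$ (not fewer) roots are charged, is routine linear algebra given the strict positivity of $\bfM_{r-1}$.
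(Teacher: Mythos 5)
The paper does not actually prove \prettyref{thm:supp-detM}; it quotes it from \cite[p.~362]{Uspensky37} and \cite[Theorem 2A]{Lindsay1989}, so your self-contained argument is welcome, and its two main steps are the standard ones: the factorization $\bfM_j=V_j^\top D V_j$ with a Vandermonde block gives $\mathrm{rank}(\bfM_j)=\min\{s,j+1\}$ and hence the ``only if'' direction, and for the converse the kernel vector $c$ of $\bfM_r$ has $c_r\neq 0$ (else its truncation would be a null vector of the nonsingular $\bfM_{r-1}$), so $c^\top\bfM_r c=\Expect_\pi[p^2(X)]=0$ pins $\supp(\pi)$ inside the $r$ roots of $p$, while $\bfM_{r-1}\succ 0$ forces at least $r$ atoms. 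Under the reading in which $(m_1,\dots,m_{2r})$ is a priori the moment vector of some distribution $\pi$ --- which is exactly how the paper uses the theorem, e.g.\ in \prettyref{lmm:sigma-exist} and \prettyref{lmm:U-exist} --- this converse is complete, because positive semidefiniteness of $\bfM_{r-1}$ and $\bfM_r$ is then automatic.

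The gap is in your last paragraph, where you treat $(m_1,\dots,m_{2r})$ as raw numbers and assert that the positive semidefinite Hankel conditions ``hold'': they are not implied by the two stated determinant conditions. For $r\ge 3$, $\det(\bfM_{r-1})>0$ does not force $\bfM_{r-1}\succeq 0$, since a positive determinant can arise from two negative eigenvalues. Concretely, take $m_1=0$, $m_2=-1$, $m_3=m_4=0$: then $\det(\bfM_2)=1>0$ although $\bfM_2$ is indefinite, and since $\det(\bfM_3)$ is affine in $m_6$ with slope $\det(\bfM_2)\neq 0$ one can choose $m_5,m_6$ so that $\det(\bfM_3)=0$; yet $m_2<0$ is not the second moment of any distribution. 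Without PSD-ness, neither the Curto--Fialkow flat-extension step nor the inequality $c^\top\bfM_r c=\Expect[p^2]\ge 0$ is available, so the ``if'' direction cannot be established from the two determinants alone. The repair is either to read the theorem as a statement about a given distribution (your first converse paragraph, and the paper's applications), or to strengthen the hypothesis, as in Lindsay's Theorem 2A, to $\det(\bfM_j)>0$ for all $j\le r-1$: these are the leading principal minors of $\bfM_{r-1}$, so $\bfM_{r-1}\succ 0$ by Sylvester's criterion, eigenvalue interlacing together with $\det(\bfM_r)=0$ then gives $\bfM_r\succeq 0$ of rank $r$, and an $r$-atomic representing measure can be produced explicitly by the Gauss quadrature construction (\prettyref{algo:quadrature}) rather than by citation.
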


\begin{algorithm}[t]
    \caption{Quadrature rule}
    \label{algo:quadrature}
    \begin{algorithmic}
        \REQUIRE a valid moment vector $ (m_1,\dots,m_{2k-1}) $.
        \ENSURE nodes $ x=(x_1,\dots,x_k) $ and weights $ w=(w_1,\dots,w_k) $.
        \STATE Define the following degree-$k$ polynomial $ P $
        \begin{equation*}
            P(x)=\det\begin{bmatrix}
                1 & m_1 & \cdots & m_k \\
                \vdots  & \vdots  & \ddots & \vdots  \\
                m_{k-1} & m_{k} & \cdots & m_{2k-1} \\
                1 & x & \cdots & x^{k} 
            \end{bmatrix}.
        \end{equation*}
        \STATE Let the nodes $ (x_1,\dots,x_k) $ be the roots of the polynomial $ P $.
        \STATE Let the weights $ w=(w_1,\dots,w_k)  $ be
        \begin{equation*}
            w=\begin{bmatrix}
                1 & 1 & \cdots & 1 \\
                x_1 & x_2 & \cdots & x_{k} \\
                \vdots  & \vdots  & \ddots & \vdots  \\
                x_1^{k-1} & x_2^{k-1} & \cdots & x_k^{k-1} 
            \end{bmatrix}^{-1}
            \begin{bmatrix}
                1 \\
                m_1 \\
                \vdots  \\
                m_{k-1} 
            \end{bmatrix}.
        \end{equation*}
    \end{algorithmic}
\end{algorithm}
Next we discuss the closely related notion of \emph{Gauss quadrature}, which is a discrete approximation for a given distribution in the sense of moments and plays an important role in the execution of the DMM estimator. Given $\pi$ supported on an interval $[a,b]\subseteq \reals$, a $k$-point Gauss quadrature is a $k$-atomic distribution $\pi_k=\sum_{i=1}^kw_i\delta_{x_i}$, also supported on $[a,b]$, such that, for any polynomial $P$ of degree at most $2k-1$,
\begin{equation}
    \label{eq:quadrature-goal}
    \Expect_\pi P = \Expect_{\pi_k}P=\sum_{i=1}^{k}w_iP(x_i).
\end{equation}
Gauss quadrature is known to always exist and is uniquely determined by ${\bf m}_{2k-1}(\pi)$ (cf.~e.g.~\cite[Section 3.6]{stoer.2002}), which shows that any valid moment vector of order $2k-1$ can be realized by a unique $k$-atomic distribution.
A basic algorithm to compute Gauss quadrature is \prettyref{algo:quadrature} \cite{GW1969} and many algorithms with improved computational efficiency and numerical stability have been proposed; cf.~\cite[Chapter 3]{Gautschi2004}.

\subsection{Wasserstein distance}
\label{sec:wass}
A central quantity in the theory of optimal transportation, the Wasserstein distance is the minimum cost of mapping one distribution to another.
In this paper, we will be mainly concerned with the 1-Wasserstein distance defined in \prettyref{eq:w1}, which can be equivalently expressed, through the Kantorovich duality \cite{villani.topics}, as
\begin{equation}
    \label{eq:W1-dual}
    W_1(\nu,\nu')=\sup \{\Expect_\nu[\varphi]-\Expect_{\nu'}[\varphi]:\varphi\text{ is 1-Lipschitz}\}.
\end{equation}
The optimal coupling in \prettyref{eq:w1} has many equivalent characterization \cite{villani.topics} but is often difficult to compute analytically in general. Nevertheless, the situation is especially simple for distributions on the real line, where the quantile coupling is known to be optimal and hence
\begin{equation}
    \label{eq:W1-CDF}
    W_1(\nu,\nu')=\int |F_\nu(t)-F_{\nu'}(t)|\diff t,
\end{equation}
where $F_\nu$ and $F_{\nu'}$ denote the CDFs of $\nu$ and $\nu'$, respectively. Both \prettyref{eq:W1-dual} and \prettyref{eq:W1-CDF} provide convenient characterizations to bound the Wasserstein distance in \prettyref{sec:w1-moments}.

As previously mentioned in \prettyref{sec:why-wass}, two discrete distributions close in the Wasserstein distance have similar support sets and weights. This is made precise by \prettyref{lmm:w1-hausdorff} and \ref{lmm:w1-prokhorov} next:

\begin{lemma}
    \label{lmm:w1-hausdorff}
    Suppose $\nu$ and $\nu'$ are discrete distributions supported on $S$ and $S'$, respectively. Let 
    $\epsilon=\min\{\nu(x):x\in S\}\wedge\min\{\nu'(x):x\in S'\}$. Then,
    \[
        d_H(S,S')\le W_1(\nu,\nu')/\epsilon ,
    \]
    where $d_H$ denotes the Hausdorff distance defined as
    \begin{equation}
        \label{eq:hausdorff}
        d_H(S,S')=\max\sth{\sup_{x\in S}\inf_{x'\in S'}|x-x'|,\sup_{x'\in S'}\inf_{x\in S}|x-x'|}.
    \end{equation}
\end{lemma}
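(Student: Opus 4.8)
The plan is to argue directly from the primal (coupling) representation \prettyref{eq:w1} of the Wasserstein distance, rather than from the CDF formula \prettyref{eq:W1-CDF}. Let $\pi$ be an optimal coupling of $X\sim\nu$ and $Y\sim\nu'$, so that $W_1(\nu,\nu')=\Expect_\pi|X-Y|$ (such a coupling exists, e.g.\ the monotone/quantile coupling on $\reals$). The underlying observation is simple: any atom of $\nu$ carrying mass at least $\epsilon$ must, under the coupling, be transported a total distance of at least $\epsilon$ times its distance to $S'$, and this already controls one of the two terms in the definition \prettyref{eq:hausdorff} of the Hausdorff distance.

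Concretely, fix $x_0\in S$ and write $d(x_0,S')\triangleq\inf_{x'\in S'}|x_0-x'|$. Since the $X$-marginal is $\nu$ we have $\pi\{X=x_0\}=\nu(x_0)\ge\epsilon$, and since the $Y$-marginal is $\nu'$ we have $Y\in S'$ almost surely, so $|x_0-Y|\ge d(x_0,S')$ on the event $\{X=x_0\}$. Restricting the expectation to this event gives
\[
W_1(\nu,\nu')=\Expect_\pi|X-Y|\ \ge\ \Expect_\pi\bigl[\,|x_0-Y|\,\mathbf{1}\{X=x_0\}\,\bigr]\ \ge\ d(x_0,S')\,\Expect_\pi\bigl[\mathbf{1}\{X=x_0\}\bigr]\ =\ d(x_0,S')\,\nu(x_0)\ \ge\ \epsilon\, d(x_0,S'),
\]
hence $\sup_{x_0\in S}d(x_0,S')\le W_1(\nu,\nu')/\epsilon$. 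Exchanging the roles of $\nu$ and $\nu'$ (equivalently of $X$ and $Y$) yields $\sup_{x_0'\in S'}\inf_{x\in S}|x_0'-x|\le W_1(\nu,\nu')/\epsilon$ as well; taking the maximum of the two bounds and invoking \prettyref{eq:hausdorff} completes the proof.

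There is no genuinely hard step here, but two points deserve a moment's care. First, one should resist proving the lemma via \prettyref{eq:W1-CDF}: the atom of mass $\ge\epsilon$ at $x_0$ forces $F_\nu$ to jump by at least $\epsilon$ there, while $F_{\nu'}$ is constant on the gap $(x_0-d(x_0,S'),\,x_0+d(x_0,S'))$ which is free of atoms of $\nu'$, but this only guarantees $|F_\nu-F_{\nu'}|\ge\epsilon/2$ on one side of $x_0$, costing a spurious factor of $2$ in the bound; the coupling argument gives the sharp constant for free. Second, if either distribution has countably infinite support with weights accumulating at $0$ then $\epsilon=0$ and the asserted inequality is vacuous, so one may assume $\epsilon>0$ (in the intended application $\nu$ and $\nu'$ are $k$-atomic, where this is automatic).
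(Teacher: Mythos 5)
Your argument is correct and is essentially the paper's own proof: both lower-bound $\Expect|X-Y|$ under a coupling by restricting to the event $\{X=x\}$, use that this event has mass $\nu(x)\ge\epsilon$ while $Y\in S'$ almost surely, and then take the supremum over $x$ and interchange the roles of $\nu$ and $\nu'$. The only cosmetic difference is that you invoke an optimal coupling while the paper bounds an arbitrary coupling and infimizes, which amounts to the same thing.
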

\begin{proof}
    For any coupling $P_{XY}$ such that $X\sim\nu$ be $Y\sim\nu'$,
    \begin{align*}
        \Expect |X-Y| = \sum_x \Prob[X=x] \Expect[|X-Y| | X=x]
        \ge \sum_x \epsilon\cdot \inf_{x'\in S'} |x-x'| \ge \epsilon \cdot\sup_{x\in S} \inf_{x'\in S'} d(x,x').
    \end{align*}
    Interchanging $X$ and $Y$ completes the proof.
\end{proof}

\begin{lemma}
    \label{lmm:w1-prokhorov}
    For any $\delta>0$,
    \begin{gather*}
        \nu(x)- \nu'([x\pm\delta])\le W_1(\nu,\nu')/\delta,\quad 
        \nu'(x)- \nu([x\pm\delta])\le W_1(\nu,\nu')/\delta. 
    \end{gather*}
\end{lemma}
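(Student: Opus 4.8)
The plan is to exploit the dual/CDF characterizations of $W_1$ already recorded in the excerpt, together with a clean choice of test function. First I would use the Kantorovich duality \prettyref{eq:W1-dual}: for any $1$-Lipschitz $\varphi$ we have $\Expect_\nu[\varphi] - \Expect_{\nu'}[\varphi] \le W_1(\nu,\nu')$. The idea is to pick a "tent" function centered at $x$ that is large on $\{x\}$ and small outside $[x\pm\delta]$: namely $\varphi_\delta(y) = \left(1 - |y-x|/\delta\right)_+$, which is $1/\delta$-Lipschitz, equals $1$ at $y=x$, and is supported on $[x\pm\delta]$ with $0\le \varphi_\delta\le 1$. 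Then $\delta\varphi_\delta$ is $1$-Lipschitz, so
\[
\delta\bigl(\Expect_\nu[\varphi_\delta] - \Expect_{\nu'}[\varphi_\delta]\bigr) \le W_1(\nu,\nu').
\]
Now I would lower-bound the left side: $\Expect_\nu[\varphi_\delta] \ge \varphi_\delta(x)\,\nu(\{x\}) = \nu(x)$ since $\varphi_\delta\ge 0$ and $\varphi_\delta(x)=1$, while $\Expect_{\nu'}[\varphi_\delta] \le \nu'([x\pm\delta])$ since $\varphi_\delta\le \mathbf 1_{[x\pm\delta]}$. Combining gives $\delta\bigl(\nu(x) - \nu'([x\pm\delta])\bigr)\le W_1(\nu,\nu')$, which is the first claimed inequality after dividing by $\delta$. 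The second inequality follows by symmetry, i.e.\ swapping the roles of $\nu$ and $\nu'$ (the duality expression is antisymmetric, so $|W_1|$ controls both signs).

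An alternative route, perhaps even shorter, is via the coupling definition \prettyref{eq:w1} directly, in the spirit of the proof of \prettyref{lmm:w1-hausdorff}: for any coupling $P_{XY}$ with $X\sim\nu$, $Y\sim\nu'$, condition on $X=x$ and note that if $|X-Y|<\delta$ then $Y\in[x\pm\delta]$; hence
\[
\Expect|X-Y| \ge \Prob[X=x,\,|X-Y|\ge\delta]\cdot\delta \ge \delta\bigl(\nu(x) - \Prob[X=x,\,Y\in[x\pm\delta]]\bigr) \ge \delta\bigl(\nu(x)-\nu'([x\pm\delta])\bigr),
\]
using $\Prob[X=x,Y\in[x\pm\delta]]\le\Prob[Y\in[x\pm\delta]]=\nu'([x\pm\delta])$. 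Taking the infimum over couplings yields the bound. Either argument is essentially a one-liner.

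I do not anticipate a genuine obstacle here; the only thing to be slightly careful about is the boundary convention in $[x\pm\delta]=[x-\delta,x+\delta]$ versus the strict inequality $|X-Y|<\delta$ — using the closed interval makes the inequality $\{|X-Y|<\delta\}\subseteq\{Y\in[x\pm\delta]\}$ hold without fuss, and in the duality proof the tent function is supported on the closed interval, so both are consistent. I would present the duality version as the main proof since it parallels the surrounding development and makes transparent why the factor $1/\delta$ appears (it is exactly the Lipschitz normalization of the tent).
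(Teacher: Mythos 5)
Your proof is correct. Your primary argument — Kantorovich duality \prettyref{eq:W1-dual} applied to the rescaled tent function $\delta\varphi_\delta$ with $\varphi_\delta(y)=(1-|y-x|/\delta)_+$ — is a genuinely different (and equally short) route from the paper's: the paper takes the optimal coupling $P^*_{XY}$, applies Markov's inequality to get $\Prob[|X-Y|>\delta]\le W_1(\nu,\nu')/\delta$, and then invokes Strassen's theorem to conclude $\nu(B)\le\nu'(B^\delta)+W_1(\nu,\nu')/\delta$ for every Borel set $B$, specializing to the singleton $B=\{x\}$. Your alternative coupling argument is essentially the paper's proof stripped of the citation: it proves the singleton case directly and, by taking the infimum over all couplings rather than using the optimal one, it does not even need existence of an optimal coupling. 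The trade-offs are mild: your tent-function proof is self-contained given \prettyref{eq:W1-dual} and makes the $1/\delta$ factor transparent as a Lipschitz normalization, while the paper's Strassen route gives the stronger statement for arbitrary Borel sets (a Lévy--Prokhorov-type bound), of which the lemma is the special case actually needed. Your boundary remark is also fine: $\{|X-Y|<\delta\}\subseteq\{Y\in[x\pm\delta]\}$ on $\{X=x\}$, and $\varphi_\delta\le\mathbf 1_{[x\pm\delta]}$, so both versions close without issue.
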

\begin{proof}
    Using the optimal coupling $P_{XY}^*$ such that $X\sim\nu$ be $Y\sim\nu'$, applying Markov inequality yields that
    \[
        \Prob[|X-Y|>\delta]\le \Expect|X-Y|/\delta=W_1(\nu,\nu')/\delta.
    \]
    By Strassen's theorem (see \cite[Corollary 1.28]{villani.topics}), for any Borel set $B$, we have $\nu(B)\le \nu'(B^\delta)+W_1(\nu,\nu')/\delta$ and $\nu'(B)\le \nu(B^\delta)+W_1(\nu,\nu')/\delta$, where $B^\delta\triangleq \{x:\inf_{y\in B}|x-y|\le \delta\}$ denotes the $\delta$-fattening of $B$. The conclusion follows by considering a singleton $B=\{x\}$.
\end{proof}

\prettyref{lmm:w1-hausdorff} and \ref{lmm:w1-prokhorov} together yield a bound on the parameter estimation error (up to a permutation) in terms of the Wasserstein distance, which was previously given in \prettyref{lmm:w1-parameters}: 
\begin{proof}
    Denote the support sets of $\nu$ and $\nu'$ by $S=\{\mu_1,\ldots,\mu_k\}$ and $S'=\{\hat\mu_1,\ldots,\hat\mu_k\}$, respectively. Applying \prettyref{lmm:w1-hausdorff} yields that $d_H(S,S')<\epsilon/\epsilon_2$, which is less than $\epsilon_1/4$ by the assumption $\epsilon<\epsilon_1\epsilon_2/4$. Since $|\mu_i-\mu_j|\ge \epsilon$ for every $i\ne j$, then there exists a permutation $\Pi$ such that 
    \[
        \Norm{\mu-\Pi\hat \mu}_\infty<\epsilon/\epsilon_2.
    \]
    Applying \prettyref{lmm:w1-prokhorov} twice with $\delta=\epsilon/2$, $x=\mu_i$ and $x=(\Pi\hat\mu)_i$, respectively, we obtain the desired
    \[
        w_i-(\Pi \hat w)_i\le 2\epsilon/\epsilon_1,\quad (\Pi \hat w)_i-w_i\le 2\epsilon/\epsilon_1. \qedhere
    \]
\end{proof}

\section{Optimal transport and moment comparison theorems}
\label{sec:w1-moments}
A discrete distribution with $k$ atoms has $2k-1$ free parameters. Therefore it is reasonable to expect that it can be uniquely determined by its first $2k-1$ moments. Indeed, we have the following simple identifiability results for discrete distributions: 
\begin{lemma}
    \label{lmm:identify} 
	Let $\nu$ and $\nu'$ be distributions on the real line.
    \begin{enumerate}
        \item If $\nu$ and $\nu'$ are both $k$-atomic, then $\nu=\nu'$ if and only if $\bfm_{2k-1}(\nu)=\bfm_{2k-1}(\nu')$.
        \item If $\nu$ is $k$-atomic, then $\nu=\nu'$ if and only if $\bfm_{2k}(\nu)=\bfm_{2k}(\nu')$.
    \end{enumerate}    
\end{lemma}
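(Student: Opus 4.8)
The plan is to prove both statements by reducing everything to the uniqueness of representing measures for moment sequences, which follows from the classical theory already reviewed in \prettyref{sec:moment}. The key structural fact I would invoke is \prettyref{thm:supp-detM}: a moment vector of order $2r$ comes from a measure with exactly $r$ atoms precisely when $\det(\bfM_{r-1})>0$ and $\det(\bfM_r)=0$, and in that case the representing measure is unique (equivalently, the Gauss quadrature with $r$ nodes matching $\bfm_{2r-1}$ is the unique $r$-atomic realization of that moment vector). So the whole proof is an exercise in bookkeeping about ranks of Hankel matrices.

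For part (1), the ``only if'' direction is trivial. For ``if'', suppose $\bfm_{2k-1}(\nu)=\bfm_{2k-1}(\nu')$ with both $\nu,\nu'$ being $k$-atomic (I would allow genuinely fewer than $k$ atoms, reading ``$k$-atomic'' as ``at most $k$ atoms''). Let $r\le k$ be the true number of atoms of $\nu$; then $\det(\bfM_{r-1}(\nu))>0$ and, since $\nu$ has $\le k$ atoms, the Hankel matrices $\bfM_j(\nu)$ for $j\ge r$ are rank-deficient, in particular $\det(\bfM_r(\nu))=0$ provided $2r\le 2k-1$, i.e. $r\le k-1$; the edge case $r=k$ needs $m_{2k-1}$ only and is handled directly by Gauss quadrature uniqueness. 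Since $\nu'$ shares the first $2k-1\ge 2r$ moments, the same determinantal identities hold for $\nu'$, so $\nu'$ also has exactly $r$ atoms, and then \prettyref{thm:supp-detM} (uniqueness of the $r$-atomic representing measure determined by $\bfm_{2r-1}$, which both share) forces $\nu=\nu'$. Alternatively, and more cleanly, I would simply observe that the $r$-point Gauss quadrature of any distribution matching $\bfm_{2r-1}$ is the unique $r$-atomic distribution with those first $2r-1$ moments, as stated in \prettyref{sec:moment}; since $\bfm_{2k-1}(\nu)=\bfm_{2k-1}(\nu')$ and both have $\le k$ atoms, both equal their common Gauss quadrature on $k$ nodes.

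For part (2), again ``only if'' is trivial. For ``if'', assume $\nu$ is $k$-atomic and $\bfm_{2k}(\nu)=\bfm_{2k}(\nu')$; here $\nu'$ is an \emph{arbitrary} distribution on $\reals$. The point is that the extra moment $m_{2k}$ pins down the support size: with $\nu$ having $r\le k$ atoms we have $\det(\bfM_r(\nu))=0$ (using $2r\le 2k$), and $\det(\bfM_{r-1}(\nu))>0$, so these two determinants — being functions of $\bfm_{2r}\subseteq\bfm_{2k}$ — are the same for $\nu'$; by \prettyref{thm:supp-detM} applied to $\nu'$, the distribution $\nu'$ has \emph{exactly} $r$ points of support, hence is $r$-atomic, and now part (1) (or directly the uniqueness clause of \prettyref{thm:supp-detM}) applied with $\bfm_{2r-1}(\nu)=\bfm_{2r-1}(\nu')$ yields $\nu=\nu'$.

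The main obstacle is a purely combinatorial one: getting the index arithmetic exactly right so that whenever $\nu$ has $r$ atoms, the determinant $\det(\bfM_r)$ that certifies ``no more than $r$ atoms'' actually only involves moments up to order $2r$, which must be $\le 2k-1$ in part (1) and $\le 2k$ in part (2). This works out in part (2) for all $r\le k$, but in part (1) the borderline case $r=k$ needs the separate (and easy) argument that a $k$-atomic distribution is determined among $k$-atomic distributions by $\bfm_{2k-1}$ via Gauss quadrature — which is why the statement needs $2k-1$ moments when both are assumed $k$-atomic but $2k$ moments when only one is. I would make sure the write-up flags this asymmetry explicitly rather than trying to unify the two cases.
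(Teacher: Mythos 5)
Your proof is correct in substance, but it follows a genuinely different route from the paper. The paper's argument is entirely self-contained and elementary: for part (1) it interpolates the step function $x\mapsto\indc{x\le t}$ on the union of the two supports (at most $2k$ points) by a polynomial of degree $\le 2k-1$, so equality of the first $2k-1$ moments forces $F_\nu(t)=F_{\nu'}(t)$ for every $t$; for part (2) it uses the nonnegative degree-$2k$ polynomial $Q(x)=\prod_i(x-x_i)^2$ built from $\supp(\nu)$, so that $\Expect_{\nu'}[Q]=\Expect_{\nu}[Q]=0$ forces $\supp(\nu')\subseteq\supp(\nu)$ and reduces to part (1). This interpolation/majorization template is then reused quantitatively in Propositions \ref{prop:stable1} and \ref{prop:stable2}, so the lemma's proof also serves as a warm-up. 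Your route instead leans on the classical moment machinery quoted in \prettyref{sec:moment}: for part (1) you are essentially citing the uniqueness of the $k$-point Gauss quadrature, which is logically equivalent to the statement being proved (legitimate, since the paper states it with a reference, but it is a proof by citation rather than a self-contained argument), and your index bookkeeping, including the $r=k$ edge case and the reason part (2) needs $2k$ moments, is accurate. The one step you should tighten is in part (2): \prettyref{thm:supp-detM} as stated only says that a moment vector with $\det(\bfM_{r-1})>0$ and $\det(\bfM_r)=0$ is realizable by \emph{some} distribution with exactly $r$ support points; it does not by itself say that $\nu'$, an arbitrary distribution with those moments, has $r$ atoms. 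To close this you need the standard observation that $\det(\bfM_r(\nu'))=0$ yields a nonzero polynomial $p$ of degree $\le r$ with $\Expect_{\nu'}[p^2(X)]=0$, confining $\supp(\nu')$ to the zero set of $p$ (or, equivalently, the determinacy of the truncated moment problem in the singular case). That missing one-liner is precisely the paper's $Q$-trick, applied to a null vector of the Hankel matrix instead of directly to $\supp(\nu)$; with it added, your argument is complete, just longer and less elementary than the paper's.
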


In the context of statistical estimation, we only have access to samples and noisy estimates of moments. To solve the inverse problems from moments to distributions, our theory relies on the following stable version of the identifiability in \prettyref{lmm:identify}, which show that closeness of moments implies closeness of distributions in Wasserstein distance.
In the sequel we refer to Propositions \ref{prop:stable1} and \ref{prop:stable2} as moment comparison theorems.
\begin{prop}
    \label{prop:stable1}
    Let $\nu$ and $\nu'$ be $k$-atomic distributions supported on $[-1,1]$. If $|m_i(\nu)-m_i(\nu')|\le \delta$ for $i=1,\dots,2k-1$, then
    \[
        W_1(\nu,\nu')\le O\pth{k\delta^{\frac{1}{2k-1}}}.
    \]
\end{prop}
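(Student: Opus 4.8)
\noindent\emph{Proof sketch.} The plan is to work with the CDF representation of $W_1$ and a ``majorizing'' polynomial built from the sign changes of $F_\nu-F_{\nu'}$, which is a quantitative refinement of the $2k$-point argument underlying \prettyref{lmm:identify}. By \prettyref{eq:W1-CDF}, $W_1(\nu,\nu')=\int_\reals|g(t)|\,\diff t$ with $g\triangleq F_\nu-F_{\nu'}$. Since $\nu$ and $\nu'$ are $k$-atomic and supported on $[-1,1]$, the function $g$ is a step function supported on $[-1,1]$ that is constant on at most $2k-1$ inner intervals, hence has at most $s\le 2k-2$ sign changes, at points $\xi_1<\dots<\xi_s$ in $[-1,1]$; moreover $\|g\|_\infty\le 1$. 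A short Fubini computation, using that $\nu$ and $\nu'$ both have unit mass, gives
\[
\int_\reals t^j g(t)\,\diff t=\frac{1}{j+1}\pth{m_{j+1}(\nu')-m_{j+1}(\nu)},\qquad j\ge 0,
\]
so that $\bigl|\int_\reals t^j g\,\diff t\bigr|\le \delta/(j+1)\le\delta$ for every $0\le j\le 2k-2$.

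I would then introduce the polynomial $q(t)=\pm\prod_{i=1}^{s}(t-\xi_i)$, choosing the sign so that $q(t)g(t)\ge 0$ for every $t$; this is possible because $q$ and $g$ change sign at exactly the same $s$ points. On one hand, $\int q g\,\diff t=\int|q|\,|g|\,\diff t\ge 0$. On the other hand, writing $q(t)=\sum_{j=0}^{s}q_j t^j$ and recalling $s\le 2k-2$, the displayed moment bound yields $\bigl|\int qg\,\diff t\bigr|\le\delta\sum_{j=0}^{s}|q_j|\le 2^s\delta$, where the last inequality uses that the $\ell^1$-norm of polynomial coefficients is submultiplicative under multiplication together with $|\xi_i|\le 1$.

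The heart of the proof is to bound $\int|q|\,|g|\,\diff t$ from below by a power of $W_1\triangleq W_1(\nu,\nu')$. Setting $B_\ell=\bigcup_i(\xi_i\pm\ell)$, one has $|q(t)|\ge\ell^{s}$ for $t\notin B_\ell$, while $|B_\ell|\le 2s\ell$, so $\int_{B_\ell}|g|\le 2s\ell$ since $\|g\|_\infty\le 1$; choosing $\ell$ to be a small constant multiple of $W_1/s$ then leaves $\int_{\reals\setminus B_\ell}|g|\ge W_1/2$, whence $\int|q|\,|g|\,\diff t\ge\frac{W_1}{2}\,\ell^{s}\ge c_k (W_1/s)^{s}W_1$ for some $c_k>0$. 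Comparing this with the upper bound $2^s\delta$ gives $W_1^{\,s+1}\le C_k\,\delta$, i.e.\ $W_1\le O\bigl(s\,\delta^{1/(s+1)}\bigr)=O\bigl(k\,\delta^{1/(2k-1)}\bigr)$, using $s+1\le 2k-1$ and (without loss of generality) $\delta\le 1$; the degenerate case $s=0$, in particular $k=1$, is immediate since then $W_1=\bigl|\int g\,\diff t\bigr|=|m_1(\nu)-m_1(\nu')|\le\delta$. The one genuinely delicate point is this lower bound: because $q$ vanishes exactly at the sign changes of $g$, which lie inside $\supp g$, the integral $\int|q|\,|g|$ is \emph{not} automatically comparable to $W_1=\int|g|$. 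The fix is to pair two complementary facts — a monic polynomial of degree $s$ exceeds $\ell^{s}$ once one stays $\ell$-away from all of its roots, whereas $g$ carries at most $2s\ell$ mass within distance $\ell$ of those roots — and to balance $\ell$ against $W_1$. The remaining ingredients (counting sign changes, the Fubini identity relating $\int t^j g$ to moment gaps, and the coefficient bound $\|q\|_{\ell^1}\le 2^s$) are routine.
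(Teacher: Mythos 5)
Your proof is correct, and it takes a route that is genuinely different from the paper's main argument. The paper proves the (slightly stronger) \prettyref{prop:stable1-union} primarily by interpolating each indicator $x\mapsto\indc{x\le t_r}$ at the atoms with a degree-$(\ell-1)$ polynomial in Newton form, controlling the divided differences, and summing the resulting CDF gaps via the discrete form of \prettyref{eq:W1-CDF}; that machinery is what later gets reused for the adaptive version and for the majorization argument behind \prettyref{prop:stable2}. Your argument is closest in spirit to the paper's second proof: both build a polynomial $q$ of degree equal to the number of sign changes of $g=F_\nu-F_{\nu'}$ with $qg\ge 0$, and both convert $\int qg$ into moment differences (your per-monomial identity $\int t^j g = (m_{j+1}(\nu')-m_{j+1}(\nu))/(j+1)$ is the same integration-by-parts fact the paper expresses through the antiderivative $P$ of its sign-preserving polynomial, and your coefficient bound matches \prettyref{lmm:Px-expand}). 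Where you genuinely diverge is the lower bound on $\int|q|\,|g|$: the paper localizes to a single inner interval carrying an $\Omega(W_1/\ell)$ share of the Wasserstein mass and evaluates a beta-type integral of $(t_{r+1}-x)^a(x-t_r)^b$ there, whereas you excise an $\ell$-neighborhood of the roots of $q$, use $\|g\|_\infty\le 1$ and $\int|g|=W_1$ globally, and optimize $\ell\asymp W_1/s$. Your version is more elementary and self-contained, avoids divided differences entirely, and naturally yields an exponent governed by the number of sign changes $s+1\le 2k-1$ (so it degrades gracefully when the two distributions interlace less); the paper's interpolation framework is heavier but is exactly what extends to the separated/adaptive bounds (\prettyref{prop:stable1-separation}) and to the one-sided (majorant/minorant) constructions needed when only one distribution is atomic. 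The minor caveats you flag — defining sign-change points when $g$ vanishes on intermediate intervals, and the reduction to $\delta\le 1$ — are handled exactly as you indicate and do not affect correctness.
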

\begin{prop}
    \label{prop:stable2}        
    Let $\nu$ be a $k$-atomic distribution supported on $[-1,1]$. If $|m_i(\nu)-m_i(\nu')|\le \delta$ for $i=1,\dots,2k$, then
    \[
        W_1(\nu,\nu')\le O\pth{k\delta^{\frac{1}{2k}}}.
    \]
\end{prop}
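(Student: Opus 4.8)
The plan is to reduce Proposition \ref{prop:stable2} to Proposition \ref{prop:stable1} by an approximation argument. The issue is that $\nu'$ in Proposition \ref{prop:stable2} is an arbitrary distribution, not necessarily $k$-atomic, so Proposition \ref{prop:stable1} does not apply directly. The idea is to replace $\nu'$ by its $k$-point Gauss quadrature $\nu'_k$, which is a $k$-atomic distribution matching the first $2k-1$ moments of $\nu'$ exactly (by the theory recalled in Section \ref{sec:moment}), and then invoke the triangle inequality
\[
W_1(\nu,\nu') \le W_1(\nu,\nu'_k) + W_1(\nu'_k,\nu').
\]
For the first term, $\nu$ and $\nu'_k$ are both $k$-atomic, and $|m_i(\nu)-m_i(\nu'_k)| = |m_i(\nu)-m_i(\nu')| \le \delta$ for $i=1,\dots,2k-1$; but I must first check that $\nu'_k$ is supported on an interval of bounded length so that, after rescaling, Proposition \ref{prop:stable1} is applicable with the stated constant. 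Since $m_{2k}(\nu')\le m_{2k}(\nu)+\delta \le 1+\delta$, the distribution $\nu'$ — hence its Gauss quadrature — is supported on an interval of length $O(1)$ (one can bound the support of $\nu'_k$ using that its $2k$-th moment is bounded; alternatively the nodes of Gauss quadrature lie in the convex hull of $\supp(\nu')$). So the first term is $O(k\delta^{1/(2k-1)})$, which is $O(k\delta^{1/(2k)})$ for $\delta\le 1$.

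The crux is bounding the second term $W_1(\nu'_k,\nu')$, i.e., how well a distribution is approximated in $W_1$ by its $k$-point Gauss quadrature, given only that its $2k$-th moment is controlled. Here I would use the fact that Gauss quadrature matches moments up to order $2k-1$, so for the characterization $W_1(\nu'_k,\nu') = \sup_\varphi \{\Expect_{\nu'_k}\varphi - \Expect_{\nu'}\varphi : \varphi \text{ 1-Lipschitz}\}$, only the non-polynomial part of $\varphi$ contributes. Concretely, for any $1$-Lipschitz $\varphi$ on $[-1,1]$ and any polynomial $P$ of degree $\le 2k-1$, $\Expect_{\nu'_k}\varphi - \Expect_{\nu'}\varphi = \Expect_{\nu'_k}(\varphi - P) - \Expect_{\nu'}(\varphi-P)$, which is at most $2\sup_{[-1,1]}|\varphi - P|$ (using that Gauss quadrature weights are nonnegative and sum to one). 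Taking $P$ to be the best degree-$(2k-1)$ polynomial approximation to $\varphi$ and invoking Jackson's theorem, $\sup|\varphi-P| = O(1/k)$ for $1$-Lipschitz $\varphi$, giving $W_1(\nu'_k,\nu') = O(1/k)$. But that bound is a constant-order quantity, not $\delta^{1/(2k)}$, so it is far too weak — this is the main obstacle.

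To get the sharp $\delta^{1/(2k)}$ rate, I expect the right move is not to approximate $\nu'$ by its Gauss quadrature in $W_1$ directly, but rather to first reduce to the case where $\nu'$ is also supported on a short interval and then use a more delicate argument: partition the real line according to the geometry of the $k$ atoms of $\nu$, and on each piece compare the CDFs $F_\nu$ and $F_{\nu'}$ using $W_1(\nu,\nu')=\int|F_\nu - F_{\nu'}|$, with polynomial interpolation controlling $\int (F_\nu - F_{\nu'})\,$against low-degree polynomials and the moment discrepancy $\delta$ entering through $\int x^i (F_\nu(x)-F_{\nu'}(x))\,\diff x \sim \frac{1}{i+1}(m_{i+1}(\nu')-m_{i+1}(\nu))$. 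Since $\nu$ is $k$-atomic, the difference $F_\nu - F_{\nu'}$ changes behavior at only $k$ points, and a majorization/interpolation argument of the same flavor as the one used for Proposition \ref{prop:stable1} — with $2k$ moments instead of $2k-1$, buying one extra degree — should yield the exponent $\frac{1}{2k}$. I would therefore structure the proof to mirror the proof of Proposition \ref{prop:stable1}, carrying one additional moment throughout; the extra moment is exactly what distinguishes the two exponents $\frac{1}{2k-1}$ and $\frac{1}{2k}$, consistent with the identifiability statement in Lemma \ref{lmm:identify} (a $k$-atomic $\nu$ is pinned down by $2k$ moments even when the competitor is arbitrary). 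The main technical obstacle, as in Proposition \ref{prop:stable1}, is quantifying how a small change in the interpolated moments forces a small change in the nodes and weights — i.e., the stability of the inverse map from moments to the atomic representation — which is where the polynomial interpolation and majorization techniques in Section \ref{sec:poly} will do the heavy lifting.
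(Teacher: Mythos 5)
Your proposal does not reach a proof: the first route is (correctly) abandoned, and the second route is only a plan whose key construction is missing. On the abandoned route, note also that the premise "$m_{2k}(\nu')\le 1+\delta$ implies $\nu'$, hence its Gauss quadrature, has $O(1)$ support" is false (a tiny mass far out keeps $m_{2k}$ bounded while pushing a quadrature node arbitrarily far), so even the first term of that triangle inequality is not covered by \prettyref{prop:stable1} as stated; but since you discard this route because $W_1(\nu'_k,\nu')=O(1/k)$ is too weak, the decisive question is what replaces it.

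What is missing in your second route is precisely the mechanism that handles a non-atomic $\nu'$. The interpolation step in \prettyref{prop:stable1} works because both measures live on a common finite set, so the step function equals its interpolant almost surely under \emph{both} measures; this is unavailable here, and "an argument of the same flavor with one extra moment" does not by itself say what to do instead. The paper's proof replaces interpolation by one-sided polynomial \emph{majorization} in the Chebyshev--Markov--Stieltjes style: for each $t$, Hermite interpolation at the atoms $x_1,\dots,x_k$ of $\nu$ and at $t$, with tangency (vanishing derivative) at the atoms, produces degree-$2k$ polynomials $P_t\ge \indc{x\le t}\ge Q_t$ whose gap is explicitly $R_t(x)=\prod_i\bigl(\frac{x-x_i}{t-x_i}\bigr)^2$, a polynomial that vanishes on $\supp(\nu)$. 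Sandwiching gives $|F_\nu(t)-F_{\nu'}(t)|\le |\Expect_{\nu'}Q_t-\Expect_{\nu}Q_t|+\Expect_{\nu'}R_t$, and \emph{both} terms are controlled by the first $2k$ moment differences (the second because $\Expect_\nu R_t=0$), with coefficients bounded via Newton-form divided differences; integrating $\min\{\cdot,1\}$ in $t$ against the factor $\prod_i(t-x_i)^{-2}$ yields the $16k\,\delta^{1/(2k)}$ bound. Your sketch never constructs such a majorant, and in particular never explains why the majorization error is of order $\delta$ rather than of order $1/k$ — which is exactly the failure mode of your Gauss-quadrature attempt. You also locate the "main technical obstacle" in the stability of the inverse map from moments to nodes and weights; the actual argument never inverts the moment map at all, it compares CDFs directly, so this framing points the remaining work in a direction that is neither needed nor likely to succeed at the $\delta^{1/(2k)}$ scale.
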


\begin{remark}
    \label{rmk:stable}
    The exponents in \prettyref{prop:stable1} and \ref{prop:stable2} are optimal. To see this, we first note that the number of moments needed for identifiability in \prettyref{lmm:identify} cannot be reduced:
    \begin{enumerate}
        \item Given any $2k$ distinct points, there exist two $k$-atomic distributions with disjoint support sets but identical first $2k-2$ moments (see \prettyref{lmm:match2k-2}).
        \item Given any continuous distribution, its $k$-point Gauss quadrature is $k$-atomic and have identical first $2k-1$ moments (see \prettyref{sec:moment}).
    \end{enumerate}
    By the first observation, there exist two $k$-atomic distributions $\nu$ and $\nu'$ such that 
    \[
        m_i(\nu)=m_i(\nu'),~i=1,\ldots,2k-2,\quad |m_{2k-1}(\nu)-m_{2k-1}(\nu')|= c_k,
        \quad W_1(\nu,\nu')=d_k,
    \]
    where $c_k$ and $d_k$ are strictly positive constants that depend on $k$. Let $\tilde\nu$ and $\tilde\nu'$ denote the distributions of $\epsilon X$ and $\epsilon X'$ such that $X\sim\nu$ and $X'\sim\nu'$, respectively. Then, we have
    \[
        \max_{i\in [2k-1]}|m_i(\tilde\nu)-m_i(\tilde\nu)|= \epsilon^{2k-1}c_k,
        \quad W_1(\tilde\nu,\tilde\nu')=\epsilon d_k.
    \]
    This concludes the tightness of the exponent in \prettyref{prop:stable1}. Similarly, the exponent in \prettyref{prop:stable2} is also tight using the second observation.
\end{remark}

\begin{remark}
    \label{rmk:moments-comparison}
    Classical moments comparison theorems aim to show convergence of distributions by comparing a \emph{growing} number of moments. For example, Chebyshev's theorem (see \cite[Theorem 2]{Diaconis1987}) states if ${\bf m}_r(\pi)={\bf m}_r(N(0,1))$, then 
    \[
        \sup_{x\in\reals}|F_{\pi}(x)-\Phi(x)|\le \sqrt{\frac{\pi}{2r}},
    \]
    where $F_{\pi}$ and $\Phi$ denote the CDFs of $\pi$ and $N(0,1)$, respectively. 
    For two compactly supported distributions, the above estimate can be sharpened to $O(\frac{\log r}{r})$ \cite{Krawtchouk1932}. In contrast, in the context of estimating finite mixtures we are dealing with discrete mixing distributions, which can be identified by a \emph{fixed} number of moments. However, with finitely many samples, it is impossible to exactly determine the moments, and measuring the error in the KS distance leads to triviality (see \prettyref{sec:why-wass}). It turns out that $W_1$-distance is a suitable metric for this purpose, and the closeness of moments does imply the closeness of distribution in the $W_1$ distance, which is the integrated difference ($L_1$-distance) between CDFs as opposed the uniform error ($L_\infty$-distance). An upper bound on the $W_1$ distance is obtained in \cite{KV17} (see also \prettyref{lmm:w1-compare}) involving the differences  of the first $k$ moments and a $\Theta(\frac{1}{k})$ term that does not vanish for fixed $k$. The discrepancy between parameters of two Gaussian mixtures is obtained by comparing moments in \cite{KMV2010,MV2010}, which is not applicable for estimating the mixing distribution.
\end{remark}

\section{Estimators and statistical guarantees}
\label{sec:estimate}
In this section we introduce the DMM estimators and prove the statistical bounds announced in \prettyref{sec:intro}. To keep the presentation simple, we focus on estimators with expected risk guarantees.
To obtain a high-probability bound, one can employ the usual technique of dividing the samples into batches, applying the unbiased moment estimator to each batch and taking the median, then finally executing the DMM method to estimate the mixing distribution. 
\subsection{Known variance}
\label{sec:known}
The denoised method of moments for estimating Gaussian location mixture models \prettyref{eq:model-conv} with known variance parameter $\sigma^2$ consists of three main steps:
\begin{enumerate}
    \item estimate ${\bf m}_{2k-1}(\nu)$ by $\tilde m = (\tilde m_1,\dots,\tilde m_{2k-1})$ (using Hermite polynomials);
    \item denoise $\tilde m$ by its projection $\hat m$ onto the moment space (semidefinite programming);
    \item find a $k$-atomic distribution $\hat \nu$ such that ${\bf m}_{2k-1}(\hat \nu)=\hat m$ (Gauss quadrature).
\end{enumerate}
The complete algorithm is summarized in \prettyref{algo:known}.

\begin{algorithm}[ht]
    \caption{Denoised method of moments (DMM) with known variance.}
    \label{algo:known}
    \begin{algorithmic}[1]
        \REQUIRE $n$ independent samples $X_1,\dots,X_n$, order $k$, variance $\sigma^2$, interval $I=[a,b]$. 
        \ENSURE estimated mixing distribution.
        \FOR{$r=1$ \TO $2k-1$}\label{line:for}
        \STATE{$\hat{\gamma}_r=\frac{1}{n}\sum_i X_i^r$}
        \STATE{$\tilde{m}_r=r!\sum_{i=0}^{\Floor{r/2}}\frac{(-1/2)^i}{i!(r-2i)!}\hat{\gamma}_{r-2i}\sigma^{2i}$}\label{line:tildem}
        \ENDFOR\label{line:endfor}
        \STATE{Let $\hat m$ be the optimal solution of the following:}
        \begin{equation}
            \label{eq:project}
            \min\{\Norm{\tilde m - \hat m}: \hat m~\text{satisfies \prettyref{eq:moment-psd}}\},
        \end{equation} 
        where $\tilde m=(\tilde m_1,\dots,\tilde m_{2k-1})$.
        \STATE{Report the outcome of the Gauss quadrature (\prettyref{algo:quadrature}) with input $\hat m$.}
    \end{algorithmic}
\end{algorithm}

We estimate the moments of the mixing distribution in lines~\ref{line:for} to \ref{line:endfor}. The unique unbiased estimators for the polynomials of the mean parameter in a Gaussian location model are Hermite polynomials 
\begin{equation}
    \label{eq:Hermite1}
    H_r(x)=r!\sum_{j=0}^{\floor{r/2}} \frac{(-1/2)^j }{j!(r-2j)!}x^{r-2j},
\end{equation}
such that $\Expect H_r(X)=\mu^r$ when $X\sim N(\mu,1)$. Thus, if we define
\begin{equation}
    \label{eq:Hermite}
    \gamma_r(x,\sigma)=\sigma^rH_r(x/\sigma)=r!\sum_{j=0}^{\floor{r/2}} \frac{(-1/2)^j }{j!(r-2j)!}\sigma^{2j}x^{r-2j},
\end{equation}
then $\Expect \gamma_r(X,\sigma)=\mu^r$ when $X\sim N(\mu,\sigma^2)$. Hence, by linearity, $\tilde m_r$ is an unbiased estimate of $m_r(\nu)$. The variance of $\tilde m_r$ is bounded by the following lemma: 
\begin{lemma}
    \label{lmm:var-tildem}
    If $X_1,\dots,X_n\iiddistr \nu*N(0,\sigma^2)$ and $\nu$ is supported on $[-M,M]$, then
    \[
        \var[\tilde m_r]\le \frac{1}{n}(O(M+\sigma\sqrt{r}))^{2r}.
    \]
\end{lemma}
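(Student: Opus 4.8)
The plan is to reduce the variance to a second‑moment estimate, and then exploit the product (linearization) formula for Hermite polynomials together with the defining unbiasedness identity $\Expect[\gamma_s(X,\sigma)\mid U=\mu]=\mu^s$ for $X\sim N(\mu,\sigma^2)$. Since $\tilde m_r=\frac1n\sum_{i=1}^n\gamma_r(X_i,\sigma)$ with the $X_i$ i.i.d., we have $\var[\tilde m_r]=\frac1n\var[\gamma_r(X,\sigma)]\le\frac1n\Expect[\gamma_r(X,\sigma)^2]$, so it suffices to bound $\Expect[\gamma_r(X,\sigma)^2]$. Writing $X=U+\sigma Z$ with $U\sim\nu$ supported on $[-M,M]$ and $Z\sim N(0,1)$ independent, I would condition on $U=\mu$, so that $X\sim N(\mu,\sigma^2)$.

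Next, I would use the linearization formula for the probabilists' Hermite polynomials, $H_r(x)^2=\sum_{k=0}^{r}\binom{r}{k}^2 k!\,H_{2r-2k}(x)$ (easily verified, e.g., by the generating function $e^{tx-t^2/2}$). Substituting $x\mapsto x/\sigma$ and multiplying by $\sigma^{2r}$ turns this, via $\gamma_s(x,\sigma)=\sigma^sH_s(x/\sigma)$, into $\gamma_r(x,\sigma)^2=\sum_{k=0}^{r}\binom{r}{k}^2 k!\,\sigma^{2k}\,\gamma_{2r-2k}(x,\sigma)$. Taking conditional expectation given $U=\mu$ and invoking $\Expect[\gamma_{2r-2k}(X,\sigma)\mid U=\mu]=\mu^{2r-2k}$ yields the closed form
\[
\Expect[\gamma_r(X,\sigma)^2\mid U=\mu]=\sum_{k=0}^{r}\binom{r}{k}^2 k!\,\sigma^{2k}\mu^{2r-2k}.
\]

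Finally, I would bound $|\mu|\le M$ (note $2r-2k$ is even, so the power is harmless) and use $\binom{r}{k}^2k!=\binom{r}{k}\cdot\frac{r!}{(r-k)!}\le\binom{r}{k}r^k$ to get
\[
\Expect[\gamma_r(X,\sigma)^2\mid U=\mu]\le\sum_{k=0}^{r}\binom{r}{k}(r\sigma^2)^k(M^2)^{r-k}=(M^2+r\sigma^2)^r\le(M+\sigma\sqrt r)^{2r},
\]
the last step using $M^2+r\sigma^2\le(M+\sigma\sqrt r)^2$. Since this bound is uniform in $\mu\in[-M,M]$, averaging over $U$ gives $\Expect[\gamma_r(X,\sigma)^2]\le(M+\sigma\sqrt r)^{2r}$ and hence $\var[\tilde m_r]\le\frac1n(M+\sigma\sqrt r)^{2r}$, which is the claim (indeed with absolute constant $1$ inside the $O(\cdot)$).

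There is no genuine obstacle here; the only substantive ingredients are the Hermite linearization identity and the elementary estimate $\binom{r}{k}^2k!\le\binom{r}{k}r^k$ that makes the binomial sum collapse. The one point requiring care is bookkeeping the powers of $\sigma$ when rescaling $H_r$ to $\gamma_r$. As an alternative not relying on the linearization formula, one could bound the coefficients of $\gamma_r$ directly and apply the Gaussian absolute‑moment bounds $\Expect|Z|^{2j}=(2j-1)!!$, but this is messier and gives the same order, so the route above seems preferable.
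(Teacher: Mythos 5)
Your proof is correct, and it takes a genuinely different route from the paper's. The paper bounds $\sqrt{\var[\gamma_r(X,\sigma)]}$ by the triangle inequality for standard deviations applied term by term to the monomials $X^{r-2j}$ in \prettyref{eq:Hermite}, then controls each $\var[X^{\ell}]\le \Expect[X^{2\ell}]$ via the decomposition $X=U+\sigma Z$ and a Gaussian moment-comparison inequality (\prettyref{lmm:m-normal}), finally reassembling the sum into $\Expect(2M+\sigma Z')^r+\Expect(3\sigma|Z|+\sigma Z')^r$. You instead invoke the Hermite linearization identity $H_r(x)^2=\sum_{k=0}^r\binom{r}{k}^2k!\,H_{2r-2k}(x)$, which after rescaling to $\gamma_r$ and conditioning on $U=\mu$ yields the \emph{exact} closed form $\Expect[\gamma_r(X,\sigma)^2\mid U=\mu]=\sum_{k}\binom{r}{k}^2k!\,\sigma^{2k}\mu^{2r-2k}$; the elementary bound $\binom{r}{k}^2k!\le\binom{r}{k}r^k$ then collapses the sum to $(M^2+r\sigma^2)^r\le(M+\sigma\sqrt{r})^{2r}$. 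Your approach buys an exact identity, a cleaner argument with no auxiliary Gaussian moment lemma, and a sharper bound (absolute constant $1$ rather than an unspecified constant inside the $O(\cdot)$); the paper's approach buys genericity, since it only uses the coefficient structure of $\gamma_r$ and crude moment bounds, and so does not rely on the Hermite product formula. All the steps you use (the linearization formula, the unbiasedness $\Expect[\gamma_s(X,\sigma)\mid U=\mu]=\mu^s$ for the even index $2r-2k$, and the binomial collapse) check out, so there is no gap.
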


As observed in \prettyref{sec:mmfail}, the major reason for the failure of the usual method of moments is that the unbiased estimate $\tilde m$ needs not constitute a legitimate moment sequence, despite the consistency of each individual $\tilde m_i$. To resolve this issue, we project $\tilde m$ to the moment space using \prettyref{eq:project}. As explained in \prettyref{sec:moment}, \prettyref{eq:moment-psd} consists of positive semidefinite constraints, and thus the optimal solution of \prettyref{eq:project} can be obtained by semidefinite programming (SDP).\footnote{
    The formulation \prettyref{eq:project} with Euclidean norm can already be implemented in popular modeling languages for convex optimization problem such as {\sf CVXPY} \cite{cvxpy}. A standard form of SDP is given in \prettyref{app:sdp}.
}
 In fact, it suffices to solve a \emph{feasibility} program and find any valid moment vector $\hat m$ that is within the desired $\frac{1}{\sqrt{n}}$ statistical accuracy.

Now that $\hat m$ is indeed a valid moment sequence, we use the Gauss quadrature introduced in \prettyref{sec:moment} (see \prettyref{algo:quadrature} in \prettyref{sec:moment}) to find the unique $k$-atomic distribution $\hat\nu$ such that ${\bf m}_{2k-1}(\hat \nu)=\hat m$. Using \prettyref{algo:known}, $\tilde m$ is computed in $O(kn)$ time, the semidefinite programming is solvable in $O(k^{6.5})$ time using the interior-point method (see \cite{WSV2012}), and the Gauss quadrature can be evaluated in $O(k^3)$ time \cite{GW1969}. 
In view of the global assumption \prettyref{eq:k}, \prettyref{algo:known} can be executed in $O(kn)$ time.

We now prove the statistical guarantee \prettyref{eq:known-variance-W1} for the DMM estimator previously announced in \prettyref{thm:main-W1}:
\begin{proof}
    By scaling it suffices consider $M=1$. We use \prettyref{algo:known} with Euclidean norm in \prettyref{eq:project}. Using the variance of $\tilde m$ in \prettyref{lmm:var-tildem} and Chebyshev inequality yield that, for each $r=1,\ldots,2k-1$, with probability $1-\frac{1}{8k}$, 
    \begin{equation}
        \label{eq:tilde-mr-rate}
        |\tilde m_r-m_r(\nu)| \le \sqrt{k/n}(c\sqrt{r})^r,
    \end{equation}
    for some absolute constant $c$. By the union bound, with probability $3/4$, \prettyref{eq:tilde-mr-rate} holds simultaneously for every $r=1,\dots,2k-1$, and thus
    \[
        \Norm{\tilde m-{\bf m}_{2k-1}(\nu)}_2\le \epsilon,\quad \epsilon\triangleq \frac{(\sqrt{ck})^{2k+1}}{\sqrt{n}}.
    \]
    Since ${\bf m}_{2k-1}(\nu)$ satisfies \prettyref{eq:moment-psd} and thus is one feasible solution for \prettyref{eq:project}, we have $\Norm{\tilde m-\hat m}_2\le \epsilon$. Note that $\hat m={\bf m}_{2k-1}(\hat \nu)$. Hence, by triangle inequality, we obtain the following statistical accuracy:
    \begin{equation}
        \label{eq:bfm-example}
        \Norm{{\bf m}_{2k-1}(\hat \nu)-{\bf m}_{2k-1}(\nu)}_2\le \epsilon,
    \end{equation}
    Applying \prettyref{prop:stable1} yields that, with probability $3/4$,
    \[
        W_1(\hat\nu,\nu)\le O\pth{k^{1.5}n^{-\frac{1}{4k-2}}}.
    \]
    The confidence $1-\delta$ in \prettyref{eq:known-variance-W1} can be obtained by the usual ``median trick'': divide the samples into $T=\log\frac{2k}{\delta}$ batches, apply \prettyref{algo:known} to each batch of $n/T$ samples, and take $\tilde m_r$ to be the median of these estimates. Then Hoeffding's inequality and the union bound imply that, with probability $1-\delta$,
    \begin{equation}
        \label{eq:Hoeffding-example}
        |\tilde m_r-m_r(\nu)| \le \sqrt{\frac{\log(2k/\delta)}{n}}(c\sqrt{r})^r,\quad \forall~r=1,\ldots,2k-1,
    \end{equation}
    and the desired \prettyref{eq:known-variance-W1} follows. 
\end{proof}


To conclude this subsection, we discuss the connection to the Generalized Method of Moments (GMM). Instead of solving the moment equations, GMM aims to minimize the difference between estimated and fitted moments:
\begin{equation}
    \label{eq:GMM-opt}
    Q(\theta)=(\hat m- m(\theta))^\top W(\hat m - m(\theta)),
\end{equation}
where $\hat m$ is the estimated moment, $\theta$ is the model parameter, and $W$ is a positive semidefinite weighting matrix. The minimizer of $Q(\theta)$ serves as the GMM estimate for the unknown model parameter $\theta_0$. In general the objective function $Q$ is non-convex in $\theta$, notably under the Gaussian mixture model with $\theta$ corresponding to the unknown means and weights, which is hard to optimize. Note that \prettyref{eq:project} with the Euclidean norm is \emph{equivalent} to GMM with the identity weighting matrix. Therefore \prettyref{algo:known} is an exact solver for GMM in the Gaussian location mixture model. 

In theory, the optimal weighting matrix $W^*$ that minimizes the asymptotic variance is the inverse of $\lim_{n\diverge}\cov[\sqrt{n}(\hat m- m(\theta_0))]$, which depends the unknown model parameters $\theta_0$. Thus, a popular approach is a two-step estimator \cite{Hall2005}:  
\begin{enumerate}
    \item a suboptimal weighting matrix, \eg, identify matrix, is used in the GMM to obtain a consistent estimate of $\theta_0$ and hence a consistent estimate $\hat W$ for $W^*$;
    \item $\theta_0$ is re-estimated using the weighting matrix $\hat W$. 
\end{enumerate}
The above two-step approach can be similarly implemented in the denoised method of moments.

\subsection{Unknown variance}
\label{sec:unknown}
When the variance parameter $\sigma^2$ is unknown, unbiased estimator for the moments of the mixing distribution no longer exists (see \prettyref{lmm:unbiased-not-exist}). It is not difficult to consistently estimate the variance,\footnote{
    For instance, the simple estimator $\hat\sigma=\frac{\max_i X_i}{\sqrt{2\log n}}$ satisfies $|\sigma-\hat \sigma|=O_P(\log n)^{-\frac{1}{2}}$.
    }
then plug into the DMM estimator in \prettyref{sec:known} to obtain a consistent estimate of the mixing distribution $\nu$; however, the convergence rate is far from optimal. In fact, to achieve the optimal rate in \prettyref{thm:main-W1}, 
 it is crucial to simultaneously estimate both the means and the variance parameters. To this end, again we take a moment-based approach. The following result provides a guarantee for any joint estimate of both the mixing distribution and the variance parameter in terms of the moments accuracy.
\begin{prop}
    \label{prop:accuracy2}
    Let
    \[
        \pi=\nu*N(0,\sigma^2),\quad \hat\pi=\hat\nu*N(0,\hat\sigma^2),
    \]
    where $\nu,\hat\nu$ are $k$-atomic distributions supported on $[-M,M]$, and $\sigma,\hat\sigma$ are bounded by a constant. If $|m_r(\pi)-m_r(\hat\pi)|\le \epsilon$ for $r=1,\dots,2k$, then
    \[
        |\sigma^2-\hat\sigma^2|\le O(M^2\epsilon^{\frac{1}{k}}),\quad W_1(\nu,\hat\nu)\le O(Mk^{1.5}\epsilon^{\frac{1}{2k}}).
    \]
\end{prop}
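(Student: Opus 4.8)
The plan is to transfer the moment proximity of the observable mixtures $\pi$ and $\hat\pi$ down to a common noise level, use this to pin down $|\sigma^2-\hat\sigma^2|$, and then assemble the Wasserstein bound by a triangle inequality. First I would make two reductions: by rescaling $x\mapsto x/M$ we may take $M=1$; and since both the hypothesis and the conclusion are symmetric under $(\nu,\sigma)\leftrightarrow(\hat\nu,\hat\sigma)$, we may assume $\sigma^2\ge\hat\sigma^2$ and set $\tau^2\triangleq\sigma^2-\hat\sigma^2\ge 0$ (we may also assume $\epsilon$ is below an absolute constant, the bounds being trivial otherwise). Since $\sigma^2\ge\hat\sigma^2$, we can write $\pi=\nu'*N(0,\hat\sigma^2)$ with $\nu'\triangleq\nu*N(0,\tau^2)$, a genuine distribution. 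Applying the unbiased Hermite transform $\gamma_r(\cdot,\hat\sigma)$ of \prettyref{eq:Hermite} to the moments of $\pi$ and of $\hat\pi=\hat\nu*N(0,\hat\sigma^2)$ returns $m_r(\nu')$ and $m_r(\hat\nu)$ respectively; this transform is linear, and its $r\Th$ row has $\ell_1$-norm $\sum_j\binom{r}{2j}(2j-1)!!\,\hat\sigma^{2j}\le(O(1+\hat\sigma\sqrt{r}))^r$, which is $k^{O(k)}$ for $r\le 2k$ and $\hat\sigma=O(1)$. Hence the hypothesis gives
\[
|m_r(\nu')-m_r(\hat\nu)|\le\epsilon'\triangleq k^{O(k)}\epsilon,\qquad r=0,1,\dots,2k.
\]

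The crucial step is to bound $\tau^2$. Let $\hat P(x)=\sum_{j=0}^{k}a_jx^j$ be a monic degree-$k$ polynomial vanishing on $\supp(\hat\nu)$ (it exists because $\hat\nu$ has at most $k$ atoms); its roots lie in $[-1,1]$, so $|a_j|\le 2^k$. Expanding the square, using $\Expect_{\hat\nu}[\hat P^2]=0$ and the display above (every index $r+s$ is $\le 2k$), one gets the upper bound
\[
\Expect_{\nu'}[\hat P(X)^2]=\sum_{r,s=0}^{k}a_ra_s\bigl(m_{r+s}(\nu')-m_{r+s}(\hat\nu)\bigr)\le\Bigl(\sum_{j=0}^{k}|a_j|\Bigr)^2\epsilon'\le k^{O(k)}\epsilon.
\]
For a matching lower bound, write $X=U+\tau Z$ with $U\sim\nu$ and $Z\sim N(0,1)$: for each fixed $u$, the polynomial $z\mapsto\hat P(u+\tau z)$ has leading coefficient $\tau^{k}$, so it equals $\tau^{k}H_k(z)$ plus a polynomial of degree $<k$; since $H_k$ is orthogonal to lower-degree polynomials in $L^2(N(0,1))$ and $\Expect[H_k(Z)^2]=k!$,
\[
\Expect_{\nu'}[\hat P(X)^2]=\Expect_U\Expect_Z\bigl[\hat P(U+\tau Z)^2\bigr]\ge\tau^{2k}\,k!.
\]
Combining the two displays, $\tau^{2k}\le k^{O(k)}\epsilon$, i.e.\ $|\sigma^2-\hat\sigma^2|=\tau^2\le O(\epsilon^{1/k})$, which becomes $O(M^2\epsilon^{1/k})$ after undoing the rescaling.

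For the Wasserstein bound I would pass through $\nu'$. Coupling $\nu$ with $\nu'=\nu*N(0,\tau^2)$ via $U\mapsto U+\tau Z$ gives $W_1(\nu,\nu')\le\tau\,\Expect|Z|<\tau=O(\epsilon^{1/(2k)})$ by the previous step. For the other leg, $\hat\nu$ is $k$-atomic on $[-1,1]$, $\nu'$ has finite (indeed sub-Gaussian) moments, and $|m_r(\nu')-m_r(\hat\nu)|\le\epsilon'$ for $r=1,\dots,2k$, so \prettyref{prop:stable2} gives $W_1(\nu',\hat\nu)\le O\bigl(k(\epsilon')^{1/(2k)}\bigr)=k^{O(1)}\epsilon^{1/(2k)}$. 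Adding the two legs and reinstating $M$ yields $W_1(\nu,\hat\nu)\le O(Mk^{1.5}\epsilon^{1/(2k)})$, with the precise power of $k$ coming from careful bookkeeping of the constants above.

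I expect the sharp lower bound in the second step to be the main obstacle. The key point is that the ``right'' functional to examine is $\Expect_{\nu'}[\hat P^2]$ for $\hat P$ the degree-$k$ polynomial annihilating $\hat\nu$, and that Hermite orthogonality pins it down to order $\tau^{2k}$ — exactly the order seen in the extremal configuration $\nu=\delta_0$, $\hat\nu$ the $k$-point Gauss--Hermite quadrature of $N(0,\tau^2)$, and $\hat\sigma^2=\sigma^2-\tau^2$, where only the $(2k)\Th$ moment is mismatched, by $\Theta(\tau^{2k})$. Cruder routes lose too much: lower-bounding the full Hankel determinant $\det\bfM_k(\nu')$ by the product of all monic-orthogonal-polynomial norms costs a factor $\tau^{k(k+1)}$ and yields the far weaker exponent $2/(k(k+1))$; and comparing the moments of the two $k$-atomic distributions $\nu$ and $\hat\nu$ directly controls them only to accuracy $\epsilon^{1/k}$ (not $\epsilon$), so feeding this into \prettyref{prop:stable1} would give exponent $1/(k(2k-1))$ instead of the correct $1/(2k)$. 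A secondary point needing care is that the deconvolution $\gamma_r(\cdot,\hat\sigma)$ inflates errors only by $k^{O(k)}$ — not the $e^{\Theta(k^2)}$ that a crude coefficient bound suggests — so that after extracting $(2k)\Th$ roots the prefactors stay polynomial in $k$.
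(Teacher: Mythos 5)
Your proposal is correct and follows essentially the same route as the paper's proof: after the same scaling and $\sigma\ge\hat\sigma$ reduction, your Hermite deconvolution step is the paper's bound \prettyref{eq:mr-deconv} (via \prettyref{lmm:Hermite-moments}), your annihilating-polynomial argument with Hermite orthogonality ($\Expect[p^2(Z)]\ge k!$ for monic $p$ of degree $k$) is exactly the paper's \prettyref{lmm:tau} and \prettyref{lmm:monic}, and the conclusion via \prettyref{prop:stable2} plus the triangle inequality through $\nu'=\nu*N(0,\tau^2)$ matches the paper verbatim.
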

To apply \prettyref{prop:accuracy2}, we can solve the method of moments equations, namely, find a $k$-atomic distribution $\hat\nu$ and $\hat\sigma^2$ such that
\begin{equation}
    \label{eq:MM-sigma}
    \Expect_n[X^r] = \Expect_{\hat\pi}[X^r], \qquad r=1,\ldots,2k
\end{equation}
where $\hat\pi=\hat\mu * N(0,\hat\sigma^2)$ is the fitted Gaussian mixture. Here both the number of equations and the number of variables are equal to $2k$. Suppose \prettyref{eq:MM-sigma} has a solution $(\hat\mu,\hat\sigma)$. Then applying \prettyref{prop:accuracy2}  with $\delta = O_k(\frac{1}{\sqrt{n}})$ achieves the rate $O_k(n^{-1/(4k)})$ in \prettyref{thm:main-W1}, which is minimax optimal (see \prettyref{sec:lb}). In sharp contrast to the case of known $\sigma$, where we have shown in \prettyref{sec:mmfail} that the vanilla method of moments equation can have no solution unless we denoise by projection to the moment space, here with one extra scale parameter $\sigma$, one can show that \prettyref{eq:MM-sigma} has a solution with probability one!\footnote{
    It is possible that the equation \prettyref{eq:MM-sigma} has no solution, for instance, when $k=2, n=7$ and the empirical distribution is $\pi_7=\frac{1}{7}\delta_{-\sqrt{7}}+\frac{1}{7}\delta_{\sqrt{7}}+\frac{5}{7}\delta_0$. The first four empirical moments are ${\bf m}_4(\pi_7)=(0,2,0,14)$, which cannot be realized by any two-component Gaussian mixture \prettyref{eq:model}. Indeed, suppose $\hat \pi=w_1N(\mu_1,\sigma^2)+(1-w_1)N(\mu_2,\sigma^2)$ is a solution to \prettyref{eq:MM-sigma}. Eliminating variables leads to the contradiction that $2\mu_1^4+2=0$. Assuringly, as we will show later in \prettyref{lmm:U-exist}, such cases occur with probability zero.
} 
Furthermore, an efficient method of finding \emph{a} solution to \prettyref{eq:MM-sigma} is due to Lindsay \cite{Lindsay1989} and summarized in \prettyref{algo:Lindsay}. Here, the sample moments can be computed in $O(kn)$ time, and the smallest non-negative root of the polynomial of degree $k(k+1)$ can be found in $O(k^2)$ time using Newton's method (see \cite{Atkinson2008}). So overall Lindsay's estimator can be evaluated in $O(kn)$ time.
\begin{algorithm}[ht]
    \caption{Lindsay's estimator for normal mixtures with an unknown common variance}
    \label{algo:Lindsay}
    \begin{algorithmic}[1]
        \REQUIRE $ n $ samples $ X_1,\dots,X_n $.
        \ENSURE estimated mixing distribution $\hat\nu$, and estimated variance $\hat\sigma^2$.
        \FOR{$ r=1 $ \TO $ 2k $}
        \STATE{$ \hat{\gamma}_r=\frac{1}{n}\sum_i X_i^r $}
        \STATE{$ \hat{m}_r(\sigma)=r!\sum_{i=0}^{\Floor{r/2}}\frac{(-1/2)^i}{i!(r-2i)!}\hat{\gamma}_{r-2i}\sigma^{2i} $}
        \ENDFOR
        \STATE{Let $ \hat{d}_k(\sigma) $ be the determinant of the matrix $\{\hat{m}_{i+j}(\sigma)\}_{i,j=0}^k$.}\label{line:hat-dk}
        \STATE{Let $ \hat{\sigma} $ be the smallest positive root of $\hat{d}_k(\sigma)=0$.}\label{line:hat-sigma}
        \FOR{$ r=1 $ \TO $ 2k $}
        \STATE{$ \hat m_r=\hat{m}_r(\hat{\sigma}) $} \label{line:hat-mr}
        \ENDFOR
        \STATE{Let $\hat\nu$ be the outcome of the Gauss quadrature (\prettyref{algo:quadrature}) with input $\hat m_1,\dots,\hat m_{2k-1}$}
        \STATE{Report $\hat\nu$ and $\hat\sigma^2$.}
    \end{algorithmic}
\end{algorithm}

In \cite{Lindsay1989} the consistency of this estimator was proved under the extra condition that $\hat \sigma$ (which is a random variable) as a root of $d_k$ has multiplicity one.
It is unclear whether this condition is guaranteed to hold.
We will show that, unconditionally, Lindsay's estimator is not only consistent, but in fact achieves the minimax optimal rate \prettyref{eq:unknown-variance-W1} and \prettyref{eq:unknown-variance-sigma} previously announced in \prettyref{thm:main-W1}. We start by proving that Lindsay's algorithm produces an estimator $\hat\sigma$ so that the corresponding the moment estimates lie in the moment space with probability one. In this sense, although no explicit projection is involved, the noisy estimates are \emph{implicitly} denoised.

We first describe the intuition of the choice of $\hat\sigma$ in Lindsay's algorithm, \ie, line \ref{line:hat-sigma} of \prettyref{algo:Lindsay}. Let $X\sim \nu*N(0,\sigma^2)$. For any $\sigma' \leq \sigma$, we have 
\[
    \Expect[\gamma_j(X,\sigma')]=m_j(\nu*N(0,\sigma^2-\sigma'^2)).
\]
Let $d_k(\sigma')$ denote the determinant of the moment matrix $\{\Expect[\gamma_{i+j}(X,\sigma')]\}_{i,j=0}^k$, which is an even polynomial in $\sigma'$ of degree $k(k+1)$. According to \prettyref{thm:supp-detM}, $d_k(\sigma')>0$ when $0\le \sigma'<\sigma$ and becomes zero at $\sigma'=\sigma$, and thus $\sigma$ is characterized by the smallest positive zero of $d_k$. In lines \ref{line:hat-dk} -- \ref{line:hat-sigma}, $d_k$ is estimated by $\hat d_k$ using the empirical moments, and $\sigma$ is estimated by the smallest positive zero of $\hat d_k$. We first note that $\hat d_k$ indeed has a positive zero:
\begin{lemma}
    \label{lmm:sigma-exist}
    Assume $ n>k $ and the mixture distribution has a density. Then, almost surely, $\hat d_k$ has a positive root within $(0,s]$, where $s^2 \triangleq \frac{1}{n}\sum_{i=1}^n(X_i-\Expect_n[X])^2$ denotes the sample variance. 
\end{lemma}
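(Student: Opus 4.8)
The plan is to establish existence of the root by a continuity argument applied not to $\hat d_k$ directly but to the least eigenvalue of the Hankel moment matrix $\bfM(\sigma)\triangleq\{\hat m_{i+j}(\sigma)\}_{i,j=0}^{k}$, so that $\hat d_k(\sigma)=\det\bfM(\sigma)$. Put $h(\sigma)\triangleq\lambda_{\min}(\bfM(\sigma))$; since each entry $\hat m_r(\sigma)$ is a polynomial in $\sigma$ and the eigenvalues of a symmetric matrix depend continuously on its entries (Weyl's inequality), $h$ is continuous. The proof then reduces to two facts: (i) $h(0)>0$ almost surely; and (ii) $h(s)\le0$ deterministically. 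Granting these, the intermediate value theorem yields $\sigma^*\in(0,s]$ with $h(\sigma^*)=0$, and $\sigma^*>0$ because $h$ is positive on a neighborhood of $0$; then $\bfM(\sigma^*)$ is singular, so $\hat d_k(\sigma^*)=\det\bfM(\sigma^*)=0$, which is the claim.

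For (i): at $\sigma=0$ we have $\hat m_r(0)=\hat\gamma_r=\Expect_n[X^r]$, so $\bfM(0)$ is the order-$(k+1)$ moment matrix of the empirical distribution $\pi_n=\frac1n\sum_i\delta_{X_i}$. Because the mixture has a density, the $X_i$ are a.s.\ distinct, hence $\pi_n$ has $n\ge k+1$ atoms (here we use $n>k$); for any $a\in\reals^{k+1}$ with associated polynomial $p$ of degree $\le k$, $a^\top\bfM(0)a=\Expect_n[p(X)^2]$ vanishes only when $p$ has $k+1$ roots, i.e.\ $p\equiv0$, so $\bfM(0)\succ0$ and $h(0)>0$ a.s.\ (cf.\ \prettyref{thm:supp-detM}). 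For (ii): since $\gamma_1(x,\sigma)=x$ and $\gamma_2(x,\sigma)=x^2-\sigma^2$, the leading $2\times2$ principal submatrix of $\bfM(\sigma)$ is $\begin{pmatrix}1 & \Expect_n[X]\\ \Expect_n[X] & \Expect_n[X^2]-\sigma^2\end{pmatrix}$, with determinant $\Expect_n[X^2]-(\Expect_n[X])^2-\sigma^2=s^2-\sigma^2$, which equals $0$ at $\sigma=s$; a $2\times2$ matrix with zero determinant has a zero eigenvalue, so by Cauchy's interlacing theorem $h(s)=\lambda_{\min}(\bfM(s))\le0$. This completes the proof.

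The step to be careful about — the reason one cannot argue directly with the determinant — is that the endpoint value $\hat d_k(s)$ itself need not be $\le0$. Since $\hat d_k$ is invariant under recentering the data (translation conjugates $\bfM$ by a unit lower-triangular matrix), one may take $\Expect_n[X]=0$, whence $\hat m_1(s)=\hat m_2(s)=0$; then $\hat d_2(s)=-\hat m_3(s)^2\le0$, but $\hat d_3(s)=\hat m_3(s)^4-\hat m_3(s)^2\hat m_6(s)+2\hat m_3(s)\hat m_4(s)\hat m_5(s)-\hat m_4(s)^3$, which is strictly positive for some admissible moment configurations. A determinant only records the parity of the number of negative eigenvalues, whereas the vanishing $2\times2$ minor at $\sigma=s$ genuinely forces $\lambda_{\min}\le0$; hence we track $\lambda_{\min}(\bfM(\sigma))$ rather than $\hat d_k(\sigma)$. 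As a byproduct, the same reasoning shows that the smallest positive zero of $\hat d_k$ — Lindsay's $\hat\sigma$ — is exactly the first $\sigma$ at which $\bfM(\sigma)$ ceases to be positive definite, so $\bfM(\hat\sigma)\succeq0$: the noisy moment estimates at $\hat\sigma$ lie in the moment space, i.e.\ are implicitly denoised.
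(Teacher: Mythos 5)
Your proof is correct and rests on essentially the same ingredients as the paper's: almost-sure positive definiteness of the moment matrix at $\sigma=0$ (from the $n>k$ a.s.\ distinct samples), the $2\times 2$ leading principal minor equal to $s^2-\sigma^2$ forcing degeneracy by $\sigma=s$, and continuity of eigenvalues in $\sigma$. The paper packages these facts as monotonicity of the first-degeneracy points $\hat\sigma_r$ across orders $r$ (Lindsay's argument, with $\hat\sigma_1=s$ computed directly), whereas you run an intermediate-value argument on $\lambda_{\min}$ of the full matrix together with Cauchy interlacing at $\sigma=s$ — the same argument, differently arranged, and your remark on why one should not argue with the sign of $\hat d_k(s)$ itself is well taken.
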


The next result shows that, with the above choice of $\hat\sigma$, the moment estimates $\hat{m}_j=\Expect_n[\gamma_j(X,\hat\sigma)]$ for $j=1,\dots,2k$ given in line \ref{line:hat-mr} are implicitly denoised and lie in the moment space with probability one. Thus \prettyref{eq:MM-sigma} has a solution, and the estimated mixing distribution $\hat\nu$ can be found by the Gauss quadrature. This result was previously shown in \cite{Lindsay1989} assuming that $\hat \sigma$ is of multiplicity one. 
In contrast, \prettyref{lmm:U-exist} only requires that $n\ge 2k-1$ and the mixture distribution has a density.
\begin{lemma}
    \label{lmm:U-exist}
    Assume $ n\ge 2k-1 $ and the mixture distribution has a density. Then, almost surely, there exists a $k$-atomic distribution $\hat\nu$ such that $m_j(\hat\nu)=\hat{m}_j$ for $j\le 2k$, where $\hat{m}_j$ is from \prettyref{algo:Lindsay}.
\end{lemma}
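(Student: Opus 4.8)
Here is how I would prove \prettyref{lmm:U-exist}.

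The plan is to show that the value $\hat\sigma$ selected in line \ref{line:hat-sigma} of \prettyref{algo:Lindsay} — which by \prettyref{lmm:sigma-exist} exists and lies in $(0,s]$, in particular forcing $n>k$ and hence $n\ge k+1$ — makes the Hankel matrix $\hat\bfM_k\triangleq\{\hat m_{i+\ell}(\hat\sigma)\}_{i,\ell=0}^{k}$ (with $\hat m_0\equiv 1$) a positive semidefinite moment matrix of rank exactly $k$. Granting this, the classical theory of moments (\prettyref{thm:supp-detM} together with the Gauss quadrature of \prettyref{sec:moment}) shows that $(\hat m_1,\dots,\hat m_{2k})$ is the moment vector of a unique $k$-atomic distribution, which is precisely the $\hat\nu$ returned by the quadrature rule; the case $k=1$ being immediate from the definition of $\hat\sigma$, assume $k\ge 2$. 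For positive semidefiniteness, write $\hat\bfM_j(\sigma)=\{\hat m_{i+\ell}(\sigma)\}_{i,\ell=0}^{j}$, so that $\hat d_j(\sigma)=\det\hat\bfM_j(\sigma)$ and $\hat d_k$ is the polynomial of line \ref{line:hat-dk}. Since $n\ge k+1$ and the mixture has a density, the empirical distribution has at least $k+1$ distinct atoms almost surely, so $\hat\bfM_k(0)$ — the moment matrix of the empirical distribution — is positive definite and $\hat d_k(0)>0$. As $\hat\sigma$ is the \emph{smallest} positive root of $\hat d_k$, we have $\hat d_k>0$ on $[0,\hat\sigma)$; therefore the least eigenvalue of $\hat\bfM_k(\sigma)$, positive at $\sigma=0$ and unable to reach $0$ without $\hat d_k$ vanishing, stays positive on $[0,\hat\sigma)$, whence $\hat\bfM_k\triangleq\hat\bfM_k(\hat\sigma)\succeq 0$ by continuity.

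The heart of the matter is the claim that, almost surely, $\hat\bfM_{k-1}(\hat\sigma)\succ 0$, equivalently $\hat d_j(\hat\sigma)\ne 0$ for every $j<k$; this is where we dispense with Lindsay's hypothesis that $\hat\sigma$ be a simple root of $\hat d_k$. Suppose $\hat d_j(\hat\sigma)=0$ for some $j<k$. Then $\hat\sigma$ is a common positive zero of $\hat d_j$ and $\hat d_k$, which we view as univariate polynomials in $\sigma^2$ with coefficients that are fixed polynomials in $(X_1,\dots,X_n)$; their leading coefficients (in $\sigma^2$) are nonzero constants not depending on the data, so they are genuine polynomials of degrees $j(j+1)/2$ and $k(k+1)/2$. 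A common zero forces the resultant $R_j\triangleq\mathrm{Res}_{\sigma^2}(\hat d_j,\hat d_k)\in\reals[X_1,\dots,X_n]$ to vanish. One checks that $R_j$ is not identically zero: it suffices to exhibit a single $n$-tuple at which $\hat d_j$ and $\hat d_k$ are coprime, and any sufficiently generic configuration works — this can be read off from the population analogue, since for a genuinely $k$-atomic mixing distribution the order-$(j+1)$ moment matrix of the deconvolved measure $\nu*N(0,\sigma^2-t^2)$ is nonsingular for all $t\in[0,\sigma]$ when $j<k$, while the order-$(k+1)$ one is singular precisely at $t=\sigma$. Hence $\{\hat d_j(\hat\sigma)=0\}\subseteq\{R_j=0\}$ has Lebesgue measure zero, and since the mixture density makes the law of $(X_1,\dots,X_n)$ absolutely continuous, $\bigcup_{j<k}\{\hat d_j(\hat\sigma)=0\}$ has probability zero. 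Thus, almost surely, $\hat\bfM_{k-1}(\hat\sigma)\succ 0$ while $\det\hat\bfM_k(\hat\sigma)=\hat d_k(\hat\sigma)=0$, so $\hat\bfM_k$ has rank exactly $k$.

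To conclude, I would run the Gauss-quadrature construction. Since $\hat\bfM_{k-1}(\hat\sigma)$ is invertible and $\hat\bfM_k$ is singular of corank one, $\ker\hat\bfM_k$ is spanned by a single vector $(q_0,\dots,q_{k-1},1)$, i.e.\ $\sum_{i=0}^{k}q_i\hat m_{i+\ell}(\hat\sigma)=0$ for $\ell=0,\dots,k$; equivalently, the polynomial $P(x)=x^k+\sum_{i=0}^{k-1}q_i x^i$ of \prettyref{algo:quadrature} (applied to $(\hat m_1,\dots,\hat m_{2k-1})$) satisfies the quasi-orthogonality relations of the positive linear functional associated with $\hat\bfM_{k-1}(\hat\sigma)$. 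Consequently $P$ has $k$ distinct real roots $\hat\mu_1,\dots,\hat\mu_k$, the quadrature weights $\hat w_\ell$ obtained from the Vandermonde system against $(1,\hat m_1,\dots,\hat m_{k-1})$ are positive, and $\hat\nu=\sum_\ell\hat w_\ell\delta_{\hat\mu_\ell}$ is $k$-atomic. Both sequences $(m_r(\hat\nu))_{r\ge 0}$ and $(\hat m_r(\hat\sigma))_{r=0}^{2k}$ obey the order-$k$ linear recursion with characteristic polynomial $P$ — the former because $P$ vanishes on $\supp\hat\nu$, the latter at orders $0,\dots,k$ because $\hat\bfM_k q=0$ — and they agree at orders $0,\dots,k-1$ by construction of $\hat w$; propagating the recursion through orders $0,\dots,k$ yields $m_r(\hat\nu)=\hat m_r$ for all $r\le 2k$, as claimed.

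I expect the genericity claim — $\hat\bfM_{k-1}(\hat\sigma)\succ 0$ almost surely — to be the main obstacle. The difficulty is that $\hat\sigma$ is itself a data-dependent, and possibly multiple, root of $\hat d_k$, so one cannot invoke the implicit function theorem; instead one must argue at the level of polynomial identities in $(X_1,\dots,X_n)$, and checking that the resultants $R_j$ do not vanish identically requires producing an explicit non-degenerate configuration. This is exactly the point that Lindsay's multiplicity hypothesis circumvents.
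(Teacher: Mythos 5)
Your reduction of the lemma to the claim that, almost surely, $\det\hat\bfM_j(\hat\sigma)>0$ for every $j<k$ while $\det\hat\bfM_k(\hat\sigma)=0$ is the right target (it matches the paper's use of \prettyref{thm:supp-detM}), and the positive semidefiniteness of $\hat\bfM_k(\hat\sigma)$ and the quadrature/recursion endgame are fine. But the one step you yourself call the heart of the matter is not proved, and the route you sketch does not obviously close it. If $\hat d_j(\hat\sigma)=0$ for some $j<k$, you pass to the resultant $R_j=\mathrm{Res}_{\sigma^2}(\hat d_j,\hat d_k)$ and need $R_j\not\equiv 0$ as a polynomial in the data; however, $R_j$ vanishes whenever $\hat d_j$ and $\hat d_k$ share \emph{any} common root in $\sigma^2$ over the complex numbers, not merely the particular positive root $\hat\sigma$. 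The ``population analogue'' you invoke only says that for a genuinely $k$-atomic mixture $d_j(\sigma')>0$ on $[0,\sigma]$ for $j<k$; it says nothing about common roots with $\sigma'>\sigma$ or complex common roots, and in any case the population moments are not the empirical moments of any fixed $n$-tuple, so it does not exhibit the coprime configuration you need. The genericity claim is therefore asserted rather than established, and that claim is precisely the content that replaces Lindsay's simple-root hypothesis, i.e., the whole point of the lemma.

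The paper closes this step by a different, more elementary probabilistic argument that avoids resultants entirely. Since $\hat\bfM_k(\hat\sigma)\succeq 0$, vanishing of leading principal minors is monotone, so there is an $r$ with $\det\hat\bfM_j(\hat\sigma)>0$ for $j<r$ and $\det\hat\bfM_j(\hat\sigma)=0$ for $r\le j\le k$. If $r\le k-1$, then $\det\hat\bfM_{r-1}(\hat\sigma)>0$ and $\det\hat\bfM_r(\hat\sigma)=\det\hat\bfM_{r+1}(\hat\sigma)=0$, and \prettyref{lmm:hankel-det} forces $\hat m_{2r+1}(\hat\sigma)$ to be a deterministic function of $\hat m_1(\hat\sigma),\dots,\hat m_{2r}(\hat\sigma)$; moreover on this event $\hat\sigma=\hat\sigma_r$ is the smallest positive root of $\hat d_r$, hence itself a function of $(\hat\gamma_1,\dots,\hat\gamma_{2r})$, so $\hat\gamma_{2r+1}$ would be a deterministic function of $(\hat\gamma_1,\dots,\hat\gamma_{2r})$. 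Because the samples are almost surely distinct and the map from $(X_1,\dots,X_n)$ to $(\hat\gamma_1,\dots,\hat\gamma_n)$ has an invertible Vandermonde-type Jacobian, the empirical moments $(\hat\gamma_1,\dots,\hat\gamma_{2r+1})$ have a joint density whenever $2r+1\le n$ --- this is exactly where the hypothesis $n\ge 2k-1$ enters --- so each such event has probability zero. To repair your proof you should either substitute this argument for the resultant step, or actually produce an explicit $n$-point configuration at which $\hat d_j$ and $\hat d_k$ are coprime for every $j<k$, which you have not done.
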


With the above analysis, we now prove the statistical guarantee \prettyref{eq:unknown-variance-W1} and \prettyref{eq:unknown-variance-sigma} for Lindsay's algorithm announced in \prettyref{thm:main-W1}:
\begin{proof}
    It suffices to consider $M=1$. Let $\hat\pi=\hat\nu*N(0,\hat\sigma^2)$ and $\pi=\nu*N(0,\sigma^2)$ denote the estimated mixture distribution and the ground truth, respectively. Let $\hat m_r=\Expect_n[X^r]$ and $m_r=m_r(\pi)$. The variance of $\hat m_r$ is upper bounded by
    \[
        \var[\hat m_r]=\frac{1}{n}\var[X_1^r]\le \frac{1}{n}\Expect[X^{2r}]
        \le \frac{(\sqrt{cr})^{2r}}{n},
    \]
    for some absolute constant $c$. Using Chebyshev inequality, for each $r=1,\dots,2k$, with probability $1-\frac{1}{8k}$, we have, 
    \begin{equation}
        \label{eq:tildemr-rate2}    
        |\hat m_r-m_r|\le (\sqrt{cr})^r\sqrt{k/n}.
    \end{equation}
    By the union bound, with probability 3/4, the above holds holds simultaneously for every $r=1,\dots,2k$. It follows from \prettyref{lmm:sigma-exist} and \ref{lmm:U-exist} that \prettyref{eq:MM-sigma} holds with probability one. Therefore,
    \[
        |m_r(\hat\pi)-m_r(\pi)|\le (\sqrt{cr})^r\sqrt{k/n},\quad r=1,\dots,2k.
    \]
    for some absolute constant $c$. In the following, the error of variance estimate is denoted by $\tau^2=|\sigma^2-\hat\sigma^2|$.

    \begin{itemize}
        \item If $\sigma\le \hat\sigma$, let $\nu'=\hat\nu*N(0,\tau^2)$. Using $\Expect_\pi[\gamma_r(X,\sigma)]=m_r(\nu)$ and $\Expect_{\hat\pi}[\gamma_r(X,\sigma)]=m_r(\nu')$, where $\gamma_r$ is the Hermite polynomial \prettyref{eq:Hermite}, we obtain that (see \prettyref{lmm:Hermite-moments})
        \begin{equation}
            \label{eq:mr-deconv-example}
            |m_r(\nu')-m_r(\nu)|\le (\sqrt{c'k})^{2k}\sqrt{k/n},\quad r=1,\dots,2k,
        \end{equation}
        for an absolute constant $c'$. Applying \prettyref{prop:accuracy2} yields that,
        \[
            |\sigma^2-\hat\sigma^2|\le O(kn^{-\frac{1}{2k}}),\quad W_1(\nu,\hat\nu)\le O(k^2n^{-\frac{1}{4k}}).
        \]
        \item If $\sigma\ge \hat\sigma$, let $\nu'=\nu*N(0,\tau^2)$. Similar to \prettyref{eq:mr-deconv-example}, we have 
        \[
            |m_r(\hat\nu)-m_r(\nu')|\le (\sqrt{c'k})^{2k}\sqrt{k/n}\triangleq \epsilon,\quad r=1,\dots,2k.
        \]
        To apply \prettyref{prop:accuracy2}, we also need to ensure that $\hat\nu$ has a bounded support, which is not obvious. To circumvent this issue, we apply a truncation argument thanks to the following tail probability bound for $\hat\nu$ (see \prettyref{lmm:tail-hatU}):
        \begin{equation}
            \label{eq:hatnu-tail}
            \Prob[|\hat U|\ge \sqrt{c_0k}]\le \epsilon(\sqrt{c_1k}/t)^{2k},\quad \hat U\sim \hat \nu,
        \end{equation}
        for absolute constants $c$ and $c'$. To this end, consider $\tilde U=\hat U\indc{|\hat U|\le \sqrt{c_0k}}\sim\tilde\nu$. Note that $\tilde U$ is $k$-atomic supported on $[-\sqrt{c_0k},\sqrt{c_0k}]$, we have $W_1(\nu,\hat\nu)\le \epsilon e^{O(k)}$ and $|m_r(\tilde\nu)-m_r(\hat\nu)|\le k\epsilon(c_1k)^k$ for $r=1,\dots,2k$. Using the triangle inequality yields that 
        \[
            |m_r(\tilde \nu)-m_r(\nu')|\le \epsilon+k\epsilon(c_1k)^k.
        \]
        Now we apply \prettyref{prop:accuracy2} with $\tilde\nu$ and $\nu*N(0,\tau^2)$ where both $\tilde \nu$ and $\nu$ are $k$-atomic supported on $[-\sqrt{c_0k},\sqrt{c_0k}]$. In the case $\tilde\nu$ is discrete, the dependence on $k$ in \prettyref{prop:accuracy2} can be improved (by improving \prettyref{eq:mr-deconv} in the proof) and we obtain that
        \[
            |\sigma^2-\hat\sigma^2|\le O(kn^{-\frac{1}{2k}}),\quad W_1(\nu,\tilde\nu)\le O(k^2n^{-\frac{1}{4k}}).
        \]
        Using $k\le O(\frac{\log n}{\log\log n})$, we also obtain $W_1(\nu,\hat\nu)\le O(k^2n^{-\frac{1}{2k}})$ by the triangle inequality.
    \end{itemize}
    To obtain a confidence $1-\delta$ in \prettyref{eq:unknown-variance-W1} and \prettyref{eq:unknown-variance-sigma}, we can replace the empirical moments $\hat m_r$ by the median of $T=\log\frac{2k}{\delta}$ independent estimates similar to \prettyref{eq:Hoeffding-example}. 
\end{proof}

\subsection{Adaptive rates}
\label{sec:adaptive}
In sections \ref{sec:known} and \ref{sec:unknown}, we proved the statistical guarantees of our estimators under the worst-case scenario where the means can be arbitrarily close. Under separation conditions on the means (see \prettyref{def:sep}), our estimators automatically achieve a strictly better accuracy than the one claimed in \prettyref{thm:main-W1}. The goal in this subsection is to show those adaptive results. The key is the following adaptive version of the moment comparison theorems (cf.~Propositions \ref{prop:stable1} and \ref{prop:stable2}):
\begin{prop}
    \label{prop:stable1-separation}
    Suppose both $\nu$ and $\nu'$ are supported on a set of $\ell$ atoms in $[-1,1]$, and each atom is at least $\gamma$ away from all but at most $\ell'$ other atoms. Let $\delta=\max_{i\in [\ell-1]}|m_i(\nu)-m_i(\nu')|$. Then,
    \[
        W_1(\nu,\nu')\le \ell \pth{\frac{\ell 4^{\ell-1}\delta}{\gamma^{\ell-\ell'-1}}}^{\frac{1}{\ell'}}.
    \]
\end{prop}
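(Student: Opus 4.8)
The plan is to combine Kantorovich duality with a polynomial-interpolation estimate in which the $\gamma$-separation controls the interpolation coefficients.

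\textbf{Reduction via duality.} By rescaling it suffices to treat atoms in $[-1,1]$. Write $S=\{x_1<\dots<x_\ell\}$ for the common support and $\pi=\nu-\nu'$, a signed measure supported on $S$ with $\pi(\reals)=0$ and $|m_r(\pi)|\le\delta$ for $r=1,\dots,\ell-1$. By the dual formula \prettyref{eq:W1-dual} it is enough to bound $|\Expect_\pi[\varphi]|$ uniformly over $1$-Lipschitz $\varphi$; since $\pi$ has zero mass we may normalize $\varphi(0)=0$, so $\|\varphi\|_{\infty,[-1,1]}\le1$. Let $P$ be the unique polynomial of degree at most $\ell-1$ interpolating $\varphi$ on all of $S$. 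As $\pi$ is supported on $S$ and $m_0(\pi)=0$,
\[
\Expect_\pi[\varphi]=\Expect_\pi[P]=\sum_{r=1}^{\ell-1}[x^r]P\cdot m_r(\pi),\qquad\text{so}\qquad|\Expect_\pi[\varphi]|\le\delta\sum_{r=1}^{\ell-1}\bigl|[x^r]P\bigr|.
\]

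\textbf{Coefficient bound and the trade-off.} Writing $P$ in Lagrange form and bounding the elementary symmetric functions of the nodes (all in $[-1,1]$) by binomial coefficients, one gets $\sum_{r\ge1}|[x^r]P|\le\poly(\ell)\,2^{\ell}\max_j\prod_{i\ne j}|x_j-x_i|^{-1}$. For each $j$, at least $\ell-\ell'-1$ of the $\ell-1$ factors $|x_j-x_i|$ are $\ge\gamma$ — namely all but the at most $\ell'$ atoms within $\gamma$ of $x_j$ — so $\prod_{i\ne j}|x_j-x_i|\ge\gamma^{\ell-\ell'-1}p_j$ with $p_j$ the product of the remaining ``small'' gaps. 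Hence $|\Expect_\pi[\varphi]|\le O\bigl(\poly(\ell)\,2^{\ell}\,\delta\,\gamma^{-(\ell-\ell'-1)}/p_{\min}\bigr)$, a bound that by itself degrades as the small gaps shrink; but a group of mutually $\gamma$-close atoms whose pairwise separations multiply to $p_{\min}$ also has small diameter, so $W_1(\nu,\nu')$ is in addition at most (a quantity of the order of) the total length of that group. Balancing these two estimates by a weighted arithmetic--geometric-mean inequality — equivalently, optimizing over the scale of the tightly packed atoms, of which each atom ``sees'' at most $\ell'$ others — turns the otherwise linear-in-$\delta$ bound into the exponent $1/\ell'$ and absorbs the polynomial prefactors into $\ell 4^{\ell-1}$, giving $W_1(\nu,\nu')\le\ell\bigl(\ell 4^{\ell-1}\delta/\gamma^{\ell-\ell'-1}\bigr)^{1/\ell'}$.

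\textbf{Main obstacle.} The delicate point is that the ``closeness graph'' on $S$ (atoms joined when within $\gamma$) need not break into small tight groups — it may be a long chain in which each atom is close only to its neighbors — so the partition of factors into ``$\ge\gamma$'' versus ``small'', and the diameter-versus-moments trade-off, are not canonical. Resolving this uniformly is where majorization enters: one argues that, among all atom configurations obeying the maximum-degree-$\ell'$ separation constraint, the $W_1$-maximizing one for a prescribed moment discrepancy is majorized by a canonical configuration — a single tightly packed group of at most $\ell'+1$ atoms plus $\ell-\ell'-1$ further atoms spread at least $\gamma$ apart — on which the interpolation-plus-AM--GM argument is transparent, the general bound then following by monotonicity along the majorization order. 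I expect this majorization reduction — together with tracking the explicit constant through it and checking the two endpoint regimes $\gamma\to0$ and $\gamma$ near the critical scale $\delta^{1/(\ell-1)}$, where the estimate matches the non-adaptive \prettyref{prop:stable1} — to be the technical heart of the argument.
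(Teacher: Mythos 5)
There is a genuine gap, and it sits exactly where you place your hopes. The pivotal claim in your trade-off --- that because the tightly packed atoms have small diameter, ``$W_1(\nu,\nu')$ is in addition at most (a quantity of the order of) the total length of that group'' --- is false as stated: the two measures can disagree on the weights they assign to well-separated atoms, and that mass discrepancy is transported over distances of order $1$, not over the diameter of the tight cluster. The only thing preventing such a discrepancy is the moment constraint itself, i.e.\ precisely what the proposition is trying to quantify, so it cannot be invoked for free as the second arm of an AM--GM balance. The majorization reduction you then lean on (that the extremal configuration is a single tight cluster of at most $\ell'+1$ atoms plus $\gamma$-spread atoms, with monotonicity along the majorization order) is asserted, not proved; you yourself flag it as the technical heart, and I see no monotonicity that would deliver it --- in the chain configuration the Lagrange products $\prod_{i\ne j}|x_j-x_i|$ and the geometry interact quite differently from the single-cluster case. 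Two further warning signs: the Lagrange-form coefficient bound $\sum_r|[x^r]P|\lesssim 2^{\ell}/\min_j\prod_{i\ne j}|x_j-x_i|$ is exactly the blow-up that \prettyref{ex:Lagrange-Newton} cautions against (an interpolant of a $1$-Lipschitz function is tame even when nodes collide, but its Lagrange coefficients are not), and even granting your sketch, balancing $\delta\,\gamma^{-(\ell-\ell'-1)}x^{-\ell'}$ against a diameter of order $x$ yields exponent $\tfrac{1}{\ell'+1}$, not the claimed $\tfrac{1}{\ell'}$.

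The paper avoids all of this by making the trade-off \emph{local} rather than global. It reuses the first proof of \prettyref{prop:stable1-union}: by \prettyref{eq:W1-CDF}, $W_1(\nu,\nu')=\sum_r|F_\nu(t_r)-F_{\nu'}(t_r)|\,|t_{r+1}-t_r|$ over consecutive support points; each CDF difference is bounded by interpolating the step function $\indc{x\le t_r}$ in \emph{Newton} form, where \prettyref{lmm:divided} expresses the divided differences as sums of products of reciprocals $1/(t_y-t_x)$ with $t_x\le t_r<t_{r+1}\le t_y$. The separation hypothesis guarantees that in each such product all but at most $\ell'$ factors are at least $\gamma$ (the rest are at least $t_{r+1}-t_r$), giving $|F_\nu(t_r)-F_{\nu'}(t_r)|\le \ell 4^{\ell-1}\delta\,(t_{r+1}-t_r)^{-\ell'}\gamma^{-(\ell-\ell'-1)}$. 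Taking the minimum with the trivial bound $1$ gap by gap and maximizing $\min\{\alpha x^{1-\ell'},x\}$ over $x$ yields $\alpha^{1/\ell'}$ per gap and hence the stated bound after summing over at most $\ell$ gaps. No extremal-configuration or majorization argument over atom geometries is needed; if you want to salvage a dual-formulation proof, you would have to emulate the paper's third proof of \prettyref{prop:stable1-union} (modify the Lipschitz test function on clusters and use Newton-form divided differences), not the raw Lagrange expansion.
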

\begin{prop}
    \label{prop:stable2-separation}
    Suppose $\nu$ is supported on $k$ atoms in $[-1,1]$ and any $t\in\reals$ is at least $\gamma$ away from all but $k'$ atoms. Let $\delta=\max_{i\in [2k]}|m_i(\nu)-m_i(\nu')|$. Then,
    \[
        W_1(\nu,\nu')\le 8k\pth{\frac{k 4^{2k} \delta}{\gamma^{2(k-k')}}}^{\frac{1}{2k'}}.
    \]
\end{prop}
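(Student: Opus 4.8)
The plan is to pass to the Kantorovich dual \prettyref{eq:W1-dual} (equivalently the CDF representation \prettyref{eq:W1-CDF}) and to leverage that $\nu$ sits on only $k$ points $x_1,\dots,x_k\in[-1,1]$. Everything runs off the single degree-$2k$ polynomial
\[
Q(x)=\prod_{i=1}^{k}(x-x_i)^2 .
\]
Two elementary facts about $Q$ do the work. First, since $|x_i|\le 1$, the monomial coefficients of $Q$ are bounded in absolute value by the corresponding binomial coefficients, so their $\ell_1$-norm is at most $\sum_j\binom{2k}{j}=4^{k}$ (and likewise for any scalar multiple, up to that scalar). Second --- and this is where the separation hypothesis enters --- $Q$ is \emph{large away from the atoms}: if every atom is at distance $\ge h$ from a point $t$, with $0<h<\gamma$, then among the factors $(t-x_i)^2$ at most $k'$ can be below $\gamma^2$ (and each of those is still $\ge h^2$), while the remaining $\ge k-k'$ are $\ge\gamma^2$; since the resulting product is decreasing in the number of ``close'' atoms, minimizing over it gives $Q(t)\ge\gamma^{2(k-k')}h^{2k'}$. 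This is the source of the two exponents $2(k-k')$ and $2k'$ in the statement.

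Next, apply the moment hypothesis to the test polynomial $\widetilde Q\triangleq Q/\pth{\gamma^{2(k-k')}h^{2k'}}$, which has degree $2k$, is nonnegative, has coefficient $\ell_1$-norm at most $4^{k}\gamma^{-2(k-k')}h^{-2k'}$, and satisfies $\Expect_{\nu}[\widetilde Q]=0$ and $\widetilde Q\ge 1$ off the $h$-neighborhoods of the atoms. Hence
\[
\nu'\pth{\{t:\ \mathrm{dist}(t,\{x_i\})\ge h\}}\ \le\ \Expect_{\nu'}[\widetilde Q]\ =\ \Expect_{\nu'}[\widetilde Q]-\Expect_{\nu}[\widetilde Q]\ \le\ \delta\cdot\frac{4^{k}}{\gamma^{2(k-k')}h^{2k'}}\ =:\ p .
\]
Here \emph{all} $2k$ moments are needed: the $2k$-th moment makes the leading coefficient of $Q$ available, and, via $m_{2k}(\nu')\le m_{2k}(\nu)+\delta\le 1+\delta$ together with Markov's inequality ($\nu'(|x|>R)\le(1+\delta)R^{-2k}$), it also confines $\nu'$, so that the mass of $\nu'$ far from $[-1,1]$ contributes only $O(R\,p+R^{1-2k})$ to the transportation cost and is negligible for $R$ of constant order. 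Feeding the mass bound into $W_1(\nu,\nu')=\int|F_\nu-F_{\nu'}|$ gives a bound of the form
\[
W_1(\nu,\nu')\ \le\ O\pth{h+\frac{4^{O(k)}\,\delta}{\gamma^{2(k-k')}h^{2k'}}},
\]
since on the union of the $h$-neighborhoods the contribution is $O(h)$ after summing the weights $w_i$, while on the complement $F_\nu$ is piecewise constant and $F_{\nu'}$ stays within $O(p)$ of it. Optimizing over the free scale $h\in(0,\gamma)$ balances the two terms and produces the advertised $8k\pth{k4^{2k}\delta\,\gamma^{-2(k-k')}}^{1/(2k')}$ --- the exact arithmetic behind the exponent $\tfrac{1}{2k'}$ and the explicit constants being a routine but slightly delicate optimization (and the reason the final constant is $4^{2k}$ rather than $4^{k}$).

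\textbf{The main obstacle} I anticipate is the italicized clause in the last display: establishing that $F_{\nu'}$ tracks the staircase $F_\nu$ on the complement of the $h$-neighborhoods --- equivalently, that $\nu'$ allocates essentially the right amount of mass to each cluster of atoms of $\nu$ --- using only test polynomials of degree $\le 2k$. The naive ``bump'' polynomials that single out one atom carry coefficients blowing up like the inverse \emph{intra}-cluster gaps, which would ruin the bound; avoiding this is exactly the purpose of the polynomial interpolation and majorization constructions of \prettyref{sec:poly}, and it is also where the constant $4^{2k}$ (as opposed to something worse) must be extracted. Throughout I would mirror the structure of the proof of \prettyref{prop:stable1-separation}, with the extra $2k$-th moment absorbing the role of the ``missing'' separation control for the arbitrary measure $\nu'$.
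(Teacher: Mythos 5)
Your first half is correct and in fact reproduces one of the two ingredients of the paper's argument: the polynomial $Q(x)=\prod_{i=1}^k(x-x_i)^2$ with coefficient $\ell_1$-norm at most $4^k$, and the separation-based lower bound $Q(t)\ge \gamma^{2(k-k')}h^{2k'}$ off the $h$-neighborhoods of the atoms, are exactly what the paper uses (in the guise of $R_t(x)=\prod_i\pth{\frac{x-x_i}{t-x_i}}^2$ from the proof of \prettyref{prop:stable2}) to control the term $g(t)=\Expect_{\nu'}[R_t]$ and obtain the separated analogue of \prettyref{eq:int-g} with $\gamma^{2(k-k')}$ in the denominator; your optimization over the scale $h$ mirrors the wedge-integral computation there.

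However, the step you label ``the main obstacle'' is not a deferrable technicality: it is the heart of the proof, and your argument as written does not supply it. The bound $\nu'\pth{\{t:\mathrm{dist}(t,\{x_i\})\ge h\}}\le p$ says nothing about how $\nu'$ apportions its mass \emph{among} the $h$-neighborhoods of different clusters: $\nu'$ could place all of its mass within $h$ of a single atom of $\nu$, making $p$ vanish while $W_1(\nu,\nu')$ stays of constant order. The low-order moment constraints do forbid this, but your proposal never converts them into a bound on $|F_\nu(t)-F_{\nu'}(t)|$ at points $t$ lying between clusters, and the route you gesture at (``mirror \prettyref{prop:stable1-separation}'') is unavailable because $\nu'$ is not supported on the atoms of $\nu$, so plain interpolation of the step function $\indc{x\le t}$ at those atoms controls $\Expect_\nu$ but not $\Expect_{\nu'}$. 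What the paper does instead, and what is missing from your sketch, is one-sided approximation: the degree-$2k$ Hermite-interpolation majorant and minorant $P_t\ge \indc{\cdot\le t}\ge Q_t$ with doubled nodes at the atoms and a simple node at $t$, satisfying $P_t-Q_t=R_t$, combined with the Newton-form divided-difference bounds of \prettyref{lmm:divided} sharpened under the separation hypothesis (all but at most $2k'$ factors in the denominators are at least of order $\gamma$), which yields the bound on $f(t)=|\Expect_{\nu'}[Q_t]-\Expect_{\nu}[Q_t]|$ and the second integral \prettyref{eq:int-f}; together with your $g$-term this gives $|F_\nu(t)-F_{\nu'}(t)|\le f(t)\wedge 1+g(t)\wedge 1$ and hence the claim via \prettyref{eq:W1-CDF}. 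Until that construction (or an equivalent control of the inter-cluster CDF error) is carried out, your write-up is a plan in which the decisive estimate is asserted rather than proved.
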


The adaptive result \prettyref{eq:known-variance-W1-adaptive} in the known variance parameter case is obtained using \prettyref{prop:stable1-separation} in place of \prettyref{prop:stable1}. To deal with unknown variance parameter case, using \prettyref{prop:stable2-separation}, we first show the following adaptive version of \prettyref{prop:accuracy2}:
\begin{prop}
    \label{prop:accuracy2-separation}
    Under the conditions of \prettyref{prop:accuracy2}, if both Gaussian mixtures both have $k_0$ $\gamma$-separated clusters in the sense of \prettyref{def:sep}, then, 
    \[
        \sqrt{|\sigma^2-\hat\sigma^2|},~ W_1(\nu,\hat\nu)\le O_k\pth{\pth{\frac{\epsilon}{\gamma^{2(k_0-1)}}}^{\frac{1}{2(k-k_0+1)}}}.
    \]
\end{prop}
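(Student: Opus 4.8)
The plan is to follow the proof of \prettyref{prop:accuracy2} line by line, replacing its non‑adaptive input (which pays for $2k$ moments) by the adaptive moment comparison \prettyref{prop:stable2-separation}, and upgrading the bound on the variance discrepancy so that it reflects the cluster structure. As in \prettyref{prop:accuracy2}, rescaling reduces to $M=1$, so all atoms lie in $[-1,1]$; and since $\sigma,\hat\sigma=O(1)$ we may assume $\tau:=\sqrt{|\sigma^2-\hat\sigma^2|}$ is at most a fixed constant (otherwise $\epsilon\gtrsim_k 1$ and the claim is vacuous). By symmetry it suffices to treat $\sigma\ge\hat\sigma$: set $\tau^2=\sigma^2-\hat\sigma^2$ and $\nu'=\nu*N(0,\tau^2)$, so that $\pi=\nu'*N(0,\hat\sigma^2)$ and $\hat\pi=\hat\nu*N(0,\hat\sigma^2)$ differ only through their mixing distributions. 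Applying the Hermite deconvolution $\gamma_r(\cdot,\hat\sigma)$ of \prettyref{eq:Hermite} to the hypothesis $|m_r(\pi)-m_r(\hat\pi)|\le\epsilon$ for $r\le 2k$ (cf.~\prettyref{lmm:Hermite-moments}) gives $|m_r(\nu')-m_r(\hat\nu)|\le\epsilon':=O_k(\epsilon)$ for $r=1,\dots,2k$. It then remains to bound $\tau$ and $W_1(\nu,\hat\nu)$; since $W_1(\nu,\nu')\le\tau\,\Expect|Z|=O(\tau)$ for $Z$ standard normal, once the $\tau$‑estimate below is in hand it yields both the claimed bound on $\sqrt{|\sigma^2-\hat\sigma^2|}$ and, via the triangle inequality, reduces the $W_1$ bound to the estimate in the next paragraph.

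For the mean part I would invoke \prettyref{prop:stable2-separation} with the $k$‑atomic distribution $\hat\nu$ in the role of ``$\nu$'' and $\nu'$ in the role of ``$\nu'$''. Because $\hat\nu$ has $k_0$ $\gamma$‑separated clusters, no point can be within $\gamma/2$ of atoms from two different clusters, so every $t\in\reals$ is at least $\gamma/2$ away from all but at most $\max_\ell|S_\ell|\le k-k_0+1$ atoms of $\hat\nu$. Hence \prettyref{prop:stable2-separation} applies with $k'=k-k_0+1$, separation $\gamma/2$, and moment discrepancy $\delta=\epsilon'$, giving
\[
W_1(\nu',\hat\nu)\le O_k\pth{\pth{\frac{\epsilon}{\gamma^{2(k_0-1)}}}^{\frac{1}{2(k-k_0+1)}}}.
\]

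The crux is the bound on $\tau$, and it is the one place where genuinely new work is needed. Mirroring \prettyref{prop:accuracy2}, I would test the moment closeness against the monic degree‑$k$ polynomial $\hat q(x)=\prod_{i=1}^k(x-\hat\mu_i)$ vanishing on $\supp(\hat\nu)$ (equivalently, $\bfM_k(\hat\nu)$ is singular since $\hat\nu$ is $k$‑atomic): then $\Expect_{\hat\nu}[\hat q^2]=0$, and since $\hat q^2$ has degree $2k$ and $O_k(1)$ coefficients the moment estimate gives $\Expect_{\nu'}[\hat q^2]=\Expect_{\nu'}[\hat q^2]-\Expect_{\hat\nu}[\hat q^2]\le O_k(\epsilon')$. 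On the other hand, writing $\hat q(u+\tau Z)=\sum_{j=0}^k\frac{\hat q^{(j)}(u)}{j!}\tau^jZ^j$ and using positive‑definiteness of the Gram matrix of $1,Z,\dots,Z^k$ under $N(0,1)$,
\[
\Expect_{\nu'}[\hat q^2]=\Expect_{U\sim\nu}\Expect_Z[\hat q(U+\tau Z)^2]\gtrsim_k\sum_{j=0}^k\tau^{2j}\,\Expect_{\nu}\!\bigl[(\hat q^{(j)}(U)/j!)^2\bigr].
\]
In the non‑adaptive case one keeps only $j=k$, where $\hat q^{(k)}\equiv k!$, to get $\tau^{2k}\lesssim_k\epsilon'$. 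To gain from separation one must instead extract the $j=k-k_0+1$ term (together with nearby ones): up to sign, $\hat q^{(k-k_0+1)}/(k-k_0+1)!$ is the elementary symmetric polynomial $e_{k_0-1}(x-\hat\mu_1,\dots,x-\hat\mu_k)$, and because the roots of $\hat q$ fall into $k_0$ $\gamma$‑separated clusters this polynomial, and the derivatives of slightly higher order, involve products over at least $k_0-1$ of the inter‑cluster gaps, each of size $\ge\gamma$; the quantitative statement — that on $[-1,1]$ the block $\sum_{j\ge k-k_0+1}\tau^{2j}(\hat q^{(j)}(u)/j!)^2$ is bounded below by a constant multiple of $\gamma^{2(k_0-1)}\tau^{2(k-k_0+1)}$ in the relevant averaged sense against $\nu$ — should follow from the polynomial interpolation and majorization estimates of \prettyref{sec:poly}. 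Combining with the upper bound yields $\gamma^{2(k_0-1)}\tau^{2(k-k_0+1)}\lesssim_k\epsilon'$, i.e.~$\tau^2\le O_k\bigl((\epsilon/\gamma^{2(k_0-1)})^{1/(k-k_0+1)}\bigr)$, exactly as required.

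Finally, the case $\sigma<\hat\sigma$ is symmetric after interchanging $\nu$ and $\hat\nu$ and taking $\nu'=\hat\nu*N(0,\hat\sigma^2-\sigma^2)$; since $\hat\nu$ is not assumed to have uniformly bounded support in that formulation one first truncates $\hat\nu$ to an interval of length $O(\sqrt k)$ using the tail bound \prettyref{lmm:tail-hatU}, exactly as in the proof of \prettyref{thm:main-W1} for unknown variance, and then proceeds as above. I expect the main obstacle to be precisely the uniform lower bound on $\sum_{j\ge k-k_0+1}\tau^{2j}(\hat q^{(j)}(u)/j!)^2$: the leading coefficient $\hat q^{(k-k_0+1)}(u)$ genuinely vanishes at (isolated) points of $[-1,1]$, and since no lower bound on the mixing weights is assumed, the argument cannot simply average the pointwise estimate against $\nu$ — one must show that near such zeros the deficit is absorbed by the successive derivatives $\hat q^{(j)}(u)$ with $j>k-k_0+1$, and it is here that the majorization bounds for elementary symmetric functions of well‑separated points are essential. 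Everything else is a routine adaptation of the proof of \prettyref{prop:accuracy2}.
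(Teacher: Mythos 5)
Your first half coincides with the paper's own proof: after Hermite deconvolution (cf.~\prettyref{eq:mr-deconv}) you get $|m_r(\nu')-m_r(\hat\nu)|\le O_k(\epsilon)$ for $r\le 2k$, where $\nu'=\nu*N(0,\tau^2)$ and $\tau^2=|\sigma^2-\hat\sigma^2|$, and applying \prettyref{prop:stable2-separation} with $k'=k-k_0+1$ and separation $\gamma/2$ gives exactly the claimed bound on $W_1(\nu',\hat\nu)$; this is what the paper does. (The truncation you invoke for the case $\sigma<\hat\sigma$ is unnecessary here: \prettyref{prop:accuracy2} already assumes both $\nu$ and $\hat\nu$ are $k$-atomic supported on $[-M,M]$, so one simply interchanges their roles.)

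The genuine gap is exactly where you place it: the bound on $\tau$. The step you describe as the crux --- lower bounding $\Expect_{\nu'}[\hat q^2]$ by a constant (depending on $k$) times $\sum_{j}\tau^{2j}\Expect_\nu[(\hat q^{(j)}(U)/j!)^2]$ and then extracting $\gamma^{2(k_0-1)}\tau^{2(k-k_0+1)}$ from the block $j\ge k-k_0+1$ --- is not established, and its pointwise version is false: $\hat q^{(k-k_0+1)}$ is a degree-$(k_0-1)$ polynomial whose zeros lie in the convex hull of the atoms, and at such a zero $u$ the block is only $O_k(\tau^{2(k-k_0+2)})$, which for fixed $\gamma$ and small $\tau$ is much smaller than the target $\gamma^{2(k_0-1)}\tau^{2(k-k_0+1)}$; with no lower bound on the weights of $\nu$, the averaged version you would need is not justified by anything in \prettyref{sec:poly}. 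The paper avoids this issue entirely: no separate moment argument for $\tau$ is needed, because \prettyref{lmm:tau-W1} gives $\tau\le O_k\big(W_1(\nu*N(0,\tau^2),\hat\nu)\big)$ for \emph{any} $k$-atomic $\hat\nu$ --- conditionally on $U=x$ one has $W_1(x+\tau Z,\hat U)=\tau\, W_1\big(Z,(\hat U-x)/\tau\big)\ge c_k\tau$ with $c_k=\inf\{W_1(Z,Y):Y\text{ is $k$-atomic}\}>0$, and averaging over $U\sim\nu$ preserves the bound. Hence the adaptive bound on $W_1(\nu',\hat\nu)$ that you have already proved immediately yields the adaptive bound on $\tau$, and the triangle inequality $W_1(\nu,\hat\nu)\le W_1(\nu,\nu')+W_1(\nu',\hat\nu)\le O(\tau)+W_1(\nu',\hat\nu)$ finishes the proof; the elementary-symmetric-function analysis you flag as the main obstacle is not needed.
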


Using these propositions, we now prove the adaptive rate of the denoised method of moments previously announced in \prettyref{thm:main-W1-adaptive}:
\begin{proof}[Proof of \prettyref{thm:main-W1-adaptive}]
    By scaling it suffices to consider $M=1$. Recall that the Gaussian mixture is assumed to have $k_0$ $(\gamma,\omega)$-separated clusters in the sense of \prettyref{def:sep}, that is, there exists a partition $S_1,\dots,S_{k_0}$ of $[k]$ such that $|\mu_i - \mu_{i'}| \ge \gamma$ for any $i\in S_\ell$ and $i'\in S_{\ell'}$ such that $\ell\ne \ell'$, and $\sum_{i\in S_\ell}w_i\ge \omega$ for each $\ell$.

    Let $\hat\nu$ be the estimated mixing distribution which satisfies $W_1(\nu,\hat \nu)\le \epsilon$ by \prettyref{thm:main-W1}. Since $\gamma\omega\ge C\epsilon$ by assumption, for each $S_\ell$, there exists $i\in S_\ell$ such that $\mu_i$ is within distance $c\gamma$, where $c=1/C$, to some atom of $\hat \nu$. Therefore, the estimated mixing distribution $\hat \nu$ has $k_0$ $(1-2c)\gamma$-separated clusters. Denote the union of the support sets of $\nu$ and $\hat \nu$ by $\calS$.
    \begin{itemize}
        \item When $\sigma$ is known, each atom in $\calS$ is $\Omega(\gamma)$ away from at least $2(k_0-1)$ other atoms. Then \prettyref{eq:known-variance-W1-adaptive} follows from \prettyref{prop:stable1-separation} with $\ell=2k$ and $\ell'=(2k-1)-2(k_0-1)$. 
        \item When $\sigma$ is unknown, \prettyref{eq:unknown-variance-W1-adaptive} follows from a similar proof of \prettyref{eq:unknown-variance-W1} and \prettyref{eq:unknown-variance-sigma} with \prettyref{prop:accuracy2} replaced by \prettyref{prop:accuracy2-separation}. \qedhere
    \end{itemize}
\end{proof}

Finally, we note that if one only assumes the separation condition but not the lower bound on the weights, we can obtain an intermediate result that is stronger than \prettyref{eq:known-variance-W1} but weaker than \prettyref{eq:known-variance-W1-adaptive}. 
\begin{theorem}
    \label{thm:separation-W1}
    Under the conditions of \prettyref{thm:main-W1}, suppose $\sigma$ is known and the Gaussian mixture has $k_0$ $\gamma$-separated clusters. Then, with probability at least $1-\delta$,
    \begin{equation}
        \label{eq:known-variance-W1-separation}
        W_1(\nu,\hat \nu)\le O_k\pth{M\gamma^{-\frac{k_0-1}{2k-k_0}}\pth{\frac{n}{\log(k/\delta)}}^{-\frac{1}{4k-2k_0}}}.
    \end{equation}
\end{theorem}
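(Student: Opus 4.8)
The plan is to mirror the proof of \prettyref{thm:main-W1-adaptive}; the only difference is that, lacking a lower bound on the weights, one can no longer claim that the estimated $\hat\nu$ inherits a separated-cluster structure, so instead we feed the adaptive moment comparison theorem \prettyref{prop:stable1-separation} a configuration whose separation is forced by that of the true $\nu$ alone.

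\emph{Reduction and moment accuracy.} By the scaling $x\mapsto x/M$ we may assume $M=1$, so both $\nu$ and $\hat\nu$ are supported on $[-1,1]$ and $\sigma$ remains a bounded constant. Running \prettyref{algo:known} with the median-of-batches trick exactly as in the proof of \prettyref{thm:main-W1}, with probability at least $1-\delta$ the output $\hat\nu$ satisfies $|m_r(\hat\nu)-m_r(\nu)|\le\epsilon_{\mathrm m}$ for all $r=1,\dots,2k-1$, with $\epsilon_{\mathrm m}=O_k(\sqrt{\log(k/\delta)/n})$; on the same event the worst-case guarantee $W_1(\nu,\hat\nu)\le\epsilon_0$ of \prettyref{thm:main-W1} also holds, where $\epsilon_0=O_k(\epsilon_{\mathrm m}^{1/(2k-1)})$ by \prettyref{prop:stable1}. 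We work on this event.

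\emph{Applying the comparison theorem.} Let $\calS=\supp(\nu)\cup\supp(\hat\nu)$ and $\ell=|\calS|\le 2k$. Suppose we can show
\[
    \text{every atom of }\calS\text{ is at least }\gamma\text{ away from at least }k_0-1\text{ of the remaining atoms of }\calS.
\]
Then \prettyref{prop:stable1-separation} applies with this atom set and $\ell'=2k-k_0$ (the hypothesis ``all but at most $\ell'$ others'' follows from $\ell\le 2k$), and since $\max_{r\le\ell-1}|m_r(\nu)-m_r(\hat\nu)|\le\epsilon_{\mathrm m}$,
\[
    W_1(\nu,\hat\nu)\le \ell\pth{\frac{\ell\,4^{\ell-1}\epsilon_{\mathrm m}}{\gamma^{\ell-\ell'-1}}}^{1/\ell'}\le O_k\pth{\pth{\frac{\epsilon_{\mathrm m}}{\gamma^{\,k_0-1}}}^{\frac{1}{2k-k_0}}},
\]
using $\ell\le 2k$, $\gamma\le 2$, and $\ell-\ell'-1\le k_0-1$ in the second step. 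Plugging in $\epsilon_{\mathrm m}=O_k(\sqrt{\log(k/\delta)/n})$ gives $W_1(\nu,\hat\nu)\le O_k(\gamma^{-\frac{k_0-1}{2k-k_0}}(n/\log(k/\delta))^{-\frac{1}{4k-2k_0}})$, and reinstating $M$ yields \prettyref{eq:known-variance-W1-separation}.

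\emph{The separation structure --- the main obstacle.} It remains to justify the displayed structural statement. For an atom of $\nu$ it is immediate: if $\mu_i\in S_j$ in the partition of \prettyref{def:sep}, then $\mu_i$ is $\ge\gamma$ from every atom of $\nu$ in the other $k_0-1$ clusters. For an atom $\hat\mu$ of $\hat\nu$ one first uses the elementary fact that an open interval of length $<2\gamma$ meets atoms of $\nu$ from at most two of $S_1,\dots,S_{k_0}$ (three atoms from three distinct clusters span $\ge 2\gamma$), whence $\hat\mu$ is $\ge\gamma$ away from all atoms of $\nu$ in at least $k_0-2$ clusters, i.e.\ from at least $k_0-2$ atoms of $\calS$. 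Closing the one-atom gap is the delicate point, because absent a weight lower bound an atom of $\hat\nu$ can in principle lie between two clusters and be within $\gamma$ of every other atom of $\calS$. I expect this to be handled by a dichotomy in the size of $\gamma$: if $\gamma=O(\epsilon_0)$ the worst-case bound $W_1(\nu,\hat\nu)\le\epsilon_0$ already implies \prettyref{eq:known-variance-W1-separation}, since then $\gamma^{-\frac{k_0-1}{2k-k_0}}\epsilon_{\mathrm m}^{\frac{1}{2k-k_0}}=\Omega_k(\epsilon_{\mathrm m}^{1/(2k-1)})=\Omega_k(\epsilon_0)$ using $\epsilon_0^{2k-1}=O_k(\epsilon_{\mathrm m})$; while if $\gamma\gg\epsilon_0$, the bound $W_1(\nu,\hat\nu)\le\epsilon_0\ll\gamma$ confines the mass of $\hat\nu$ near such a straddling atom to a window of width $O(\gamma)$, which cannot accommodate the $k$ atoms of $\nu$ with mutual gaps $\ge\gamma$ (these span $\ge(k-1)\gamma$) apart from a few small-$k$ cases treated by hand --- equivalently, the straddling atom can be merged into a neighbour at $W_1$- and moment-cost $O(\epsilon_0)$, after which the structural statement holds for the modified pair and the previous step goes through with an $O(\epsilon_0)$ slack. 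Making this reconciliation of $\hat\nu$'s boundary atoms with the separation of $\nu$ rigorous is where essentially all the work of the theorem lies.
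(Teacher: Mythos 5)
Your reduction, the moment-accuracy bound, and the way you feed \prettyref{prop:stable1-separation} with $\ell=2k$, $\ell'=2k-k_0$ are all in line with the paper, but the step you flag as ``where essentially all the work of the theorem lies'' is exactly the step you have not proved, so the argument as written has a genuine gap. The paper closes it with a one-line observation that you missed: do not insist on separation $\gamma$ for the union support $\calS=\supp(\nu)\cup\supp(\hat\nu)$, but only $\gamma/2$. By the triangle inequality, any point of the real line --- in particular any atom of $\hat\nu$, regardless of its weight --- can be within $\gamma/2$ of atoms of $\nu$ from at most one cluster $S_j$ (two such atoms from distinct clusters would be less than $\gamma$ apart), hence it is at least $\gamma/2$ away from at least $k_0-1$ atoms of $\nu$, one from each of the remaining clusters; atoms of $\nu$ satisfy this trivially. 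Applying \prettyref{prop:stable1-separation} with separation parameter $\gamma/2$ instead of $\gamma$ only changes the bound by a factor $2^{(k_0-1)/(2k-k_0)}$, absorbed into $O_k(\cdot)$, and the rest of your computation goes through verbatim. Note that this fix uses neither a lower bound on the weights nor the worst-case bound $W_1(\nu,\hat\nu)\le\epsilon_0$; this is precisely why \prettyref{thm:separation-W1} holds without the $(\gamma,\omega)$ condition of \prettyref{thm:main-W1-adaptive}.

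Your proposed repair, by contrast, is both unnecessary and not sound as stated. In the regime $\gamma\gg\epsilon_0$ you argue that the $W_1$ bound ``confines the mass of $\hat\nu$ near such a straddling atom,'' but with no lower bound on the weights of $\hat\nu$ an atom of arbitrarily small weight can sit anywhere between two clusters, so $W_1(\nu,\hat\nu)\le\epsilon_0$ places no constraint on its location; a correct version would have to argue via weight times distance (if the straddling atom is at distance $d$ from $\supp(\nu)$ and has weight $w$ then $wd\le\epsilon_0$) and then control the moment perturbation caused by merging, and it would still have to treat separately the case where the straddling atom is close to an atom of $\nu$ --- a case analysis that, once written out, collapses back to the $\gamma/2$ observation above. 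The $\gamma=O(\epsilon_0)$ branch of your dichotomy is fine (the exponent arithmetic checks out), but the main branch is incomplete, so as it stands the proof is not finished.
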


\subsection{Unbounded means}
\label{sec:unbounded}
In the previous subsections, we assume that the means lie in a bounded interval. In the unbounded case, it is in fact impossible to estimate the mixing distribution under the Wasserstein distance\footnote{
    Let $\pi_\epsilon=\frac{1+\epsilon}{2}\delta_{0}+\frac{1-\epsilon}{2}\delta_{M}$. Then $W_1(\pi_0,\pi_\epsilon)=M\epsilon$, but $D(\pi_0\|\pi_\epsilon)\le O(\epsilon^2)$ independent of $M$. Choosing $\epsilon = o(1/\sqrt{n})$ and $M \gg 1/\epsilon$ leads to arbitrarily large estimation error.
}.
Nevertheless, provided that the weights are bounded away from zero, it is possible to estimate the support set of the mixing distribution with respect to the Hausdorff distance (cf.~\prettyref{eq:hausdorff}). This is the goal of this subsection.

In the unbounded case, blindly applying the previous moment-based methods does not work, because the estimated moments suffer from large variance due to the wide range of values of the means (cf.~\prettyref{lmm:var-tildem}). To resolve this issue, we shall apply the ``divide and conquer'' strategy as follows: partition the real line into intervals, estimate means in each interval separately, and report the union as the estimate of the set of centers. The complete procedure is given in \prettyref{algo:unbounded}.
\begin{algorithm}[ht]
    \caption{Estimate means of a Gaussian mixture model in the unbounded case.}
    \label{algo:unbounded}
    \begin{algorithmic}[1]
        \REQUIRE $n$ samples $X_1,\dots,X_{n}$, variance parameter $\sigma^2$ (optional), cluster parameter $L$, and weights threshold $\tau$, test sample size $n'$.
        \ENSURE a set of estimated means $\hat S$
        \STATE Merge overlapping intervals $[X_i\pm L]$ for $i\le n'$ into disjoint ones $I_1,\dots,I_s$.\label{line:cluster-I}
        \FOR{$j=1$ \TO $s$}
        \STATE Let $c_j,\ell_j$ be such that $I_j=[c_j\pm \ell_j]$.
        \STATE Let $C_j=\{X_i-c_j:X_i\in I_j,i>n'\}$.\label{line:cluster-C}
        \IF{$\sigma^2$ is specified}
        \STATE Let ${(\hat w,\hat \mu)}$ be the outcome of \prettyref{algo:known} with input $C_j$, $\sigma^2$, and $[-\ell_j,\ell_j]$.
        \ELSE 
        \STATE Let ${(\hat w,\hat \mu)}$ be the outcome of \prettyref{algo:Lindsay} with input $C_j$.
        \ENDIF
        \STATE Let $\hat S_j=\{\hat x_i+c_j: \hat w_i\ge \tau\}$.
        \ENDFOR
        \STATE Report $\hat S=\cup_j\hat S_j$.
    \end{algorithmic}
\end{algorithm}

The first step is to apply a clustering method that partitions the samples into a small number of groups. There are many clustering algorithms in practice such as the popular Lloyd's $k$-means clustering \cite{Lloyd1982}. In lines \ref{line:cluster-I} -- \ref{line:cluster-C}, we present a conservative yet simple clustering with the following guarantees (see \prettyref{lmm:cluster}):
\begin{itemize}
    \item each interval is of length at most $O(kL)$;
    \item a sample $X_i=U_i+\sigma Z_i$ is always in the same interval as the latent variable $U_i$. 
\end{itemize}
In the present clustering method, each cluster $C_j$ only contains samples that are not used line \ref{line:cluster-I} so that the intervals are independent of each $C_j$. This is a commonly used \emph{sample splitting} technique in statistics to simplify the analysis. Note that only a small number of samples are needed to determine the intervals (see \prettyref{thm:unbounded}). In the second step, we estimate means in each $I_j$ using samples $C_j$ and report the union of all means.

The statistical guarantee of \prettyref{algo:unbounded} is analyzed in \prettyref{thm:unbounded}. Note that \prettyref{thm:unbounded} holds in the worst-case, and can be improved in many situations: the number of samples in each $C_j$ increases proportionally to the total weights; the adaptive rate in \prettyref{thm:main-W1-adaptive} is applicable when separation is present within one interval; we can postulate fewer components in one interval based on information from other intervals.

\begin{theorem}
    \label{thm:unbounded}
    Assume in the Gaussian mixture \prettyref{eq:model} $w_i \geq \epsilon$, $\sigma$ is bounded. Let $S = \supp(\nu)$ be the set of means of the Gaussian mixture, and $\hat S$ be the output of \prettyref{algo:unbounded} with $L=\Theta(\sqrt{\log n})$ and $\tau=\epsilon/(2k)$. If $n\ge 2n'\ge \Omega(\frac{\log (k/\delta)}{\epsilon})$, then, with probability $1-\delta-n^{-\Omega(1)}$, we have 
    \[
        d_H(\hat S, S)\le  
        \begin{cases}
            ~O\pth{L k^{3.5} (\frac{\epsilon n}{\log(1/\delta)})^{-\frac{1}{4k-2}}/\epsilon},& \sigma\text{ is known},\\
            ~O\pth{L k^4 (\frac{\epsilon n}{\log(1/\delta)})^{-\frac{1}{4k}}/\epsilon},& \sigma\text{ is unknown},
        \end{cases}
    \]
    where $d_H$ denotes the Hausdorff distance (see \prettyref{eq:hausdorff}).
\end{theorem}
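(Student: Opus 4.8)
The plan is to reduce the unbounded problem, cluster by cluster, to the bounded-mean case already settled in \prettyref{thm:main-W1}, using the clustering step of \prettyref{algo:unbounded} to localize both the component means and the samples. First I would fix the good event on which the crude merging in lines~\ref{line:cluster-I}--\ref{line:cluster-C} behaves as intended. Since $L=\Theta(\sqrt{\log n})$ and $\sigma=O(1)$, a union bound over Gaussian tails gives $\max_{i\le n}\sigma|Z_i|\le L/2$ with probability $1-n^{-\Omega(1)}$, and on this event \prettyref{lmm:cluster} yields that (i) each merged interval $I_j=[c_j\pm\ell_j]$ has length $\ell_j=O(kL)$ and (ii) every $X_i=U_i+\sigma Z_i$ lies in the same interval as its latent $U_i$, so $\{X_i\in I_j\}=\{U_i\in I_j\}$. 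Because every weight is at least $\epsilon$ and $n'\ge\Omega(\log(k/\delta)/\epsilon)$, a Chernoff and union bound over the $k$ components show that each component is represented among $X_1,\dots,X_{n'}$; hence every $\mu_i$ lies in some $I_j$, and $S=\bigcup_j S_j$, $\hat S=\bigcup_j\hat S_j$ with $S_j\triangleq S\cap I_j$.

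Conditionally on the intervals (a function of $X_{1:n'}$ and the good event), fix a cluster $j$. The ``oracle'' cluster $\{X_i-c_j: U_i\in I_j,\ i>n'\}$ consists of $m_j$ i.i.d.\ draws from $\nu_j*N(0,\sigma^2)$ recentered by $c_j$, where $\nu_j=\frac{1}{w(I_j)}\sum_{i:\mu_i\in I_j}w_i\,\delta_{\mu_i-c_j}$ is supported on $[-\ell_j,\ell_j]$ and $w(I_j)=\sum_{i:\mu_i\in I_j}w_i\le 1$; it coincides with the actual $C_j$ on the good event, and every atom of $\nu_j$ has weight $w_i/w(I_j)\ge w_i\ge\epsilon$. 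Since $\Expect[m_j]=(n-n')w(I_j)\ge(n/2)\epsilon$, a Chernoff bound gives $m_j\ge\epsilon n/4$ for all $j$ simultaneously with probability $1-\delta/3$ when $\epsilon n\gtrsim\log(k/\delta)$; this also forces $m_j\ge 2k-1$, so \prettyref{lmm:sigma-exist} and \prettyref{lmm:U-exist} apply when $\sigma$ is unknown. Invoking \prettyref{thm:main-W1} on each cluster with sample size $m_j$, postulated order $k$ (a valid upper bound on the number of atoms of $\nu_j$, since \prettyref{prop:stable1}, \prettyref{prop:stable2} and the Gauss quadrature all tolerate fewer than $k$ atoms), box size $M=\ell_j=O(kL)$, and confidence $1-\delta/(3k)$, and union bounding over the $\le k$ clusters, yields simultaneously
\[
W_1(\nu_j,\hat\nu_j)\le
\begin{cases}
O\pth{L\,k^{2.5}\,(\epsilon n/\log(k/\delta))^{-\frac{1}{4k-2}}}, & \sigma\text{ known},\\
O\pth{L\,k^{3}\,(\epsilon n/\log(k/\delta))^{-\frac{1}{4k}}}, & \sigma\text{ unknown},
\end{cases}
\]
where $\hat\nu_j$ is the recentered estimate formed inside \prettyref{algo:unbounded}.

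The final step is to convert each per-cluster $W_1$ bound into a Hausdorff bound for the \emph{thresholded} sets. Write $W=W_1(\nu_j,\hat\nu_j)$. For a true atom $\mu_i-c_j$ of $\nu_j$ (weight $\ge\epsilon$), \prettyref{lmm:w1-prokhorov} with $\delta=2W/\epsilon$ gives $\hat\nu_j([\mu_i-c_j\pm\delta])\ge\epsilon/2$, so one of the at most $k$ atoms of $\hat\nu_j$ in that window has weight $\ge\epsilon/(2k)=\tau$ and therefore survives the threshold within distance $2W/\epsilon$ of $\mu_i$; conversely, a surviving atom $\hat x\in\hat S_j$ (weight $\ge\tau$) satisfies, via \prettyref{lmm:w1-prokhorov} with $\delta'=2W/\tau=4kW/\epsilon$, that $\nu_j([\hat x-c_j\pm\delta'])>0$, so $\hat x$ is within $4kW/\epsilon$ of some $\mu_i\in S_j$. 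Hence $d_H(S_j,\hat S_j)\le 4kW/\epsilon$, and since $S=\bigcup_j S_j$, $\hat S=\bigcup_j\hat S_j$, we get $d_H(S,\hat S)\le\max_j 4kW_1(\nu_j,\hat\nu_j)/\epsilon$, which is exactly the two claimed rates after substituting the displayed $W_1$ estimates. I expect the main obstacle to be the probabilistic bookkeeping: choosing the clustering event, the per-component representation event, the per-cluster sample-count events, and the $\le k$ per-cluster DMM events so that their failure probabilities sum to $\delta+n^{-\Omega(1)}$, and justifying that passing to the oracle clusters and conditioning on the random $I_j$ and $m_j$ truly leaves the data i.i.d.\ from the localized mixtures (this is what splitting $X_{1:n'}$ off from $X_{n'+1:n}$ buys). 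A smaller but genuine subtlety, already visible in the choice $\tau=\epsilon/(2k)$ rather than $\epsilon/2$, is that a single true atom may be split among up to $k$ estimated atoms, so the threshold must be lowered by a factor $k$ while still staying comfortably above the mass $W/\delta'$ that any spurious atom can carry.
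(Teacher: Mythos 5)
Your proposal is correct and follows the paper's proof in all essentials: the same good event from \prettyref{lmm:cluster} (intervals of length $O(kL)$, samples landing in the same interval as their latent variables, every component represented among the first $n'$ samples), the same per-cluster invocation of \prettyref{thm:main-W1} with box size $M=O(kL)$, sample size $\Omega(\epsilon n)$ and confidence $\delta/\Theta(k)$, and the same conversion from the per-cluster $W_1$ bound to a Hausdorff bound via the weight threshold $\tau=\epsilon/(2k)$, which yields exactly the claimed rates after multiplying by $\Theta(k/\epsilon)$. The one place where your route differs is the treatment of the dependence created by the truncation event $\{|Z_i|\le L/2\}$: the paper conditions on this event and shows, via \prettyref{lmm:conditional-moments-normal}, that the conditional moments of each localized mixture differ from the unconditional ones by at most $n^{-1}$, so the moment-accuracy bounds behind \prettyref{thm:main-W1} still apply; you instead define oracle clusters by selection on the latent $U_i$ (which, conditionally on the intervals and the assignment counts, are genuinely i.i.d.\ from the localized mixtures), apply \prettyref{thm:main-W1} to them unconditionally, and observe that on the good event the algorithm's clusters coincide with the oracle clusters, so a union bound over the good event, the representation and sample-count events, and the $\le k$ per-cluster DMM events gives the stated $\delta+n^{-\Omega(1)}$ failure probability. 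This coupling argument is a valid and arguably cleaner substitute for the conditional-moment-bias computation. Likewise, your two applications of \prettyref{lmm:w1-prokhorov} (giving $d_H\le 4kW_1/\epsilon$ per cluster) do the same job as the paper's \prettyref{lmm:w1-hausdorff-2} (which gives $2kW_1/\epsilon$), with no effect on the stated rate.
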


\section{Lower bounds}
\label{sec:lb}
This section introduces minimax lower bounds for estimating Gaussian location mixture models which certify the optimality of our estimators. We will apply Le Cam's two-point method, namely, find two Gaussian mixtures that are statistically close. Then any estimator suffers a loss at least proportional to the distance between these two mixing distributions.

To bound the statistical distance between two mixture models, one commonly used technique is \emph{moment matching}, \ie, $\nu*N(0,1)$ and $\nu*N(0,1)$ are statistically close if ${\bf m}_\ell(\nu)={\bf m}_\ell(\nu')$ for some large $\ell$. This is demonstrated in \prettyref{fig:tv-latent-mixture}, and is made precise in \prettyref{lmm:d-matching}. Statistical closeness via moment matching has been established, for instance, by orthogonal expansion \cite{WV2010,CL11}, by Taylor expansion \cite{HP15,WY14}, and by the best polynomial approximation \cite{WY15}. Similar results to this lemma were previously obtained in \cite{WV2010,CL11,HP15}. 
\begin{figure}[ht]
    \centering
    \subfigure[Mixing distributions]
    {\label{fig:tv-latent}\includegraphics[width=.4\linewidth]{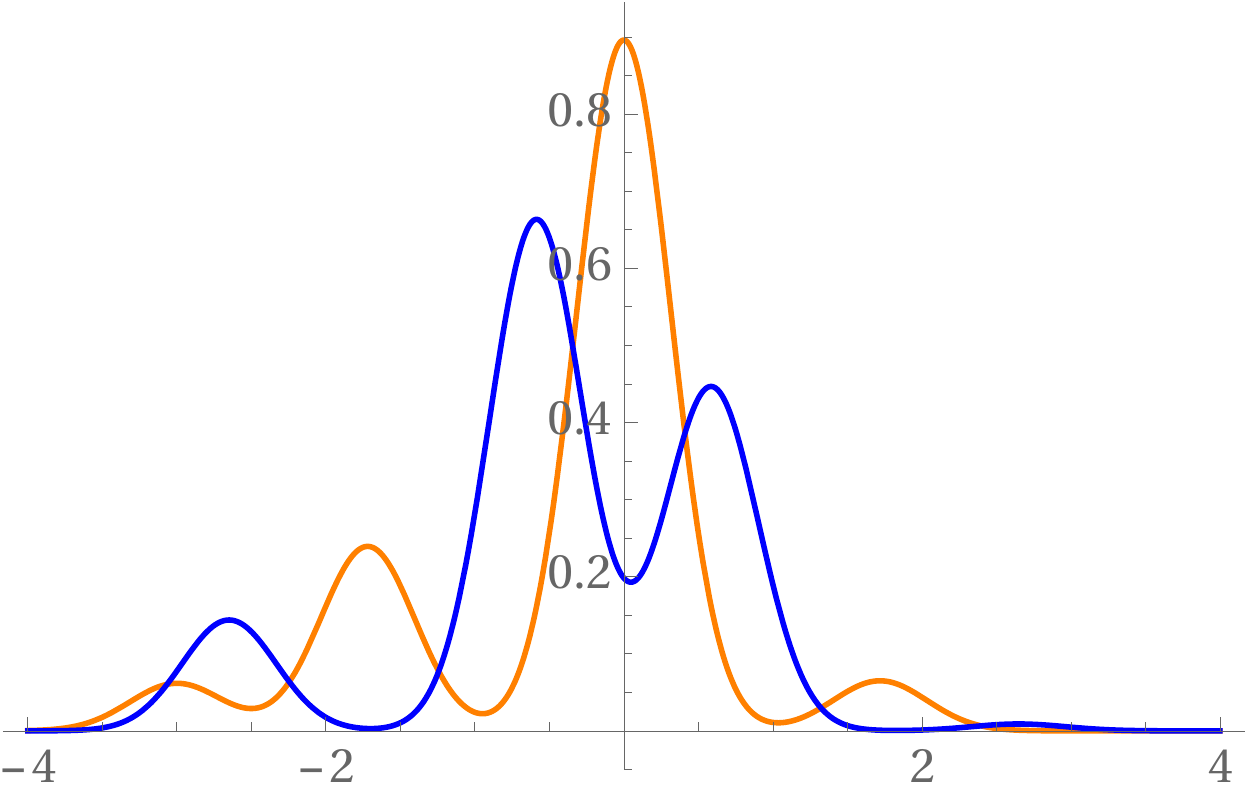}}
    \qquad
    \subfigure[Mixture distributions]
    {\label{fig:tv-mixture}\includegraphics[width=.4\linewidth]{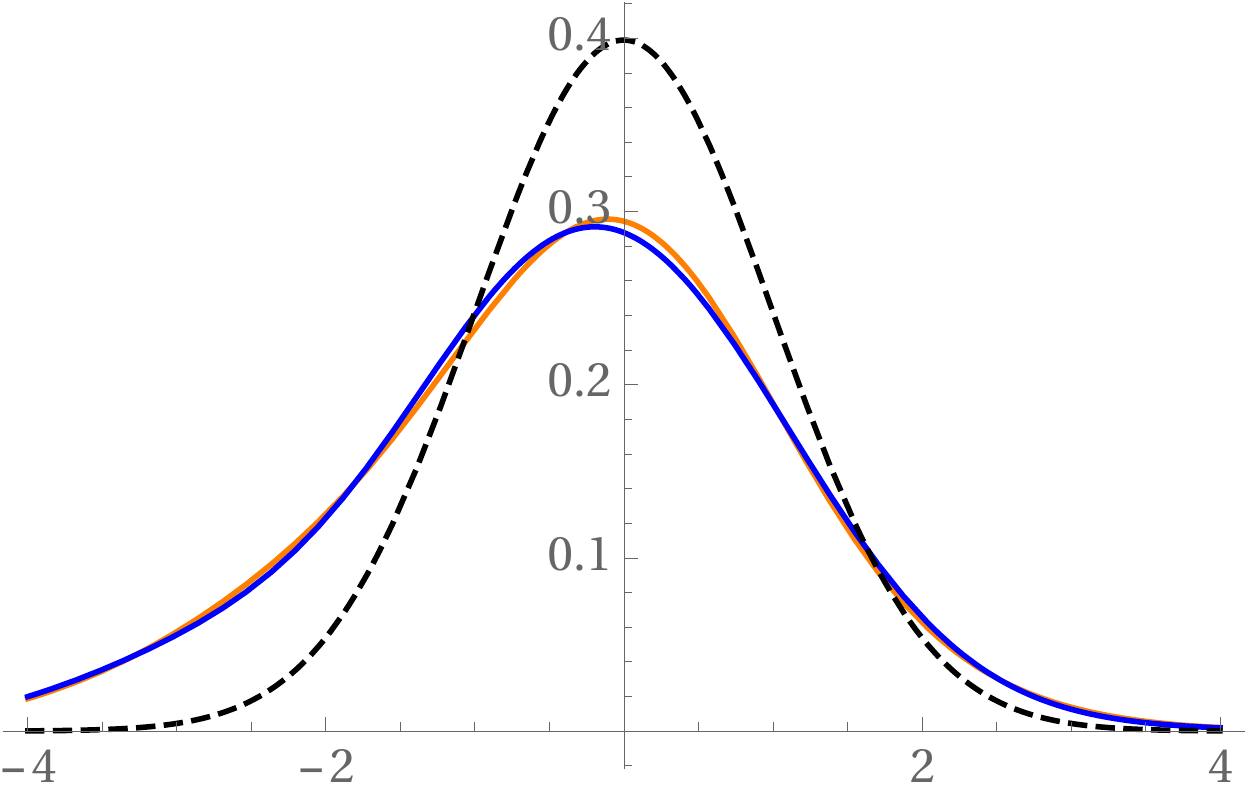}}
    \caption{Statistical closeness via moment matching. In \subref{fig:tv-latent}, two different mixing distributions coincide on their first six moments; in \subref{fig:tv-mixture}, the mixing distributions are convolved with the standard normal distribution (the black dashed line), and the Gaussian mixtures are statistically close. 
    }
    \label{fig:tv-latent-mixture}
\end{figure}
\begin{lemma}
    \label{lmm:d-matching}
    Suppose $\nu$ and $\nu'$ are centered distributions such that ${\bf m}_\ell(\nu)={\bf m}_\ell(\nu')$. 
    \begin{itemize}
        \item If $\nu$ and $\nu'$ are $\epsilon$-subgaussian for $\epsilon<1$, then
        \begin{equation}
            \label{eq:chi2-subg}
            \chi^2(\nu*N(0,1)\|\nu'*N(0,1))\le O\pth{\frac{1}{\sqrt{\ell}}\frac{\epsilon^{2\ell+2}}{1-\epsilon^2}}.
        \end{equation}
        \item If $\nu$ and $\nu'$ are supported on $[-\epsilon,\epsilon]$ for $\epsilon<1$, then
        \begin{equation}
            \label{eq:chi2-bdd}
            \chi^2(\nu*N(0,1)\|\nu'*N(0,1))\le O\pth{\pth{\frac{e\epsilon^2}{\ell+1}}^{\ell+1}}.
        \end{equation}
    \end{itemize}
\end{lemma}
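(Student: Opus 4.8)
The plan is to bound the $\chi^2$-divergence between the two Gaussian mixtures $\nu * N(0,1)$ and $\nu' * N(0,1)$ directly in terms of the moment discrepancy, which here vanishes up to order $\ell$. Write $f = \nu * \varphi$ and $g = \nu' * \varphi$ where $\varphi$ is the standard normal density. The key observation is that, since the mixing distributions are compactly supported (or subgaussian), the densities $f$ and $g$ are close to $\varphi$ and in particular $g$ is bounded below by a Gaussian-type envelope, so $\chi^2(f\|g) = \int \frac{(f-g)^2}{g} \le \int \frac{(f-g)^2}{c\,\varphi_\tau}$ for a slightly wider Gaussian $\varphi_\tau$ (with variance $\tau > 1$), up to a constant $c$ depending on $\tau$ and $\epsilon$. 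The point is to reduce the problem to an $L^2$-type estimate with Gaussian weight, where Hermite polynomials diagonalize everything.

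First I would expand $f - g$ using the fact that the Gaussian density is analytic: writing $\varphi(x - u) = \varphi(x)\sum_{j\ge 0} \frac{u^j}{j!} \mathrm{He}_j(x)$ in terms of the (probabilist's) Hermite polynomials, we get $f(x) - g(x) = \varphi(x)\sum_{j\ge 0} \frac{m_j(\nu) - m_j(\nu')}{j!}\mathrm{He}_j(x)$. Because $m_j(\nu) = m_j(\nu')$ for $j \le \ell$, the sum starts at $j = \ell + 1$. Next I would plug this into the weighted $L^2$ bound. Using the orthogonality relation $\int \mathrm{He}_i(x)\mathrm{He}_j(x)\varphi(x)\diff x = j!\,\delta_{ij}$ — and a variant with the wider Gaussian weight $\varphi_\tau$, which introduces controllable factors like $\tau^{-j}$ or $(2-1/\tau)^j$ — the cross terms essentially decouple, leaving $\chi^2(f\|g) \lesssim \sum_{j > \ell} \frac{(m_j(\nu) - m_j(\nu'))^2}{j!}\, \rho^j$ for some $\rho < 1$ coming from the weight mismatch. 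Finally I would bound $|m_j(\nu) - m_j(\nu')| \le |m_j(\nu)| + |m_j(\nu')| \le 2\epsilon^j$ in the bounded case, or use the subgaussian moment bound $|m_j| \le (C\epsilon\sqrt{j})^j$ in the subgaussian case, and sum the resulting series. In the bounded case, $\sum_{j > \ell} \frac{(2\epsilon^j)^2}{j!}\rho^j$ is dominated by its first term by a ratio test, and Stirling's formula $(\ell+1)! \ge ((\ell+1)/e)^{\ell+1}$ yields the claimed $O((e\epsilon^2/(\ell+1))^{\ell+1})$ (absorbing $\rho$ and constants). In the subgaussian case, the moment growth $(C\epsilon\sqrt{j})^{2j}/j! \sim (C'\epsilon^2)^j \cdot \text{poly}$ gives a geometric series with ratio $\epsilon^2$, and tracking the $\frac{1}{\sqrt{\ell}}$ from Stirling's refinement produces the bound $O(\frac{1}{\sqrt\ell}\frac{\epsilon^{2\ell+2}}{1-\epsilon^2})$.

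The main obstacle I anticipate is the denominator in the $\chi^2$: one needs a clean lower bound on $g = \nu' * \varphi$ that is uniform enough to pull out of the integral but not so lossy that it destroys the exponential decay. Since $\nu'$ is supported in $[-\epsilon,\epsilon] \subseteq [-1,1]$, one has $g(x) \ge \varphi(x+\mathrm{sgn}(x)) \ge e^{-|x| - 1/2}\varphi(x)$ or, more usefully, $g(x) \ge c_\tau \varphi_\tau(x)$ for the variance-$\tau$ Gaussian with any $\tau > 1$; the right choice of $\tau$ (e.g.\ $\tau$ slightly larger than $1$, independent of $\ell$, or mildly growing) trades the constant $c_\tau$ against the geometric factor $\rho = \rho(\tau)$ in the Hermite series, and one must check this trade-off still leaves room for the stated rates. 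For the subgaussian case the same strategy works but the envelope argument is slightly more delicate since $\nu'$ is only subgaussian rather than compactly supported; here I would instead directly use $\chi^2(f\|g) \le \int (f^2/g) - 1$ and bound via a Gaussian with variance $1 + \epsilon^2 < 2$ in the denominator, which is valid because convolving a $\epsilon$-subgaussian with $N(0,1)$ is pointwise at least a constant times $N(0, 1+\epsilon^2 + o(1))$ up to the needed accuracy. The rest — orthogonal expansion, term-by-term integration, and summing the dominated series — is routine.
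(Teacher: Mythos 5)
Your skeleton---expanding $f-g$ in Hermite polynomials so that the series starts at $j=\ell+1$, invoking orthogonality, then finishing with subgaussian/bounded moment bounds and Stirling---is exactly the route the paper takes (via \prettyref{lmm:chi2-moments}). The gap is in the step you yourself flag as the main obstacle: the pointwise lower bound on the denominator $g=\nu'*N(0,1)$, where your envelope goes in the wrong direction. A mixture of unit-variance Gaussians with centers in $[-\epsilon,\epsilon]$ has tails no heavier than $\phi(x)e^{\epsilon|x|-\epsilon^2/2}$, so, writing $\phi_\tau$ for the $N(0,\tau)$ density with $\tau>1$, the ratio $g(x)/\phi_\tau(x)$ tends to $0$ as $|x|\to\infty$; there is no constant $c_\tau>0$ with $g\ge c_\tau\phi_\tau$. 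Concretely, for $\nu'=\frac12(\delta_{-\epsilon}+\delta_{\epsilon})$ one has $g(x)=\phi(x)e^{-\epsilon^2/2}\cosh(\epsilon x)$, and $g/\phi_\tau\to 0$ for every $\tau>1$; since this $\nu'$ is $\epsilon$-subgaussian, the same example refutes the claim that an $\epsilon$-subgaussian mixing distribution convolved with $N(0,1)$ is pointwise at least a constant times $N(0,1+\epsilon^2+o(1))$. A wider Gaussian can only serve as an upper envelope; any valid Gaussian comparison in the denominator must have variance $\tau\le 1$.

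The paper's fix is both simpler and lossless, and it is the one missing idea in your write-up: because $\nu'$ is \emph{centered} with variance $\sigma^2\le\epsilon^2$, one writes $g(x)=\phi(x)\,\Expect[\exp(U'x-U'^2/2)]$ and applies Jensen's inequality to get $g(x)\ge\phi(x)e^{-\sigma^2/2}$, i.e.\ a constant times the standard Gaussian itself. This keeps the weight $\phi$ in the resulting $L^2$ estimate, so Hermite orthogonality is exact and
\[
\chi^2(\nu*N(0,1)\|\nu'*N(0,1))\le e^{\sigma^2/2}\sum_{j\ge \ell+1}\frac{(m_j(\nu)-m_j(\nu'))^2}{j!},
\]
after which your moment-bound-plus-Stirling endgame goes through verbatim for both cases. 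If you instead insist on a mismatched Gaussian weight, you must take $\tau<1$, and then the cross terms do not simply decouple: relative to the mismatched weight the diagonal terms $\Expect[H_j^2]$ exceed $j!$ by a factor exponential in $j$, so the trade-off you sketch degrades the constant inside $(\,\cdot\,)^{\ell+1}$ and needs substantially more bookkeeping to recover the stated bounds.
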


\begin{remark}[Tightness of \prettyref{lmm:d-matching}]
    When $\ell$ is odd, there exists a pair of $\epsilon$-subgaussian distributions $\nu$ and $\nu'$ such that ${\bf m}_\ell(\nu)={\bf m}_\ell(\nu')$, while $\chi^2(\nu*N(0,1)\|\nu'*N(0,1))\ge \Omega_\ell(\epsilon^{2\ell+2})$. Such a pair can be constructed using Gauss quadrature introduced in \prettyref{sec:moment}. To this end, let $\ell=2k-1$ and we set $\nu=N(0,\epsilon^2)$ and $\tilde g_k$ to be its $k$-point Gauss quadrature. Then ${\bf m}_{2k-1}(\nu)={\bf m}_{2k-1}(\tilde g_k)$, and $\tilde g_k$ is also $\epsilon$-subgaussian (see \prettyref{lmm:quadrature-moments-error}). It is shown in \cite[Eq.~(54)]{WV2010} that
    \[
        \chi^2(\tilde g_k*N(0,1)\|\nu*N(0,1))=\sum_{j \ge 2k}\frac{1}{j!}\pth{\frac{\epsilon^2}{1+\epsilon^2}}^j|\Expect_{g_k}[H_j]|^2,
    \]
    where $g_k$ is the $k$-point Gauss quadrature of the standard normal distribution, and $H_k$ is the degree-$k$ Hermite polynomial defined in \prettyref{eq:Hermite1}. Since $\Expect_{g_k}[H_{2k}]=-k!$ (see \prettyref{lmm:quadrature-moments-Hermite}), 
	for any $\epsilon<1$, we have 
    \[
        \chi^2(\tilde g_k*N(0,1)\|\nu*N(0,1))\ge \frac{(k!)^2}{(2k)!}\pth{\frac{\epsilon^2}{1+\epsilon^2}}^{2k}\ge (\Omega(\epsilon))^{4k},
    \]
\end{remark}

In view of \prettyref{lmm:d-matching}, the best lower bound follows from two different mixing distributions $\nu$ and $\nu'$ such that ${\bf m}_\ell(\nu)={\bf m}_\ell(\nu')$ with the largest degree $\ell$, which is $2k-2$ when both distributions are $k$-atomic and $2k-1$ when one of them is $k$-atomic (see \prettyref{lmm:identify} and the following \prettyref{rmk:stable}).
Next we provide the precise minimax lower bounds for the case of known and unknown variance separately.

\paragraph{Known variance. } We shall assume $\sigma=1$. First, we define the space of all $k$ Gaussian location mixtures as 
\[
    \calP_k=\{\nu*N(0,1): \nu \text{ is $k$-atomic supported on }[-1,1]\},
\]
and we consider the worst-case risk over all mixture models in $\calP_k$. From the identifiability of discrete distributions in \prettyref{lmm:identify}, two different $k$-atomic distributions can match up to $2k-2$ moments. Therefore, using \prettyref{lmm:d-matching}, the best minimax lower bound using Le Cam's method is obtained from the optimal pair of distributions for the following:
\begin{equation}
    \label{eq:optimal-lb}
    \begin{aligned}
        \max & ~W_1(\nu,\nu')\\
        \mathrm{s.t.}&~{\bf m}_{2k-2}(\nu)={\bf m}_{2k-2}(\nu'),\\
        &~ \nu,\nu' \text{ are $k$-atomic on }[-\epsilon,\epsilon].
    \end{aligned}
\end{equation}
The value of the above optimization problem is $\Omega(\epsilon/k)$ (see \prettyref{lmm:max-W1}). Using $\epsilon=\sqrt{k}n^{-\frac{1}{4k-2}}$, we obtain the following minimax lower bound:
\begin{prop}
    \label{prop:lb-known-sigma}
    \[
        \inf_{\hat \nu}\sup_{P\in\calP_k}\Expect_P W_1(\nu,\hat\nu) \ge \Omega\pth{\frac{1}{\sqrt{k}}n^{-\frac{1}{4k-2}}}
    \]
    where $\hat\nu$ is an estimator measurable with respect to $X_1,\ldots,X_n\iiddistr P=\nu*N(0,1)$.
\end{prop}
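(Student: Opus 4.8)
The plan is to apply Le Cam's two-point method, reducing the minimax risk to the task of constructing two statistically indistinguishable $k$-atomic mixing distributions whose Wasserstein distance is as large as possible. Concretely, I would take a pair $\nu,\nu'$ achieving (up to constants) the value of the optimization problem \prettyref{eq:optimal-lb}, scale the support to $[-\epsilon,\epsilon]$ for a parameter $\epsilon$ to be optimized, and then use the standard reduction: for any estimator $\hat\nu$,
\[
\inf_{\hat\nu}\sup_{P\in\calP_k}\Expect_P W_1(\nu,\hat\nu)\ge \frac{W_1(\nu,\nu')}{2}\pth{1-\TV(P^{\otimes n},P'^{\otimes n})},
\]
where $P=\nu*N(0,1)$, $P'=\nu'*N(0,1)$. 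Here I need $\nu,\nu'$ to be $k$-atomic so both $P,P'\in\calP_k$, and I need them centered (or to fall within $[-1,1]$ after scaling, which holds once $\epsilon\le 1$).

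The key quantitative inputs are two results stated earlier in the excerpt. First, \prettyref{lmm:max-W1} (invoked in the text right before the proposition) gives that the value of \prettyref{eq:optimal-lb} is $\Omega(\epsilon/k)$, so we may fix $\nu,\nu'$ that are $k$-atomic on $[-\epsilon,\epsilon]$, agree on their first $2k-2$ moments, and satisfy $W_1(\nu,\nu')\ge \Omega(\epsilon/k)$. Second, since $\nu$ and $\nu'$ are supported on $[-\epsilon,\epsilon]$ and share $\ell=2k-2$ moments, \prettyref{lmm:d-matching}, specifically the bounded-support bound \prettyref{eq:chi2-bdd}, gives
\[
\chi^2(P\|P')\le O\pth{\pth{\frac{e\epsilon^2}{2k-1}}^{2k-1}}.
\]
Then by tensorization of $\chi^2$-divergence, $\chi^2(P^{\otimes n}\|P'^{\otimes n})=(1+\chi^2(P\|P'))^n-1$, and using the standard bounds $\TV\le\sqrt{\tfrac12\ln(1+\chi^2)}$ together with $(1+x)^n\le e^{nx}$, we get $\TV(P^{\otimes n},P'^{\otimes n})^2\le \tfrac12 n\chi^2(P\|P')\le O\pth{n(e\epsilon^2/(2k-1))^{2k-1}}$.

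The final step is to choose $\epsilon$ so that this total-variation distance is bounded away from $1$, say $\le 1/2$; this requires $n\,\epsilon^{4k-2}=\Theta_k(1)$, i.e. $\epsilon=\Theta(\sqrt{k}\,n^{-\frac{1}{4k-2}})$ as asserted in the text (the $\sqrt{k}$ absorbing the $(2k-1)$-type factors). With this choice $\epsilon\le 1$ for $n$ large, so the construction is valid, and $W_1(\nu,\nu')\ge\Omega(\epsilon/k)=\Omega(k^{-1/2}n^{-\frac{1}{4k-2}})$, which after the constant-factor loss from Le Cam yields the claimed bound. I expect the only real subtlety to be bookkeeping the $k$-dependent constants — making sure the $\poly(k)$ factors coming out of \prettyref{eq:chi2-bdd} and \prettyref{lmm:max-W1} combine to give exactly the $\tfrac{1}{\sqrt k}$ in the statement and not a worse power; the probabilistic/analytic content is entirely standard once the two cited lemmas are in hand.
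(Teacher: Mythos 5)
Your proposal is correct and follows essentially the same route as the paper's proof: take the extremal pair from \prettyref{eq:optimal-lb} on $[-\epsilon,\epsilon]$ (whose $W_1$ separation $\Omega(\epsilon/k)$ comes from \prettyref{lmm:max-W1}), bound $\chi^2(\nu*N(0,1)\|\nu'*N(0,1))$ via the bounded-support case \prettyref{eq:chi2-bdd} of \prettyref{lmm:d-matching}, set $\epsilon=\Theta(\sqrt{k}\,n^{-\frac{1}{4k-2}})$, and conclude by Le Cam's two-point method. The only difference is that you spell out the tensorization and $\TV$-vs-$\chi^2$ bookkeeping that the paper compresses into ``applying Le Cam's method,'' and your accounting of the $k$-dependent constants is consistent with the claimed $\frac{1}{\sqrt{k}}$ factor.
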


\begin{remark}
    \label{rmk:lb-adaptive-known}
    The above lower bound argument can be easily extended to prove the optimality of \prettyref{eq:known-variance-W1-adaptive} in \prettyref{thm:main-W1-adaptive}, where the mixture satisfies further separation conditions in the sense of \prettyref{def:sep}. In this case, the main difficulty is to estimate parameters in the biggest cluster. When there are $k_0$ $\gamma$-separated clusters, the biggest cluster is of order at most $k'=k-k_0+1$. Similar to \prettyref{eq:optimal-lb}, let $\tilde\nu$ and $\tilde\nu'$ be two $k'$-atomic distributions on $[-\epsilon,\epsilon]$. Consider the following mixing distributions
    \[
        \nu=\frac{k_0-1}{k_0}\nu_0+\frac{1}{k_0}\tilde\nu,\quad \nu'=\frac{k_0-1}{k_0}\nu_0+\frac{1}{k_0}\tilde\nu',
    \]
    where $\nu_0$ is the uniform distribution over $\{\pm 2\gamma,\pm 3\gamma,\ldots\}$ of cardinality $k_0-1$. Then both mixture models have $k_0$ $(\gamma,\frac{1}{k_0})$-separated clusters. Thus the minimax lower bound $\Omega(\frac{1}{\sqrt{k'}}n^{-\frac{1}{4k'-2}})$ analogously follows from Le Cam's method.
\end{remark}

By similar argument, when the order of the mixture model is $\Omega(\frac{\log n}{\log \log n})$, we obtain from \prettyref{eq:optimal-lb} a pair $\nu$ and $\nu'$ supported on $[-1,1]$ with identical first $\frac{\log n}{\log\log n}$ moments. This leads to the following lower bound which matches the upper bound in \prettyref{thm:large-k}.
\begin{prop}
\label{prop:large-k-lb}
If $k=\Omega(\frac{\log n}{\log\log n})$, then
\[
\inf_{\hat \nu}\sup_{P\in\calP_k}\Expect W_1(\nu,\hat\nu)\ge \Omega\pth{\frac{\log\log n}{\log n}}.
\]
\end{prop}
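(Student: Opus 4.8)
The goal is a two-point lower bound of order $\frac{\log\log n}{\log n}$ for estimating the mixing distribution in $W_1$ when the model order is allowed to be $k = \Omega(\frac{\log n}{\log\log n})$. The natural strategy is Le Cam's method combined with moment matching exactly as in \prettyref{prop:lb-known-sigma}, but now taking $\ell$ (the number of matched moments) to grow with $n$. First I would invoke the solution of the extremal problem \prettyref{eq:optimal-lb} (\prettyref{lmm:max-W1}): for any even number $\ell = 2m-2$ of matched moments, there exist two $m$-atomic distributions $\nu, \nu'$ on $[-\epsilon,\epsilon]$ with ${\bf m}_{\ell}(\nu) = {\bf m}_{\ell}(\nu')$ and $W_1(\nu,\nu') = \Omega(\epsilon/m)$. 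Since we are in the regime $k = \Omega(\frac{\log n}{\log\log n})$, we may take $\epsilon = 1$ (the support constraint $[-1,1]$ is not binding in the sense that we keep the scale fixed), so that $\nu,\nu'$ are $k$-atomic distributions on $[-1,1]$ matching their first $\ell$ moments with $W_1(\nu,\nu') = \Omega(1/k)$.

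Next I would bound the statistical distance between the two hypotheses $P = \nu * N(0,1)$ and $Q = \nu' * N(0,1)$ using \prettyref{lmm:d-matching}. With $\nu, \nu'$ supported on $[-1,1]$ (so $1$-subgaussian, or bounded with $\epsilon = 1$), the bound \prettyref{eq:chi2-subg} does not decay in $\epsilon$, so instead I would scale down: let $\nu_\epsilon, \nu'_\epsilon$ be the pushforwards under $x \mapsto \epsilon x$, which are $\epsilon$-subgaussian, still $k$-atomic, still match $\ell$ moments, and have $W_1(\nu_\epsilon,\nu'_\epsilon) = \Omega(\epsilon/k)$. Then \prettyref{eq:chi2-subg} gives $\chi^2(\nu_\epsilon * N(0,1) \,\|\, \nu'_\epsilon * N(0,1)) \le O(\frac{\epsilon^{2\ell+2}}{\sqrt{\ell}})$. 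The $n$-fold product satisfies $\chi^2(P^{\otimes n} \| Q^{\otimes n}) \le (1 + O(\epsilon^{2\ell+2}/\sqrt{\ell}))^n - 1$, which is $O(1)$ — say at most a constant — provided $n \epsilon^{2\ell+2} = O(1)$, i.e. $\epsilon \le n^{-1/(2\ell+2)}$ (absorbing the $\sqrt{\ell}$ factor). Choosing $\ell = \Theta(\frac{\log n}{\log\log n})$ and $k = m = \ell/2 + 1 = \Theta(\frac{\log n}{\log\log n})$, the exponent $\frac{1}{2\ell+2} = \Theta(\frac{\log\log n}{\log n})$, so $\epsilon = n^{-\Theta(\log\log n/\log n)} = e^{-\Theta(\log\log n)} = \Theta(\frac{1}{\poly(\log n)})$; more carefully, picking $\ell$ proportional to $\frac{\log n}{\log\log n}$ with the right constant makes $\epsilon = \Theta(1)$ bounded below by an absolute constant, while $\epsilon/k = \Theta(\frac{\log\log n}{\log n})$.

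Finally I would assemble the two-point argument: since $\TV(P^{\otimes n}, Q^{\otimes n})$ is bounded away from $1$ (via $\chi^2 = O(1)$ and Cauchy–Schwarz / $\TV \le \sqrt{\chi^2}$... actually $\TV \le \frac12\sqrt{\chi^2}$ suffices when $\chi^2$ is a small constant, or one rescales $\epsilon$ by a further constant to force $\chi^2 \le 1/4$), Le Cam's lemma yields $\inf_{\hat\nu}\max\{\Expect_P W_1(\nu,\hat\nu), \Expect_Q W_1(\nu',\hat\nu)\} \ge \frac{1}{2}(1 - \TV)\, W_1(\nu,\nu') = \Omega(\epsilon/k) = \Omega(\frac{\log\log n}{\log n})$, which gives the claimed minimax bound over $\calP_k$.

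**Main obstacle.** The delicate point is the joint calibration of the three parameters $\ell$ (matched moments), $k = m$ (atoms, hence $\ell \approx 2k$), and $\epsilon$ (scale) so that simultaneously: (i) $\epsilon$ does not collapse — we need $\epsilon = \Omega(1)$, or at least $\epsilon/k = \Omega(\frac{\log\log n}{\log n})$ after accounting for the $1/k$ loss in the extremal problem; (ii) $n\epsilon^{2\ell+2}/\sqrt{\ell} = O(1)$ so the $n$ samples cannot distinguish the hypotheses; and (iii) $k = O(\frac{\log n}{\log\log n})$ stays within the stated regime (indeed $k$ is chosen of exactly this order). The balance works precisely because $\frac{1}{2\ell+2}\log n = \Theta(\log\log n)$ when $\ell = \Theta(\frac{\log n}{\log\log n})$, so $\epsilon^{2\ell+2} = n^{-1}$ forces $\log(1/\epsilon) = \Theta(\log\log n)$ — but one must be careful that $\log(1/\epsilon)$ being $\Theta(\log\log n)$ means $\epsilon = (\log n)^{-\Theta(1)}$, a vanishing quantity, so the $W_1$ lower bound becomes $\Omega(\epsilon/k) = \Omega((\log n)^{-\Theta(1)} \cdot \frac{\log\log n}{\log n})$, which is too small. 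The fix — and the crux of getting the sharp $\frac{\log\log n}{\log n}$ rate — is to use the \emph{bounded-support} bound \prettyref{eq:chi2-bdd} with $\epsilon = 1$ fixed, which gives $\chi^2 \le O((e/(\ell+1))^{\ell+1})$ decaying \emph{super-polynomially in} $\ell$ alone with no scale shrinkage; then $n \cdot (e/(\ell+1))^{\ell+1} = O(1)$ requires $(\ell+1)\log(\ell+1) = \Theta(\log n)$, i.e. $\ell = \Theta(\frac{\log n}{\log\log n})$, with $\epsilon$ held at a constant, so $W_1(\nu,\nu') = \Omega(1/k) = \Omega(\frac{\log\log n}{\log n})$ directly. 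This route — fixed constant support, no rescaling, exploiting the factorial decay in \prettyref{eq:chi2-bdd} — is the one that actually closes the gap, and identifying it is the main subtlety.
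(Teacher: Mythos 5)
Your final route---keeping the support at a fixed constant scale, taking the pair from \prettyref{lmm:max-W1} on $[-1,1]$ matching $\ell \asymp \frac{\log n}{\log\log n}$ moments with $W_1 \gtrsim 1/\ell$, bounding $\chi^2$ by the bounded-support estimate \prettyref{eq:chi2-bdd} so that it is $O(1/n)$, and concluding by Le Cam---is exactly the paper's proof of \prettyref{prop:large-k-lb}. The subgaussian-scaling detour in your first paragraphs is correctly diagnosed and discarded in your ``main obstacle'' discussion, so the proposal as it stands is correct and takes essentially the same approach as the paper.
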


\paragraph{Unknown variance. } In this case the collection of mixture models is defined as 
\[
    \calP_k'=\{\nu*N(0,\sigma^2): \nu \text{ is $k$-atomic supported on }[-1,1],~\sigma\le 1\}.
\]
In \prettyref{lmm:d-matching}, mixing distributions are not restricted to be $k$-atomic but can be Gaussian location mixtures themselves, thanks to the infinite divisibility of the Gaussian distributions, \eg, $N(0,\epsilon^2)*N(0,0.5)=N(0,0.5+\epsilon^2)$. Let $g_k$ be the $k$-point Gauss quadrature of $N(0,\epsilon^2)$. Then $g_k$ has the same first $2k-1$ moments as $N(0,\epsilon^2)$, and $g_k*N(0,0.5)$ is a $k$-Gaussian mixture. Applying \prettyref{eq:chi2-subg} yields that
\[
    \chi^2(g_k*N(0,0.5)\|N(0,0.5+\epsilon^2))\le O(\epsilon^{4k}).
\]
Using $W_1(g_k,\delta_0)\ge \Omega(\epsilon/\sqrt{k})$ (see \prettyref{lmm:quadrature-l1}), and choosing $\epsilon=n^{-\frac{1}{4k}}$, we obtain the following minimax lower bound:
\begin{prop}
    \label{prop:lb-unknown-sigma}
    For $k\ge 2$,
    \begin{gather*}
        \inf_{\hat \nu}\sup_{P\in\calP_k}\Expect_P W_1(\nu,\hat\nu) \ge \Omega\pth{\frac{1}{\sqrt{k}}n^{-\frac{1}{4k}}},\\
        \inf_{\hat \nu}\sup_{P\in\calP_k}\Expect_P |\sigma^2-\hat\sigma^2| \ge \Omega\pth{n^{-\frac{1}{2k}}},
    \end{gather*}
    where the infimum is taken over estimators $\hat\nu, \hat\sigma^2$ measurable with respect to $X_1,\ldots,X_n\iiddistr P=\nu*N(0,\sigma^2)$.
\end{prop}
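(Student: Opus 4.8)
The plan is to apply Le Cam's two-point method with a single pair of hypotheses that witnesses both lower bounds simultaneously, following the sketch given in the paragraph preceding the statement. Fix a small parameter $\epsilon\in(0,1)$ to be chosen, let $g_k$ denote the $k$-point Gauss quadrature of $N(0,\epsilon^2)$, and take the two mixtures
\[
    P=g_k*N(0,1/2),\qquad Q=N(0,1/2+\epsilon^2)=\delta_0*N(0,1/2+\epsilon^2).
\]
Both lie in the unknown-variance class $\calP_k'$: $g_k$ is $k$-atomic, its atoms are $\epsilon$ times the Hermite nodes and hence lie in $[-1,1]$ once $\epsilon=O(1/\sqrt k)$ (which holds for all $n$ large enough at fixed $k$), and the variance parameters $1/2$ and $1/2+\epsilon^2$ are both at most $1$. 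The role of the slack $1/2$ is exactly to leave room for the extra variance $\epsilon^2$ while staying within the constraint $\sigma\le 1$. Under $P$ the mixing distribution is $g_k$ and the variance is $1/2$; under $Q$ they are $\delta_0$ and $1/2+\epsilon^2$. Thus the two functionals of interest differ by $W_1(g_k,\delta_0)$ and by $\epsilon^2$ across the two hypotheses.

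The first step is to control the statistical distance. Since $g_k$ matches the first $2k-1$ moments of $N(0,\epsilon^2)$, and $N(0,\epsilon^2)$ is $\epsilon$-subgaussian while $g_k$ is $O(\epsilon)$-subgaussian (\prettyref{lmm:quadrature-moments-error}), I would apply \prettyref{lmm:d-matching} after rescaling: using the scale-invariance of $\chi^2$, $\chi^2(P\|Q)=\chi^2(\sqrt 2\,g_k*N(0,1)\,\|\,N(0,2\epsilon^2)*N(0,1))$, and \prettyref{eq:chi2-subg} with $\ell=2k-1$ gives $\chi^2(P\|Q)\le O(\epsilon^{4k})$. Tensorizing via $1+\chi^2(P^{\otimes n}\|Q^{\otimes n})=(1+\chi^2(P\|Q))^n$ and choosing $\epsilon=c\,n^{-1/(4k)}$ for a sufficiently small absolute constant $c<1$ (the residual $k$-dependence is harmless since $c^{4k}\le c^{8}$ for $k\ge 2$) yields $\chi^2(P^{\otimes n}\|Q^{\otimes n})\le 1/4$, hence $\TV(P^{\otimes n},Q^{\otimes n})\le 1/2$. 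The second step is then immediate: by Le Cam's two-point lemma, for any estimator $\hat\nu$, $\sup_{P\in\calP_k'}\Expect_P W_1(\nu,\hat\nu)\ge \frac14 W_1(g_k,\delta_0)$, and $W_1(g_k,\delta_0)\ge\Omega(\epsilon/\sqrt k)=\Omega(n^{-1/(4k)}/\sqrt k)$ by \prettyref{lmm:quadrature-l1}, proving the first inequality; the same two points give $\sup_{P\in\calP_k'}\Expect_P|\sigma^2-\hat\sigma^2|\ge\frac14\epsilon^2=\Omega(n^{-1/(2k)})$ for the second.

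The only genuinely non-mechanical point is the first step: transferring \prettyref{lmm:d-matching}, which is stated for convolution with $N(0,1)$, to convolution with $N(0,1/2)$ while simultaneously honoring all three constraints built into $\calP_k'$ — support of the mixing distribution in $[-1,1]$, total variance at most $1$, and the $\epsilon<1$ subgaussianity hypothesis of \prettyref{lmm:d-matching} — all of which force $\epsilon$ small. This is fine because $\epsilon\asymp n^{-1/(4k)}\to 0$, but one must check that the constants absorbed along the way (in particular the choice of $c$) do not erode the $W_1$ lower bound $\epsilon/\sqrt k$ supplied by \prettyref{lmm:quadrature-l1}; managing this bookkeeping is the main thing requiring care. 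Everything else — the $\chi^2$ tensorization identity, Le Cam's inequality, and the quoted properties of Gauss quadrature — is standard.
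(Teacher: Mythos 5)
Your proof is correct and is essentially the paper's own argument: the same two-point construction pairing the $k$-point Gauss quadrature $g_k$ of $N(0,\epsilon^2)$ against $\delta_0$ with variance shifted by $\epsilon^2$ (the paper's sketch preceding the proposition uses exactly the base variance $1/2$ you use, while its formal proof writes the convolution with $N(0,1)$), combined with \prettyref{lmm:d-matching}, \prettyref{lmm:quadrature-moments-error}, \prettyref{lmm:quadrature-l1}, $\chi^2$ tensorization, Le Cam, and $\epsilon\asymp n^{-1/(4k)}$. Your explicit $\sqrt{2}$-rescaling to invoke the $N(0,1)$-convolution form of \prettyref{lmm:d-matching}, and your choice of $c$ small enough to absorb the resulting $2^{O(k)}$ factor, are exactly the right bookkeeping and if anything handle the $\sigma\le 1$ constraint more carefully than the paper's written proof.
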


\section{Numerical experiments}
\label{sec:exp}


The algorithms of this paper are implemented in {\sf Python}.\footnote{
    The implementations are available at \url{https://github.com/Albuso0/mixture}.
}
In \prettyref{algo:known}, the explicit denoising via semidefinite programming uses {\sf CVXPY} \cite{cvxpy} and {\sf CVXOPT} \cite{cvxopt}, and the Gauss quadrature is calculated based on \cite{GW1969}. 
In this section, we compare the performance of our algorithms with the EM algorithm, also implemented in {\sf Python}, and the GMM algorithm using the popular package {\sf gmm} \cite{Chausse2010} implemented in {\sf R}. We omit the comparison with the vanilla method of moments which constantly fails to output a meaningful solution (see \prettyref{sec:mmfail}). In all figures presented in this section, we omit the running time of {\sf gmm}, which is on the order of hours as compared to seconds using our algorithms; the slowness of of {\sf gmm} is mainly due to the heuristic solver of the non-convex optimization \prettyref{eq:GMM-opt}.

We first clarify the parameters used in the experiments. EM and the iterative solver for \prettyref{eq:GMM-opt} in {\sf gmm} both require an initialization and a stop criterion. We use the best over five random initializations: the means are drawn independently from a uniform distribution, and the weights are from a Dirichlet distribution; then we pick the estimate that maximizes the likelihood and the minimal moment discrepancy \prettyref{eq:GMM-opt} in EM and GMM, respectively. The EM algorithm terminates when log-likelihood increases less than $10^{-3}$ or 5,000 iterations are reached; we use the default stop criterion in {\sf gmm} \cite{Chausse2010}.

\paragraph{Known variance. } We generated a random instance of Gaussian mixture model with five components and an unit variance. The means are drawn uniformly from $[-1,1]$; the weights are drawn from the Dirichlet distribution with parameters $(1,1,1,1,1)$, i.e., uniform over the probability simplex. It has the following parameters:
\begin{center}
    \begin{tabular}{c|c  c  c  c  c}  
      \toprule
      Weights&  0.123 &  0.552 & 0.010 & 0.080 & 0.235\\
      \hline
      Centers& -0.236 & -0.168 & -0.987 & 0.299  & 0.150\\
      \bottomrule
    \end{tabular}
\end{center}
We repeat the experiments 20 times and plot and the average and the standard deviation of the errors in the Wasserstein distance. We also plot the running time at each sample size. The results are shown in \prettyref{fig:DMM}.
\begin{figure}[h]
    \centering
    \includegraphics[width=\linewidth]{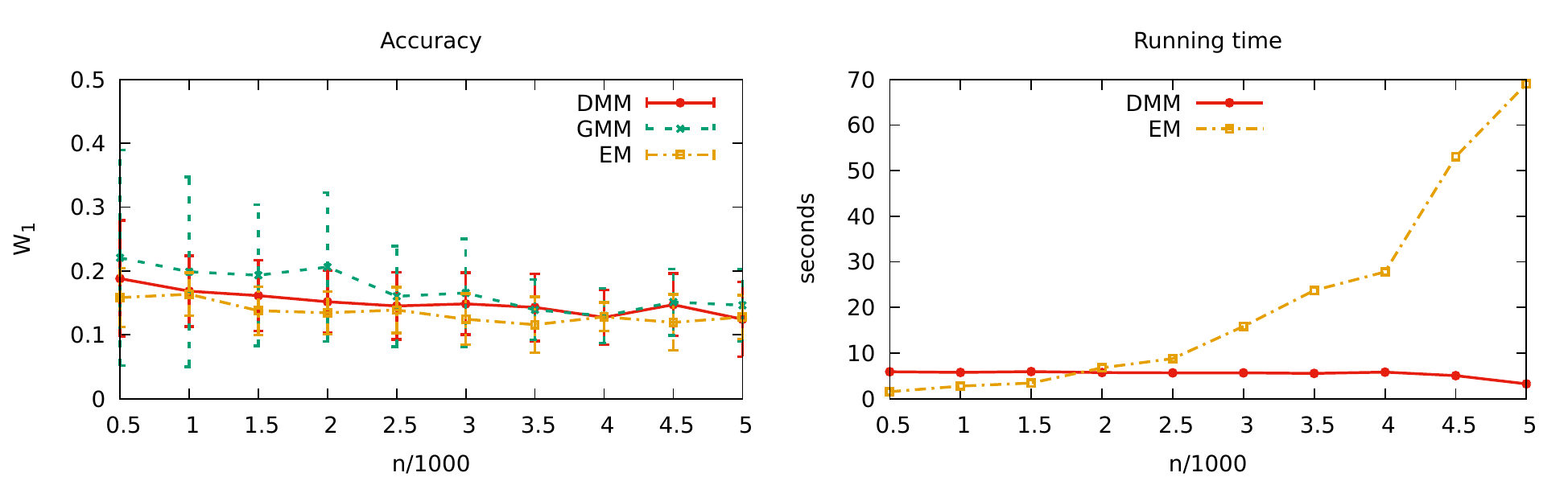}
    \caption{Comparison of different methods under a randomly generated five-component Gaussian mixture model.}
    \label{fig:DMM}
\end{figure}
These three algorithms have comparable accuracies, but EM is significantly slower than DMM: it is 15 times slower with 5,000 samples and increasingly slower as the sample size grows. This is because EM accesses all samples in each iteration, instead of first summarizing data into a few moments.

Furthermore, EM converges particularly slowly when components are poorly separated, since the likelihood function is very flat near its maximum \cite{RW1984,KX2003}. In this case, a loose stop criterion can terminate the algorithm prematurely, while a stringent one incurs substantially longer running time. 
To demonstrate this effect, in \prettyref{fig:DMM-k2} we consider the extreme case of a two-component Gaussian mixture with overlapping components, where the samples are drawn from $N(0,1)$. We run the EM algorithm that terminates when the log-likelihood increases less than $10^{-3}$ and $10^{-4}$, shown as EM and EM$^+$ in \prettyref{fig:DMM-k2}. 
\begin{figure}[h]
    \centering
    \includegraphics[width=\linewidth]{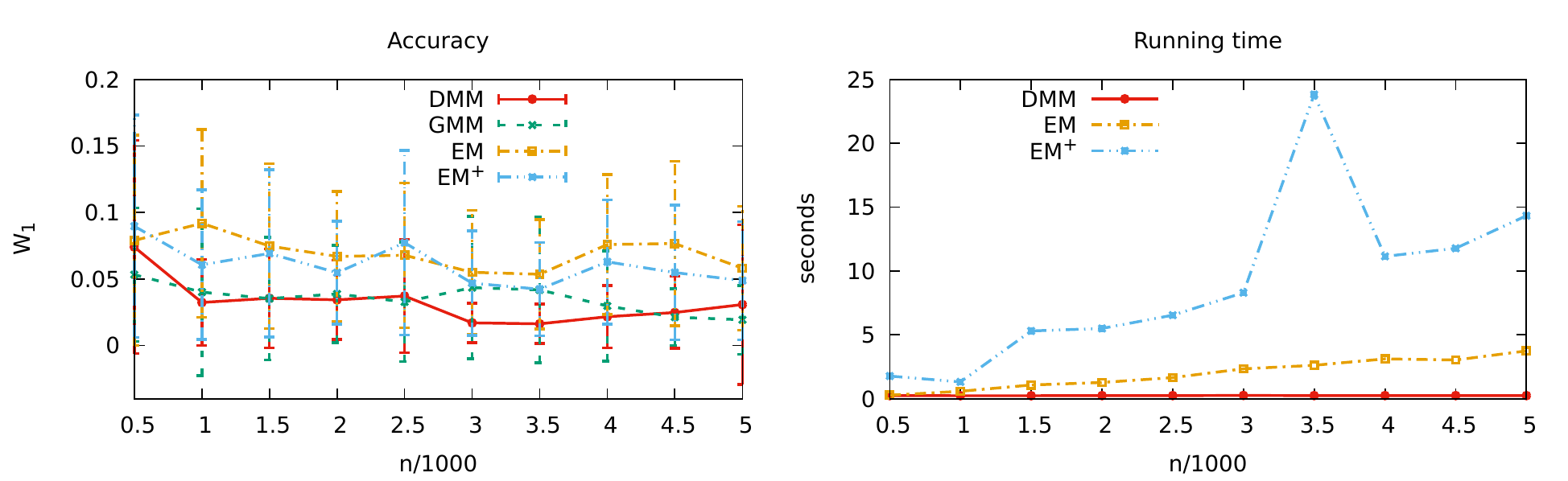}
    \caption{Comparison of different methods when components completely overlap.}
    \label{fig:DMM-k2}
\end{figure}
Again the estimation errors are similar, but EM$^+$ is much slower than EM without substantial gain in the accuracy. Specifically, at 5,000 samples, EM is still 15 times slower than DMM, but EM$^+$ is 60 times slower.

Lastly, we demonstrate a faster rate in the well-separated case as shown in \prettyref{thm:main-W1-adaptive}. In this experiment, the samples are drawn from $\frac{1}{2}N(1,1)+\frac{1}{2}N(1,-1)$. The results are shown in \prettyref{fig:DMM-separated}.
\begin{figure}[h]
    \centering
    \includegraphics[width=\linewidth]{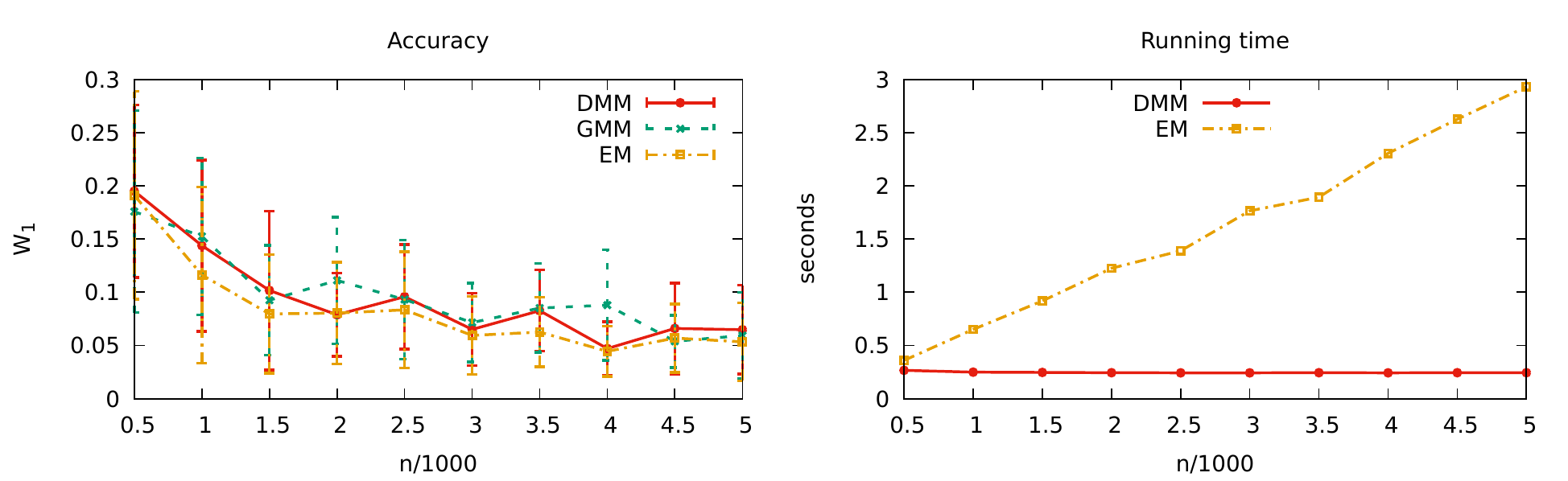}
    \caption{Comparison of different methods when components are separated.}
    \label{fig:DMM-separated}
\end{figure}
In this case, the estimation error decays faster than the one shown in \prettyref{fig:DMM-k2}. The larger absolute values of the Wasserstein distance is an artifact of the range of the means.

\paragraph{Unknown variance. }
We conduct an experiment under the same five-component Gaussian mixture as before, but now the estimators no longer have access to the true variance parameter. In this case, Lindsay's algorithm (see \prettyref{algo:Lindsay}) involves the empirical moments of degrees up to 10, among which higher order moments are hard to estimate with limited samples. Indeed, the standard deviation of $\Expect_n[X^{10}]$ is $\frac{1}{\sqrt{n}}\sqrt{\var[X^{10}]} \approx 473$ under this specified model with $n=5000$ samples. 
To resolve this issue, we introduce an extra step to determine whether an empirical moment is too noisy and accept the empirical moment of order $j$ only when its empirical variance satisfies
\begin{equation}
    \label{eq:screening}
    \frac{\Expect_n[X^{2j}]-(\Expect_n[X^j])^2}{n}\le \tau ,
\end{equation}
where the left-hand side of \prettyref{eq:screening} is an estimate of the variance of $\Expect_n[X^{j}]$ and $\tau$ is some threshold. The estimated mixture model has $\tilde k$ components for the largest $\tilde k$ such that the first $2\tilde k$ empirical moments are all accepted. 
In the experiment, we choose $\tau=0.5$. The results are shown in \prettyref{fig:Lindsay}.
\begin{figure}[h]
    \centering
    \includegraphics[width=\linewidth]{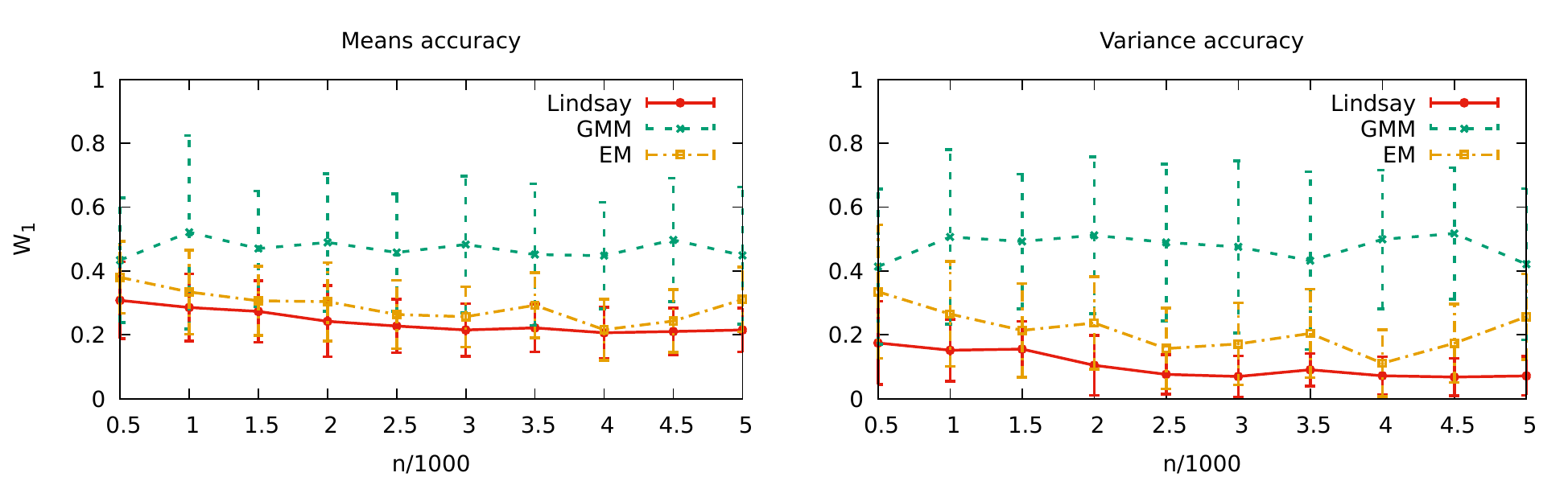}
    \caption{Comparison of different methods with unknown variance. }
    \label{fig:Lindsay}
\end{figure}
The performance of the Lindsay and EM estimators are similar and better than GMM, which is possibly due to the large variance of higher order empirical moments. The running time comparison are similar to before and thus are omitted. The experiments under the models of \prettyref{fig:DMM-k2} and \prettyref{fig:DMM-separated}
 also yield similar results.



\section{Extensions and discussions}
\label{sec:discuss}
\subsection{Gaussian location-scale mixtures}
In this paper we focus on the Gaussian location mixture model \prettyref{eq:model}, where all components share the same (possibly unknown) variance.
One immediate extension is the Gaussian location-scale mixture model with heteroscedastic components:
\begin{equation}
\sum_{i=1}^k w_i N(\mu_i, \sigma_i^2)
\label{eq:model-locationscale}
\end{equation}
Parameter estimation for this model turns out to be significantly more difficult than the location mixture model, in particular
\begin{itemize}
        \item The likelihood function is unbounded. In fact, it is well-known that the maximum likelihood estimator is ill-defined \cite[p.~905]{KW56}.
        For instance, 
consider $k=2$, for any sample size $n$, we have
\[
\sup_{p_1,p_2,\theta_1,\theta_2,\sigma} \prod_{i=1}^n \qth{\frac{p_1 }{\sigma_1}\varphi\pth{\frac{X_i-\theta_1}{\sigma_1} }+
\frac{p_2 }{\sigma_2}\varphi\pth{\frac{X_i-\theta_2}{\sigma_2} }} = \infty,
\]
achieved by, e.g., $\theta_1=X_1, p_1=1/2$, $\sigma_2=1$, and $\sigma_1\to0$.
        
        \item 
In this model, the identifiability result based on moments is not completely settled and we do not have a counterpart of \prettyref{lmm:identify}. Note that the model \prettyref{eq:model-locationscale} comprises $3k-1$ free parameters ($k$ means, $k$ variances, and $k$ weights normalized to one), so it is expected to be identified through its first $3k-1$ moments.
However, the intuition of equating the number of parameters and the number of equations is known to be wrong as pointed out by Pearson \cite{Pearson1894}, who showed that for $k=2$, five moments are insufficient and six moments are enough.
The recent result \cite{ARS2016} showed that, if the parameters are in general positions, then $3k-1$ moments can identify the Gaussian mixture distribution up to finitely many solutions (known as algebraic identifiability). Whether $3k$ moments can uniquely identify the model (known as rational identifiability) in general positions remains open, except for $k=2$.
In the worst case, we need at least $4k-2$ moments for identifiability since for scale-only Gaussian mixtures all odd moments are zero 
(see \prettyref{sec:general-mix} for details).

\end{itemize}

Besides the issue of identifiability, the optimal estimation rate under the Gaussian location-scale mixture model is resolved only in special cases. 
The sharp rate is only known in the case of two components to be $\Theta(n^{-1/12})$ for estimating means and $\Theta(n^{-1/6})$ for estimating variances \cite{HP15}, achieved by a robust variation of Pearson's method of moment equations \cite{Pearson1894}.
For $k$ components, the optimal rate is known to be $n^{-\Theta(1/k)}$ \cite{MV2010,KMV2010}, achieved by an exhaustive grid search on the parameter space.
See \cite[Table 1]{li2017robust} for a comprehensive review of the existing results in the univariate case.
In addition, the above results all aim to recover parameters of all components (up to a global permutation), which necessarily require extra assumptions including lower bounds on mixing weights and separation between components; recovering the mixing distribution with respect to, say, Wasserstein distance, remains open.

\subsection{Multiple dimensions}
\label{sec:multiple}
So far we have focused on Gaussian mixtures in one dimension. The multivariate version of this problem has been studied in the context of clustering, or classification, which typically usually requires non-overlapping components \cite{Dasgupta1999,VW2004}. 
One commonly used approach is dimensionality reduction: project samples onto some lower dimensional subspace and perform clustering, then map back to the original space. Common choices of the subspace include random subspaces and subspaces obtained from the singular value decomposition.
The approach using random subspace is analyzed in \cite{Dasgupta1999,AK2001}, and requires a pairwise separation polynomial in the dimensions;
the subspace from singular value decomposition is analyzed in \cite{VW2004,AM2005}, 
and requires a pairwise separation that grows polynomially in the number of components.
Tensor decomposition for spherical Gaussian mixtures has been studied in \cite{HK2013,AGHKT2014}, which requires the stronger assumption that
that means are linear independent and is inapplicable in lower dimensions, say, two or three dimensions. 

When components are allowed to overlap significantly, the random projection approach is also adopted by \cite{MV2010,KMV2010,HP15}, where the estimation  problem in high dimensions is reduced to that in one dimension, so that univariate methodologies can be invoked as a primitive.
We provide an algorithm (\prettyref{algo:d-dim}) using similar random projection ideas to estimate the parameters of a Gaussian mixture model in $d$ dimensions for known covariance matrices, using the univariate algorithm in \prettyref{sec:known} as a subroutine, and obtain the estimation guarantee in \prettyref{thm:d-dim}; the unknown covariance case can be handled analogously using the algorithm in \prettyref{sec:unknown} instead. 
However, the dependency of the performance guarantee on the dimension is highly suboptimal,\footnote{
    Specifically, in $d$ dimensions, estimating each coordinate independently incurs 
		an $\ell_2$-loss proportional to $\sqrt{d}$;
    however, it is possible to achieve $d^{1/4}$ by, e.g., spectral methods (see \prettyref{lmm:pca}).} which stems from the fact that the method based on random projections estimates each coordinate independently. Moreover, this method needs to match the Gaussian components of the estimated model in each random direction, which necessarily requires lower bounds on the mixing weights and separation between the means.
\begin{algorithm}[ht]
    \caption{Learning a $k$-component Gaussian mixture in $d$ dimensions.}
    \label{algo:d-dim}
    \begin{algorithmic}[1]
        \REQUIRE $ n $ samples $ X_1,X_2,\dots,X_n \in \reals^d$, common covariance matrix $\Sigma$, and separation parameter $\tau$, radius parameter $\rho$.
        \ENSURE estimated mixing distribution $\hat\pi$ with weights and means $(\hat w_j,\hat\mu_j)$ for $j=1,\ldots,k$ .
        \STATE Let $(b_1,\dots,b_d)$ be a set of random orthonormal basis in $\reals^d$, and $r=b_1$.
        \STATE Let $\{(w_j,\mu_j)\}$ be the outcome of \prettyref{algo:known} using $n$ projected samples $\inner{X_1, r}, \dots, \inner{X_n,r}$, variance $r^\top\Sigma r$, and interval $[-\rho,\rho]$.
        \STATE Reordering the indices such that $\mu_1<\mu_2<\dots<\mu_k$.
        \STATE Initialize $k$ weights $\hat w_j = w_j$ and means $\hat\mu_j=(0,\dots,0)$.
        \FOR{$i=1$ \TO $d$}
        \STATE Let $r'=r+\tau b_i$.
        \STATE Let $\{\mu_j'\}$ be the estimated means (weights are ignored) from \prettyref{algo:known} using $n$ projected samples $\inner{X_1, r'}, \dots, \inner{X_n, r'}$, variance $r'^\top\Sigma r'$, and interval $[-\rho-\tau,\rho+\tau]$. \label{line:muj-p}
        \STATE Reordering the indices such that $\mu_1'<\mu_2'<\dots<\mu_k'$.\label{line:muj-p-order}
        \STATE Let $\hat\mu_j:=\hat\mu_j+b_i\frac{\mu_j'-\mu_j}{\tau}$ for $j=1,\dots,k$.
        \ENDFOR
    \end{algorithmic}
\end{algorithm}
\begin{theorem}
    \label{thm:d-dim}
    Suppose in a $d$-dimensional Gaussian mixture $\sum_{j=1}^k w_jN(\mu_j,\Sigma)$, 
    \[
        \Norm{\mu_j}_2\le M,\quad \Norm{\mu_i-\mu_j}_2\le \epsilon, \quad w_j\ge \epsilon',\quad \forall~i\ne j.
    \]
    Then \prettyref{algo:d-dim} with $n>(\Omega_k(\frac{M}{\tilde\epsilon\epsilon'}))^{4k-2}\log\frac{d}{\delta}$ samples, $\tau=\frac{\tilde\epsilon}{2M}$, and $\rho=M$, where $\tilde\epsilon=\frac{\delta\epsilon}{k^2\sqrt{d}}$, yields $\hat\pi$ such that, with  probability $1-2\delta$,
    \[
        W_1(\pi,\hat\pi)<O_k\pth{\sqrt{d}\frac{M\epsilon_n}{\tau\epsilon'}},
    \]
    where $\pi=\sum_j w_j\delta_{\mu_j}$ and $\epsilon_n=\min\{(\frac{n}{\log(d/\delta)})^{-\frac{1}{4k-2}},\tilde\epsilon^{2-2k}\sqrt{\frac{\log(d/\delta)}{n}}\}$.
\end{theorem}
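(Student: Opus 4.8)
The plan is to control how errors in each univariate projection propagate to the reconstruction of the $d$-dimensional means. First I would record the basic geometric fact underlying the algorithm: if $\mu \in \reals^d$ and $r, r' = r + \tau b_i$ are two directions differing by $\tau$ along an orthonormal basis vector $b_i$, then $\inner{\mu, r'} - \inner{\mu, r} = \tau \inner{\mu, b_i}$, so that $\inner{\mu, b_i} = (\inner{\mu,r'} - \inner{\mu,r})/\tau$; summing $b_i \inner{\mu,b_i}$ over $i=1,\dots,d$ recovers $\mu$ exactly. The algorithm mimics this with \emph{estimated} projected means $\mu_j$ (in direction $r$) and $\mu_j'$ (in direction $r'$), so the reconstruction error is an amplification by $1/\tau$ of the univariate estimation errors plus any error from mismatching the indexing across directions.

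The key steps, in order, are as follows. (i) Apply \prettyref{thm:main-W1} (known-variance case) to each univariate problem: along direction $r$, the samples $\inner{X_i,r}$ are i.i.d.\ from the Gaussian location mixture $\sum_j w_j N(\inner{\mu_j,r}, r^\top\Sigma r)$ with means in $[-M,M]$ and bounded variance, so \prettyref{algo:known} returns $(w_j,\mu_j)$ with $W_1$-error $O_k(\epsilon_n)$, and similarly along each $r' = r + \tau b_i$ (whose means lie in $[-M-\tau, M+\tau]$, still $O(M)$); a union bound over the $d+1$ directions costs the $\log(d/\delta)$ factor already built into $\epsilon_n$. (ii) Convert the $W_1$ bounds into per-atom bounds on weights and means via \prettyref{lmm:w1-parameters}: since the projected means along a generic direction are $\Omega(\tilde\epsilon)$-separated (the original means are $\epsilon$-separated, and a random projection of $k$ points contracts pairwise distances by at most a $\poly(k)/\sqrt d$ factor with probability $\ge 1-\delta$, which is precisely why $\tilde\epsilon = \delta\epsilon/(k^2\sqrt d)$ is the right scale) and the weights are $\ge \epsilon'$, the hypothesis $\epsilon_n \ll \tilde\epsilon\epsilon'$ (guaranteed by the sample-size condition) lets us conclude $|\hat\mu_j^{(r)} - \inner{\mu_{\sigma(j)}, r}| = O_k(\epsilon_n/\epsilon')$ for a common permutation $\sigma$, and likewise for each $r'$. (iii) Check that the sorting steps (lines \ref{line:muj-p}--\ref{line:muj-p-order}) produce a \emph{consistent} matching across all directions: since for $\tau$ small the ordering of $\{\inner{\mu_j, r'}\}_j$ agrees with that of $\{\inner{\mu_j, r}\}_j$ (their difference is $\tau\inner{\mu_j,b_i} = O(\tau M)$, far below the $\Omega(\tilde\epsilon)$ gap once $\tau = \tilde\epsilon/(2M)$), the sorted estimated means are matched to the \emph{same} underlying atom in every direction. (iv) Combine: for each atom $j$,
\[
\hat\mu_j - \mu_j = \sum_{i=1}^d b_i\pth{\frac{\mu_j'^{,(i)} - \mu_j}{\tau} - \inner{\mu_j, b_i}},
\]
and each bracket is bounded by $O_k(\epsilon_n/(\tau\epsilon'))$ using step (ii); orthonormality of $\{b_i\}$ gives $\|\hat\mu_j - \mu_j\|_2 \le O_k(\sqrt d\, \epsilon_n/(\tau\epsilon'))$. (v) Finally, $W_1(\pi,\hat\pi) \le \sum_j w_j \|\mu_j - \hat\mu_j\|_2 + (\text{weight-mismatch term}) \le \max_j \|\mu_j - \hat\mu_j\|_2 + O_k(\epsilon_n\cdot M/\tilde\epsilon) = O_k(\sqrt d\, M\epsilon_n/(\tau\epsilon'))$, since $\tau = \tilde\epsilon/(2M)$ and the weight-transport term (moving mass $O(\epsilon_n/\tilde\epsilon)$ a distance $O(M)$) is of the same order; plugging in $\tau$ gives the claimed bound.

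The main obstacle I expect is step (ii)--(iii): quantifying the random-projection contraction of the pairwise separation and ensuring that, simultaneously over all $d+1$ directions, (a) the projected means stay $\Omega(\tilde\epsilon)$-separated so that \prettyref{lmm:w1-parameters} applies, and (b) the index orderings are mutually consistent so that the per-coordinate errors add up coherently rather than being attributed to different atoms in different directions. This is where the somewhat delicate choice $\tilde\epsilon = \delta\epsilon/(k^2\sqrt d)$ and $\tau = \tilde\epsilon/(2M)$ comes from, and it is also the source of the suboptimal $\sqrt d$ dependence, since we are effectively estimating each of the $d$ coordinates separately and paying a $1/\tau \asymp M/\tilde\epsilon \asymp \poly(k)\sqrt d/(\delta\epsilon)$ amplification in each.
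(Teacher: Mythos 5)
Your proposal is correct and follows essentially the same route as the paper's proof: random-projection separation of the projected means (the paper's \prettyref{lmm:proj}, which is exactly your $\poly(k)/\sqrt{d}$ contraction bound and the origin of $\tilde\epsilon$), the univariate guarantee with a union bound over the $d+1$ directions, conversion to per-atom errors via \prettyref{lmm:w1-parameters}, consistency of the orderings because $2M\tau<\tilde\epsilon$, finite-difference reconstruction with the $1/\tau$ amplification and the $\sqrt{d}$ from summing over coordinates, and a final triangle inequality splitting mean error and weight mismatch. The only detail left implicit in your sketch is that the second branch of the minimum defining $\epsilon_n$, namely $\tilde\epsilon^{2-2k}\sqrt{\log(d/\delta)/n}$, comes from additionally invoking the adaptive rate (\prettyref{thm:main-W1-adaptive}) under the $\tilde\epsilon$-separation of the projected means, alongside the worst-case rate of \prettyref{thm:main-W1}.
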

\begin{proof}
    By the distribution of random direction $r$ on the unit sphere (see \prettyref{lmm:proj}) and the union bound, we obtain that, with probability $1-\delta$,
    \[
        \abs{\inner{\mu_i-\mu_j,r}} >  2\tilde\epsilon,\quad \forall~i\ne j.
    \]
    Without loss of generality, assume $\inner{\mu_1,r}<\dots<\inner{\mu_k,r}$. Applying \prettyref{thm:main-W1} yields that, with probability $1-\frac{\delta}{d+1}$, 
    \[
        W_1(\pi_r,\hat\pi_r)\le O_k\pth{M\pth{\frac{n}{\log(d/\delta)}}^{-\frac{1}{4k-2}}},
    \]
    where $\pi_r$ denotes the Gaussian mixture projected on $r$ and $\hat\pi_r$ is its estimate. The right-hand side of the above inequality is less than $c\epsilon_r\epsilon'$ for some constant $c<0.5$ when $n>(\Omega_k(\frac{M}{\tilde\epsilon\epsilon'}))^{4k-2}\log\frac{d}{\delta}$. Applying \prettyref{thm:main-W1-adaptive} yields that 
    \[
        W_1(\pi_r,\hat\pi_r)\le O_k\pth{M\tilde\epsilon^{2-2k}\sqrt{\frac{\log(d/\delta)}{n}}}. 
    \]
    Hence, we obtained $W_1(\pi_r,\hat\pi_r)\le O_k(M\epsilon_n)$. It follows from \prettyref{lmm:w1-parameters} that, after reordering indices,
    \begin{equation}
        \label{eq:d-dim-parameters}
        |\inner{\mu_j,r}-\tilde\mu_{j}|< O_k(M\epsilon_n/\epsilon'),\quad |w_j-\hat w_j|<O_k(M\epsilon_n/\tilde\epsilon).
    \end{equation}
    On each direction $r_\ell = r+\tau b_\ell$, the means are separated by $|\inner{\mu_i-\mu_j, r_\ell}|>2\tilde\epsilon-2M\tau>\tilde\epsilon$ and the ordering of the means remains the same as on direction $r$. Therefore the accuracy similar to \prettyref{eq:d-dim-parameters} continues to hold for the estimated means $\tilde\mu_{\ell,j}$ ($\mu_j'$ in lines \ref{line:muj-p} and \ref{line:muj-p-order}). Note that $\mu_j=\sum_{\ell=1}^d b_\ell \frac{\inner{\mu_j,r_\ell}-\inner{\mu_j,r}}{\tau}$ and $\hat\mu_j=\sum_{\ell=1}^d b_\ell \frac{\tilde\mu_{\ell,j}-\tilde\mu_{j}}{\tau}$. Therefore,
    \[
        \Norm{\hat\mu_j-\mu_j}_2^2\le\sum_{\ell=1}^d\pth{\frac{O_k(M\epsilon_n/\epsilon')}{\tau}}^2.
    \]
    Applying the triangle equality yields that
    \[
        W_1(\pi,\hat\pi)<\sqrt{d}O_k(M\epsilon_n/\epsilon')/\tau+MO_k\pth{M\epsilon_n/\tilde\epsilon}
        <O_k\pth{\sqrt{d}\frac{M\epsilon_n}{\tau\epsilon'}}.\qedhere
    \]
\end{proof}

It is interesting to directly extend the DMM methodology to multiple dimensions, which is challenging both theoretically and algorithmically:
\begin{itemize}
	\item To extend the proof technique in multiple dimensions, the challenge is to obtain a multi-dimensional moment comparison theorem analogous to \prettyref{prop:stable1} or \ref{prop:stable2}, the key step leading to the optimal rate.
These results are proved in \prettyref{sec:pf-compare}
by the primal formulation of the Wasserstein distance and its simple formula \prettyref{eq:W1-CDF} in one dimension \cite{villani.topics}. Alternatively, they can be proved via the dual formula \prettyref{eq:W1-dual} which holds in any dimension; however, the proof relies on the Newton's interpolation formula, which is again difficult to generalize or analyze in multiple dimensions.
\item 
To obtain a computationally efficient algorithm, we rely on the semidefinite characterization of the moment space in one dimension to denoise the noisy estimates of moments. In multiple dimensions, however, it remains open how to efficiently describe the moment space \cite{Lasserre2009} as well as how to extend the Gauss quadrature rule to multivariate distributions.

\end{itemize}

\subsection{General finite mixtures}
\label{sec:general-mix}
Though this paper focuses on Gaussian location mixture models, the moments comparison theorems in \prettyref{sec:w1-moments} are independent of properties of Gaussian. As long as moments of the mixing distribution are estimated accurately, similar theory and algorithms can be obtained. Unbiased estimate of moments exists in many useful mixture models, including exponential mixtures \cite{Jewell1982}, Poisson mixtures \cite{KX2005}, and more generally the quadratic variance exponential family (QVEF) whose variance is at most a quadratic function of the mean \cite[(8.8)]{morris1982natural}.


As a closely related topic of this paper, we discuss the Gaussian scale mixture model in details, which has been extensively studied in the statistics literature \cite{AM1974} and is widely used in image and video processing \cite{WS2000,PSWS2003}. In a Gaussian scale mixture, a sample is distributed as
\[
    X\sim \sum_{i=1}^{k}w_iN(0,\sigma_i^2)=\int N(0,\sigma^2)\diff \nu(\sigma^2),
\]
where $\nu=\sum_{i=1}^kw_i\delta_{\sigma_i^2}$ is a $k$-atomic mixing distribution. Equivalently, a sample can be represented as $X=\sqrt{V}Z$, where $V\sim \nu$ and $Z$ is standard normal independent of $V$. In this model, samples from different components significantly overlap, so clustering-based algorithms will fail. Nevertheless, moments of $\nu$ can be easily estimated, for instance, using $\Expect_n[X^{2r}]/\Expect[Z^{2r}]$ for $m_r(\nu)$ with accuracy $O_r(1/\sqrt{n})$. Applying a similar algorithm to DMM in \prettyref{sec:known}, we obtain an estimate $\hat \nu$ such that
\[
    W_1(\nu,\hat \nu)\le O_k(n^{-\frac{1}{4k-2}}),
\]
with high probability.

Moreover, a matching minimax lower bound can be established using similar techniques to \prettyref{sec:lb}. Analogous to \prettyref{eq:optimal-lb}, let $\nu$ and $\nu'$ be a pair of $k$-atomic distributions supported on $[0,\epsilon]$ such that they match the first $2k-2$ moments, and let
\[
    \pi=\int N(0,\sigma^2)\diff \nu(\sigma^2),\quad 
    \pi'=\int N(0,\sigma^2)\diff \nu'(\sigma^2),
\]
which match their first $4k-3$ moments and are $\sqrt{\epsilon}$-subgaussian. Applying \prettyref{lmm:d-matching} with $\pi*N(0,0.5)$, $\pi'*N(0,0.5)$, and $\epsilon=O_k(n^{-\frac{1}{4k-2}})$ yields a minimax lower bound 
\[
    \inf_{\hat\nu}\sup_{P\in\calG_k}\Expect_P W_1(\nu,\hat \nu)\ge \Omega_k\pth{n^{-\frac{1}{4k-2}}},
\]
where the estimator $\hat \nu$  is measurable with respect to $X_1,\dots,X_n\sim P$, and the space of $k$ Gaussian scale mixtures is defined as
\[
    \calG_k=\sth{\int N(0,\sigma^2)\diff \nu(\sigma^2): \nu \text{ is $k$-atomic supported on }[0,1]}.
\]

\section{Proofs}
\label{sec:proof}

We begin by briefly reviewing some background on polynomial interpolation, which plays a key role in the proofs.

\subsection{Polynomial interpolation, majorization, and the Neville diagram}
\label{sec:poly}
Given a function $f$ and a set of distinct points (commonly referred to as \emph{nodes}) $\{x_0,\dots,x_{k}\}$, there exists a unique polynomial $P$ of degree $k$ that coincides with $f$ on every node. The interpolating polynomial $P$ can be expressed in the \emph{Lagrange form} as
\begin{equation}
    \label{eq:Lagrange}
    P(x)=\sum_{i=0}^k f(x_i)\frac{\prod_{j\ne i}(x-x_j)}{\prod_{j\ne i}(x_i-x_j)},
\end{equation}
and, alternatively, in the \emph{Newton form} as
\begin{equation}
    \label{eq:interpolation-Newton}
    P(x)=a_0+a_1(x-x_0)+\dots+a_{k}(x-x_0)\cdots(x-x_{k-1}).
\end{equation}
Let us pause to emphasize that, in numerical analysis, typically the Newton form is introduced for computational considerations so that one does not need to recompute all coefficients when an extra node is introduce \cite{stoer.2002}. Here for our theoretical analysis the Newton form turns out to be crucial, which offers better bound on the coefficients of the interpolating polynomials.

The coefficients in \prettyref{eq:interpolation-Newton} can be successively calculated using $a_0=f(x_0)$, $a_0+a_1(x_1-x_0)=f(x_1)$, etc. In general, they coincide with the divided differences $a_r=f[x_0,\dots,x_{r}]$ that are recursively defined as 
\begin{equation}
    \label{eq:div-recursion}
    f[x_i]=f(x_i)\quad f[x_i,\dots,x_{i+r}]=\frac{f[x_{i+1},\dots,x_{i+r}]-f[x_i,\dots,x_{i+r-1}]}{x_{i+r}-x_i}.
\end{equation}
The above recursion can be calculated by the following \emph{Neville's diagram} (cf.~\cite[Section 2.1.2]{stoer.2002}):
\begin{center}
    \begin{tikzpicture}[every node/.style={draw,shape=circle,fill,inner sep=1.5pt,align=left},scale=0.7,transform shape]
        \def\num{3}
        \def\shift{0.5}
        \def\lshift{1.5}
        \foreach \n in {0,...,\num} {
            \foreach \k in {0,...,\n} {
                \node at (-2*\n,-2*\k+\n){};
            }
        }

        \pgfmathtruncatemacro\numd{\num-1}
        \foreach \n in {1,...,\numd} {
            \foreach \k in {1,...,\n} {
                \draw (-2*\n-2,-2*\k+2+\n-1) -- (-2*\n,-2*\k+2+\n) -- (-2*\n-2,-2*\k+2+\n+1);
            }
        }
        
        \foreach \k in {1,...,\num} {
            \pgfmathtruncatemacro\kd{\k-1}
            \node[draw=none, fill=none] at (-2*\num-\lshift,-2*\k+2+\num){$x_\kd$};
            \node[draw=none, fill=none] at (-2*\num,-2*\k+2+\num+\shift){$f[x_\kd]$};
            \node[draw=none, fill=none] at (-2*\k,-\k+2-1){$\vdots$};
        }
        
        \foreach \k in {1,...,\numd} {
            \pgfmathtruncatemacro\kd{\k-1}
            \node[draw=none, fill=none] at (-2*\num+2,-2*\k+2+\num-1+\shift){$f[x_\kd,x_\k]$};
            \draw (-2*\k,-\k) -- (-2*\k-2,-\k-1);
        }

        \node[draw=none, fill=none] at (-2,1+\shift){$f[x_0,x_1,x_2]$};
        \node[draw=none, fill=none] at (-0,0+\shift){$f[x_0,\dots,x_k]$};
        \node[draw=none, fill=none] at (-6-\lshift,-3){$x_{k}$};
        \node[draw=none, fill=none] at (-6-\lshift,-2){$\vdots$};
        \node[draw=none, fill=none] at (-6,-3-\shift){$f[x_k]$};
        \draw[dotted] (-2*.6,-1*.6) -- (-2*.4,-1*.4);
        \draw[dotted] (-2*.6,1*.6) -- (-2*.4,1*.4);
        \node[draw=none, fill=none] at (-6,4){$r=0$};
        \node[draw=none, fill=none] at (-4,4){$1$};
        \node[draw=none, fill=none] at (-2,4){$2$};
        \node[draw=none, fill=none] at (-1,4){$\dots$};
        \node[draw=none, fill=none] at (-0,4){$k$};
        \draw (-8,3.7) -- (1,3.7);
        \draw (-6.8,4.5) -- (-6.8,-4);
    \end{tikzpicture}
\end{center}
In Neville's diagram, the $r\Th$ order divided differences are computed in the $r\Th$ column, and are determined by the previous column and the nodes. The coefficients in \prettyref{eq:interpolation-Newton} are found in the top diagonal. In this paper Neville's diagram will be used to bound the coefficients in Newton formula \prettyref{eq:interpolation-Newton}; cf.~\prettyref{lmm:divided}.

Interpolating polynomials are the main tool to prove moment comparison theorems in \prettyref{sec:w1-moments}. Specifically, we will interpolate step functions by polynomials in order to bound the difference of two CDFs via their moment difference. Therefore, it is crucial to have a good control over the coefficients of the interpolating polynomial. To this end, it turns out the Newton form is more convenient to use than the Lagrange form because the former takes into account the cancellation between each term in the polynomial. Indeed, in the Lagrange form \prettyref{eq:Lagrange}, if two nodes are very close, then the individual terms can be arbitrarily large, even if $f$ itself is a smooth function. In contrast, each term of \prettyref{eq:interpolation-Newton} is stable when $f$ is smooth since divided differences are closely related to derivatives. The following example illustrates this point:

\begin{example}[Lagrange versus Newton form]
    \label{ex:Lagrange-Newton}
    Given three points $x_1=0, x_2=\epsilon, x_3=1$ with $f(x_1)=1, f(x_2)=1+\epsilon, f(x_3)=2$, the interpolating polynomial is $P(x)=x+1$. The next equation gives the interpolating polynomial in Lagrange's and Newton's form respectively.
    \begin{align*}
      \text{Lagrange: }&P(x)=\frac{(x-\epsilon)(x-1)}{\epsilon}+(1+\epsilon)\frac{x(x-1)}{\epsilon(\epsilon-1)}+2\frac{x(x-\epsilon)}{1-\epsilon};\\
      \text{Newton: } &P(x)=1+x+0.
    \end{align*}
        The coefficients in the Newton form are bounded, while those in the Lagrange form blow up as $\epsilon\to0$.
\end{example}

Polynomial interpolation can be generalized to interpolate the value of derivatives, known as the \emph{Hermite interpolation}. Formally, given a function $f$ and distinct nodes $x_0<x_1<\ldots<x_m$, there exists a unique polynomial $P$ of degree $k$ satisfying $P^{(j)}(x_i)=f^{(j)}(x_i)$ for $i=0,\dots,m$ and $j=0,\dots,k_i-1$, where $k+1=\sum_{i=0}^m k_i$. Analogous to the Lagrange formula \prettyref{eq:Lagrange}, $P$ can be explicitly constructed with the help of the generalized Lagrange polynomials, and an explicit formula is given in \cite[pp.~52--53]{stoer.2002}. The Newton form \prettyref{eq:interpolation-Newton} can also be extended by using generalized divided differences, which, for repeated nodes, is defined as the value of the derivative:
\begin{equation}
    \label{eq:div-repeated}
    f[x_i,\dots,x_{i+r}] \triangleq \frac{f^{(r)}(x_0)}{r!},\quad x_i=x_{i+1}=\ldots=x_{i+r},
\end{equation}
To this end, we define an expanded set of nodes by repeating each $x_i$ for $k_i$ times:
\begin{equation}
    \label{eq:new-sequence}
    \underbrace{x_0=\ldots =x_0}_{k_0}<\underbrace{x_1=\ldots =x_1}_{k_1}<\ldots<\underbrace{x_m=\ldots=x_m}_{k_m}.
\end{equation}
The Hermite interpolating polynomial is obtained by \prettyref{eq:interpolation-Newton} using this new set of nodes and generalized divided differences, which can also be calculated from the Neville's diagram verbatim by replacing divided differences by derivatives whenever encountering repeated nodes. 
Below we give an example using Hermite interpolation to construct polynomial majorant, which will be used to prove moment comparison theorems in \prettyref{sec:w1-moments}.

\begin{example}[Hermite interpolation and polynomial majorization]
    \label{ex:Hermite}
    Let $f(x)=\indc{x\le 0}$. We want to find a polynomial majorant $P\geq f$ such that $P(x)=f(x)$ on $x=\pm 1$. To this end we interpolate the values of $f$ on $\{-1,0,1\}$ with the following constraints:
    \begin{center}
        \begin{tabular}{ c | c  c  c }
            \toprule
            $x$ & $-1$ & $0$ & $1$ \\
            \hline
            $P(x)$  & 1 & 1 & 0 \\
            $P'(x)$ & 0 & \text{any} & 0 \\
            \bottomrule
        \end{tabular}
    \end{center}
    The resulting polynomial $P$ has degree four and majorizes $f$ \cite[p.~65]{Akhiezer1965}. To see this, we note that $P'(\xi)=0$ for some $\xi\in (-1,0)$ by Rolle's theorem. Since $P'(-1)=P'(1)=0$, $P$ has no other stationary point than $-1,\xi,1$, and thus decreases monotonically in $(\xi,1)$. Hence, $-1,1$ are the only local minimum points of $P$, and thus $P\ge f$ everywhere. The polynomial $P$ is shown in \prettyref{fig:step-approx}.

    To explicitly construct the polynomial, we expand the set of nodes to $-1,-1,0,1,1$ according to \prettyref{eq:new-sequence}. Applying Newton formula \prettyref{eq:interpolation-Newton} with generalized divided differences from the Neville's diagram \prettyref{fig:Neville}, we obtain that $P(x)=1-\frac{1}{4}x(x+1)^2+\frac{1}{2}x(x+1)^2(x-1)$.

    \begin{figure}[ht]
        \centering
        \subfigure[Neville's diagram.]
        {\label{fig:Neville}
            \begin{tikzpicture}[every node/.style={draw,shape=circle,fill,inner sep=1.5pt,align=left},scale=0.5,transform shape]
                \def\labels{{1,1,1,0,0},{0,0,$-1$,0},{0,$-1/2$,1},{$-1/4$,3/4},{1/2}}
                \foreach[count=\n] \x in \labels{
                    \foreach[count=\k] \y in \x{
                        \node[label={[yshift=0cm]\y}] at (2*\n,-2*\k-\n) {};
                        \ifthenelse{\n > 1}{
                            \draw ({2*(\n-1)},{-2*\k-(\n-1)}) -- (2*\n,-2*\k-\n) -- ({2*(\n-1)},{-2*(\k+1)-(\n-1)});
                        }{}
                    }
                }

                \def\labels{$t_0=-1$,$t_1=-1$,$t_2=0$,$t_3=1$,$t_4=1$}
                \foreach[count=\k] \y in \labels{
                    \node[draw=none, fill=none, anchor=west] at (0,-2*\k-1) {\y};
                }

                \foreach \n in {1,2,3} {
                    \draw[red,very thick]
                    (2*\n,-2-\n) node[black,thin] {} -- ({2*(\n+1)},{-2-(\n+1)}) node[black,thin] {}
                    (2*\n,{-2*(5-\n)-\n}) node[black,thin] {} -- ({2*(\n+1)},{-2*(4-\n)-(\n+1)}) node[black,thin] {};
                }



            \end{tikzpicture}
        }
        \qquad
        \subfigure[Hermite interpolation.]
        {\label{fig:step-approx}
            \includegraphics[width=.4\textwidth]{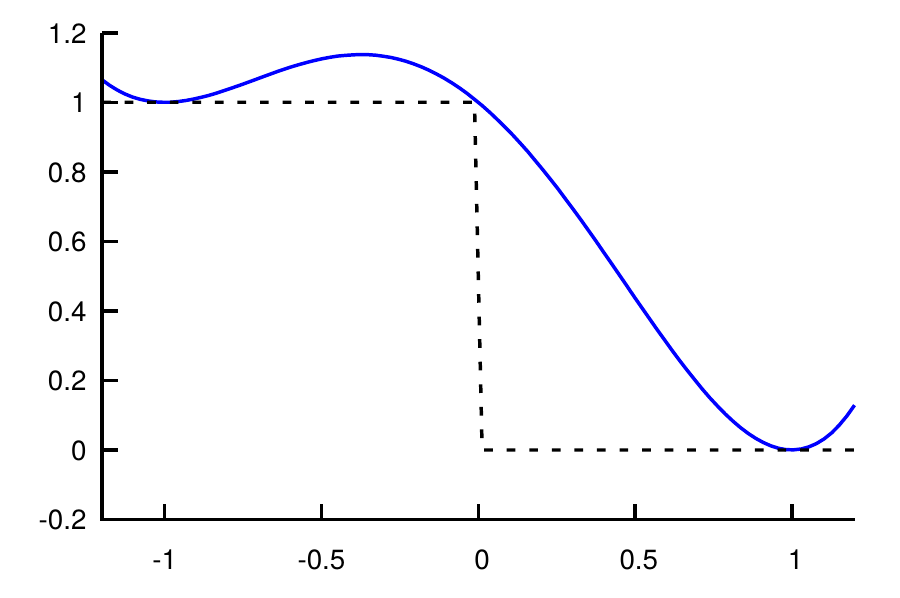}}
        \caption{Neville's diagram and Hermite interpolation.
            In \subref{fig:Neville}, values are recursively calculated from left to right. For example, the red thick line shows that $f[-1,-1,0,1]$ is obtained by $\frac{-1/2-0}{1-(-1)}=-1/4$.}
    \end{figure}
\end{example}

\subsection{Proofs of moments comparison theorems}
\label{sec:pf-compare}
In this subsection we prove Propositions \ref{prop:stable1} and \ref{prop:stable2}. As a warm-up, we start by proving \prettyref{lmm:identify}, with the purpose of introducing the apparatus of interpolating polynomials. Throughout this section, we use
\[
    F_\pi(x) \triangleq \pi((-\infty,x]).
\]
to denote the CDF of a distribution $\pi$.
\begin{proof}[Proof of \prettyref{lmm:identify}]
    We only need to prove the ``if'' part. 
    \begin{enumerate}
        \item Denote the union of the support sets of $\nu$ and $\nu'$ by $S$. Here $S$ is of size at most $2k$. For any $t\in\reals$, there exists a polynomial $P$ of degree at most $2k-1$ to interpolate $x\mapsto \indc{x\le t}$ on $S$. Since $m_i(\nu)=m_i(\nu')$ for $i=1,...,2k-1$, we have 
        \[
            F_{\nu}(t)=\Expect_\nu[\indc{X\le t}]=\Expect_{\nu}[P(X)]=\Expect_{\nu'}[P(X)]=\Expect_{\nu'}[\indc{X\le t}]=F_{\nu'}(t).
        \]
        \item Denote the support set of $\nu$ by $S'=\{x_1,\dots,x_k\}$. Let $Q(x)=\prod_i(x-x_i)^2$, a non-negative polynomial of degree $2k$. Since $m_i(\nu)=m_i(\nu')$ for $i=1,...,2k$, we have 
        \[
            \Expect_{\nu'}[Q(X)]=\Expect_{\nu}[Q(X)]=0.
        \]
        Therefore, $\nu'$ is also supported on $S'$ and thus is $k$-atomic. The conclusion follows from the first case of \prettyref{lmm:identify}. \qedhere
    \end{enumerate}
\end{proof}

Next we prove \prettyref{prop:stable1-union}, which is slightly stronger than \prettyref{prop:stable1}. We provide three proofs: the first two are based on the primal (coupling) formulation of $W_1$ distance \prettyref{eq:W1-CDF}, and the third proof uses the dual formulation \prettyref{eq:W1-dual}. 
Specifically,
\begin{itemize}
    \item The first proof uses polynomials to interpolate step functions, whose expected values are the CDFs. The closeness of moments imply the closeness of distribution functions and thus, by \prettyref{eq:W1-CDF}, a small Wasserstein distance. Similar idea applies to the proof of \prettyref{prop:stable2} later.
    \item The second proof finds a polynomial that preserves the sign of the difference between two CDFs, and then relate the Wasserstein distance to the integral of that polynomial. Related idea has been used in \cite[Lemma 20]{MV2010} which finds a polynomial that preserves the sign of the difference between two Gaussian mixture densities. 
    \item The third proof uses polynomials to approximate 1-Lipschitz functions, whose expected values are related to the Wasserstein distance via the dual formulation \prettyref{eq:W1-dual}. 
\end{itemize}
\begin{prop}
    \label{prop:stable1-union}    
    Let $\nu$ and $\nu'$ be discrete distributions supported on a total of $\ell$ atoms in $[-1,1]$. If
    \begin{equation}
        \label{eq:momentdiff}
        |m_i(\nu)-m_i(\nu')|\le \delta,\quad i=1,\ldots,\ell-1,
    \end{equation}
    then
    \[
        W_1(\nu,\nu')\le O\pth{\ell \delta^{\frac{1}{\ell-1}}}.
    \]
\end{prop}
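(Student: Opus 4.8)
The plan is to use the primal identity $W_1(\nu,\nu')=\int_{\reals}|F_\nu(t)-F_{\nu'}(t)|\,\diff t$ from \prettyref{eq:W1-CDF} and to argue that matching the first $\ell-1$ moments forces the CDF difference $g\triangleq F_\nu-F_{\nu'}$ to be small in $L_1$. First I would record three elementary facts about $g$. Since $\nu-\nu'$ is a signed measure of total mass zero supported on the $\ell$ atoms $x_1<\dots<x_\ell\subseteq[-1,1]$, the function $g$ vanishes outside $[x_1,x_\ell)$, is piecewise constant on at most $\ell-1$ subintervals, and has $\|g\|_\infty\le 1$ (being a difference of two CDFs). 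Moreover, integrating by parts, $\int_{\reals} t^{i}g(t)\,\diff t=-\frac{1}{i+1}\bigl(m_{i+1}(\nu)-m_{i+1}(\nu')\bigr)$, so $\bigl|\int t^i g\bigr|\le\delta$ for every $i=0,\dots,\ell-2$: the function $g$ is almost orthogonal to every polynomial of degree at most $\ell-2$.

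Next I would convert this near-orthogonality into an $L_1$ bound via a sign-preserving polynomial. A piecewise-constant function with at most $\ell-1$ pieces changes sign at most $\ell-2$ times, say at $\eta_1<\dots<\eta_r\in[-1,1]$ with $r\le\ell-2$; set $Q(t)=\pm 2^{-r}\prod_{s=1}^{r}(t-\eta_s)$ with the sign chosen so that $Qg\ge 0$ on $[-1,1]$. Since $|\eta_s|\le1$, the monomial coefficients of $Q$ sum in absolute value to at most $1$, hence $\int |Q|\,|g|=\int Qg=\sum_i c_i\int t^i g\le\delta$ by near-orthogonality (and $\deg Q\le\ell-2$ keeps only the controlled moments in play). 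Now fix a threshold $\rho>0$: on the part of $[-1,1]$ at distance $\ge\rho$ from every $\eta_s$ one has $|Q|\ge(\rho/2)^r$, so that part contributes at most $2^r\delta\rho^{-r}$ to $\|g\|_1$; the $\rho$-neighbourhood of $\{\eta_s\}$ has Lebesgue measure at most $2\ell\rho$ and contributes at most $2\ell\rho$ because $\|g\|_\infty\le1$. Optimizing $2^r\delta\rho^{-r}+2\ell\rho$ over $\rho$ (take $\rho\asymp(\delta/\ell)^{1/(r+1)}$) gives $W_1(\nu,\nu')=\|g\|_1\le O\bigl(\ell\,\delta^{1/(r+1)}\bigr)\le O\bigl(\ell\,\delta^{1/(\ell-1)}\bigr)$, using $\delta\le1$ and $r\le\ell-2$; the regime $\delta\ge1$ is vacuous since $W_1\le 2$. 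Essentially the same bookkeeping, but interpolating the indicators $\indc{\cdot\le t}$ at the $\ell$ atoms (whose $\nu$- and $\nu'$-expectations are precisely the CDF values, as in the proof of \prettyref{lmm:identify}), yields a second proof, and pairing the dual formula \prettyref{eq:W1-dual} with polynomial approximation of $1$-Lipschitz functions yields a third.

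The crux, and the origin of the fractional exponent $\frac{1}{\ell-1}$, is that every polynomial estimate has the shape $\delta\times(\text{size of a degree-}(\ell-1)\text{ polynomial})$, and this size genuinely explodes — like $w^{-(\ell-1)}$ — when the atoms cluster into a window of width $w$; no uniform polynomial bound can beat this. The remedy is the threshold trick above, playing the explosion off against the a priori bound $\|g\|_\infty\le1$, with the crossover scale $\rho\asymp\delta^{1/(\ell-1)}$ producing the exponent. For the sign-preserving polynomial the coefficient control is immediate, but for the interpolation-based argument (and, more importantly, for the refinements in \prettyref{prop:stable2} and \prettyref{prop:stable1-separation}) one must bound the coefficients of a degree-$(\ell-1)$ interpolant at an arbitrary, possibly clustered, node set; the clean tool is the Newton form \prettyref{eq:interpolation-Newton} with divided differences read off Neville's diagram, which — unlike the Lagrange form — absorbs the cancellations and yields the bounds needed (cf.\ \prettyref{lmm:divided}).
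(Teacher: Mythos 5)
Your argument is correct and is in substance the same as the paper's \emph{second} proof of \prettyref{prop:stable1-union}: your sign-preserving polynomial $Q$ is (up to the normalization $2^{-r}$) exactly the polynomial $L$ constructed there from the sign changes of $\Delta F=F_\nu-F_{\nu'}$; your identity $\int t^i g(t)\,\diff t=-\tfrac{1}{i+1}\bigl(m_{i+1}(\nu)-m_{i+1}(\nu')\bigr)$ is the same integration by parts that converts $\int \Delta F\cdot L$ into $\Expect_{\nu'}[P]-\Expect_\nu[P]$ with $P'=L$; and your coefficient bound $\sum_j|c_j|\le 1$ is \prettyref{lmm:Px-expand}. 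Where you genuinely differ is the endgame: the paper argues by contradiction, isolating the single widest interval on which $|\Delta F|\cdot(\text{gap})$ is large and lower-bounding $\int|L|$ there via a beta-function integral, whereas you proceed directly, splitting $[-1,1]$ into the $\rho$-neighbourhood of the roots of $Q$ (controlled by $\|g\|_\infty\le 1$) and its complement (where $|Q|\ge(\rho/2)^r$), then balancing the two contributions. Your version is a cleaner, non-contradiction route to the same bound, and it makes transparent that the exponent is really $1/(r+1)$ with $r$ the number of sign changes, which is exactly the mechanism exploited in the adaptive refinement \prettyref{prop:stable1-separation}; the paper's first and third proofs (interpolation of indicators, and the dual formula with Lipschitz test functions), which you sketch as alternatives, are different in that they need divided-difference control of interpolants (\prettyref{lmm:divided}), which your sign-preserving construction avoids. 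One small correction: with your parenthetical choice $\rho\asymp(\delta/\ell)^{1/(r+1)}$ the term $2^r\delta\rho^{-r}$ retains an untamed factor $2^r$; take instead $\rho\asymp\delta^{1/(r+1)}$, e.g.\ $\rho=2\delta^{1/(r+1)}$, so that $2^r\delta\rho^{-r}\le\delta^{1/(r+1)}$ and $2\ell\rho\le 4\ell\delta^{1/(r+1)}$, giving $W_1(\nu,\nu')\le O\bigl(\ell\delta^{1/(r+1)}\bigr)\le O\bigl(\ell\delta^{1/(\ell-1)}\bigr)$ with an absolute constant as claimed (your handling of $r\le\ell-2$, $\delta\le 1$, and the trivial regime $\delta\ge 1$ is fine).
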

\begin{proof}[First proof of \prettyref{prop:stable1-union}]
    Suppose $\nu$ and $\nu'$ are supported on
    \begin{equation}
        \label{eq:supports-1}
        S=\{t_1,\dots,t_\ell\},\quad t_1<t_2<\dots<t_{\ell}.    
    \end{equation}
    Then, using the integral representation \prettyref{eq:W1-CDF}, the $W_1$ distance reduces to 
    \begin{equation}
        \label{eq:W1-CDF-discrete}
        W_1(\nu,\nu')=\sum_{r=1}^{\ell-1}|F_{\nu}(t_r)-F_{\nu'}(t_r)|\cdot|t_{r+1}-t_r|.
    \end{equation}
    For each $r$, let $f_r(x)=\indc{x\le t_r}$, and $P_r$ be the unique polynomial of degree $\ell-1$ to interpolate $f_r$ on $S$. In this way we have $f_r=P_r$ almost surely under both $\nu$ and $\nu'$, and thus
    \begin{equation}
        \label{eq:CDF-poly}    
        |F_{\nu}(t_r)-F_{\nu'}(t_r)|=|\Expect_{\nu}f_r-\Expect_{\nu'}f_r|
        =|\Expect_{\nu}P_r-\Expect_{\nu'}P_r|.
    \end{equation}
    $P_r$ can expressed using Newton formula \prettyref{eq:interpolation-Newton} as
    \begin{equation}
        \label{eq:Pr-1}
        P_r(x)=1+\sum_{i=r+1}^{\ell}f_r[t_1,\dots,t_i]g_{i-1}(x),
    \end{equation}
    where $g_{r}(x)=\prod_{j=1}^{r}(x-t_j)$ and we used $f_r[t_1,\dots,t_i]=0$ for $i=1,\ldots,r$. In \prettyref{eq:Pr-1}, the absolute values of divided differences are obtained in \prettyref{lmm:divided}:
    \begin{equation}
        \label{eq:fr-div}
        |f_r[t_1,\dots,t_i]|\le \frac{\binom{i-2}{r-1}}{(t_{r+1}-t_{r})^{i-1}}.
    \end{equation}
    In the summation of \prettyref{eq:Pr-1}, let $g_{i-1}(x)=\sum_{j=0}^{i-1} a_jx^j$. Since $|t_j|\le 1$ for every $j$, we have $\sum_{j=0}^{i-1}|a_j|\le 2^{i-1}$ (see \prettyref{lmm:Px-expand}). Applying \prettyref{eq:momentdiff} yields that
    \begin{equation}
        \label{eq:polydiff}
        |\Expect_\nu[g_{i-1}]-\Expect_{\nu'}[g_{i-1}]|\le \sum_{j=1}^{i-1} |a_j|\delta\le 2^{i-1}\delta.
    \end{equation}
    Then we obtain from \prettyref{eq:CDF-poly} and \prettyref{eq:Pr-1} that
    \begin{equation}
        \label{eq:CDFdiff}
        |F_{\nu}(t_r)-F_{\nu'}(t_r)|\le \sum_{i=r+1}^{\ell}\frac{\binom{i-2}{r-1}2^{i-1}\delta}{(t_{r+1}-t_{r})^{i-1}}
        \le \frac{\ell 4^{\ell-1}\delta}{(t_{r+1}-t_r)^{\ell-1}}.
    \end{equation}
    Also, $|F_{\nu}(t_r)-F_{\nu'}(t_r)|\le 1$ trivially. Therefore,
    \begin{equation}
        \label{eq:W1-example}
        W_1(\nu,\nu')\le \sum_{r=1}^{\ell-1}\pth{\frac{\ell 4^{\ell-1}\delta}{(t_{r+1}-t_r)^{\ell-1}}\wedge 1}\cdot|t_{r+1}-t_r|\le 4e(\ell-1)\delta^{\frac{1}{\ell-1}},
    \end{equation}
    where we used $\max\{\frac{\alpha}{x^{\ell-2}}\wedge x:x>0\}=\alpha^{\frac{1}{\ell-1}}$ and $x^{\frac{1}{x-1}}\le e$ for $x\ge 1$.
\end{proof}

\begin{proof}[Second proof of \prettyref{prop:stable1-union}]
    Suppose on the contrary that
    \begin{equation}
        W_1(\nu,\nu')\ge C\ell\delta^{\frac{1}{\ell-1}},
    \end{equation}
    for some absolute constant $C$. We will show that $\max_{i\in[\ell-1]}|m_i(\nu)-m_i(\nu')|\ge \delta$. We continue to use $S$ in \prettyref{eq:supports-1} to denote the support of $\nu$ and $\nu'$. Let $\Delta F(t)=F_{\nu}(t)-F_{\nu'}(t)$ denote the difference between two CDFs. Using \prettyref{eq:W1-CDF-discrete}, there exists $r\in [\ell-1]$ such that
    \begin{equation}
        \label{eq:DeltaF-2}
        |\Delta F(t_r)|\cdot |t_{r+1}-t_r|\ge C\delta^{\frac{1}{\ell-1}}.
    \end{equation}
    We first construct a polynomial $L$ that preserves the sign of $\Delta F$. To this end, let $S'=\{s_1,\dots,s_m\}\subseteq S$ such that $t_1=s_1<s_2<\dots<s_m=t_{\ell}$ be the set of points where $\Delta F$ changes sign, \ie, $\Delta F(x)\Delta F(y)\le 0$ for every $x\in(s_i,s_{i+1})$, $y\in(s_{i+1},s_{i+2})$, for every $i$. Let $L(x)\in \pm \prod_{i=2}^{m-1}(x-s_i)$ be a polynomial of degree at most $\ell-2$ that also changes sign on $S'$ such that
    \[
        \Delta F(x)L(x)\ge 0,\quad t_1\le x\le t_\ell.
    \]
    Consider the integral of the above positive function. Applying integral by parts, and using $\Delta F(t_{\ell})=\Delta F(t_{1})=0$ yields that
    \begin{equation}
        \label{eq:int-EP}
        \int_{t_1}^{t_\ell} \Delta F(x)L(x)\diff x= -\int_{t_1}^{t_\ell}P(x)\diff \Delta F(x)
        =\Expect_{\nu'}[P(X)]-\Expect_\nu [P(X)],
    \end{equation}
    where $P(x)$ is a polynomial of degree at most $\ell-1$ such that $P'(x)=L(x)$. If we write $L(x)=\sum_{j=0}^{\ell-2}a_jx^j$, then $P(x)=\sum_{j=0}^{\ell-2}\frac{a_j}{j+1}x^{j+1}$. Since $|s_j|\le 1$ for every $j$, we have $\sum_{j=0}^{\ell-2}|a_j|\le 2^{\ell-2}$ (see \prettyref{lmm:Px-expand}), and thus $\sum_{j=0}^{\ell-2}\frac{|a_j|}{j+1}\le 2^{\ell-2}$. Hence, 
    \begin{equation}
        \label{eq:EP-ub}
        |\Expect_{\nu'}[P(X)]-\Expect_\nu [P(X)]|\le 2^{\ell-2}\max_{i\in[\ell-1]}|m_i(\nu)-m_i(\nu')|.
    \end{equation}

    Since $\Delta F(x)L(x)$ is always non-negative, applying \prettyref{eq:DeltaF-2} to \prettyref{eq:int-EP} yields that
    \begin{equation}
        \label{eq:diff-EP}
        |\Expect_{\nu'}[P(X)]-\Expect_\nu [P(X)]|\ge \int_{t_r}^{t_{r+1}} |\Delta F(x)L(x)|\diff x
        \ge \frac{C\delta^{\frac{1}{\ell-1}}}{|t_{r+1}-t_r|}\int_{t_r}^{t_{r+1}}|L(x)|\diff x.
    \end{equation}
    Recall that $|L(x)|=\prod_{i=2}^{m-1}|x-s_i|$. Then for $x\in (t_r,t_{r+1})$, we have $|x-s_i|\ge x-t_r$ if $s_i\le t_r$, and $|x-s_i|\ge t_{r+1}-x$ if $s_i\ge t_{r+1}$. Hence,
    \[
        |L(x)|\ge (t_{r+1}-x)^a(x-t_r)^b,
    \]
    for some $a,b\in\naturals$ such that $a,b\ge 1$ and $a+b\le \ell-2$. The integral of the right-hand side of the above inequality can be expressed as (see \cite[6.2.1]{AS64})
    \[
        \int_{t_r}^{t_{r+1}}(t_{r+1}-x)^a(x-t_{r})^b\diff x=\frac{(t_{r+1}-t_r)^{a+b+1}}{(a+1)\binom{a+b+1}{b}}.
    \]
    Since $|t_{r+1}-t_r|\ge |\Delta F(t_r)|\cdot |t_{r+1}-t_r|\ge C\delta^{\frac{1}{\ell-1}}$ and $\binom{a+b+1}{b}\le 2^{a+b+1}$, and $a+b+1\le \ell-1$, we obtain from \prettyref{eq:diff-EP} that
    \begin{equation}
        \label{eq:EP-lb}
        |\Expect_{\nu'}[P(X)]-\Expect_\nu [P(X)]|\ge \delta\frac{(C/2)^{\ell-1}}{\ell}.
    \end{equation}
    We obtain from \prettyref{eq:EP-ub} and \prettyref{eq:EP-lb} that 
    \[
        \max_{i\in[\ell-1]}|m_i(\nu)-m_i(\nu')|\ge \delta\frac{(C/4)^{\ell-1}}{\ell}.\qedhere
    \]
\end{proof}

\begin{proof}[Third proof of \prettyref{prop:stable1-union}]
    We continue to use $S$ in \prettyref{eq:supports-1} to denote the support of $\nu$ and $\nu'$. For any 1-Lipschitz function $f$, $\Expect_\nu f$ and $\Expect_{\nu'}f$ only pertain to function values $f(t_1),\dots,f(t_{\ell})$, which can be interpolated by a polynomial of degree $\ell-1$. 
    However, the coefficients of the interpolating polynomial can be arbitrarily large.\footnote{
        For example, the polynomial to interpolate $f(-\epsilon)=f(\epsilon)=\epsilon, f(\epsilon)=0$ is $P(x)=x^2/\epsilon$.
    }
    To fix this issue, we slightly modify the function $f$ on $S$ to $\tilde f$, and then interpolate $\tilde f$ with bounded coefficients. In this way we have 
    \begin{equation*}
        |\Expect_\nu f-\Expect_{\nu'}f|
        \le 2 \max_{x\in \{t_1,\dots,t_{\ell}\}}|\tilde f(x)-f(x)| + |\Expect_\nu P-\Expect_{\nu'}P|.
    \end{equation*}
    To this end, we define the values of $\tilde f$ recursively by
    \begin{equation}
        \label{eq:tilde-f-def}
        \tilde f(t_1)=f(t_1),\quad \tilde f(t_i)=\tilde f(t_{i-1})+(f(t_i)-f(t_{i-1}))\indc{t_i-t_{i-1}>\tau},
    \end{equation}
    where $\tau\le 2$ is a parameter we will optimize later. From the above definition $|\tilde f(x)-f(x)|\le \tau\ell$ for $x\in S$. The interpolating polynomial $P$ can be expressed using Newton formula \prettyref{eq:interpolation-Newton} as
    \[
        P(x)=\sum_{i=1}^\ell \tilde f[t_1,\dots,t_i] g_{i-1}(x),
    \]
    where $g_{r}(x)=\prod_{j=1}^{r}(x-t_j)$ such that $|\Expect_\nu[g_{r}]-\Expect_{\nu'}[g_{r}]|\le 2^{r}\delta$ by \prettyref{eq:polydiff} for $r\le \ell-1$. Since $f$ is 1-Lipschitz, we have $|\tilde f[t_i,t_{i+1}]|\le 1$ for every $i$. Higher order divided differences are recursively evaluated by \prettyref{eq:div-recursion}. We now prove
    \begin{equation}
        \label{eq:tilde-f-div}
        \tilde f[t_i,\ldots,t_{i+j}]\le (2/\tau)^{j-1},~\forall~i,j.
    \end{equation}
    by induction on $j$. Assume \prettyref{eq:tilde-f-div} holds for every $i$ and some fixed $j$. The recursion \prettyref{eq:div-recursion} gives 
    \[
        \tilde f[t_i,\ldots,t_{i+j+1}]=\frac{\tilde f[t_{i+1},\ldots,t_{i+j+1}]-\tilde f[t_{i},\ldots,t_{i+j}]}{t_{i+j+1}-t_{i}}.
    \]
    If $t_{i+j+1}-t_{i}<\tau$, then $\tilde f[t_i,\ldots,t_{i+j+1}]=0$ by \prettyref{eq:tilde-f-def}; otherwise, $\tilde f[t_i,\ldots,t_{i+j+1}]\le (\frac{2}{\tau})^{j}$ by triangle inequality. Using \prettyref{eq:tilde-f-div}, we obtain that
    \begin{equation*}
        |\Expect_\nu f-\Expect_{\nu'}f|
        \le 2\tau\ell+\sum_{i=2}^\ell \pth{\frac{2}{\tau}}^{i-2}2^{i-1}\delta
        \le 2\ell\pth{\tau+\frac{4^{\ell-2}}{\tau^{\ell-2}}\delta}.
    \end{equation*}
    The conclusion follows by letting $\tau=4\delta^{\frac{1}{\ell-1}}$.
\end{proof}

The proof of \prettyref{prop:stable2} uses a similar idea as the first proof of \prettyref{prop:stable1-union} to approximate step functions for all values of $\nu$ and $\nu'$; however, this is clearly impossible for non-discrete $\nu'$. For this reason, we turn from interpolation to majorization. A classical method to bound a distribution function by moments is to construct two polynomials that majorizes and minorizes a step function, respectively. Then the expectations of these two polynomials provide a sandwich bound for the distribution function. This idea is used, for example, in the proof of Chebyshev-Markov-Stieltjes inequality (cf.~\cite[Theorem 2.5.4]{Akhiezer1965}).
\begin{proof}[Proof of \prettyref{prop:stable2}]
    Suppose $\nu$ is supported on $x_1<x_2<\ldots<x_k$. Fix $t\in\reals$ and let $f_t(x)=\indc{x\le t}$. Suppose $x_m<t<x_{m+1}$. Similar to \prettyref{ex:Hermite}, we construct polynomial majorant and minorant using Hermite interpolation. To this end, let $P_t$ and $Q_t$ be the unique degree-$2k$ polynomials to interpolate $f_t$ with the following:
    \begin{center}
        \begin{tabular}{ c | c  c  c | c | c c c }
        \toprule
                &$x_1$  &\dots  &$x_m$  &$t$      &$x_{m+1}$  &\dots  &$x_k$ \\
        \hline
        $P$     &1     &\dots   &1      &1      &0          &\dots  &0 \\
        $P'$    &0     &\dots   &0      &any    &0          &\dots  &0 \\
        \hline
        $Q$     &1     &\dots   &1      &0      &0          &\dots  &0 \\
        $Q'$    &0     &\dots   &0      &any    &0          &\dots  &0 \\
        \bottomrule
        \end{tabular}
    \end{center}
    \vspace{0.5em}
    As a consequence of Rolle's theorem, $P_t\ge f_t\ge Q_t$ (cf.~\cite[p.~65]{Akhiezer1965}, and an illustration in \prettyref{fig:major-minor}):
    \begin{figure}[ht]
        \centering
        \includegraphics[width=.5\linewidth]{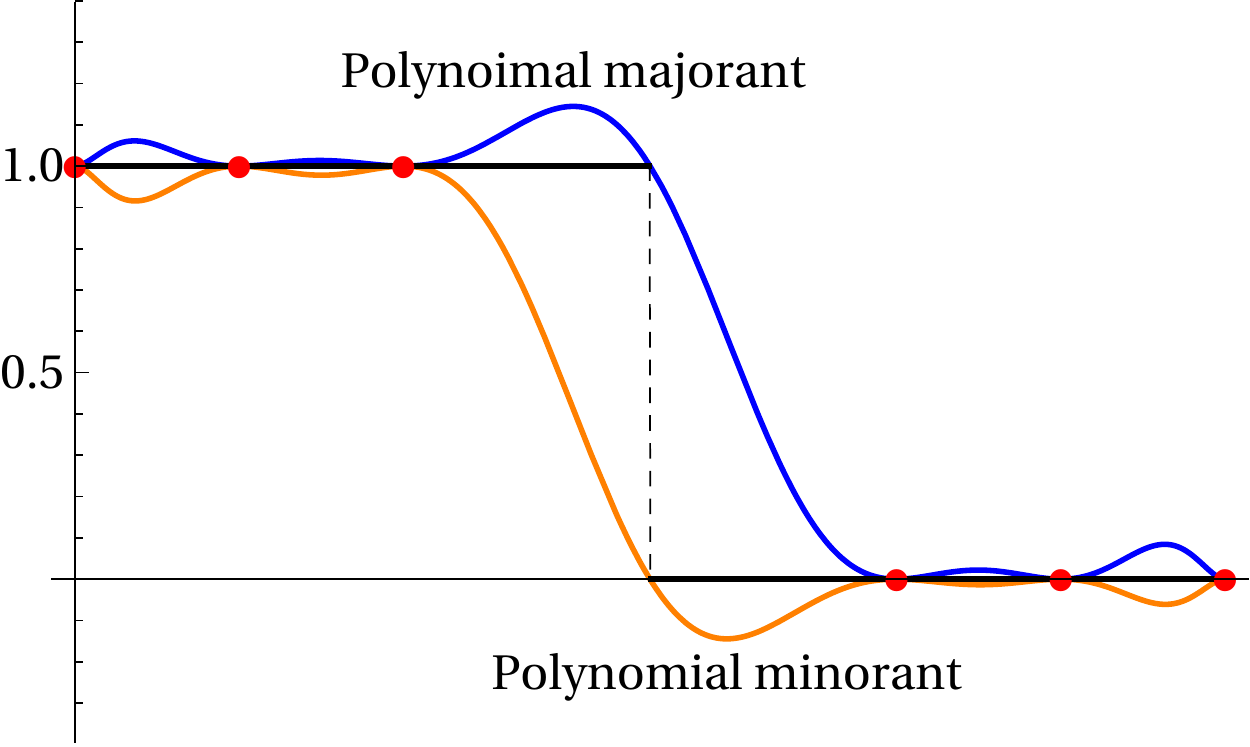}
        \caption{Polynomial majorant $P_t$ and minorant $Q_t$ that coincide with the step function on 6 red points.
            The polynomials are of degree 12, obtained by Hermite interpolation in \prettyref{sec:poly}.
        }
        \label{fig:major-minor}
    \end{figure}
    Using Lagrange formula of Hermite interpolation \cite[pp.~52--53]{stoer.2002}, $P_t$ and $Q_t$ differ by
    \[
        P_t(x)-Q_t(x)=R_t(x)\triangleq\prod_i\pth{\frac{x-x_i}{t-x_i}}^2.
    \]
    The sandwich bound for $f_t$ yields a sandwich bound for the CDFs:
    \begin{align*}
        & \Expect_{\nu'}[Q_t]\le F_{\nu'}(t) \le \Expect_{\nu'}[P_t]=\Expect_{\nu'}[Q_t]+\Expect_{\nu'}[R_t],\\
        & \Expect_{\nu}[Q_t]\le F_{\nu}(t) \le \Expect_{\nu}[P_t]=\Expect_{\nu}[Q_t].
    \end{align*}
    Then the CDFs differ by 
    \begin{gather}
        |F_{\nu}(t)-F_{\nu'}(t)|\le (f(t)+g(t))\wedge 1\le f(t)\wedge 1+g(t)\wedge 1,\label{eq:CDF-diff-fg}\\
        f(t)\triangleq |\Expect_{\nu'}[Q_t]-\Expect_{\nu}[Q_t]|, \quad g(t)\triangleq\Expect_{\nu'}[R_t].\nonumber
    \end{gather}
    The conclusion will be obtained from the integral of CDF difference using \prettyref{eq:W1-CDF}. Since $R_t$ is almost surely zero under $\nu$, we also have $g(t)=|\Expect_{\nu'}[R_t]-\Expect_{\nu}[R_t]|$. Similar to \prettyref{eq:polydiff}, we obtain that 
    \[
        g(t)=|\Expect_{\nu'}[R_t]-\Expect_{\nu}[R_t]|\le \frac{2^{2k}\delta}{\prod_{i=1}^k (t-x_i)^2}.
    \]
    Hence, 
    \begin{equation}
        \label{eq:int-g}
        \int (g(t)\wedge 1) \diff t \le \int \pth{\frac{2^{2k}\delta}{\prod_{i=1}^k (t-x_i)^2}\wedge 1} \diff t \le 16k\delta^{\frac{1}{2k}},
    \end{equation}
    where the last inequality is proved in \prettyref{lmm:wedge1-integral}.

    Next we analyze $f(t)$. The polynomial $Q_t$ (and also $P_t$) can be expressed using Newton formula \prettyref{eq:interpolation-Newton} as
    \begin{equation}
        \label{eq:Qt}
        Q_t(x)=1+\sum_{i=2m+1}^{2k+1}f_t[t_1,\dots,t_i]g_{i-1}(x),
    \end{equation}
    where $t_1,\ldots,t_{2k+1}$ denotes the expanded sequence
    \[
        x_1,x_1,\ldots,x_m,x_m,t,x_{m+1},x_{m+1},\ldots,x_{k},x_{k}
    \]
    obtained by \prettyref{eq:new-sequence}, $g_{r}(x)=\prod_{j=1}^{r}(x-t_j)$, and we used $f_t[t_1,\dots,t_i]=0$ for $i=1,\ldots, 2m$. In \prettyref{eq:Qt}, the absolute values of divided differences are obtained in \prettyref{lmm:divided}: 
    \[
        f_t[t_1,\dots,t_i]\le \frac{\binom{i-2}{2m-1}}{(t-x_m)^{i-1}}.
    \]
    Using \prettyref{eq:Qt}, and applying the upper bound for $|\Expect_{\nu}[g_{i-1}]-\Expect_{\nu'}[g_{i-1}]|$ in \prettyref{eq:polydiff}, we obtain that, 
    \[
        f(t)=|\Expect_{\nu'}[Q_t]-\Expect_{\nu}[Q_t]|
        \le \sum_{i=2m+1}^{2k+1}\frac{\binom{i-2}{2m-1}2^{i-1}\delta}{(t-x_m)^{i-1}}
        \le \frac{k4^{2k}\delta}{(t-x_m)^{2k}},\quad x_m<t<x_{m+1},~m\ge 1.
    \]
    If $t<x_1$, then $Q_t=0$ and thus $f(t)=0$. Then, analogous to \prettyref{eq:int-g}, we obtain that
    \begin{equation}
        \label{eq:int-f}
        \int (f(t)\wedge 1) \diff t \le 16k\delta^{\frac{1}{2k}}.
    \end{equation}
    Using \prettyref{eq:int-g} and \prettyref{eq:int-f}, the conclusion follows by applying \prettyref{eq:CDF-diff-fg} to the integral representation of Wasserstein distance \prettyref{eq:W1-CDF}.
\end{proof}

\subsection{Proofs of density estimation}
\begin{lemma}[Bound $\chi^2$-divergence using moments difference]
    \label{lmm:chi2-moments}
    Suppose all moments of $\nu$ and $\nu'$ exist, and $\nu'$ is centered with variance $\sigma^2$.
    Then,
    \[
        \chi^2(\nu*N(0,1)\|\nu'*N(0,1))\le e^{\frac{\sigma^2}{2}}\sum_{j\ge 1}\frac{(\Delta m_j)^2}{j!},
    \]
    where $\Delta m_j =  m_j(\nu)-m_j(\nu')$ denotes the $j^{\rm th}$ moment difference.
\end{lemma}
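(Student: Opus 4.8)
The plan is to expand the densities of the two mixtures in the Hermite basis and reduce the $\chi^2$-divergence to a weighted sum of squared moment differences, the denominator being absorbed by one pointwise inequality. First I would write $\varphi$ for the standard normal density and observe that, since $e^{-(x-u)^2/2}=e^{-x^2/2}e^{ux-u^2/2}$, the density of $\mu*N(0,1)$ factors as $\varphi(x)\,g_\mu(x)$ with $g_\mu(x)\triangleq\int e^{ux-u^2/2}\,\mu(\diff u)$. Setting $p=\varphi g_\nu$, $q=\varphi g_{\nu'}$, and $h\triangleq g_\nu-g_{\nu'}=(p-q)/\varphi$, the definition of $\chi^2$-divergence gives
\[
\chi^2(\nu*N(0,1)\,\|\,\nu'*N(0,1))=\int\frac{(p-q)^2}{q}\,\diff x=\Expect_\varphi\qth{\frac{h(X)^2}{g_{\nu'}(X)}}.
\]

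The crucial step — and the one worth highlighting — is a uniform lower bound on $g_{\nu'}$. Writing $g_{\nu'}(x)=\Expect_{\nu'}\qth{e^{Vx-V^2/2}}$ with $V\sim\nu'$ and applying Jensen's inequality to the convex function $\exp$,
\[
g_{\nu'}(x)\ge\exp\pth{x\,\Expect_{\nu'}[V]-\frac{1}{2}\Expect_{\nu'}[V^2]}=e^{-\sigma^2/2},
\]
where I use precisely the hypothesis that $\nu'$ is centered with variance $\sigma^2$. Substituting this into the display above yields $\chi^2(\nu*N(0,1)\,\|\,\nu'*N(0,1))\le e^{\sigma^2/2}\,\Expect_\varphi[h(X)^2]$.

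It then remains to identify $\Expect_\varphi[h^2]$ with $\sum_{j\ge1}(\Delta m_j)^2/j!$. Here I would invoke the generating-function identity $e^{ux-u^2/2}=\sum_{j\ge0}\frac{u^j}{j!}H_j(x)$ for the (probabilists') Hermite polynomials $H_j$ of \prettyref{eq:Hermite1}, so that $g_\mu=\sum_j\frac{m_j(\mu)}{j!}H_j$ and hence $h=\sum_{j\ge1}\frac{\Delta m_j}{j!}H_j$ (noting $\Delta m_0=0$). Equivalently, and more robustly, the $j$-th Hermite coefficient of $h$ is $\Expect_\varphi[h\,H_j]/j!=(m_j(\nu)-m_j(\nu'))/j!=\Delta m_j/j!$, using $\Expect H_j(X)=\mu^j$ for $X\sim N(\mu,1)$. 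The orthogonality relations $\Expect_\varphi[H_iH_j]=i!\,\indc{i=j}$ and Parseval's identity then give $\Expect_\varphi[h^2]=\sum_{j\ge1}(\Delta m_j)^2/j!$, which is the claim.

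The main obstacle is purely technical: justifying the $L^2(\varphi)$-convergence of the Hermite expansion of $h$ (and the interchange of sum and integral in $g_\mu$) under the bare hypothesis that all moments of $\nu,\nu'$ exist. I would dispatch the case $\sum_j(\Delta m_j)^2/j!=\infty$ as vacuous, and in the finite case argue — by a truncation/approximation argument, or simply by noting that in every application of this lemma $\nu,\nu'$ are compactly supported or subgaussian — that $h\in L^2(\varphi)$ and is genuinely represented by $\sum_{j\ge1}\frac{\Delta m_j}{j!}H_j$. Everything else is a short, routine computation.
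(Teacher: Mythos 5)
Your proposal is correct and follows essentially the same route as the paper's proof: factor both mixture densities as $\phi$ times a Hermite series, lower-bound the denominator by $e^{-\sigma^2/2}$ via Jensen's inequality using that $\nu'$ is centered with variance $\sigma^2$, and conclude by orthogonality of the Hermite polynomials. Your extra care in identifying the Hermite coefficients of $h$ and in addressing the $L^2(\varphi)$ convergence is a slight refinement of the paper's (more formal) interchange of sum and integral, but the argument is the same.
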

\begin{proof}
    The densities of two mixture distributions $\nu*N(0,1)$ and $\nu'*N(0,1)$ are 
    \begin{gather*}
        f(x)=\int \phi(x-u)\diff \nu(u)=\phi(x)\sum_{j\ge 1}H_j(x)\frac{m_j(\nu)}{j!},\\
        g(x)=\int \phi(x-u)\diff \nu'(u)= \phi(x)\sum_{j\ge 1}H_j(x)\frac{m_j(\nu')}{j!},
    \end{gather*}
    respectively, where $\phi$ denotes the density of $N(0,1)$, and we used $\phi(x-u)=\phi(x)\sum_{j\ge 0}H_j(x)\frac{x^j}{j!}$ (see the exponential generating function of Hermite polynomials \cite[22.9.17]{AS64}). Since $x\mapsto e^x$ is convex, applying Jensen's inequality yields that
    \[
        g(x)=\phi(x)\Expect[\exp(U'x-U'^2/2)]
        \ge \phi(x)\exp(-\sigma^2/2).
    \]
    Consequently, 
    \begin{align*}
        &\phantom{{}={}}\chi^2(\nu*N(0,1)\|\nu'*N(0,1)) = \int \frac{(f(x)-g(x))^2}{g(x)}\diff x\\
        &\leq e^{\frac{\sigma^2}{2}} \expect{\pth{\sum_{j\ge 1}H_j(Z)\frac{\Delta m_j}{j!}}^2}       
        =  e^{\frac{\sigma^2}{2}} \sum_{j\ge 1}\frac{(\Delta m_j)^2}{j!},
    \end{align*}
    where $Z\sim N(0,1)$ and the last step follows from the orthogonality of Hermite polynomials:
    \begin{equation}
        \label{eq:Hermite-ortho}
        \Expect[H_i(Z)H_j(Z)] = j! \indc{i=j}.\qedhere
    \end{equation}
\end{proof}


\begin{lemma}
    \label{lmm:high-moments}
    If $U$ and $U'$ each takes at most $k$ values in $[-1,1]$, and $|\Expect[U^j]-\Expect[U'^j]|\le \epsilon$ for $j=1,\dots,2k-1$, then, for any $\ell\ge 2k$,
    \[
        |\Expect[U^\ell]-\Expect[U'^\ell]|\le 3^\ell \epsilon.
    \]
\end{lemma}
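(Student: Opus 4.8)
The plan is to mimic the first proof of \prettyref{prop:stable1-union}: interpolate the monomial $x^\ell$ by a polynomial of degree at most $2k-1$ on the union of the two support sets, and then control the $\ell_1$-norm of its coefficients through the Newton form. Let $\nu,\nu'$ be the laws of $U,U'$, and enumerate $S\triangleq\supp(\nu)\cup\supp(\nu')=\{t_1<\dots<t_m\}$ with $m\le 2k$. For $f(x)=x^\ell$, let $P$ be the unique polynomial of degree $m-1\le 2k-1$ interpolating $f$ on $S$. Since $f=P$ on $S$, which contains both supports, $\Expect[U^\ell]-\Expect[U'^\ell]=\Expect_\nu[P]-\Expect_{\nu'}[P]$. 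Writing $P(x)=\sum_{j=0}^{m-1}c_jx^j$ and noting that the constant terms cancel ($\Expect_\nu[1]=\Expect_{\nu'}[1]=1$), the hypothesis $|\Expect[U^j]-\Expect[U'^j]|\le\epsilon$ for $1\le j\le 2k-1$ gives $|\Expect[U^\ell]-\Expect[U'^\ell]|\le\epsilon\sum_{j=1}^{m-1}|c_j|$.

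It remains to bound $\sum_j|c_j|$. Expanding $P$ in Newton form, $P(x)=\sum_{i=1}^m f[t_1,\dots,t_i]\,g_{i-1}(x)$ with $g_{i-1}(x)=\prod_{j=1}^{i-1}(x-t_j)$, so $\sum_j|c_j|\le\sum_{i=1}^m|f[t_1,\dots,t_i]|\cdot\|g_{i-1}\|_1$, where $\|g_{i-1}\|_1$ is the sum of the absolute values of its coefficients. Since $|t_j|\le 1$, \prettyref{lmm:Px-expand} gives $\|g_{i-1}\|_1\le 2^{i-1}$. For the divided differences I would invoke the classical closed form for a monomial, $f[t_1,\dots,t_i]=h_{\ell-i+1}(t_1,\dots,t_i)$, the complete homogeneous symmetric polynomial of degree $\ell-i+1$ in $i$ variables; this follows by induction on $i$ from the recursion \prettyref{eq:div-recursion} together with the identity $h_d(t_1,\dots,t_i)-h_d(t_1,\dots,t_{i-1})=t_i\,h_{d-1}(t_1,\dots,t_i)$ (alternatively from the Hermite--Genocchi integral representation of divided differences). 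As $h_{\ell-i+1}$ is a sum of $\binom{\ell}{i-1}$ monomials, each of modulus at most $1$ on $[-1,1]^i$, we get $|f[t_1,\dots,t_i]|\le\binom{\ell}{i-1}$.

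Combining these bounds yields
\[
|\Expect[U^\ell]-\Expect[U'^\ell]|\le\epsilon\sum_{i=1}^m\binom{\ell}{i-1}2^{i-1}\le\epsilon\sum_{s=0}^{\ell}\binom{\ell}{s}2^s=3^\ell\epsilon,
\]
where the middle step extends the sum (all terms are nonnegative, and $m-1\le 2k-1\le\ell-1$) and the last step is the binomial theorem. The only ingredient that is not entirely routine is the closed form and resulting bound for the divided differences of $x^\ell$ at nodes in $[-1,1]$; the rest of the argument parallels the proof of \prettyref{prop:stable1-union} verbatim.
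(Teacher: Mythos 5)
Your proof is correct and follows essentially the same route as the paper: interpolate $x^\ell$ on the (at most $2k$-point) union of supports, expand in Newton form, bound each $g_{i-1}$ by $2^{i-1}$ via \prettyref{lmm:Px-expand}, and sum $\sum_i\binom{\ell}{i-1}2^{i-1}\le 3^\ell$. The only cosmetic difference is that you bound the divided differences through the complete homogeneous symmetric polynomial closed form, whereas the paper uses the mean-value representation $f[x_1,\dots,x_i]=f^{(i-1)}(\xi_i)/(i-1)!$; both give the same bound $\binom{\ell}{i-1}$.
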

\begin{proof}
    Let $f(x)=x^\ell$ and denote the atoms of $U$ and $U'$ by $x_1<\dots<x_{k'}$ for some $k'\le 2k$. The function $f$ can be interpolated on $x_1,\dots,x_{k'}$ using a polynomial $P$ of degree at most $2k-1$, which, in the Newton form \prettyref{eq:interpolation-Newton}, is
    \[
        P(x)=\sum_{i=1}^{k'}f[x_1,\dots,x_i]g_{i-1}(x)
        =\sum_{i=1}^{k'}\frac{f^{(i-1)}(\xi_i)}{(i-1)!}g_{i-1}(x),
    \]
    for some $\xi_i\in[x_1,x_i]$, where $g_{r}(x)=\prod_{j=1}^{r}(x-x_j)$ and we used the intermediate value theorem for the divided differences (see \cite[(2.1.4.3)]{stoer.2002}).  Note that for any $\xi\in[-1,1]$, $|f^{(i-1)}(\xi)|\le \frac{\ell!}{(\ell-1+i)!}$. Similar to \prettyref{eq:polydiff}, we obtain that
    \[
        |\Expect[U^\ell]-\Expect[U'^\ell]|
        =|\Expect[P(U)]-\Expect[P(U')]|
        \le \sum_{i=1}^{k'}\binom{\ell}{i-1}2^{i-1}\epsilon
        \le 3^\ell \epsilon.\qedhere
    \]
\end{proof}

\begin{proof}[Proof of \prettyref{thm:main-density}]
    Here we prove a stronger result that 
    \[
        \chi^2(\hat f\|f) + \chi^2(f\|\hat f)\le O_k(\log(1/\delta)/n).
    \]
    By scaling it suffices to consider $M=1$. Similar to \prettyref{eq:bfm-example} and \prettyref{eq:Hoeffding-example}, we obtain an estimated mixing distribution $\hat \nu$ supported on $k$ atoms in $[-1,1]$ such that, with probability $1-\delta$,
    \[
        \Norm{{\bf m}_{2k-1}(\hat \nu)-{\bf m}_{2k-1}(\nu)}_2\le \sqrt{c_k\log(1/\delta)/n},
    \]
    for some constant $c_k$ that depends on $k$. The conclusion follows from Lemmas \ref{lmm:chi2-moments} and \ref{lmm:high-moments}.
\end{proof}

\begin{proof}[Proof of \prettyref{thm:oracle}]
    Recall that $f$ is $1$-subgaussian and $\sigma$ is a fixed constant. Similar to \prettyref{eq:Hoeffding-example}, we obtain an estimate $\tilde m_r$ for $\Expect_f[\gamma_r(X,\sigma)]$ (see the definition of $\gamma_r(\cdot,\sigma)$ in \prettyref{eq:Hermite}) for $r=1,\ldots,2k-1$ such that, with probability $1-\delta$,
    \[
        |\tilde m_r-\Expect_f [\gamma_r(X,\sigma)]|\le \sqrt{c_k\log(1/\delta)/n},
    \]
    for some constant $c_k$ that depends on $k$. By assumption, $\TV(f,g)\le \epsilon$ where both $f$ and $g$ are 1-subgaussian. Let $g=\nu*N(0,\sigma^2)$. Then, using \prettyref{lmm:subg-moments} below and the triangle inequality, we have
    \[
        |\tilde m_r-m_r(\nu)|\le O_k\pth{\epsilon\sqrt{\log(1/\epsilon)}+\sqrt{\log(1/\delta)/n}},\quad r=1,\dots,2k-1.
    \]
    Let $\hat\nu$ be obtained from the projection \prettyref{eq:project}. Similar to \prettyref{eq:bfm-example}, we have such that
    \[
        \Norm{{\bf m}_{2k-1}(\hat\nu)-{\bf m}_{2k-1}(\nu)}_2\le O_k\pth{\epsilon\sqrt{\log(1/\epsilon)}+\sqrt{\log(1/\delta)/n}}.
    \]
    Let $\hat f= \hat \nu*N(0,\sigma^2)$. Using the moment comparison in Lemmas \ref{lmm:chi2-moments} and \ref{lmm:high-moments}, and applying the upper bound $\TV(\hat f,g)\le \sqrt{\chi^2(\hat f\|g)/2}$, we obtain that
    \[
        \TV(\hat f,g)\le O_k\pth{\epsilon\sqrt{\log(1/\epsilon)}+\sqrt{\log(1/\delta)/n}}.
    \]
    The conclusion follows from the triangle inequality.
\end{proof}
\begin{lemma}
    \label{lmm:subg-moments}
    Let $\sigma$ be a constant. If $f$ and $g$ are 1-subgaussian, and $\TV(f,g)\le \epsilon$, then, 
    \[
        |\Expect_f[\gamma_r(X,\sigma)]-\Expect_g[\gamma_r(X,\sigma)]|\le O_r(\epsilon\sqrt{\log(1/\epsilon)}).
    \]
\end{lemma}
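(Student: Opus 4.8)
The plan is to reduce the claim to a direct $L^1$ estimate and then control the polynomial growth of $\gamma_r(\cdot,\sigma)$ against the subgaussian tails of $f$ and $g$ by truncation. Recall from \prettyref{eq:Hermite} that $\gamma_r(x,\sigma)=r!\sum_{j=0}^{\lfloor r/2\rfloor}\frac{(-1/2)^j}{j!(r-2j)!}\sigma^{2j}x^{r-2j}$ is a polynomial in $x$ of degree $r$ whose coefficients are bounded in absolute value by a constant $C_{r,\sigma}$ depending only on $r$ and $\sigma$; hence $|\gamma_r(x,\sigma)|\le C_{r,\sigma}(1+|x|)^r$ for all $x$. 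Since $\TV(f,g)=\frac12\Norm{f-g}_1\le\epsilon$, we immediately get
\[
\abs{\Expect_f[\gamma_r(X,\sigma)]-\Expect_g[\gamma_r(X,\sigma)]}\le \int |\gamma_r(x,\sigma)|\,|f(x)-g(x)|\diff x .
\]
If $\gamma_r$ were bounded this would already finish the proof; the only issue is its polynomial growth, so the remaining work is to show that the contribution of large $|x|$ is negligible.

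Second, I would split the integral at a threshold $T\ge 1$ to be chosen. On $\{|x|\le T\}$ bound $|\gamma_r(x,\sigma)|\le C_{r,\sigma}(1+T)^r$ and use $\int|f-g|\le 2\epsilon$, giving a contribution at most $2C_{r,\sigma}(1+T)^r\epsilon$. On $\{|x|>T\}$ use $|f-g|\le f+g$ together with the subgaussian tail bound (valid since $f,g$ are $1$-subgaussian) $\Prob_f[|X|>t],\Prob_g[|X|>t]\le 2e^{-t^2/2}$ and the corresponding moment bound $\Expect_f[X^{2r}],\Expect_g[X^{2r}]\le C_r$. By Cauchy--Schwarz,
\[
\int_{|x|>T}|\gamma_r(x,\sigma)|\,(f(x)+g(x))\diff x\le C_{r,\sigma}\pth{\sqrt{\Expect_f[X^{2r}]\,\Prob_f[|X|>T]}+\sqrt{\Expect_g[X^{2r}]\,\Prob_g[|X|>T]}}\le C'_{r,\sigma}\,e^{-T^2/4},
\]
so that altogether $\abs{\Expect_f[\gamma_r(X,\sigma)]-\Expect_g[\gamma_r(X,\sigma)]}\le 2C_{r,\sigma}(1+T)^r\epsilon+C'_{r,\sigma}e^{-T^2/4}$. (Alternatively one can skip Cauchy--Schwarz and integrate $\int_{|x|>T}(1+|x|)^r e^{-x^2/2}\diff x$, which is of order $T^{r-1}e^{-T^2/2}$ and leads to the same conclusion.)

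Finally, I would optimize over $T$: choosing $T=\Theta(\sqrt{\log(1/\epsilon)})$ makes the second term of order $\epsilon$ and the first term of order $\epsilon(\log(1/\epsilon))^{r/2}$, which for a fixed degree $r$ gives the bound $O_r(\epsilon\sqrt{\log(1/\epsilon)})$ (more precisely, $C_r\,\epsilon\,(\log(1/\epsilon))^{r/2}$, the constant and the exact power of the logarithm depending only on $r$ and $\sigma$). The only genuinely delicate point is this balancing step: one must place the truncation level exactly at the scale where the subgaussian tail mass of $f$ and $g$ drops to $\epsilon$, and the polynomial factor $(1+T)^r$ evaluated there is precisely what produces the logarithmic correction to the naive $\epsilon$ rate; all the remaining ingredients (bounded coefficients of $\gamma_r$, subgaussian tail and moment estimates) are routine.
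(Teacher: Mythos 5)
Your proof is correct and follows essentially the same route as the paper's: truncate the polynomial at $T=\Theta(\sqrt{\log(1/\epsilon)})$, control the bounded part by the total variation (the paper uses the variational form of $\TV$ with the test function $h(x)=\gamma_r(x,\sigma)\indc{|x|\le \alpha}$, you use the equivalent $L^1$ split of the integral), and absorb the tail via subgaussian tail/moment bounds. The only caveat is the final balancing step: what the optimization actually yields is $O_r\pth{\epsilon(\log(1/\epsilon))^{r/2}}$, which is not literally $O_r\pth{\epsilon\sqrt{\log(1/\epsilon)}}$ when $r\ge 2$ — but the paper's own proof terminates at exactly the same bound $\epsilon(O(\alpha\sqrt{r}))^r$ with $\alpha=O_r(\sqrt{\log(1/\epsilon)})$, so this imprecision sits in the lemma's statement rather than in your argument.
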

\begin{proof}
    The total variation distance has the following variational representation:
    \begin{equation}
        \label{eq:TV-sup}
        \TV(f,g)=\frac{1}{2}\sup_{\|h\|_\infty \le 1}|\Expect_fh-\Expect_gh|.
    \end{equation}
    Here the function $\gamma_r(\cdot,\sigma)$ is a polynomial and unbounded, so the above representation cannot be directly applied. Instead, we apply a truncation argument, thanks to the subgaussianity of $f$ and $g$, and obtain that, for both $X\sim f$ and $g$ (see Lemmas \ref{lmm:hermite-ub} and \ref{lmm:normal-tail-moments}),
    \[
        \Expect[\gamma_r(X,\sigma)\indc{|X|\ge \alpha}]
        \le (O(\sqrt{r}))^r\Expect|X^r\indc{|X|\ge \alpha}|
        \le (O(\alpha\sqrt{r}))^re^{-\alpha^2/2}.
    \]
    Note that by definition \prettyref{eq:Hermite}, $\gamma_r(x,\sigma)$ on $|x|\le \alpha$ is at most $(O(\alpha\sqrt{r}))^r$.  Applying \prettyref{eq:TV-sup} yields that, for $h(x)=\gamma_r(x,\sigma)\indc{|x|\le \alpha}$,
    \[
        |\Expect_fh-\Expect_gh|\le \epsilon(O(\alpha\sqrt{r}))^r.
    \]
    The conclusion follows by choosing $\alpha=O_r(\sqrt{\log(1/\epsilon)})$ and using the triangle inequality.
\end{proof}

\subsection{Proofs for \prettyref{sec:known}}
\label{sec:pf-known}

\begin{proof}[Proof of \prettyref{lmm:var-tildem}]
    Note that $\tilde m_r=\frac{1}{n}\sum_{i=1}^{n}\gamma_r(X_i,\sigma)$. Then we have 
    \[
        \var[\tilde m_r]=\frac{1}{n}\var[\gamma_r(X,\sigma)],
    \]
    where $X\sim \nu*N(0,\sigma^2)$. Since the standard deviation of a summation is at most the sum of individual standard deviations, using \prettyref{eq:Hermite}, we have 
    \[
        \sqrt{\var[\gamma_r(X,\sigma)]}
        \le r!\sum_{j=0}^{\floor{r/2}} \frac{(1/2)^j }{j!(r-2j)!}\sigma^{2j}\sqrt{\var[X^{r-2j}]}.
    \]
    $X$ can be viewed as $U+\sigma Z$ where $U\sim\nu$ and $Z\sim N(0,1)$ independent of $U$. Since $\nu$ is supported on $[-M,M]$, for any $\ell\in\naturals$, we have 
    \[
        \var[X^\ell]\le \Expect[X^{2\ell}]\le 2^{2\ell-1}(M^{2\ell}+\Expect|\sigma Z|^{2\ell})
        \le ((2M)^{\ell}+\Expect|3\sigma Z|^\ell)^2,
    \]
    where in the last step we used the inequality $\Expect|Z|^{2\ell}\le 2^{\ell}(\Expect|Z|^\ell)^2$ (see \prettyref{lmm:m-normal} below). Therefore,
    \begin{align*}
        \sqrt{\var[\gamma_r(X,\sigma)]}
        &\le r!\sum_{j=0}^{\floor{r/2}} \frac{(1/2)^j }{j!(r-2j)!}\sigma^{2j}((2M)^{r-2j}+\Expect|3\sigma Z|^{r-2j})\\
        &=\Expect(2M+\sigma Z')^r+\Expect(3\sigma |Z|+\sigma Z')^r,
    \end{align*}
    where $Z'\sim N(0,1)$ independent of $Z$. The conclusion follows by the moments of the standard normal distribution (see \cite{subgaussian}).
\end{proof}

\begin{lemma}
    \label{lmm:m-normal}
    Let $Z\sim N(0,1)$. For $\ell\in\naturals$, we have
    \[
        \sqrt{\frac{\pi}{8}}\le \frac{\Expect|Z|^{2\ell}}{2^\ell(\Expect|Z|^{\ell})^2}\le \sqrt{\frac{2}{\pi}}.
    \]
\end{lemma}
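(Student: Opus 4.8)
The plan is to compute the ratio
\[
R_\ell \triangleq \frac{\Expect|Z|^{2\ell}}{2^\ell(\Expect|Z|^\ell)^2}
\]
explicitly and to bound it using classical Wallis-type estimates for the central binomial coefficients. First I would recall the half-normal moments $\Expect|Z|^{2m}=(2m-1)!!$ and $\Expect|Z|^{2m+1}=\sqrt{2/\pi}\,(2m)!!$, and then, using $(2m-1)!!=(2m)!/(2^m m!)$, $(2m)!!=2^m m!$, and $(4m+1)!!=(4m+2)!/(2^{2m+1}(2m+1)!)$, rewrite $R_\ell$ as a pure ratio of central binomials: for even $\ell=2m$,
\[
R_{2m}=\frac{(4m-1)!!}{4^m((2m-1)!!)^2}=\frac{\binom{4m}{2m}}{4^m\binom{2m}{m}},
\]
and for odd $\ell=2m+1$,
\[
R_{2m+1}=\frac{\pi(4m+1)!!}{2^{2m+2}((2m)!!)^2}=\frac{\pi(2m+1)\binom{4m+2}{2m+1}\binom{2m}{m}}{2^{6m+3}}.
\]

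Next I would feed in the standard Wallis bounds $\frac{4^n}{2\sqrt n}\le\binom{2n}{n}\le\frac{4^n}{\sqrt{\pi n}}$ (for $n\ge 1$), applied with $n=2m$ and $n=m$. The even case is then essentially exact: taking the upper bound for $\binom{4m}{2m}$ and the lower bound for $\binom{2m}{m}$ gives $R_{2m}\le \frac{16^m/\sqrt{2\pi m}}{4^m\cdot 4^m/(2\sqrt m)}=\sqrt{2/\pi}$, and the opposite choice gives $R_{2m}\ge\sqrt{\pi/8}$ — so the constants in the lemma are precisely what this argument produces (and are approached as $m\to\infty$). For the odd case I would use the same idea together with the refined Wallis lower bound $\binom{2n}{n}>\frac{4^n}{\sqrt{\pi(n+1/2)}}$: after the algebraic simplification $(2m+\tfrac32)(m+\tfrac12)=\tfrac14(2m+1)(4m+3)$ this yields $R_{2m+1}>\sqrt{\frac{2m+1}{4m+3}}\ge\sqrt{3/7}>\sqrt{\pi/8}$ for $m\ge 1$, while the upper Wallis bound gives $R_{2m+1}\le\frac12\sqrt{2+1/m}\le\frac12\sqrt{5/2}<\sqrt{2/\pi}$ for $m\ge 2$. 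The two remaining small cases $\ell=1$ and $\ell=3$ would be checked by hand, $R_1=\frac{1}{2(2/\pi)}=\pi/4$ and $R_3=15\pi/64$, both of which lie strictly between $\sqrt{\pi/8}$ and $\sqrt{2/\pi}$. (Alternatively, one can show $R_\ell$ is monotonically decreasing in $\ell$ — the within-parity ratios $R_{2m+2}/R_{2m}$ and $R_{2m+3}/R_{2m+1}$ reduce to elementary rational inequalities — so that it suffices to evaluate $R_1=\pi/4<\sqrt{2/\pi}$ and $\lim_{\ell\to\infty}R_\ell=1/\sqrt2>\sqrt{\pi/8}$.)

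The main obstacle is the odd case. The crudest Wallis lower bound $\binom{2n}{n}\ge\frac{4^n}{2\sqrt n}$ is just barely too weak there (it would only give $R_{2m+1}\ge\frac{\pi}{4\sqrt2}\approx 0.555$, below $\sqrt{\pi/8}\approx 0.627$), so one genuinely needs the sharper estimate $\binom{2n}{n}>\frac{4^n}{\sqrt{\pi(n+1/2)}}$, or else must verify a few more small $\ell$ directly; likewise the even-to-odd comparison needed for a clean monotonicity proof is delicate. The underlying reason is that the stated constants have essentially no slack against $\sup_\ell R_\ell=R_1=\pi/4$ and $\inf_\ell R_\ell=\lim_\ell R_\ell=1/\sqrt2$, so genuinely soft bounds (Cauchy–Schwarz, or plain log-convexity of $p\mapsto\Expect|Z|^p$) are far too lossy and a second-order-accurate Wallis-type estimate is what makes the argument go through.
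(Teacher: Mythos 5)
Your proof is correct, and it follows the same skeleton as the paper's: reduce the ratio $R_\ell \triangleq \Expect|Z|^{2\ell}/(2^\ell(\Expect|Z|^\ell)^2)$ to explicit central binomial coefficients (your even and odd formulas agree with the ones the paper quotes from Gradshteyn--Ryzhik) and then apply Wallis-type bounds; indeed your $\frac{4^n}{2\sqrt n}\le\binom{2n}{n}\le\frac{4^n}{\sqrt{\pi n}}$ is exactly the bound $\frac{2^n}{\sqrt{2n}}\le\binom{n}{n/2}\le 2^n\sqrt{2/(\pi n)}$ that the paper invokes, and for even $\ell$ the two arguments are identical and yield the constants $\sqrt{\pi/8}$ and $\sqrt{2/\pi}$ exactly. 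Where you genuinely depart is the odd case, and your instinct there is precisely on target: the paper uses only the crude Wallis lower bound and obtains the sandwich $\frac{\pi}{4}\sqrt{\frac{\ell}{2(\ell-1)}}\le R_\ell\le\sqrt{\frac{\ell}{2(\ell-1)}}$, whose lower end tends to $\pi/(4\sqrt 2)\approx 0.555$ and already falls below $\sqrt{\pi/8}\approx 0.627$ at $\ell=5$; so the paper's assertion that these displays prove the lemma for $\ell\ge 5$ does not actually go through for the odd-case lower bound as written (the lemma itself is of course true, since $R_\ell\to 1/\sqrt 2$). Your replacement of the crude bound by the refined estimate $\binom{2n}{n}>4^n/\sqrt{\pi(n+1/2)}$, which after the simplification $(2m+\frac32)(m+\frac12)=\frac14(2m+1)(4m+3)$ gives $R_{2m+1}>\sqrt{(2m+1)/(4m+3)}\ge\sqrt{3/7}>\sqrt{\pi/8}$ for $m\ge 1$, together with the hand-checked values $R_1=\pi/4$ and $R_3=15\pi/64$, closes exactly this gap; your odd-case upper bound $\frac12\sqrt{2+1/m}<\sqrt{2/\pi}$ for $m\ge 2$ is also correct. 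In short, your route buys a complete odd-case lower bound where the paper's own proof needs a small repair, at the mild cost of invoking a second-order-accurate Wallis bound (or, as you note, checking a few more small $\ell$ directly).
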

\begin{proof}
    Direct calculations lead to (see \cite[3.461.2--3]{GR}):
    \[
        \frac{\Expect|Z|^{2\ell}}{2^\ell (\Expect|Z|^{\ell})^2}
        = \begin{cases}
            \frac{\binom{2\ell}{\ell}}{\binom{\ell}{\ell/2}2^\ell},&\ell~\mathrm{even},\\
            \frac{\pi\ell}{8^\ell}\binom{2\ell}{\ell}\binom{\ell-1}{\frac{\ell-1}{2}},&\ell~\mathrm{odd}.
        \end{cases}
    \]
    Using $\frac{2^n}{\sqrt{2n}}\le \binom{n}{n/2}\le 2^n\sqrt{\frac{2}{\pi n}}$ \cite[Lemma 4.7.1]{ash-itbook}, we obtain that
    \begin{gather*}
        \sqrt{\frac{\pi}{8}}\le \frac{\binom{2\ell}{\ell}}{\binom{\ell}{\ell/2}2^\ell}\le \sqrt{\frac{2}{\pi}},\\
        \frac{\pi}{4}\sqrt{\frac{\ell}{2(\ell-1)}}\le \frac{\pi\ell}{8^\ell}\binom{2\ell}{\ell}\binom{\ell-1}{\frac{\ell-1}{2}}\le \sqrt{\frac{\ell}{2(\ell-1)}},
    \end{gather*}
    which prove this lemma for $\ell\ge 5$. For $\ell\le 4$ the lemma follows from the above equalities. 
\end{proof}

\subsection{Proofs for \prettyref{sec:unknown}}
\label{sec:pf-unknown}
\begin{proof}[Proof of \prettyref{prop:accuracy2}]
    By scaling it suffices to consider $M=1$. Without loss of generality assume $ \sigma\ge \hat\sigma $ and otherwise we can interchange $\pi$ and $\hat\pi$. Let $\tau^2=\sigma^2-\hat\sigma^2$ and $\nu'=\nu*N(0,\tau^2)$. Similar to \prettyref{eq:mr-deconv-example}, we obtain that
    \begin{equation}
        \label{eq:mr-deconv}
        |m_r(\nu')-m_r(\hat \nu)|\le (c\sqrt{k})^{2k}\epsilon,\quad r=1,\dots,2k,
    \end{equation}
    for some absolute constant $c$. Using \prettyref{lmm:tau} below yields that $\tau\le O(\epsilon^{\frac{1}{2k}})$. It follows from \prettyref{prop:stable2} that
    \[
        W_1(\nu',\hat\nu)\le O\pth{k^{1.5}\epsilon^{\frac{1}{2k}}}.
    \]
    The conclusion follows from $W_1(\nu',\nu)\le O(\tau)$ and the triangle inequality.
\end{proof}
\begin{lemma}
    \label{lmm:tau}
    Suppose $\pi=\nu*N(0,\tau^2)$ and $\pi'$ is $k$-atomic supported on $[-1,1]$. Let $\epsilon=\max_{i\in [2k]}|m_i(\pi)-m_i(\pi')|$. Then,
    \[
        \tau\le 2\pth{\epsilon/k!}^{\frac{1}{2k}}.
    \]
\end{lemma}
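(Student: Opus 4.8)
The plan is to reuse the ``positive polynomial'' device from the proof of \prettyref{lmm:identify}. Let $x_1,\dots,x_k\in[-1,1]$ denote the atoms of $\pi'$ and set $Q(x)=\prod_{i=1}^k(x-x_i)^2$, a monic nonnegative polynomial of degree $2k$. Since $Q$ vanishes on $\supp(\pi')$, we have $\Expect_{\pi'}[Q]=0$, and therefore $\Expect_\pi[Q]=\Expect_\pi[Q]-\Expect_{\pi'}[Q]$ is a linear combination of the moment discrepancies $m_j(\pi)-m_j(\pi')$, $j=1,\dots,2k$ (the order-$0$ moments agree), whose coefficients are those of $Q$. Because $|x_i|\le1$, the coefficients of $\prod_i(x-x_i)$ sum in absolute value to at most $2^k$ (cf.~\prettyref{lmm:Px-expand}), and by submultiplicativity of the $\ell^1$-norm of polynomial coefficients, the coefficients of $Q=\big(\prod_i(x-x_i)\big)^2$ sum to at most $4^k$. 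This yields the upper bound $\Expect_\pi[Q]\le4^k\epsilon$.

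Next I would establish the matching lower bound $\Expect_\pi[Q]\ge k!\,\tau^{2k}$. Writing $X\sim\pi$ as $X=U+W$ with $U\sim\nu$ and $W\sim N(0,\tau^2)$ independent, one has
\[
\Expect_\pi[Q]=\Expect_U\Big[\Expect_W\big[R_U(W)^2\big]\Big],\qquad R_u(w)\triangleq\prod_{i=1}^k(u-x_i+w),
\]
where for each fixed $u$, $R_u$ is a \emph{monic} polynomial of degree $k$ in $w$. It thus suffices to show that every monic degree-$k$ polynomial $R$ satisfies $\Expect_{W\sim N(0,\tau^2)}[R(W)^2]\ge k!\,\tau^{2k}$. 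This is the extremal property of Hermite polynomials: since $H_k$ in \prettyref{eq:Hermite1} is monic, $\tau^kH_k(\cdot/\tau)$ is the monic polynomial of degree $k$ that is orthogonal in $L^2(N(0,\tau^2))$ to all lower-degree polynomials, so decomposing $R=\tau^kH_k(\cdot/\tau)+S$ with $\deg S<k$ gives $\Expect[R(W)^2]\ge\Expect[(\tau^kH_k(W/\tau))^2]$, which equals $\tau^{2k}\Expect_{Z\sim N(0,1)}[H_k(Z)^2]=k!\,\tau^{2k}$ by the orthogonality relation \prettyref{eq:Hermite-ortho}.

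Combining the two bounds gives $k!\,\tau^{2k}\le4^k\epsilon$, hence $\tau\le(4^k\epsilon/k!)^{1/(2k)}=2(\epsilon/k!)^{1/(2k)}$, which is the assertion. The only step requiring genuine care is the lower bound $\Expect_\pi[Q]\ge k!\,\tau^{2k}$ via the Hermite minimality property (together with the change of variables $W=\tau Z$); the coefficient bookkeeping in the upper bound is routine, so I do not foresee a serious obstacle.
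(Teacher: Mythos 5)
Your proposal is correct and follows essentially the same route as the paper: the same vanishing polynomial $Q(x)=\prod_i(x-x_i)^2$, the same coefficient bound giving $\Expect_\pi[Q]\le 2^{2k}\epsilon$, and the same lower bound $\Expect_\pi[Q]\ge k!\,\tau^{2k}$ obtained by conditioning on the latent variable and invoking the extremal (orthogonality) property of the monic Hermite polynomial, which is exactly the paper's \prettyref{lmm:monic}. The only cosmetic difference is that you reprove the Hermite minimality inline (in the $N(0,\tau^2)$-scaled form) rather than citing it as a separate lemma.
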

\begin{proof}
    Denote the support of $\pi'$ by $\{x_1',\dots,x_k'\}$. Consider the polynomial $P(x)=\prod_{i=1}^k (x-x_i')^2=\sum_{i=0}^{2k} a_i x^i$ which is almost surely zero under $\pi'$. Since every $|x_i'|\le 1$, similar to \prettyref{eq:polydiff}, we obtain that 
    \[
        \Expect_\pi[P]=|\Expect_\pi[P]-\Expect_{\pi'}[P]|\le 2^{2k} \epsilon.
    \]
    Since $\pi=\nu*N(0,\tau^2)$, we have
    \[
        \Expect_\pi[P]\ge \min_{x}\Expect[P(x+\tau Z)]\ge \tau^{2k}\min_{y_1,\dots,y_{k}}\Expect\qth{\prod_i(Z+y_i)^2}=k!\tau^{2k},
    \]
    where $Z\sim N(0,1)$, and in the last step we used \prettyref{lmm:monic} below.
\end{proof}
\begin{lemma}
    Let $Z\sim N(0,1)$. Then,
    \label{lmm:monic}       
    \[
        \min\{\Expect[p^2(Z)]:   \deg(p) \leq k,~p \text{ is  monic} \} = k!
    \]
    achieved by $p=H_k$.
\end{lemma}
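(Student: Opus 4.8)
The plan is to diagonalize the quadratic form $p\mapsto \Expect[p^2(Z)]$ in the basis of Hermite polynomials, exploiting the fact (to be established first) that $H_j$ is \emph{monic} of degree $j$. Indeed, from the definition \prettyref{eq:Hermite1} the leading term of $H_j$ comes from the summand $i=0$, contributing $j!\cdot\frac{1}{j!}x^j=x^j$, so $H_j(x)=x^j+(\text{lower order})$. Moreover $\{H_0,H_1,\dots,H_k\}$ is a basis of the space of polynomials of degree at most $k$, orthogonal under the standard Gaussian measure, with $\Expect[H_i(Z)H_j(Z)]=j!\,\indc{i=j}$ as recorded in \prettyref{eq:Hermite-ortho}.

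Given these facts, the argument is immediate. I would take an arbitrary monic polynomial $p$ of degree $k$ (monicity forces the degree to be exactly $k$, which is the nontrivial case; lower degree would only decrease the value) and expand it as $p=\sum_{j=0}^{k}c_j H_j$. Since each $H_j$ is monic of degree $j$, matching leading coefficients on both sides pins down $c_k=1$, while $c_0,\dots,c_{k-1}$ remain free. Orthogonality then gives
\[
\Expect[p^2(Z)]=\sum_{i,j=0}^{k}c_ic_j\,\Expect[H_i(Z)H_j(Z)]
=\sum_{j=0}^{k}c_j^2\,j!
=k!+\sum_{j=0}^{k-1}c_j^2\,j!\ \ge\ k!,
\]
with equality if and only if $c_0=\dots=c_{k-1}=0$, i.e.\ $p=H_k$. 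This establishes both the value of the minimum and that it is uniquely attained at $H_k$.

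There is essentially no obstacle: the only point needing (minor) care is the observation that $H_k$ is monic, which is what fixes the coefficient $c_k$ to $1$ and thereby reduces the problem to minimizing $\sum_{j<k}c_j^2\,j!$ over the unconstrained coefficients, whose minimizer is obviously the all-zero vector. (Conceptually, this is just the standard extremal characterization of monic orthogonal polynomials as the minimizers of the $L^2$-norm among monic polynomials of a given degree.)
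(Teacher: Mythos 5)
Your proof is correct and is essentially the paper's own argument: expand a monic degree-$k$ polynomial as $p=H_k+\sum_{j<k}c_jH_j$ (using that $H_k$ is monic) and apply the orthogonality relation \prettyref{eq:Hermite-ortho} to get $\Expect[p^2(Z)]=k!+\sum_{j<k}c_j^2\,j!\ge k!$, with equality exactly at $p=H_k$. The only difference is that you spell out the monicity of $H_k$ and the uniqueness of the minimizer, which the paper leaves implicit.
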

\begin{proof}
    Since $p$ is monic, it can be written as $p=H_k + \sum_{j=0}^{k-1} \alpha_j H_j$, where $H_j$ is the Hermite polynomial \prettyref{eq:Hermite1}. By the orthogonality \prettyref{eq:Hermite-ortho}, we have $\Expect[p^2(Z)]=k!+\sum_{j=0}^{k-1} \alpha_j^2 j!$ and the conclusion follows.
\end{proof}

\begin{proof}[Proof of \prettyref{lmm:sigma-exist}]
    The proof is similar to that of \cite[Theorem 5B]{Lindsay1989}. Let $ \hat{\bfM}_r(\sigma) $ denote the moment matrix associated with the empirical moments of $ \gamma_i(X,\sigma) $ for $ i\le 2r $; in other words, $(\hat{\bfM}_r(\sigma))_{ij} = \Expect_n[\gamma_{i+j}(X,\sigma)]$, $i,j =0,\ldots,r$.		Let
    \begin{equation}
        \label{eq:hat-sigma}
        \hat{\sigma}_r=\inf\{\sigma>0:\det(\hat\bfM_r(\sigma))=0\}.
    \end{equation}
    The smallest positive zero of $\hat d_k$ is given by $\hat\sigma_k$. Direct calculation shows that $\hat{\sigma}_1=s$. Since the mixture distribution has a density, then almost surely, the empirical distribution has $ n $ points of support. By \prettyref{thm:supp-detM}, the matrix $ \hat{\bfM}_r(0) $ is positive definite and thus $ \hat{\sigma}_r>0 $ for any $ r<n $. For any $ q<r $, if $ \hat{\bfM}_r(\sigma) $ is positive definite, then $ \hat{\bfM}_q(\sigma) $ as a leading principal submatrix is also positive definite. Since eigenvalues of $ \hat{\bfM}_r(\sigma) $ are continuous functions of $ \sigma $, we have $\hat\sigma_r>\sigma\Rightarrow\hat\sigma_q>\sigma$, and thus
    \begin{equation}
        \label{eq:hat-sigma-cmp}
        \hat{\sigma}_q\ge \hat{\sigma}_r,\quad \forall~q<r.
    \end{equation}
    In particular, $ \hat{\sigma}_k\le \hat{\sigma}_1 $.
\end{proof}
\begin{proof}[Proof of \prettyref{lmm:U-exist}]
    We continue to use the notation in \prettyref{eq:hat-sigma}. Applying \prettyref{eq:hat-sigma-cmp} and \prettyref{lmm:sigma-exist} yields that
    \[
        0<\hat{\sigma}=\hat{\sigma}_k\le \hat{\sigma}_{k-1}\le ...\le \hat{\sigma}_1=s,
    \]
    and for any $ \sigma<\hat{\sigma}_j $, the matrix $ \hat{\bfM}_j(\sigma) $ is positive definite. Since $ \det(\hat{\bfM}_k(\hat{\sigma}))=0 $, then, for some $ r\in \{1,\dots,k\} $, we have $ \det(\hat{\bfM}_j(\hat{\sigma}))=0 $ for $ j=r,\dots,k $, and $ \det(\hat{\bfM}_j(\hat{\sigma}))>0 $ for $ j=0,\dots,r-1 $. By \prettyref{thm:supp-detM}, there exist a $ r $-atomic distribution whose $ j^{\rm th} $ moment coincides with $ \hat{\gamma}_j(\hat{\sigma}) $ for $ j\le 2r $. It suffices to show that $ r=k $ almost surely.

    Since the mixture distribution has a density, in the following we condition on the event that all samples $ X_1,\dots,X_n $ are distinct, which happens almost surely, without loss of generality. We first show that the empirical moments $ (\hat{\gamma}_1,\dots,\hat{\gamma}_n) $, where $ \hat{\gamma}_j=\frac{1}{n}\sum_i X_i^j $, have a joint density in $ \reals^n $. The Jacobian matrix of this transformation is
    \begin{equation*}
        \frac{1}{n}
        \begin{bmatrix}
            1 & &  &\\
            & 2 &  &\\
            &  &\ddots&   \\
            &  &  &n 
        \end{bmatrix}
        \begin{bmatrix}
            1 & \cdots & 1 \\
            X_1 & \cdots & X_{n} \\
            \vdots  & \ddots & \vdots  \\
            X_1^{n-1} & \cdots & X_{n}^{n-1} 
        \end{bmatrix},
    \end{equation*}
    which is invertible. Since those $ n $ samples $ (X_1,\dots,X_n) $ have a joint density, then the empirical moments $ (\hat{\gamma}_1,\dots,\hat{\gamma}_n) $ also have a joint density.

    Suppose, for the sake of contradiction, that $ r\le k-1 $. Then $ \det(\hat{\bfM}_{r-1}(\hat{\sigma}))>0 $ and $ \det(\hat{\bfM}_{r}(\hat{\sigma}))=\det(\hat{\bfM}_{r+1}(\hat{\sigma}))=0 $. In this case, $ \hat{m}_{2r+1}(\hat{\sigma}) $ is a deterministic function of $ \hat{m}_{1}(\hat{\sigma}),\dots,\hat{m}_{2r}(\hat{\sigma})$ (see \prettyref{lmm:hankel-det}). Since $ \hat{\sigma} $ is the smallest positive root of $ \hat{d}_r(\sigma)=0 $, it is uniquely determined by $ (\hat{\gamma}_1,\dots,\hat{\gamma}_{2r}) $.
    Therefore, $ \hat{m}_{2r+1}(\hat{\sigma}) $, and thus $ \hat{\gamma}_{2r+1} $, are both deterministic functions of $ (\hat{\gamma}_1,\dots,\hat{\gamma}_{2r}) $, which happens with probability zero, since the sequence $ (\hat{\gamma}_1,\dots,\hat{\gamma}_{2r+1}) $ has a joint density.
    Consequently, $ r\le k-1 $ with probability zero.
\end{proof}

The proof of \prettyref{eq:hatnu-tail} relies on the following result, which obtains a tail probability bound by comparing moments.
\begin{lemma}
    \label{lmm:tail}
    Let $\epsilon=\max_{i\in [2k]}|m_i(\nu)-m_i(\nu')|$. If either $\nu$ or $\nu'$ is $k$-atomic, and $\nu$ is supported on $[-1,1]$, then, for any $t>1$,
    \[
        \Prob[|Y|\ge t]\le 2^{2k+1}\epsilon (t-1)^{-2k},\quad Y\sim\nu'.
    \]
\end{lemma}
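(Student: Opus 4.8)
The plan is to bound the tail of $\nu'$ by Markov's inequality applied to a carefully chosen nonnegative polynomial $Q$ of degree at most $2k$, designed so that: (a) $Q$ vanishes identically on the support of whichever of $\nu,\nu'$ is $k$-atomic; (b) the coefficients of $Q$ have small $\ell_1$-norm (call it $\Norm{Q}_1$); and (c) $Q(x)\ge (t-1)^{2k}$ on $\{|x|\ge t\}$. Property (b) converts the moment closeness $|m_j(\nu)-m_j(\nu')|\le\epsilon$ into $|\Expect_\nu[Q]-\Expect_{\nu'}[Q]|\le \Norm{Q}_1\epsilon$; property (a) kills one of these two expectations; and property (c) turns a bound on $\Expect_{\nu'}[Q]$ into a tail bound via $\Prob_{\nu'}[|Y|\ge t]\le \Prob_{\nu'}[Q(Y)\ge (t-1)^{2k}]\le \Expect_{\nu'}[Q]/(t-1)^{2k}$. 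Two elementary facts used throughout: by \prettyref{lmm:Px-expand} a monic polynomial $\prod_i(x-r_i)$ with every $|r_i|\le 1$ has $\Norm{\cdot}_1\le 2^{(\#\text{factors})}$, and $\Norm{PQ}_1\le\Norm{P}_1\Norm{Q}_1$.

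When $\nu$ is the $k$-atomic one, say on $x_1,\dots,x_k\in[-1,1]$, I would take $Q(x)=\prod_{i=1}^k(x-x_i)^2$. Then $Q\ge 0$, $\deg Q\le 2k$, $\Expect_\nu[Q]=0$, and $\Norm{Q}_1\le (2^{k})^2=2^{2k}$. Since $m_0(\nu)=m_0(\nu')=1$ the constant terms cancel, so $0\le\Expect_{\nu'}[Q]=\Expect_{\nu'}[Q]-\Expect_\nu[Q]=\sum_{j=1}^{2k}q_j\,(m_j(\nu')-m_j(\nu))\le 2^{2k}\epsilon$, where $q_j$ are the coefficients of $Q$. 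Finally, if $|Y|\ge t$ then $|Y-x_i|\ge |Y|-|x_i|\ge t-1>0$ for every $i$, so $Q(Y)\ge(t-1)^{2k}$, and Markov's inequality gives $\Prob_{\nu'}[|Y|\ge t]\le 2^{2k}\epsilon/(t-1)^{2k}\le 2^{2k+1}\epsilon(t-1)^{-2k}$.

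When instead $\nu'$ is the $k$-atomic one, the naive choice $\prod_i(x-y_i)^2$ fails: it vanishes at \emph{all} atoms of $\nu'$, including those in the tail, so it is blind to $\{|x|\ge t\}$. The remedy is to isolate the atoms of $\nu'$ outside $[-1,1]$: if $a$ of them lie outside and $B$ indexes the $k-a$ atoms in $[-1,1]$, take $Q(x)=x^{2a}\prod_{i\in B}(x-y_i)^2$, which again has degree $2k$, is nonnegative, vanishes at $y_i$ for $i\in B$, has $\Norm{Q}_1\le 2^{2(k-a)}\le 2^{2k}$, and satisfies $Q(x)\ge (t-1)^{2(k-a)}\cdot t^{2a}\ge(t-1)^{2k}$ for $|x|\ge t$ (using $|x-y_i|\ge t-1$ for $i\in B$ and $|x|^{2a}\ge t^{2a}\ge(t-1)^{2a}$). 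Now $\Expect_{\nu'}[Q]$ need not vanish — it is carried by the far atoms — so one only gets $\Expect_{\nu'}[Q]\le \Expect_\nu[Q]+\Norm{Q}_1\epsilon$, and \textbf{the main obstacle is controlling $\Expect_\nu[Q]=\int_{[-1,1]}Q\,d\nu$}, for which the crude bound $\max_{[-1,1]}|Q|$ is of order $1$, not order $\epsilon$. My plan here is a preliminary a priori estimate feeding a bootstrap: the hypothesis forces $m_{2k}(\nu')\le m_{2k}(\nu)+\epsilon\le 1+\epsilon$, so by $k$-atomicity every atom of $\nu'$ at location $y$ has weight at most $(1+\epsilon)|y|^{-2k}$, giving a rough tail bound; combining this with moment comparison against the $k$-point Gauss quadrature of $\nu$ (which matches $m_1,\dots,m_{2k-1}(\nu)$ exactly) to show the atoms outside $[-1,1]$ carry total weight $O(\epsilon)$ and hence $\Expect_\nu[Q]=O(2^{2k}\epsilon)$, which closes the argument with the stated constant $2^{2k+1}$ — the extra factor $2$ over the first case being exactly the slack this bookkeeping costs.
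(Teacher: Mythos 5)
Your first case ($\nu$ $k$-atomic) is correct and is exactly the paper's argument: take $Q(x)=\prod_i(x-x_i)^2$, use the coefficient bound $2^{2k}$ to get $\Expect_{\nu'}[Q]\le 2^{2k}\epsilon$, and apply Markov with $Q(y)\ge(t-1)^{2k}$ on $\{|y|\ge t\}$.

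The second case, however, has a genuine gap, and the obstacle you flag is not one you can bootstrap away. With $Q(x)=x^{2a}\prod_{i\in B}(x-y_i)^2$ you must control $\Expect_\nu[Q]$, and this quantity can be of order $1$ under the hypotheses: take $k=1$, $\nu=\delta_1$, $\nu'=\delta_{1+\eta}$, so that $\epsilon\asymp\eta$ while $a=1$, $Q(x)=x^2$ and $\Expect_\nu[Q]=1$. The same example kills your proposed intermediate claim that the atoms of $\nu'$ outside $[-1,1]$ carry total weight $O(\epsilon)$ --- here they carry weight $1$. (This is consistent with the lemma because for $t-1\lesssim\eta$ the right-hand side $2^{2k+1}\epsilon(t-1)^{-2k}$ exceeds $1$; the lemma does not assert that the outside mass is small, only that it decays in $t$.) So neither the crude bound $m_{2k}(\nu')\le 1+\epsilon$ nor a comparison with the Gauss quadrature of $\nu$ can yield $\Expect_\nu[Q]=O(2^{2k}\epsilon)$, and your sketch cannot close with the stated constant. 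The missing idea, which is the paper's trick, is to exploit signs rather than magnitudes: take $P(x)=(x^2-1)\prod_{i\in B}(x-y_i)^2$, still of degree $2k$ with coefficient $\ell_1$-norm at most $2^{2k}$. Since $\nu$ lives on $[-1,1]$, $P\le 0$ $\nu$-a.s., so $\Expect_{\nu'}[P]\le\Expect_{\nu'}[P]-\Expect_\nu[P]\le 2^{2k}\epsilon$ with no need to estimate $\Expect_\nu[P]$ at all; and $P\ge 0$ $\nu'$-a.s.\ (it vanishes at the atoms in $[-1,1]$ and is positive at those outside), so Markov applies, with $P(y)\ge(t^2-1)(t-1)^{2(k-1)}\ge(t-1)^{2k}$ for $|y|\ge t$. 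Replacing your $x^{2a}$ factor by $(x^2-1)$ is exactly what makes the far atoms of $\nu'$ visible to $P$ while keeping $\Expect_\nu[P]$ harmless, and it completes the proof with the claimed constant.
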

\begin{proof}
    We only show the upper tail bound $ \Prob[Y\ge t] $. The lower tail bound of $ Y $ is equal to the upper tail bound of $ -Y $. 
    \begin{itemize}
        \item Suppose $\nu$ is $k$-atomic supported on $\{x_1,\ldots,x_k\}$. Consider a polynomial $ P(x)=\prod_i (x-x_i)^2 $ of degree $2k$ that is almost surely zero under $\nu$. Since every $|x_i|\le 1$, similar to \prettyref{eq:polydiff}, we obtain that 
        \[
            \Expect_{\nu'}[P]=|\Expect_\nu[P]-\Expect_{\nu'}[P]|\le 2^{2k} \epsilon.
        \]
        Using Markov inequality, for any $t>1$, we have
        \[
            \Prob[Y\ge t]\le \Prob[P(Y)\ge P(t)] \le \frac{\Expect[P(Y)]}{P(t)} \le \frac{2^{2k} \epsilon}{(t-1)^{2k}}.
        \]
        \item Suppose $\nu'$ is $k$-atomic supported on $\{x_1,\ldots,x_k\}$. If those values are all within $[-1,1]$, then we are done. If there are at most $ k-1 $ values, denoted by $ \{x_1,\dots,x_{k-1}\} $, are within $ [-1,1] $, then we consider a polynomial $ P(x)=(x^2-1)\prod_i (x-x_i)^2 $ of degree $2k$ that is almost surely non-positive under $\nu$. Similar to \prettyref{eq:polydiff}, we obtain that
        \[
            \Expect_{\nu'}[P]\le \Expect_{\nu'}[P]-\Expect_{\nu}[P]\le 2^{2k} \epsilon.
        \]
        Since $P\ge 0$ almost surely under $\nu'$, the conclusion follows follows analogously using Markov inequality. \qedhere
    \end{itemize}
\end{proof}
\begin{lemma}
    \label{lmm:tail-hatU}
    Let
    \[
        \pi=\nu*N(0,\tau^2),\quad \hat\pi= \hat\nu,
    \]
    where $\nu$ and $\nu$ are both $k$-atomic, $\nu$ is supported on $[-1,1]$, and $\tau\le 1$. If $|m_i(\pi)-m_i(\hat{\pi})|\le \epsilon$ for $i\le 2k$, then, for any $t\ge \sqrt{18k}$,
    \begin{equation*}
        \Prob[|\hat U|\ge t]\le 
				2^{2k+1}\epsilon
				\pth{\frac{t}{\sqrt{18k}}-1}^{-2k},\quad \hat U\sim\hat \nu.
    \end{equation*}
\end{lemma}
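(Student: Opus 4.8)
The plan is to reduce everything, by the scaling $X\mapsto X/\sqrt{18k}$, to an estimate of exactly the shape of \prettyref{lmm:tail}, and then to run the polynomial‑majorant argument used in the proof of that lemma. First one may take $M=1$. Write $c=\sqrt{18k}$ and let $\nu_s,\hat\nu_s$ be the laws of $U/c$ and $\hat U/c$, and $\pi_s=\nu_s*N(0,\tau^2/c^2)$; then $m_i(\pi_s)-m_i(\hat\nu_s)=\pth{m_i(\pi)-m_i(\hat\pi)}/c^i$, so $|m_i(\pi_s)-m_i(\hat\nu_s)|\le\epsilon$ for $i\le 2k$ (since $c\ge1$), while $\Prob[|\hat U|\ge t]=\Prob_{\hat\nu_s}[|\,\cdot\,|\ge t/c]$. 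The point of the constant $18$ is that, because $\nu$ is supported on $[-1,1]$ and $\tau\le1$, the mixture $\pi$ is $\sqrt{1+\tau^2}\le\sqrt2$‑subgaussian, so $\Expect_\pi[X^{2k}]\le(18k)^k$ (combining $\Expect[(1+\tau|Z|)^{2k}]$ with \prettyref{lmm:m-normal} or the standard formula for Gaussian moments); equivalently the rescaled reference $\pi_s$ is highly concentrated, with essentially all of its mass in $[-1,1]$.

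Next I would mimic the second bullet in the proof of \prettyref{lmm:tail}. If every atom of $\hat\nu_s$ lies in $[-1,1]$ the claim is trivial for $t>c$ (and the right‑hand side is $+\infty$ at $t=c$), so assume at most $k-1$ atoms $x_1,\dots,x_m$ of $\hat\nu_s$ lie in $[-1,1]$. Set
\[
    P(x)=(x^2-1)\prod_{i=1}^m(x-x_i)^2,
\]
a polynomial of degree at most $2k$ whose roots all lie in $[-1,1]$, hence with coefficients summing in absolute value to at most $2^{2k}$. At the atoms $x_i$ one has $P=0$, and at the atoms of $\hat\nu_s$ outside $[-1,1]$ one has $x^2-1>0$, so $P\ge0$ a.s.\ under $\hat\nu_s$ and $\Expect_{\hat\nu_s}[P]\ge0$ equals the contribution of the large atoms; moreover $P\le0$ on $[-1,1]$ and $P(x)\le 2^{2k}|x|^{2k}$ for $|x|>1$. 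Since the coefficients of $P$ are bounded, moment matching gives $\Expect_{\hat\nu_s}[P]\le \Expect_{\pi_s}[P]+2^{2k}\epsilon$, and the sign/size bounds on $P$ give $\Expect_{\pi_s}[P]\le 2^{2k}\Expect_{\pi_s}\!\big[X^{2k}\indc{|X|>1}\big]$. Finally, for an atom $y$ of $\hat\nu_s$ with $|y|\ge s>1$ one has $P(y)\ge(s-1)^{2k}$, so Markov's inequality yields $\Prob_{\hat\nu_s}[|\,\cdot\,|\ge s]\le \Expect_{\hat\nu_s}[P]/(s-1)^{2k}$; taking $s=t/c=t/\sqrt{18k}$ and using $(t-\sqrt{18k})^{2k}=(18k)^k(t/\sqrt{18k}-1)^{2k}$ produces the announced bound, provided $\Expect_{\hat\nu_s}[P]\le 2^{2k+1}\epsilon$.

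The main obstacle is precisely that last inequality, i.e.\ showing that the ``leakage'' term $\Expect_{\pi_s}\!\big[X^{2k}\indc{|X|>1}\big]=\Expect_\pi\!\big[X^{2k}\indc{|X|>\sqrt{18k}}\big]/(18k)^k$ is dominated by $\epsilon$. Here is where the slack built into the constant $18$ (and the hypotheses $\nu\subseteq[-1,1]$, $\tau\le1$) is used: on $\{|X|>\sqrt{18k}\}$ the Gaussian part of $X$ must exceed $(\sqrt{18k}-1)/\tau\gtrsim \sqrt{18k}$, which is comfortably larger than $\sqrt{2k}$, so a standard truncated‑Gaussian moment estimate (integration by parts, as in the auxiliary lemmas on Gaussian tails and moments) bounds $\Expect_\pi[X^{2k}\indc{|X|>\sqrt{18k}}]$ by $(18k)^k$ times an exponentially small factor in $k$, hence by $(18k)^k\epsilon$ in the regime of interest $k=O(\log n/\log\log n)$; one then closes the estimate with the triangle inequality and undoes the scaling. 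I expect this Gaussian tail bookkeeping — getting the numerical constant to line up so that the leakage is absorbed into the stated $2^{2k+1}\epsilon$ rather than into a larger constant — to be the only genuinely delicate part; the rest is the by‑now routine interpolation/majorization machinery of \prettyref{sec:pf-compare}.
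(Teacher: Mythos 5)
Your reduction by rescaling with $\sqrt{18k}$ and your plan to re-run the second bullet of \prettyref{lmm:tail} is the right shape of argument, but the step you yourself flag as delicate is a genuine gap, not bookkeeping. After the rescaling, the reference measure $\pi_s$ is a Gaussian mixture with unbounded support, and your chain of inequalities produces
\[
\Prob_{\hat\nu_s}[|\,\cdot\,|\ge s]\le \frac{2^{2k}\epsilon+2^{2k}\,\Expect_{\pi_s}\!\bigl[X^{2k}\indc{|X|>1}\bigr]}{(s-1)^{2k}},
\]
and the leakage term $\Expect_{\pi_s}[X^{2k}\indc{|X|>1}]=\Expect_\pi[X^{2k}\indc{|X|>\sqrt{18k}}]/(18k)^k$ is a fixed positive quantity of order $e^{-\Theta(k)}$ depending only on $k$ and $\tau$ — it does not scale with $\epsilon$. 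The lemma is a purely deterministic statement with no lower bound on $\epsilon$, so you cannot absorb this term into $2^{2k+1}\epsilon$; and even in the downstream application the absorption fails, since there $\epsilon\asymp(\sqrt{c'k})^{2k}\sqrt{k/n}$, which for fixed $k$ and large $n$ (and in fact also at the extreme order $k\asymp\log n/\log\log n$, where $\epsilon$ is polynomially small in $n$ while $e^{-\Theta(k)}=n^{-o(1)}$) is far smaller than the leakage. Invoking $k=O(\log n/\log\log n)$ therefore does not close the argument.

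The paper sidesteps the unbounded support entirely rather than estimating its tail: replace $N(0,\tau^2)$ by $\tau G$, where $G$ is distributed as the $(k+1)$-point Gauss quadrature of $N(0,1)$. Since the quadrature matches the first $2(k+1)-1=2k+1$ moments of the standard normal, the law of $U+\tau G$ has exactly the same first $2k$ moments as $\pi=\nu*N(0,\tau^2)$, so the hypothesis $|m_i(\pi)-m_i(\hat\pi)|\le\epsilon$ transfers with no error; and $G$ is supported on $[-\sqrt{4k+6},\sqrt{4k+6}]$ (zeros of the Hermite polynomial), so $|U+\tau G|\le 1+\sqrt{4k+6}\le\sqrt{18k}$. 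One then applies \prettyref{lmm:tail} verbatim to the rescaled pair $(U+\tau G)/\sqrt{18k}$ (supported on $[-1,1]$) and $\hat U/\sqrt{18k}$ (which is $k$-atomic), with zero leakage, and the stated constant $2^{2k+1}$ comes straight from that lemma. If you want to salvage your route, this quadrature substitution is exactly the missing idea; the polynomial-majorant computation you wrote out is then subsumed by \prettyref{lmm:tail} and need not be repeated.
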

\begin{proof}
    Let $g$ be the $(k+1)$-point Gauss quadrature of the standard normal distribution. Furthermore, $g$ is supported on $[-\sqrt{4k+6},\sqrt{4k+6}]$ for some absolute constant $c$ (see the bound on the zeros of Hermite polynomials in \cite[p.~129]{orthogonal.poly}). Let $G\sim g$, $U\sim \nu$, and $\hat U\sim\hat\nu$. Denote the maximum absolute value of $U+\tau G$ by $M$ which is at most $1+\sqrt{4k+6}\le \sqrt{18k}$ for $k\ge 1$. Applying \prettyref{lmm:tail} to the distributions of $\frac{U+\tau G}{\sqrt{18k}}$ and $\frac{\hat U}{\sqrt{18k}}$ yields the desired conclusion.
\end{proof}

\subsection{Proofs for \prettyref{sec:adaptive}}
\label{sec:pf-adaptive}


\begin{proof}[Proof of \prettyref{prop:stable1-separation}]
    The proof is analogous to the first proof of \prettyref{prop:stable1-union}, apart from a more careful analysis of polynomial coefficients. When each atom is at least $\gamma$ away from all but at most $\ell'$ other atoms, the left-hand side of \prettyref{eq:CDFdiff} is upper bounded by
    \[
        |F_{\nu}(t_r)-F_{\nu'}(t_r)|\le \frac{\ell 4^{\ell-1}\delta}{(t_{r+1}-t_r)^{\ell'}\gamma^{\ell-\ell'-1}},
    \]
    The remaining proof is similar. 
\end{proof}
\begin{proof}[Proof of \prettyref{prop:stable2-separation}]
    Similar to the proof of \prettyref{prop:stable1-separation}, this proof is analogous to \prettyref{prop:stable2} apart from a more careful analysis of polynomial coefficients. When every $t\in\reals$ is at least $\gamma$ away from all but $k'$ atoms, the left-hand sides of \prettyref{eq:int-g} and \prettyref{eq:int-f} are upper bounded by
    \begin{gather*}
        \int (g(t)\wedge 1) \diff t \le 4k\pth{\frac{2^{2k} \delta}{\gamma^{2(k-k')}}}^{1/(2k')},\\
        \int (f(t)\wedge 1) \diff t \le 4k\pth{\frac{k 4^{2k} \delta}{\gamma^{2(k-k')}}}^{1/(2k')}.
    \end{gather*}
    The remaining proof is similar. 
\end{proof}
\begin{proof}[Proof of \prettyref{prop:accuracy2-separation}]
    The proof is similar to \prettyref{prop:accuracy2}, except that moment comparison theorem \prettyref{prop:stable2} is replaced by its adaptive version \prettyref{prop:stable2-separation}. Recall \prettyref{eq:mr-deconv}:
    \[
        |m_r(\nu')-m_r(\hat \nu)|\le (c\sqrt{k})^{2k}\epsilon,\quad r=1,\dots,2k,
    \]
    where $\nu'=\nu*N(0,\tau^2)$ and $\tau^2=|\sigma^2-\hat\sigma^2|$. Since $\hat\nu*N(0,1)$ has $k_0$ $\gamma$-separated clusters, any $t\in \reals$ can be $\gamma/2$ close to at most $k-k_0+1$ atoms of $\hat\nu$. Applying \prettyref{prop:stable2-separation} yields that 
    \[
        W_1(\nu',\hat\nu)\le 8k\pth{\frac{k  (4c\sqrt{k})^{2k}\epsilon}{(\gamma/2)^{2(k_0-1)}}}^{\frac{1}{2(k-k_0+1)}}.
    \]
    Using \prettyref{lmm:tau-W1} below yields that $\tau\le O_k(W_1(\nu',\hat\nu))$. The conclusion follows from $W_1(\nu',\nu)\le O(\tau)$ and the triangle inequality.
\end{proof}
\begin{lemma}
    \label{lmm:tau-W1}
    Suppose $\pi=\nu*N(0,\tau^2)$ and $\pi'$ is $k$-atomic. Then
    \[
        \tau \le O_k(W_1(\pi,\pi')).
    \]
\end{lemma}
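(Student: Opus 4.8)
The plan is to lower-bound $W_1(\pi,\pi')$ by a quantity proportional to $\tau$, using the basic tension that a Gaussian-smoothed distribution has a uniformly bounded density, whereas a $k$-atomic distribution has a CDF with a large jump. If $\tau=0$ the statement is trivial, so assume $\tau>0$. First I would note that the density of $\pi=\nu*N(0,\tau^2)$ is, by maximizing the Gaussian kernel pointwise, at most $\frac{1}{\tau\sqrt{2\pi}}$ everywhere; hence the CDF $F_\pi$ is $\frac{1}{\tau\sqrt{2\pi}}$-Lipschitz. On the other hand, writing $\pi'=\sum_i w_i\delta_{x_i}$ with at most $k$ atoms, the CDF $F_{\pi'}$ is a nondecreasing step function whose largest jump has height $w_{\max}\ge 1/k$.

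Next I would localize around the atom $x_j$ carrying the weight $w_{\max}$ and invoke the one-dimensional formula $W_1(\pi,\pi')=\int|F_\pi(t)-F_{\pi'}(t)|\,dt$ from \prettyref{eq:W1-CDF}. Set $c=F_{\pi'}(x_j^-)$, so by monotonicity $F_{\pi'}(t)\le c$ for $t<x_j$ and $F_{\pi'}(t)\ge c+w_{\max}$ for $t\ge x_j$. Choose an interval length $a=\frac{w_{\max}\tau\sqrt{2\pi}}{4}$. Because $F_\pi$ is Lipschitz, it moves by at most $w_{\max}/4$ over any interval of length $a$; so if $F_\pi(x_j)\le c+w_{\max}/2$ then $F_\pi\le c+3w_{\max}/4$ on $[x_j,x_j+a]$ while $F_{\pi'}\ge c+w_{\max}$ there, whereas if $F_\pi(x_j)>c+w_{\max}/2$ then $F_\pi> c+w_{\max}/4$ on $[x_j-a,x_j)$ while $F_{\pi'}\le c$ there. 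Either way $|F_\pi-F_{\pi'}|\ge w_{\max}/4$ on an interval of length $a$, so integrating gives $W_1(\pi,\pi')\ge \frac{a\,w_{\max}}{4}=\frac{w_{\max}^2\tau\sqrt{2\pi}}{16}\ge\frac{\tau\sqrt{2\pi}}{16k^2}$, i.e.\ $\tau\le\frac{16k^2}{\sqrt{2\pi}}W_1(\pi,\pi')=O_k(W_1(\pi,\pi'))$.

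I do not anticipate a real obstacle here; the only point requiring a little care is the two-case split (whether the useful interval lies to the left or the right of $x_j$), and observing that one needs no separation hypothesis on the atoms of $\pi'$ — monotonicity of $F_{\pi'}$ already provides the one-sided bounds $F_{\pi'}\le c$ and $F_{\pi'}\ge c+w_{\max}$ regardless of whether other atoms sit nearby. Everything else is constant-tracking.
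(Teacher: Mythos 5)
Your proof is correct, and it takes a genuinely different route from the paper's. The paper argues by conditioning on the latent variable: writing $X=U+\tau Z$ with $U\sim\nu$ and $Z\sim N(0,1)$ independent, for any coupling of $\pi$ and $\pi'$ it bounds $\Expect[|U+\tau Z-U'|\,|\,U=x]\ge W_1(N(x,\tau^2),\mathrm{law}(U'|U=x))\ge c_k\tau$, using translation/scale invariance to reduce to the single constant $c_k=\inf\{W_1(Z,Y):Y\text{ is }k\text{-atomic}\}$, whose lower bound $c_k\ge\Omega(1/k)$ is only asserted in a footnote (via the dual formula \prettyref{eq:W1-dual}). You instead work with the primal CDF formula \prettyref{eq:W1-CDF}: the density of $\nu*N(0,\tau^2)$ is at most $1/(\tau\sqrt{2\pi})$, so $F_\pi$ is Lipschitz, while $F_{\pi'}$ has a jump of height $w_{\max}\ge 1/k$; your two-case localization around that atom then forces $|F_\pi-F_{\pi'}|\ge w_{\max}/4$ on an interval of length proportional to $w_{\max}\tau$, giving $W_1(\pi,\pi')\ge\Omega(\tau/k^2)$. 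Your argument buys self-containedness — it proves the quantitative constant outright rather than deferring it to an unproved footnote — and it correctly uses only monotonicity of $F_{\pi'}$ (no separation of atoms needed), as you note. What it gives up is the sharper dependence ($O(k^2)$ versus the paper's $O(k)$, immaterial for the $O_k(\cdot)$ claim) and the dimension-free flavor of the conditioning argument, which would extend to multivariate settings given a bound on the distance from a Gaussian to $k$-atomic laws, whereas the CDF identity is intrinsically one-dimensional.
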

\begin{proof}
    In this proof we write $W_1(X,Y)=W_1(P_X,P_Y)$. Let $Z\sim N(0,1)$, $U\sim\nu$, and $U'\sim\pi'$. For any $x\in \reals$, we have 
    \[
        W_1(x+\tau Z,U')= \tau W_1(Z, (U'-x)/\tau)\ge c_k\tau,
    \]
    where $c_k=\inf\{W_1(Z,Y): Y\text{ is $k$-atomic}\}$\footnote{
        We can prove that $c_k\ge \Omega(1/k)$ using the dual formula \prettyref{eq:W1-dual}. 
    }. 
    For any couping between $U+\tau Z$ and $U'$, 
    \[
        \Expect|U+\tau Z-U'|=\Expect[\Expect[|U+\tau Z-U'| | U]] \ge c_k\tau. \qedhere
    \] 
\end{proof}

\begin{proof}[Proof of \prettyref{thm:separation-W1}]
    By scaling it suffices to consider $M=1$. Recall that the Gaussian mixture is assumed to have $k_0$ $\gamma$-separated clusters in the sense of \prettyref{def:sep}, that is, there exists a partition $S_1,\dots,S_{k_0}$ of $[k]$ such that $|\mu_i - \mu_{i'}| \ge \gamma$ for any $i\in S_\ell$ and $i'\in S_{\ell'}$ such that $\ell\ne \ell'$. Denote the union of the support sets of $\nu$ and $\hat \nu$ by $\calS$. Each atom is $\calS$ is at least $\gamma/2$ away from at least $k_0-1$ other atoms. Then \prettyref{eq:known-variance-W1-separation} follows from \prettyref{prop:stable1-separation} with $\ell=2k$ and $\ell'=(2k-1)-(k_0-1)$. 
\end{proof}

\subsection{Proofs for \prettyref{sec:unbounded}}
\label{sec:pf-unbounded}
\begin{lemma}
    \label{lmm:cluster}
    Assume in the Gaussian mixture \prettyref{eq:model} $w_i \geq \epsilon$, $\sigma=1$. Suppose $L=\sqrt{c\log n}$ in \prettyref{algo:unbounded}. Then, with probability at least $1-ke^{-n'\epsilon}-n^{-(\frac{c}{8}-1)}$, the following holds:
    \begin{itemize}
        \item $\ell_j\le 3kL$ for every $j$. 
        \item Let $X_i= U_i + Z_i$ for $i\in[n]$, where $U_i \sim \nu$ is the latent variable and $Z_i\sim N(0,1)$. Then, $|Z_i|\le 0.5L$ for every $i\in [n]$; $X_i\in I_j$ if and only if $U_i\in I_j$.
    \end{itemize}
\end{lemma}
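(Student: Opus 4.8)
The plan is to split the randomness into two ``good events'' on whose intersection all three assertions hold deterministically, and then to bound the probability of the complement. Write $X_i=U_i+Z_i$ with $U_i\sim\nu$ and $Z_i\sim N(0,1)$ (here $\sigma=1$), let $E_1$ be the event that every one of the $k$ components is represented among the first $n'$ samples used to form the intervals in line~\prettyref{line:cluster-I} of \prettyref{algo:unbounded}, i.e.\ for each atom $\mu$ of $\nu$ there is some $i'\le n'$ with $U_{i'}=\mu$, and let $E_2$ be the event that $|Z_i|\le 0.5L$ for all $i\in[n]$. A union bound over components together with $w_i\ge\epsilon$ gives $\Prob[E_1^c]\le k(1-\epsilon)^{n'}\le ke^{-n'\epsilon}$, while the Gaussian tail bound $\Prob[|Z|>t]\le e^{-t^2/2}$ with $t=0.5L=\tfrac12\sqrt{c\log n}$ and a union bound over $i\in[n]$ give $\Prob[E_2^c]\le n\,e^{-L^2/8}=n^{-(c/8-1)}$. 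Hence $\Prob[(E_1\cap E_2)^c]\le ke^{-n'\epsilon}+n^{-(c/8-1)}$, and it remains to argue on $E_1\cap E_2$; the second half of the last bullet ($|Z_i|\le0.5L$) is then exactly $E_2$.

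Next I would establish the membership equivalence $X_i\in I_j\iff U_i\in I_j$. Fix $i\in[n]$ and let $\mu=U_i$ be the mean of the component that produced $X_i$. On $E_1$ there is a test index $i'\le n'$ with $U_{i'}=\mu$, and on $E_2$ both $|Z_i|$ and $|Z_{i'}|$ are at most $0.5L$, so $|X_i-X_{i'}|=|Z_i-Z_{i'}|\le L$ and $|\mu-X_{i'}|=|Z_{i'}|\le L$; thus both $X_i$ and $\mu$ lie in the generating interval $[X_{i'}\pm L]$. By the way overlapping intervals are merged in line~\prettyref{line:cluster-I}, this interval lies inside exactly one of the pairwise disjoint blocks $I_1,\dots,I_s$, so $X_i$ and $U_i=\mu$ lie in that same $I_j$ and in no other, which is the assertion.

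For the length bound $\ell_j\le 3kL$: each $I_j$ is the union of the generating intervals $[X_{i'}\pm L]$ (with $i'\le n'$) that it contains. Grouping those test samples by their component, there are at most $k$ distinct means, and within the group of the component with mean $\mu$ one has, on $E_2$, $[X_{i'}\pm L]\subseteq[\mu-1.5L,\mu+1.5L]$, an interval of length $3L$. Hence the interval $I_j$ is covered by at most $k$ intervals of length $3L$, so $|I_j|=\sum_a|I_j\cap(\text{$a$-th such interval})|\le 3kL$, giving $\ell_j=|I_j|/2\le \tfrac{3kL}{2}\le 3kL$.

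The argument is essentially bookkeeping and I do not anticipate a genuine obstacle; the only points needing a little care are (i) using the sharp tail bound $\Prob[|Z|>t]\le e^{-t^2/2}$ (which follows from $t\bar\Phi(t)\le\varphi(t)$, equivalently that $e^{t^2/2}\bar\Phi(t)$ is nonincreasing) rather than the cruder $2e^{-t^2/2}$, so as to land exactly on $n^{-(c/8-1)}$, and (ii) the covering/connectedness step in the length bound, i.e.\ that a single interval $I_j$ covered by $k$ intervals of length $3L$ has total length at most $3kL$, together with the observation that each generating interval genuinely sits inside one merged block.
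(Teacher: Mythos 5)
Your proposal is correct and follows essentially the same route as the paper's proof: the same two good events (every component represented among the first $n'$ samples, and $|Z_i|\le 0.5L$ for all $i$) with the same union bounds, followed by the observation that each merged interval is covered by at most $k$ intervals of the form $[\mu_j\pm 1.5L]$ (giving $\ell_j\le 3kL$) and that $X_i$ and $U_i$ fall in the same merged block. Your extra care about the one-sided Gaussian tail constant and the connectedness of generating intervals only makes explicit what the paper leaves implicit.
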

\begin{proof}
    By the union bound, with probability $1-ke^{-n'\epsilon}-n^{-(\frac{c}{8}-1)}$, the following holds:
    \begin{itemize}
        \item $|Z_i|\le 0.5L$ for every $i\in [n]$.
        \item For every $j\in [k]$, there exists $i\le n'$ such that $U_i=\mu_j$. 
    \end{itemize} 
    Recall the disjoint intervals $I_1\cup \ldots\cup I_s=\cup_{i=1}^{n'}[X_i\pm L]$. Then, we obtain that 
    \[
        \bigcup_{j=1}^{k}[\mu_j\pm 0.5L]~\subseteq~ I_1\cup \cdots\cup I_s
        ~\subseteq~ \bigcup_{j=1}^{k}[\mu_j\pm 1.5L].
    \]
    The total length of all intervals is at most $3kL$. Since $|Z_i|\le 0.5L$, $X_i=U_i+Z_i$ is in the same interval as $U_i$.
\end{proof}

\begin{proof}[Proof of \prettyref{thm:unbounded}]
    Since $n'\ge \Omega(\frac{\log(k/\delta)}{\epsilon})$, applying \prettyref{lmm:cluster} yields that, with probability at least $1-\frac{\delta}{3}-n^{-\Omega(1)}$, the following holds:
    \begin{itemize}
        \item $\ell_j\le O(kL)$ for every $j$. 
        \item Let $X_i= U_i + \sigma Z_i$ for $i\in[n]$ as in \prettyref{lmm:cluster}. Then, $|Z_i|\le 0.5L$ for every $i\in [n]$; $X_i\in I_j$ if and only if $U_i\in I_j$.
    \end{itemize}
    The intervals $I_1,\ldots,I_s$ are independent of every $C_j$ and are treated as deterministic in the remaining proof. We first evaluate the expected moments of samples in $C_j$, conditioned on $|Z_i|\le L'\triangleq 0.5L$. Let $X=U+\sigma Z$ where $U\sim \nu$ and $Z\sim N(0,1)$. Then,
    \[
        \Expect[(X-c_j)^r|X\in I_j, |Z|\le L']
        =\Expect[(X-c_j)^r|U\in I_j, |Z|\le L']
        =\Expect[(U_j'+\sigma Z)^r| |Z|\le L'],
    \]
    where $U_j'=U_j-c_j$, and $U_j\sim P_{U|U\in I_j}$. Since $|U_j'|\le O(kL)$ and $L'=\Theta(\sqrt{\log n})$, the right-hand side differs from the unconditional moment by (see \prettyref{lmm:conditional-moments-normal} in \prettyref{app:aux})
    \[
        |\Expect[(U_j'+\sigma Z)^r| |Z|\le L']-\Expect[(U_j'+\sigma Z)^r]|\le (kL\sigma\sqrt{r})^rn^{-\Omega(1)},\quad r=1,\ldots,2k-1,
    \]
    which is less than $n^{-1}$ when $k\le O(\frac{\log n}{\log\log n})$. Therefore, the accuracy of empirical moments in \prettyref{eq:tilde-mr-rate}, \prettyref{eq:tildemr-rate2} and thus \prettyref{thm:main-W1} are all applicable. Since $w_i\ge \epsilon$, with probability at least $1-\frac{\delta}{3}$, each $C_j$ contains $\Omega(n\epsilon)$ samples, and applying \prettyref{thm:main-W1} yields that, with probability $1-\frac{\delta}{3}$,
    \[
        W_1(\hat\nu_j,\nu_j)\le 
        \begin{cases}
            O(L k^{2.5} (\frac{n\epsilon}{\log (3k/\delta)})^{-\frac{1}{4k-2}}),\quad \sigma\text{ known},\\
            O(L k^3 (\frac{n\epsilon}{\log (3k/\delta)})^{-\frac{1}{4k}}),\quad \sigma\text{ unknown},
        \end{cases}
    \]
    for every $j$, where $\nu_j$ denotes the distribution of $U_j'$ and $\hat\nu_j$ is the estimate in \prettyref{thm:main-W1}. Using the weights threshold $\tau=\epsilon/(2k)$, and applying \prettyref{lmm:w1-hausdorff-2} below, we obtain that
    \[
        d_H(\supp(\hat \nu_j),\supp(\nu_j))\le \frac{W_1(\hat\nu_j,\nu_j)}{\epsilon/(2k)}.
    \]
    The conclusion follows. 
\end{proof}

\begin{lemma}
    \label{lmm:w1-hausdorff-2}
    Let $\nu$ be a discrete distribution whose atom has at least $\epsilon$ probability. Let $S_\nu$ and $S_{\hat\nu}$ denote the support sets of $\nu$ and $\hat\nu$, respectively.
    For $\hat S\subseteq S_{\hat\nu}$, 
    \begin{equation*}
        d_H(S_\nu,\hat S)\le \frac{W_1(\nu,\hat\nu)}{(\min_{y\in \hat S}\hat\nu(y))\wedge (\epsilon-\hat\nu(\hat S^c))_+}.
    \end{equation*}
\end{lemma}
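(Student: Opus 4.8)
The plan is to refine the argument behind \prettyref{lmm:w1-hausdorff}, now keeping track of the asymmetry (only the atoms of $\nu$ are assumed to carry mass at least $\epsilon$) and of the restriction to the subset $\hat S$. Fix an optimal coupling $P_{XY}$ of $\nu$ and $\hat\nu$, so that $\Expect|X-Y| = W_1(\nu,\hat\nu) =: W$. Recall that $d_H(S_\nu,\hat S) = \max\{A,B\}$ where $A \triangleq \sup_{x\in S_\nu}\inf_{y\in\hat S}|x-y|$ and $B \triangleq \sup_{y\in\hat S}\inf_{x\in S_\nu}|x-y|$. I will bound $A$ and $B$ separately and then combine using $\max\{W/p,\,W/q\} = W/(p\wedge q)$.

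For $B$: fix $y\in\hat S\subseteq S_{\hat\nu}$, so $\Prob[Y=y]=\hat\nu(y)>0$. Since $X\sim\nu$ is supported on $S_\nu$ and $\hat\nu(y)>0$, the conditional law of $X$ given $\{Y=y\}$ is still supported on $S_\nu$; hence $|X-Y|\ge \inf_{x\in S_\nu}|x-y|$ almost surely on $\{Y=y\}$, and
\[
    W \;\ge\; \Prob[Y=y]\,\inf_{x\in S_\nu}|x-y| \;\ge\; \Bigl(\min_{y'\in\hat S}\hat\nu(y')\Bigr)\inf_{x\in S_\nu}|x-y|.
\]
Taking the supremum over $y\in\hat S$ gives $B\le W/\min_{y\in\hat S}\hat\nu(y)$.

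For $A$: fix $x\in S_\nu$. By the union bound, $\Prob[X=x,\,Y\in\hat S]\ge \Prob[X=x]-\Prob[Y\notin\hat S]=\nu(x)-\hat\nu(\hat S^c)\ge \epsilon-\hat\nu(\hat S^c)$. If $\epsilon-\hat\nu(\hat S^c)\le 0$ the claimed inequality is vacuous (its right-hand side is $+\infty$), so assume it is positive. On the event $\{X=x,\,Y\in\hat S\}$ we have $|X-Y|\ge \inf_{y\in\hat S}|x-y|$, so $W\ge (\epsilon-\hat\nu(\hat S^c))\inf_{y\in\hat S}|x-y|$, i.e.\ $A\le W/(\epsilon-\hat\nu(\hat S^c))_+$. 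Combining the two bounds, $d_H(S_\nu,\hat S)=\max\{A,B\}\le W/\bigl[(\min_{y\in\hat S}\hat\nu(y))\wedge(\epsilon-\hat\nu(\hat S^c))_+\bigr]$, as desired.

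The argument is essentially routine; the only points that need a word of care are (i) justifying that the conditional law of $X$ given $Y=y$ remains supported on $S_\nu$ (immediate since $\hat\nu(y)>0$ and $\nu(S_\nu^c)=0$), and (ii) isolating the degenerate regime $\epsilon\le\hat\nu(\hat S^c)$, which is exactly what the positive-part truncation $(\cdot)_+$ absorbs.
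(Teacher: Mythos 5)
Your proof is correct and follows essentially the same route as the paper's: bound each half of the Hausdorff distance via the coupling, using $\Prob[Y=y]\ge\min_{y'\in\hat S}\hat\nu(y')$ for atoms of $\hat S$ and $\Prob[X=x,\,Y\in\hat S]\ge(\epsilon-\hat\nu(\hat S^c))_+$ for atoms of $\nu$, then take the worse of the two constants. The only difference is cosmetic — you work with an optimal coupling and spell out the degenerate case $\epsilon\le\hat\nu(\hat S^c)$, which the paper absorbs silently into the $(\cdot)_+$ notation.
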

\begin{proof}
    This is a generalization of \prettyref{lmm:w1-hausdorff} in the sense that the minimum weight of $\hat\nu$ is unknown. For any coupling $P_{XY}$ such that $X\sim\nu$ and $Y\sim\hat\nu$, for any $y\in \hat S$,
    \[
        \Expect|X-Y|\ge \hat\nu(y)\Expect[|X-Y| | Y=y]\ge \epsilon_1\min_{x\in S_\nu} |x-y|,
    \]
    where $\epsilon_1=\min_{y\in \hat S}\hat\nu(y)$. Note that $\Prob[Y\not\in \hat S,X=x]\le \hat\nu(\hat S^c)$ and $\nu(x)\ge \epsilon$ for any $x\in S_{\nu}$. Then we have $\Prob[Y\in \hat S,X=x]\ge (\epsilon-\hat\nu(\hat S^c))_+\triangleq \epsilon_2$, and thus
    \[
        \Expect|X-Y|\ge \epsilon_2\Expect[|X-Y| | X=x, Y\in \hat S]\ge \epsilon_2\min_{y\in\hat S}|x-y|.
    \]
    Using the definition of $d_H$ in \prettyref{eq:hausdorff}, the proof is complete.
\end{proof}

\subsection{Proofs for \prettyref{sec:lb}}
\begin{proof}[Proof of \prettyref{lmm:d-matching}]
    Let $U\sim\nu$ and $U'\sim\nu'$. If $\nu$ and $\nu'$ are $\epsilon$-subgaussian, then $\var[U']\le \epsilon^2$, and $\Expect|U|^p, \Expect|U'|^p\le 2(\epsilon\sqrt{p/e})^p$ \cite{subgaussian}. Applying the $\chi^2$ upper bound from moment difference in \prettyref{lmm:chi2-moments} yields that 
    \[
        \chi^2(\nu*N(0,1)\|\nu'*N(0,1))
        \le e^{\epsilon^2/2}\sum_{j\ge \ell+1}\frac{16\epsilon^{2j}}{\sqrt{2\pi j}},
    \]
    where we used Stirling's approximation $n!>\sqrt{2\pi n}(n/e)^n$. If $\nu$ and $\nu'$ are supported on $[-\epsilon,\epsilon]$, the conclusion is obtained similarly by using $\Expect|U|^p, \Expect|U'|^p\le \epsilon^p$.
\end{proof}

\begin{proof}[Proof of \prettyref{prop:lb-known-sigma}]
    Let $\nu$ and $\nu'$ be the optimal pair of distributions for \prettyref{eq:optimal-lb}. Applying \prettyref{lmm:d-matching} yields that 
    \[
        \chi^2(\nu*N(0,1)\| \nu'*N(0,1))\le c\pth{\frac{e\epsilon^2}{2k-1}}^{2k-1},
    \]
    for some absolute constant $c$. The two mixing distributions satisfy (see \prettyref{lmm:max-W1} below)
    \[
        W_1(\nu,\nu')\ge \Omega(\epsilon/\sqrt{k}).
    \]
    The conclusion follows by choosing $\epsilon=c'\sqrt{k}n^{-\frac{1}{4k-2}}$ for some absolute constant $c'$ and applying Le Cam's method \cite{Lecam86}.
\end{proof}
\begin{lemma}
    \label{lmm:max-W1}
    \[
        \sup\{W_1(\nu,\nu'):{\bf m}_\ell(\nu) = {\bf m}_\ell(\nu'),~\nu,\nu'\text{ on }[-1,1]\}=\Theta(\beta/(\ell+1)).
    \]
    Furthermore, the supremum is $\frac{\beta(\pi-o(1))}{\ell+1}$ as $\ell\rightarrow\infty$, and is achieved by two distributions whose support sizes differ by at most one and sum up to $\ell+2$. 
\end{lemma}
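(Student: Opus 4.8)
\emph{Proof proposal.} The plan is to identify the extremal value, via Kantorovich duality, with twice a classical approximation-theoretic quantity --- the best uniform approximation error of the Lipschitz class on the interval by polynomials of degree $\le\ell$ --- and to read off the extremal configuration from Chebyshev's equioscillation theorem. First I would use the dual formula \prettyref{eq:W1-dual} to rewrite the supremum as $\sup_\varphi\,\sup_{\nu,\nu'}(\Expect_\nu\varphi-\Expect_{\nu'}\varphi)$, the outer supremum over $1$-Lipschitz $\varphi$ and the inner one over pairs $\nu,\nu'$ supported on $[-1,1]$ with $m_i(\nu)=m_i(\nu')$ for $i=1,\dots,\ell$. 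For fixed $\varphi$ and any polynomial $P$ with $\deg P\le\ell$, moment matching gives $\Expect_\nu\varphi-\Expect_{\nu'}\varphi=\Expect_\nu(\varphi-P)-\Expect_{\nu'}(\varphi-P)$, which is at most the oscillation $\max_{[-1,1]}(\varphi-P)-\min_{[-1,1]}(\varphi-P)$; optimizing over $P$ and recentering yields $\Expect_\nu\varphi-\Expect_{\nu'}\varphi\le 2E_\ell(\varphi)$, where $E_\ell(\varphi)\triangleq\inf\{\|\varphi-P\|_{L^\infty[-1,1]}:\deg P\le\ell\}$.

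Next I would show this bound is attained, which simultaneously produces the extremal pair. Let $P^\ast$ be a best degree-$\le\ell$ approximant to $\varphi$; by Chebyshev's equioscillation theorem $\varphi-P^\ast$ attains $\pm E_\ell(\varphi)$ with strictly alternating signs at $\ell+2$ points $x_0<\cdots<x_{\ell+1}$ in $[-1,1]$. The space of signed measures on $\{x_0,\dots,x_{\ell+1}\}$ annihilating $1,x,\dots,x^\ell$ is one-dimensional (Vandermonde), and its generator $\mu=\sum_i c_i\delta_{x_i}$ has strictly alternating weights: a shortage of sign changes in $(c_i)$ would let a polynomial of degree $\le\ell$ reproduce that sign pattern and contradict $\mu\perp\{1,\dots,x^\ell\}$. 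Renormalizing the positive and negative parts of $\mu$ to probability measures $\nu_\varphi,\nu'_\varphi$ gives a pair with matching moments up to order $\ell$ and support sizes $\lceil(\ell+2)/2\rceil$ and $\lfloor(\ell+2)/2\rfloor$; since the $P^\ast$-contribution cancels by orthogonality while the $\pm E_\ell(\varphi)$-contributions add, $\Expect_{\nu_\varphi}\varphi-\Expect_{\nu'_\varphi}\varphi=2E_\ell(\varphi)$. Hence
\[
\sup_{\nu,\nu'}W_1(\nu,\nu')=2\sup_{\varphi\ \text{1-Lipschitz}}E_\ell(\varphi),
\]
the supremum over pairs is achieved at the equioscillation set of an extremal $\varphi$ (one exists by Arzel\`a--Ascoli since $1$-Lipschitz functions mod constants are compact in sup norm), and the stated support structure follows.

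It remains to evaluate $\sup_\varphi E_\ell(\varphi)$ over $1$-Lipschitz $\varphi$ on $[-1,1]$; a general interval follows by homogeneity, contributing the factor $\beta$. The order is elementary: Jackson's theorem gives $E_\ell(\varphi)=O(1/(\ell+1))$ for every such $\varphi$, and Bernstein's classical estimate $E_\ell(|x|)_{[-1,1]}=\Theta(1/\ell)$ gives the matching lower bound --- this already proves the $\Theta(\beta/(\ell+1))$ assertion, which is the only part used in \prettyref{sec:lb}. For the sharp constant, substitute $x=\cos\theta$: polynomials of degree $\le\ell$ become even trigonometric polynomials of degree $\le\ell$, $g(\theta)=\varphi(\cos\theta)$ satisfies $|g'(\theta)|\le|\sin\theta|\le1$, and $E_\ell(\varphi)_{[-1,1]}=E_\ell(g)_{\text{trig}}$. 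The Favard estimate for $2\pi$-periodic Lipschitz-$1$ functions then yields $E_\ell(g)_{\text{trig}}\le\frac{\pi}{2(\ell+1)}$, with the extremal $\varphi$ being the piecewise-linear triangle wave of slope $\pm1$ with vertices at the Chebyshev nodes $\cos\frac{(2j-1)\pi}{2(\ell+1)}$ --- equivalently, $\nu_\varphi,\nu'_\varphi$ the two Chebyshev-type quadratures of the arcsine law --- the endpoint constraint $|g'|\le|\sin\theta|$ costing only an $o(1)$ relative factor confined to an $O(1/\ell)$ boundary layer near $\theta\in\{0,\pi\}$. Doubling gives $\frac{\beta(\pi-o(1))}{\ell+1}$.

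The main obstacle is precisely the last step: everything through the $\Theta$-bound is short and self-contained, but justifying that the naive triangle-wave/Chebyshev-grid picture is asymptotically optimal with vanishing relative error --- controlling the best polynomial approximant of the worst Lipschitz function where the Chebyshev nodes crowd near $\pm1$ --- is a genuinely delicate piece of approximation theory, and in the write-up it is cleanest to quote the known sharp Jackson constant for $\mathrm{Lip}\,1$ on an interval rather than reprove it.
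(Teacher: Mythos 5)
Your proposal is correct and follows essentially the same route as the paper: Kantorovich duality reduces the supremum to twice the best uniform polynomial approximation error over the $1$-Lipschitz class, the order $\Theta(1/(\ell+1))$ and the sharp asymptotic value are quoted from approximation theory, and the extremal pair is read off from the Chebyshev equioscillation points of $f^*-P^*$, with support sizes differing by at most one. The only difference is presentational: you re-derive the moment-matching/best-approximation duality explicitly via the alternating annihilating measure on the $\ell+2$ alternation points, whereas the paper cites this duality from \cite[Appendix E]{WY14} and likewise cites the sharp Lipschitz approximation constant rather than sketching the $x=\cos\theta$/Favard argument.
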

\begin{proof}
    It suffices to prove for $\beta=1$. Using the dual characterization of the $W_1$ distance in \prettyref{sec:wass}, the supremum is equal to
    \[
        \sup_{f:1-\mathrm{Lipschitz}}\sup\sth{\Expect_\nu f-\Expect_{\nu'}f:{\bf m}_\ell(\nu) = {\bf m}_\ell(\nu'),~\nu,\nu'\text{ on }[-\beta,\beta]}.
    \]
    Using the duality between moment matching and best polynomial approximation (see \cite[Appendix E]{WY14}), the optimal value is further equal to
    \[
        2\sup_{f:1-\mathrm{Lipschitz}}\inf_{P:\text{degree }\le \ell}\sup_{|x|\le 1}|f(x)-P(x)|.
    \]
    The above value is the best uniform approximation error over 1-Lipschitz functions, a well-studied quantity in the approximation theory (see, \eg, \cite[section 4.1]{Jorge2011}), and thus the optimal values in the lemma are obtained. A pair of optimal distributions are supported on the maxima and the minima of $P^*-f^*$, respectively, where $f^*$ is the optimal 1-Lipschitz function and $P^*$ is the best polynomial approximation for $f^*$. The numbers of maxima and minima differ by at most one by Chebyshev's alternating theorem (see, \eg, \cite[p.~54]{timan63}).
\end{proof}

\begin{proof}[Proof of \prettyref{prop:lb-unknown-sigma}]
    Let $\nu=N(0,\epsilon^2)$ and $\nu'$ be its $k$-point Gauss quadrature. Then ${\bf m}_{2k-1}(\nu)={\bf m}_{2k-1}(\nu')$ and $\nu$ and $\nu'$ are both $\epsilon$-subgaussian (see \prettyref{lmm:quadrature-moments-error}). Applying \prettyref{lmm:d-matching} yields that
    \[
        \chi^2(\nu*N(0,1)\|\nu'*N(0,1))\le O(\epsilon^{4k}).
    \]
    Note that $\nu*N(0,1)=N(0,1+\epsilon^2)$ is a valid Gaussian mixture distribution (with single zero mean component). Between the above two mixture models, the variance parameters differ by $\epsilon^2$; the mean parameters satisfy $W_1(g_k,\delta_0)\ge \Omega(\epsilon/\sqrt{k})$ (see \prettyref{lmm:quadrature-l1}). The conclusion follows by choosing $\epsilon=cn^{-\frac{1}{4k}}$ for some absolute constant $c$ applying applying Le Cam's method \cite{Lecam86}.
\end{proof}
\begin{lemma}
    \label{lmm:quadrature-moments-error}
    Let $g_k$ be the $k$-point Gauss quadrature of $N(0,\sigma^2)$. For $j\ge 2k$, we have $m_j(g_k)\le m_j(N(0,\sigma^2))$ when $j$ is even, and $m_j(g_k)= m_j(N(0,\sigma^2))=0$ otherwise. In particular, $g_k$ is $\sigma$-subgaussian.
\end{lemma}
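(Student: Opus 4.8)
The plan is to reduce to $\sigma=1$ by scaling: the $k$-point Gauss quadrature of $N(0,\sigma^2)$ is the pushforward of that of $N(0,1)$ under $x\mapsto\sigma x$ (both have the same first $2k-1$ moments after rescaling, so this follows from the uniqueness of Gauss quadrature noted in \prettyref{sec:moment}), hence each moment of order $j$ is multiplied by $\sigma^j$ and the statement is scale-invariant. Write $g_k=\sum_i w_i\delta_{x_i}$ for the $k$-point Gauss quadrature of $N(0,1)$; by \prettyref{eq:quadrature-goal} it reproduces the first $2k-1$ moments of $N(0,1)$, and its nodes $x_1,\dots,x_k$ are the roots of the monic Hermite polynomial $\pi_k$.

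First I would dispose of odd $j$. Since $\pi_k(-x)=(-1)^k\pi_k(x)$, the node set is symmetric about the origin, and by the uniqueness of Gauss quadrature (it is determined by ${\bf m}_{2k-1}(N(0,1))$, the moments of a symmetric distribution) $g_k$ is invariant under $x\mapsto -x$; hence $m_j(g_k)=0=m_j(N(0,1))$ for every odd $j$. Together with the reproducing property this also gives $m_j(g_k)=m_j(N(0,1))$ for even $j\le 2k-1$, so only even $j\ge 2k$ remains.

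For even $j\ge 2k$, set $f(x)=x^j$ and let $R$ be the degree-$(2k-1)$ Hermite interpolant of $f$ matching $f$ and $f'$ at each node $x_i$ (each node with multiplicity two), as in \prettyref{sec:poly}. The interpolation error formula gives $f(x)-R(x)=\frac{f^{(2k)}(\xi_x)}{(2k)!}\prod_i(x-x_i)^2=\frac{f^{(2k)}(\xi_x)}{(2k)!}\pi_k(x)^2$ for some $\xi_x$; since $f^{(2k)}(x)=\frac{j!}{(j-2k)!}x^{j-2k}$ and $j-2k$ is even, $f^{(2k)}\ge 0$, so $f\ge R$ pointwise, with equality at the nodes. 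Consequently $\Expect_{g_k}[f-R]=\sum_i w_i(f(x_i)-R(x_i))=0$, while $\deg R\le 2k-1$ gives $\Expect_{g_k}[R]=\Expect_{N(0,1)}[R]$, whence
\[
m_j(g_k)=\Expect_{N(0,1)}[R(X)]=m_j(N(0,1))-\Expect_{N(0,1)}[f(X)-R(X)]\le m_j(N(0,1)),
\]
because $f-R\ge 0$. (At $j=2k$ the subtracted term is exactly $\Expect_{N(0,1)}[\pi_k(X)^2]=k!$, so $m_{2k}(g_k)=(2k-1)!!-k!$.)

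Finally, subgaussianity follows from this moment comparison: $g_k$ is symmetric with bounded support, so $\Expect_{g_k}[e^{tX}]=\sum_{j\text{ even}}\frac{t^jm_j(g_k)}{j!}$ is an absolutely convergent series, and for even $j$ we have $t^j\ge 0$ and $0\le m_j(g_k)\le m_j(N(0,1))$, so this series is dominated termwise by $\sum_{j\text{ even}}\frac{t^jm_j(N(0,1))}{j!}=\Expect_{N(0,1)}[e^{tX}]=e^{t^2/2}$. Hence $g_k$ is $1$-subgaussian, and in general $\sigma$-subgaussian. The only point requiring care is the sign bookkeeping in the Hermite error formula — it is precisely the evenness of $j-2k$ that makes $f^{(2k)}$ nonnegative — but I do not anticipate a genuine obstacle; the rest is routine.
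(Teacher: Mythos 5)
Your proof is correct and takes essentially the same route as the paper's: reduce to $\sigma=1$ by scaling, dispose of odd $j$ by symmetry, and control even moments $j\ge 2k$ via the Gauss quadrature error term for $f(x)=x^j$, where the paper simply cites the integral representation of the error (Stoer--Bulirsch, Theorem 3.6.24) while you re-derive it from the pointwise Hermite interpolation error, exploiting that $f^{(2k)}\ge 0$ when $j-2k$ is even. Your termwise MGF-domination argument for $\sigma$-subgaussianity is a self-contained substitute for the paper's citation of a lemma from Buldygin--Kozachenko, but the substance is identical.
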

\begin{proof}
    By scaling it suffices to consider $\sigma=1$. Let $\nu=N(0,1)$. If $j$ is odd, $m_j(g_k)= m_j(\nu)=0$ by symmetry. If $j\ge 2k$ and $j$ is even, the conclusion follows from the integral representation of the error term of Gauss quadrature (see, \eg,~\cite[Theorem 3.6.24]{stoer.2002}):
    \[
        m_j(\nu)- m_j(g_k) =\frac{f^{(2k)}(\xi)}{(2k)!}\int \pi_k^2(x)\diff \nu(x),
    \]
    for some $\xi\in\reals$; here $f(x)=x^{j}$, $\{x_1,\ldots,x_k\}$ is the support of $g_k$, and $\pi_k(x) \triangleq \prod_i (x-x_i)$. Consequently, $g_k$ is $1$-subgaussian \cite[Lemma 2]{subgaussian}.
\end{proof}
\begin{lemma}
    \label{lmm:quadrature-l1}
    Let $g_k$ be the $k$-point Gauss quadrature of $N(0,1)$. Then
    \[
        \Expect_{g_k}|X|\ge (4k+2)^{-1/2},\quad k\ge 2.
    \]
\end{lemma}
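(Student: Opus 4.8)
The plan is to combine two elementary facts about $g_k$: that it reproduces the low-order moments of $N(0,1)$, and that its atoms lie in an interval whose half-width is exactly $\sqrt{4k+2}$. Since $k\ge 2$, the $k$-point Gauss quadrature $g_k$ matches the first $2k-1\ge 3$ moments of $N(0,1)$ (cf.~\prettyref{eq:quadrature-goal}), so in particular $\Expect_{g_k}[X^2]=1$. On the other hand, the atoms of $g_k$ are the zeros of the degree-$k$ Hermite polynomial, which are contained in $[-\sqrt{4k+2},\sqrt{4k+2}]$ by the standard estimate on Hermite zeros (see \cite[p.~129]{orthogonal.poly}, the same bound already invoked in the proof of \prettyref{lmm:tail-hatU} for the $(k+1)$-point rule).

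Given these two ingredients, the conclusion follows in one line: on the support of $g_k$ we have $X^2=|X|\cdot|X|\le \sqrt{4k+2}\,|X|$, so taking expectation under $g_k$ gives
\[
1=\Expect_{g_k}[X^2]\le \sqrt{4k+2}\;\Expect_{g_k}|X|,
\]
which rearranges to the claimed $\Expect_{g_k}|X|\ge(4k+2)^{-1/2}$. Note the role of $k\ge 2$ is precisely to guarantee $2k-1\ge 2$ so that the second moment is reproduced; for $k=1$ one has $g_1=\delta_0$ and the bound fails.

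There is essentially no obstacle here — the constant $4k+2$ in the statement appears to have been chosen exactly so that this argument works verbatim. The only point requiring a little care is quoting the zero bound in the correct (probabilists') normalization, which I would do by cross-referencing the identical estimate used in the proof of \prettyref{lmm:tail-hatU} rather than re-deriving it. For completeness one could also record that a \emph{dimension-free} bound $\Expect_{g_k}|X|\ge 1/\sqrt 3$ is available from $\Expect_{g_k}[X^4]\le 3$ (valid by \prettyref{lmm:quadrature-moments-error}) together with the interpolation inequality $\Expect[X^2]\le(\Expect|X|)^{2/3}(\Expect[X^4])^{1/3}$, but since only the $\Omega(1/\sqrt k)$ rate is needed downstream in \prettyref{prop:lb-unknown-sigma}, the support-based argument is the cleanest route.
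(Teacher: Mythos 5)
Your proof is correct and is essentially identical to the paper's: both use that $g_k$ reproduces the second moment of $N(0,1)$ (so $\Expect_{g_k}[X^2]=1$, valid since $k\ge 2$) together with the Hermite-zero bound $|X|\le\sqrt{4k+2}$ on the support, and conclude via $1=\Expect_{g_k}[X^2]\le\sqrt{4k+2}\,\Expect_{g_k}|X|$. Nothing further is needed.
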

\begin{proof}
    Let $G_k\sim g_k$. Note that $|G_k|\le \sqrt{4k+2}$ using the bound on the zeros of Hermite polynomials \cite[p.~129]{orthogonal.poly}. The conclusion follows from $1=\Expect[G_k^2]\le \Expect|G_k|\sqrt{4k+2}$.
\end{proof}
\begin{lemma}
    \label{lmm:quadrature-moments-Hermite}
    Let $g_k$ be the $k$-point Gauss quadrature of $N(0,1)$.
    Then $\Expect_{g_k}[H_{j}]=0$ for $j=1,\dots, 2k-1$, and $\Expect_{g_k}[H_{2k}]=-k!$, where $H_j$ is the Hermite polynomial of degree $j$ (see \prettyref{eq:Hermite1}).
\end{lemma}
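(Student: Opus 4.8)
The plan is to combine the defining moment-matching property of Gauss quadrature with the orthogonality of Hermite polynomials. First, for $j=1,\dots,2k-1$: by construction (cf.~\prettyref{eq:quadrature-goal} in \prettyref{sec:moment}), the $k$-point Gauss quadrature $g_k$ of $N(0,1)$ satisfies $\Expect_{g_k}[P]=\Expect_{N(0,1)}[P]$ for every polynomial $P$ of degree at most $2k-1$. Since $H_j$ defined in \prettyref{eq:Hermite1} has degree $j\le 2k-1$, this gives $\Expect_{g_k}[H_j]=\Expect_{N(0,1)}[H_j]$, and taking $i=0$ in the orthogonality relation \prettyref{eq:Hermite-ortho} together with $H_0\equiv 1$ yields $\Expect_{N(0,1)}[H_j]=0$ for all $j\ge 1$. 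This settles the first claim.

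For $j=2k$ the degree is one too large for the direct moment-matching argument, so I would instead use the node structure. Recall from the classical theory of Gauss quadrature (cf.~\prettyref{sec:moment} and the proof of \prettyref{lmm:quadrature-moments-error}) that the nodes $x_1,\dots,x_k$ of $g_k$ are precisely the zeros of the monic degree-$k$ orthogonal polynomial of $N(0,1)$, which by \prettyref{eq:Hermite1} is exactly $H_k$; hence $\pi_k(x)\triangleq\prod_{i=1}^k(x-x_i)=H_k(x)$. Since $H_{2k}$ is also monic (of degree $2k$) by \prettyref{eq:Hermite1}, I can write $H_{2k}=H_k^2+R$ with $\deg R\le 2k-1$.

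Now I would evaluate this decomposition under both measures. Under $g_k$: $\Expect_{g_k}[H_k^2]=\sum_i w_i H_k(x_i)^2=0$ because every node is a root of $H_k$, while $\Expect_{g_k}[R]=\Expect_{N(0,1)}[R]$ since $\deg R\le 2k-1$; thus $\Expect_{g_k}[H_{2k}]=\Expect_{N(0,1)}[R]$. Under $N(0,1)$: $\Expect_{N(0,1)}[H_{2k}]=\Expect_{N(0,1)}[H_k^2]+\Expect_{N(0,1)}[R]=k!+\Expect_{N(0,1)}[R]$, where $\Expect_{N(0,1)}[H_k^2]=k!$ by \prettyref{eq:Hermite-ortho}. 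Since $\Expect_{N(0,1)}[H_{2k}]=0$ (again by \prettyref{eq:Hermite-ortho}), we get $\Expect_{N(0,1)}[R]=-k!$, and therefore $\Expect_{g_k}[H_{2k}]=-k!$.

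The argument is elementary and short; the only step that needs to be stated with care is the identification $\pi_k=H_k$, i.e.\ that the Gauss nodes coincide with the zeros of the probabilist's Hermite polynomial, which follows because $H_k$ in \prettyref{eq:Hermite1} is the monic orthogonal polynomial of degree $k$ with respect to $N(0,1)$. An equivalent route, if one prefers to avoid invoking the node characterization, is to apply the error representation for Gauss quadrature (as used in the proof of \prettyref{lmm:quadrature-moments-error}) to $f(x)=H_{2k}(x)$, noting $f^{(2k)}\equiv(2k)!$, which directly gives $\Expect_{N(0,1)}[H_{2k}]-\Expect_{g_k}[H_{2k}]=\int \pi_k^2\,dN(0,1)=k!$.
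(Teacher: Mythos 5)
Your proof is correct and is essentially the paper's argument: the first claim via moment matching of degree $\le 2k-1$ plus Hermite orthogonality, and the second via the decomposition of $H_{2k}$ against $H_k^2$ (the paper writes $H_k^2=H_{2k}+\sum_{j<2k}a_jH_j$ with $a_0=k!$, you write $H_{2k}=H_k^2+R$ with $\deg R\le 2k-1$, which is the same computation), together with the fact that $g_k$ is supported on the zeros of $H_k$. The quadrature error-formula route you mention at the end is a valid, slightly slicker alternative, but no essentially new idea is involved.
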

\begin{proof}
    Let $Z\sim N(0,1)$ and $G_k\sim g_k$. By orthogonality of Hermite polynomials \prettyref{eq:Hermite-ortho} we have $\Expect[H_j(Z)]=0$ for all $j\ge 1$ and thus $\Expect[H_j(G_k)]=0$ for $j=1,\dots,2k-1$. Expand $H_k^2(x)$ as
    \[
        H_k^2(x)=H_{2k}(x)+a_{2k-1}H_{2k-1}(x)+\dots+a_1H_1(x)+a_0.
    \]
    Since $G_k$ is supported on the zeros of $H_k$, we have $0=\Expect[H_k^2(G_k)]=\Expect[H_{2k}(G_k)]+a_0$. The conclusion follows from $k!=\Expect[H_k^2(Z)]=a_0$ (see \prettyref{eq:Hermite-ortho}). 
\end{proof}

\subsection{Proofs for higher-order mixtures}
\label{sec:large-k}

\begin{proof}[Proof of \prettyref{thm:large-k}]
It suffices to consider $M=1$. Following the proof of \prettyref{thm:main-W1}, we obtain that, with probability $1-\delta$, 
\[
    \Norm{{\bf m}_{2k-1}(\hat \nu)-{\bf m}_{2k-1}(\nu)}_2\le \sqrt{\frac{\log(2k/\delta)}{n}}(\sqrt{ck})^{2k+1}.
\]
The conclusion follows by applying \prettyref{lmm:w1-compare} with $L=2k-1$ and $k=O(\frac{\log n}{\log\log n})$.
\end{proof}

\prettyref{lmm:w1-compare} is a sharpened version of \cite[Proposition 1]{KV17}.
\begin{lemma}
\label{lmm:w1-compare}
Let $\mu$ and $\nu$ be two probability distributions supported on $[-1,1]$. Then
\[
W_1(\mu,\nu)\le \frac{\pi}{L+1}+2(1+\sqrt{2})^L\Norm{\bfm_L(\mu)-\bfm_L(\nu)}_2.
\]
\end{lemma}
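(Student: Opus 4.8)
The plan is to run the argument through the Kantorovich dual formula \prettyref{eq:W1-dual} and trade off polynomial approximation against moment discrepancy. Fix a $1$-Lipschitz $\varphi$; since adding a constant does not change $\Expect_\mu\varphi-\Expect_\nu\varphi$, I may assume $\varphi(0)=0$, so $\Norm{\varphi}_\infty\le 1$ on $[-1,1]$. For any polynomial $P(x)=\sum_{j=0}^L c_jx^j$ of degree at most $L$,
\[
\Expect_\mu\varphi-\Expect_\nu\varphi=\Expect_\mu[\varphi-P]+\Expect_\nu[P-\varphi]+\sum_{j=1}^L c_j\pth{m_j(\mu)-m_j(\nu)},
\]
where the $j=0$ term drops out because $\mu,\nu$ are probability measures. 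The first two terms are each at most $\Norm{\varphi-P}_\infty$, and the third is at most $\Norm{(c_1,\dots,c_L)}_2\,\Norm{\bfm_L(\mu)-\bfm_L(\nu)}_2$ by Cauchy--Schwarz. So it suffices to produce one polynomial $P$ of degree $\le L$ for which both $\Norm{\varphi-P}_\infty$ and $\Norm{(c_1,\dots,c_L)}_2$ are controlled, and then take the supremum over $1$-Lipschitz $\varphi$.

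For the approximation error I would take $P$ to be a best uniform approximant of $\varphi$ on $[-1,1]$ of degree $\le L$. By the classical Jackson/Favard-type estimate for the Lipschitz class — the very quantity evaluated in \prettyref{lmm:max-W1} and its references — one has $\Norm{\varphi-P}_\infty\le \frac{\pi}{2(L+1)}$, which is $\le\pi/4<1$ for $L\ge 1$; hence $\Norm{P}_{\infty,[-1,1]}\le \Norm{\varphi}_\infty+\frac{\pi}{2(L+1)}\le 2$. This already accounts for the first term, $2\Norm{\varphi-P}_\infty\le \frac{\pi}{L+1}$. (The degenerate case $L=0$ is trivial since $W_1(\mu,\nu)\le 2\le\pi$ and the moment term is empty.)

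For the coefficients I would invoke the Bernstein--Walsh lemma: a polynomial of degree $\le L$ satisfies $|P(z)|\le \Norm{P}_{\infty,[-1,1]}\,\rho^{L}$ for $z$ in the closed Bernstein ellipse with parameter $\rho$, and the unit circle $\{|z|=1\}$ is contained in that ellipse exactly when $\rho=1+\sqrt2$ (the extreme points being $z=\pm i$, where $z+\sqrt{z^2-1}$ has modulus $1+\sqrt2$). Therefore $\sup_{|z|=1}|P(z)|\le (1+\sqrt2)^{L}\Norm{P}_{\infty,[-1,1]}\le 2(1+\sqrt2)^{L}$, and by Parseval $\Norm{(c_0,\dots,c_L)}_2^2=\frac1{2\pi}\int_0^{2\pi}|P(e^{i\theta})|^2\diff\theta\le \sup_{|z|=1}|P(z)|^2$, so $\Norm{(c_1,\dots,c_L)}_2\le 2(1+\sqrt2)^{L}$. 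Substituting the two estimates into the displayed decomposition and maximizing over $\varphi$ gives the bound.

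The main obstacle is tracking the constants, specifically establishing the sharp approximation constant $\frac{\pi}{2(L+1)}$ for the class of $1$-Lipschitz functions on $[-1,1]$ at \emph{every} finite $L$ rather than merely asymptotically; this is precisely where the improvement over \cite[Proposition 1]{KV17} resides, and it rests on the Favard/Chebyshev-alternation analysis already distilled in \prettyref{lmm:max-W1}. The Bernstein--Walsh step, the Parseval identity, the verification $\{|z|=1\}\subseteq E_{1+\sqrt2}$, and the elementary inequality $\frac{\pi}{2(L+1)}\le 1$ ensuring $\Norm{P}_\infty\le 2$ are all routine.
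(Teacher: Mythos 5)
Your proposal is correct and follows essentially the same route as the paper's proof: the Kantorovich dual plus the Favard/Jackson bound $\pi/(2(L+1))$ for best uniform approximation of $1$-Lipschitz functions, combined with a coefficient bound obtained from the maximum of the polynomial on the complex unit circle (your Bernstein--Walsh step is exactly \prettyref{lmm:poly-unit-circle}) and Parseval (exactly \prettyref{cor:poly-coeffs}). The only cosmetic difference is your normalization $\varphi(0)=0$ to get $\Norm{P}_{\infty,[-1,1]}\le 2$, where the paper instead centers $P_L^*$ by its constant coefficient using the bounded variation of $f$.
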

\begin{proof}
    Fix any 1-Lipschitz function $f$. 
    Let $ P_L^* $ be the best polynomial of degree $ L $ to uniformly approximate $ f $ over $ [-1,1] $, and denote its coefficients by $ a=(a_1,\dots,a_L) $.
    \begin{align*}
      |\Expect_\mu f-\Expect_{\nu} f|
      & \le |\Expect_\mu (f-P_L^*)|+|\Expect_{\nu}(f-P_L^*)|+|\Expect_\mu P_L^*-\Expect_{\nu} P_L^*|\\
      & \le 2\sup_{x\in [-1,1]}|f(x)-P_L^*(x)|+\sum_{i=1}^{L}|a_i||m_i(\mu)-m_i(\nu)|\\
      & \le \frac{\pi}{L+1}+\Norm{a}_2\Norm{\bfm_L(\mu)-\bfm_L(\nu)}_2,
    \end{align*}
    where in the second inequality we applied the upper bound on the uniform approximation error of 1-Lipschitz functions \cite[Theorem 4.1.1]{Jorge2011}.
    Since $f$ is 1-Lipschitz, it has variation no more than $2$ over $ [-1,1] $ then by the optimality of $P_L^*$ we have $ |P_L^*(x)-a_0|\le 2 $ over $[-1,1]$.
    Applying \prettyref{cor:poly-coeffs} yields that
    \[
    |\Expect_\mu f-\Expect_{\nu} f|
    \le \frac{\pi}{L+1}+2(1+\sqrt{2})^L\Norm{\bfm_L(\mu)-\bfm_L(\nu)}_2.    
    \]
    The conclusion follows by applying \prettyref{eq:W1-dual}.
\end{proof}

\begin{proof}[Proof of \prettyref{prop:large-k-lb}]
Let $\nu$ and $\nu'$ be two discrete distributions obtained from \prettyref{lmm:max-W1} that are supported on at most $\ell+1$ atoms on $[-1,1]$ such that
\[
|\Expect_\nu f-\Expect_\nu' f|\gtrsim \frac{1}{\ell}.
\]
Applying \prettyref{eq:chi2-bdd} with $\ell\asymp \frac{\log n}{\log\log n}$ yields that
\[
\chi^2(\nu*N(0,1)\|\nu'*N(0,1)) \lesssim \frac{1}{n}.
\]
The conclusion follows by Le Cam's method.
\end{proof}

\appendix

\section{Standard form of the semidefinite programming \prettyref{eq:project}}
\label{app:sdp}
Given an arbitrary vector $\tilde{m}=(\tilde{m}_1,\dots,\tilde{m}_r)$, \prettyref{eq:project} computes its projection onto the moment space $\calM_r([a,b])$.
By introducing an auxiliary scalar variable $t$ satisfying $ t\ge \|x\|_2^2$, \prettyref{eq:project} is equivalent to 
\begin{align*}
  \min ~&t-2\Iprod{\tilde{m}}{x}+\|\tilde{m}\|_2^2,\\
  \mathrm{s.t.}~&t\ge \|x\|_2^2,~x~\text{satisfies \prettyref{eq:moment-psd}}.
\end{align*}
This is a semidefinite programming with decision variable $(x,t)$, since the constraint $ t\ge \|x\|_2^2$ is equivalent to
$ \begin{bmatrix}
    t & x^\top\\
    x & I  
\end{bmatrix}\succeq 0
$ using the Schur complement (see, \eg, \cite{VB1996}).

\section{Auxiliary lemmas}
\label{app:aux}

\begin{lemma}
    \label{lmm:divided}
    Let $t_1\le t_2\le \dots$ be an ordered sequence (not necessarily distinct) and $t_r<t<t_{r+1}$. Let $f(x)=\indc{x\le t}$. Then 
    \begin{equation}
        \label{eq:divided}
        f[t_i,\dots,t_j]=(-1)^{i-r}\sum_{L\in\calL(i,j)}\prod_{(x,y)\in L}\frac{1}{t_x-t_y}, \quad i\le r < r+1 \le j,
    \end{equation}
    where $ \calL(i,j) $ is the set of lattice paths from $ (r,r+1) $ to $ (i,j) $ using steps $ (0,1) $ and $ (-1,0) $\footnote{
        Formally, for $a,b\in \naturals^2$, a lattice path from $ a $ to $ b $ using a set of steps $ S $ is a sequence $a=x_1,x_2,\dots,x_n=b$ with all increments $x_{j+1}-x_j\in S$. In the matrix representation shown in the proof, this corresponds to a path from $a_{r,r+1}$ to $a_{i,j}$ going up and right. This path consists of entries $(i,j)$ such that $i\le r<r+1\le j$, and thus in \prettyref{eq:divided} we always have $t_x\le t_r<t_{r+1}\le t_y$.
    }. 
    Furthermore,
    \begin{equation}
        \label{eq:divided-ub}
        |f[t_1,\dots,t_i]|\le \frac{\binom{i-2}{r-1}}{(t_{r+1}-t_{r})^{i-1}},\quad i\ge r+1.
    \end{equation}
\end{lemma}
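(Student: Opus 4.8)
The plan is to establish the closed form \prettyref{eq:divided} first, by induction on $j-i$, and then read off the bound \prettyref{eq:divided-ub} by crude estimates. One preliminary remark makes all the divided differences legitimate: since every node differs from the jump point $t$, near each node the function $f=\indc{\cdot\le t}$ agrees with a locally constant (hence $C^\infty$) function, so $f[t_i,\dots,t_j]$ is well defined via \prettyref{eq:div-recursion}--\prettyref{eq:div-repeated}; moreover any divided difference of order $\ge 1$ over a block $t_i\le\cdots\le t_j$ lying strictly on one side of $t$ vanishes, while any block straddling $t$ has $t_i<t<t_j$, so the recursion never divides by zero. (Coincident nodes may alternatively be avoided by replacing $f$ with a $C^\infty$ function equal to it near every node and invoking continuity of divided differences in the nodes.) I will adopt the convention $\calL(i,j)=\emptyset$ unless $i\le r<r+1\le j$, so that \prettyref{eq:divided} reads $f[t_i,\dots,t_j]=0$ off the straddling range, matching the vanishing just noted, and prove it for all $i\le j$ uniformly.

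The base case $j-i=1$ forces $i=r$, $j=r+1$ (the only straddling pair), and both sides equal $1/(t_r-t_{r+1})$. For $j-i\ge 2$ I apply the recursion $f[t_i,\dots,t_j]=\bigl(f[t_{i+1},\dots,t_j]-f[t_i,\dots,t_{j-1}]\bigr)/(t_j-t_i)$ and the inductive hypothesis for the two terms on the right (each $0$ exactly when its range is non-straddling, consistent with $\calL=\emptyset$). Since $(-1)^{(i+1)-r}=-(-1)^{i-r}$, the two inductive sums acquire the common prefactor $(-1)^{i-r}$ after the minus sign in the recursion, leaving $f[t_i,\dots,t_j]=\frac{(-1)^{i-r}}{t_i-t_j}\bigl(\sum_{L\in\calL(i+1,j)}\Pi(L)+\sum_{L\in\calL(i,j-1)}\Pi(L)\bigr)$, where $\Pi(L)\triangleq\prod_{(x,y)\in L}(t_x-t_y)^{-1}$. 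The combinatorial heart is then that appending the vertex $(i,j)$ to a path in $\calL(i+1,j)$ (as a step $(-1,0)$) or to one in $\calL(i,j-1)$ (as a step $(0,1)$) produces every path of $\calL(i,j)$ exactly once and multiplies $\Pi$ by the new factor $(t_i-t_j)^{-1}$; hence $\sum_{L\in\calL(i,j)}\Pi(L)=(t_i-t_j)^{-1}\bigl(\sum_{L\in\calL(i+1,j)}\Pi(L)+\sum_{L\in\calL(i,j-1)}\Pi(L)\bigr)$, which closes the induction.

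To deduce \prettyref{eq:divided-ub}, note $r\ge 1$ and apply \prettyref{eq:divided} with first index $1$ and last index $i$ (valid since $1\le r<r+1\le i$): $\bigl|f[t_1,\dots,t_i]\bigr|\le\sum_{L\in\calL(1,i)}|\Pi(L)|$. A path from $(r,r+1)$ to $(1,i)$ consists of $r-1$ steps $(-1,0)$ and $i-r-1$ steps $(0,1)$, so it visits exactly $i-1$ vertices $(x,y)$, each with $x\le r<r+1\le y$; for such a vertex $|t_x-t_y|=t_y-t_x\ge t_{r+1}-t_r>0$, whence $|\Pi(L)|\le(t_{r+1}-t_r)^{-(i-1)}$. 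The number of such paths is the number of interleavings of the $r-1$ and the $i-r-1$ steps, namely $\binom{(r-1)+(i-r-1)}{r-1}=\binom{i-2}{r-1}$, giving $|f[t_1,\dots,t_i]|\le\binom{i-2}{r-1}(t_{r+1}-t_r)^{-(i-1)}$. The only genuinely delicate point is the handling of coincident nodes and degenerate index ranges (and, to a lesser degree, getting the signs in the Newton-form recursion right), which the $\calL=\emptyset$ convention and the locally-constant observation dispatch at the outset; the rest is bookkeeping.
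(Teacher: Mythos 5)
Your proof is correct and takes essentially the same route as the paper's: induction on the divided-difference recursion, with the lattice-path decomposition (appending the vertex $(i,j)$ to paths in $\calL(i+1,j)$ and $\calL(i,j-1)$) closing the inductive step, and the bound \prettyref{eq:divided-ub} obtained by counting the $\binom{i-2}{r-1}$ paths and bounding each factor by $(t_{r+1}-t_r)^{-1}$. The only cosmetic differences are that you replace the paper's explicitly computed base cases (the $r$\Th{} row and $(r+1)$\Th{} column of the Neville array) by the empty-path convention together with the vanishing of divided differences on one-sided blocks—which also treats repeated nodes a bit more carefully—though your phrase ``all $i\le j$'' should read ``all $i<j$'', since for $i=j\le r$ the zeroth-order value is $1$, not $0$; the induction never uses order-zero values, so nothing breaks.
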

\begin{proof}
    Denote by $a_{i,j}=f[t_i,\dots,t_j]$ when $i\le j$. It is obvious that $a_{i,i}=1$ for $i\le r$; $a_{i,i}=0$ for $i\ge r+1$; $a_{i,j}=0$ for both $i<j\le r$ and $j>i\ge r+1$. For $i\le r<r+1\le j$, the values can be obtained recursively by 
    \begin{equation}
        \label{eq:recursion}
        a_{i,j}=\frac{a_{i,j-1}-a_{i+1,j}}{t_i-t_j}.
    \end{equation}
    The above recursion can be represented in Neville's diagram as in \prettyref{sec:poly}. In this proof, it is equivalently represented in a upper triangular matrix as follows:
    \begin{equation*}
        \begin{bmatrix}
            1 & 0 & \cdots      & 0      & a_{1,r+1}    &  \cdots  &      &      \\
            & 1   & \ddots      & \vdots & \vdots      &          &      &      \\
            &     & 1           & 0      & a_{r-1,r+1}  &  \cdots  &      &       \\
            &     &             & 1      & a_{r,r+1}    &  \cdots  &      &      \\
            &     &             &        & 0 & \cdots   &  0   &      \\
            &     &\text{\huge0}&        &   & \ddots   &\vdots&      \\
            &     &             &        &   &          &   0  &      
        \end{bmatrix}.
    \end{equation*}
    In the matrix, every $a_{i,j}$ is calculated using the two values left to it and below it. The values on any path from $a_{r,r+1}$ to $a_{i,j}$ going up and right will contribute to the formula of $a_{i,j}$ in \prettyref{eq:divided}. The paths consist of two types: first go to $a_{i,j-1}$ and then go right; first go to $a_{i+1,j}$ and then go up. Formally, $\{L,(i,j):L\in\calL_{i,j-1}\}\cup \{L,(i,j):L\in\calL_{i+1,j}\}=\calL_{i,j}$. This will be used in the proof of \prettyref{eq:divided} by induction present next. The base cases ($r\Th$ row and $(r+1)\Th$ column) can be directly computed:
    \[
        a_{r,j}=\prod_{v=r+1}^j\frac{1}{t_{r}-t_v},\quad a_{i,r+1}=(-1)^{i-r}\prod_{v=i}^{r}\frac{1}{t_v-t_{r+1}}.
    \]
    Suppose \prettyref{eq:divided} holds for both $a_{i,j-1}$ and $a_{i+1,j}$. Then $a_{i,j}$ can be evaluated by
    \begin{align*}
        a_{i,j}&=\frac{(-1)^{i-r}}{t_i-t_j}\pth{\sum_{L\in\calL(i,j-1)}\prod_{(x,y)\in L}\frac{1}{t_x-t_y}+\sum_{L\in\calL(i+1,j)}\prod_{(x,y)\in L}\frac{1}{t_x-t_y}}\\
        &=(-1)^{i-r}\pth{\sum_{L\in\calL(i,j)}\prod_{(x,y)\in L}\frac{1}{t_x-t_y}}.
    \end{align*}
    For the upper bound in \prettyref{eq:divided-ub}, we note that $|\calL(i,j)|\le\binom{(r-1)+(i-(r+1))}{r-1}$ in \prettyref{eq:divided}, and each summand is at most $\frac{1}{(t_{r+1}-t_{r})^{i-1}}$ in magnitude.
\end{proof}

\begin{lemma}
    \label{lmm:Px-expand}
    Let
    \[
        P(x)=\prod_{i=1}^\ell(x-x_i)=\sum_{j=0}^{\ell}a_jx^j.
    \]
    If $|x_i|\le \beta$ for every $i$, then
    \[
        |a_j|\le \binom{\ell}{j}\beta^{\ell-j}.
    \]
\end{lemma}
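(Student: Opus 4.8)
The plan is to identify the coefficients of $P$ with (signed) elementary symmetric polynomials and then bound each term by the trivial estimate $|x_i|\le \beta$. Concretely, expanding the product gives
\[
    P(x)=\prod_{i=1}^\ell(x-x_i)=\sum_{j=0}^\ell (-1)^{\ell-j} e_{\ell-j}(x_1,\dots,x_\ell)\, x^j,
\]
so that $a_j=(-1)^{\ell-j} e_{\ell-j}(x_1,\dots,x_\ell)$, where $e_m$ denotes the $m$-th elementary symmetric polynomial. Since $e_m(x_1,\dots,x_\ell)=\sum_{|S|=m}\prod_{i\in S}x_i$, the triangle inequality yields $|a_j|\le \sum_{|S|=\ell-j}\prod_{i\in S}|x_i|\le \binom{\ell}{\ell-j}\beta^{\ell-j}=\binom{\ell}{j}\beta^{\ell-j}$, which is exactly the claim.

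Alternatively, one can give a short induction on $\ell$ that avoids even naming symmetric functions: writing the coefficients of $\prod_{i=1}^{\ell}(x-x_i)$ in terms of those of $\prod_{i=1}^{\ell-1}(x-x_i)$ via $(x-x_\ell)\cdot\big(\sum_j a_j^{(\ell-1)}x^j\big)$ gives $a_j^{(\ell)}=a_{j-1}^{(\ell-1)}-x_\ell\, a_j^{(\ell-1)}$, so by the inductive hypothesis and $|x_\ell|\le\beta$,
\[
    |a_j^{(\ell)}|\le \binom{\ell-1}{j-1}\beta^{(\ell-1)-(j-1)}+\beta\binom{\ell-1}{j}\beta^{(\ell-1)-j}=\Big(\tbinom{\ell-1}{j-1}+\tbinom{\ell-1}{j}\Big)\beta^{\ell-j}=\binom{\ell}{j}\beta^{\ell-j},
\]
using Pascal's rule; the base case $\ell=1$ is immediate. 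I would present whichever of these two arguments is shortest in context, probably the first.

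There is no real obstacle here: the statement is elementary and the only thing to be careful about is the bookkeeping of indices (the coefficient $a_j$ pairs with the symmetric polynomial of degree $\ell-j$, and $\binom{\ell}{\ell-j}=\binom{\ell}{j}$). The bound is also seen to be tight by taking all $x_i=\beta$ (or $x_i=-\beta$), in which case $|a_j|=\binom{\ell}{j}\beta^{\ell-j}$ exactly, so no improvement of the constant is possible without further hypotheses.
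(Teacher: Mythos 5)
Your first argument is exactly the paper's proof: expand the product so that each coefficient is a signed elementary symmetric polynomial, then bound it by the triangle inequality as a sum of $\binom{\ell}{j}$ terms each of magnitude at most $\beta^{\ell-j}$. The proposal is correct, and the additional induction via Pascal's rule is a fine but unnecessary alternative.
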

\begin{proof}
    $P$ can be explicitly expanded and we obtain that
    \[
        a_{\ell-j}=(-1)^j\sum_{\{i_1, i_2,\dots, i_j\}\subseteq [\ell]}x_{i_1}\cdot x_{i_2}\cdot \ldots\cdot x_{i_j}.
    \]
    The summation consists of $\binom{\ell}{j}$ terms, and each term is at most $\beta^j$ in magnitude. 
\end{proof}

\begin{lemma}
    \label{lmm:Hermite-moments}
    If $|\Expect[X^\ell]-\Expect[X'^\ell]|\le (C\sqrt{\ell})^\ell\epsilon$ for $\ell=1,\dots,r$, then, for $\gamma_r$ in \prettyref{eq:Hermite},
    \[
        |\Expect[\gamma_r(X,\sigma)]-\Expect[\gamma_r(X',\sigma)]|\le \epsilon\pth{(2\sigma \sqrt{r/e})^r+(2C\sqrt{r})^r}.
    \]
\end{lemma}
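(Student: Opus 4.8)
The plan is to expand $\gamma_r(\cdot,\sigma)$ in the monomial basis using the explicit formula \prettyref{eq:Hermite}, apply the hypothesis termwise to control the moment differences, and then recognize the resulting coefficient sum as (a bound on) a Gaussian moment, which lets me package the estimate cleanly. First I would write
\[
    \gamma_r(x,\sigma)=r!\sum_{j=0}^{\floor{r/2}} \frac{(-1/2)^j }{j!(r-2j)!}\sigma^{2j}x^{r-2j},
\]
so that, by linearity and the triangle inequality,
\[
    |\Expect[\gamma_r(X,\sigma)]-\Expect[\gamma_r(X',\sigma)]|
    \le r!\sum_{j=0}^{\floor{r/2}} \frac{(1/2)^j }{j!(r-2j)!}\sigma^{2j}\,|\Expect[X^{r-2j}]-\Expect[X'^{r-2j}]|.
\]
Applying the hypothesis $|\Expect[X^\ell]-\Expect[X'^\ell]|\le (C\sqrt{\ell})^\ell\epsilon$ with $\ell=r-2j\le r$ (and noting $(C\sqrt{\ell})^\ell\le (C\sqrt r)^r$ when $C\sqrt r\ge 1$, with the small-$\ell$ cases absorbed into the constant), the bound becomes
\[
    \epsilon\, r!\sum_{j=0}^{\floor{r/2}} \frac{(1/2)^j }{j!(r-2j)!}\sigma^{2j}(C\sqrt{r})^{r-2j}.
\]

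The key observation is that the sum $r!\sum_j \frac{(1/2)^j}{j!(r-2j)!}\sigma^{2j}a^{r-2j}$ is exactly $\Expect[(a+\sigma Z')^r]$ for $Z'\sim N(0,1)$ when we replace $(-1/2)^j$ by $(1/2)^j$ — this is the same identity used in the proof of \prettyref{lmm:var-tildem}, where $\Expect(2M+\sigma Z')^r$ and $\Expect(3\sigma|Z|+\sigma Z')^r$ appear. So with $a=C\sqrt r$ the sum equals $\Expect[(C\sqrt r+\sigma |Z'|)^r]$ (replacing $\sigma^{2j}a^{r-2j}$ and recombining), which by $(u+v)^r\le 2^{r-1}(u^r+v^r)$ and the standard normal absolute-moment bound $\Expect|Z'|^r\le (\sqrt{r/e})^r\cdot(\text{const})$ (cf.~\cite{subgaussian}) is at most
\[
    2^{r}\pth{(C\sqrt r)^r + \sigma^r (\sqrt{r/e})^r} \le (2C\sqrt r)^r + (2\sigma\sqrt{r/e})^r,
\]
after adjusting absolute constants. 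This matches the claimed bound $\epsilon\pth{(2\sigma\sqrt{r/e})^r+(2C\sqrt r)^r}$.

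The only real care needed — and the one place I expect bookkeeping friction rather than genuine difficulty — is the juggling of constants: the hypothesis is stated with $C$ but the conclusion with $2C$ and $2$, so I need to be slightly generous at each $2^{r-1}$-type splitting and each replacement of $(C\sqrt\ell)^\ell$ by $(C\sqrt r)^r$, and verify that the normal-moment constant folds in without degrading the exponent. I would handle the low-order terms ($r$ small, or $C\sqrt r<1$) separately or simply note they are subsumed by enlarging the absolute constant hidden in the $2$'s. Everything else is the elementary identity $\sum_j \binom{r}{2j}\binom{2j}{j} j!\,2^{-j}\sigma^{2j}a^{r-2j}=\Expect[(a+\sigma Z')^r]$ (equivalently, the generating-function identity for Hermite polynomials with the sign flipped), which is routine.
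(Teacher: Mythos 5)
Your proof is correct and follows essentially the same route as the paper's: expand $\gamma_r(\cdot,\sigma)$ in monomials, apply the triangle inequality termwise, recognize the weighted sum as $\Expect[(a+\sigma Z')^r]$ for a standard normal $Z'$, and finish with $(u+v)^r\le 2^{r-1}(u^r+v^r)$ together with the normal absolute-moment bound (the paper's only cosmetic difference is that it upgrades $(C\sqrt{\ell})^\ell$ to the Gaussian moment $\Expect|C\sqrt{e}Z'|^\ell$ via the lower bound in its normal-moments lemma, rather than to a deterministic base). One clarification: your parenthetical condition $C\sqrt{r}\ge 1$ and the talk of absorbing small-$\ell$ cases into constants are unnecessary, since the displayed intermediate bound only needs $(C\sqrt{\ell})^{\ell}\le (C\sqrt{r})^{\ell}$ for $\ell\le r$, which holds for every $C>0$, so the stated constants $(2\sigma\sqrt{r/e})^r+(2C\sqrt{r})^r$ come out exactly with no enlargement.
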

\begin{proof}
    Note that $|\Expect[X^\ell]-\Expect[X'^\ell]|\le \Expect|C\sqrt{e}Z'|^r\epsilon$ by \prettyref{lmm:normal-moments} below, where $Z'\sim N(0,1)$. Then, 
    \begin{equation*}
        |\Expect[\gamma_r(X,\sigma)]-\Expect[\gamma_r(X',\sigma)]|
        \le \sum_{i=0}^{\Floor{r/2}}\frac{r!\sigma^{2i}}{i!(r-2i)!2^i}\Expect[|C\sqrt{e}Z'|^r]\epsilon
        =\epsilon\cdot\Expect[(\sigma Z+|C\sqrt{e}Z'|)^r],
    \end{equation*}
    where $Z\sim N(0,1)$ independent of $Z'$. Applying $(a+b)^r\le 2^{r-1}(|a|^r+|b|^r)$ and \prettyref{lmm:normal-moments} completes the proof.
\end{proof}

\begin{lemma}
    \label{lmm:normal-moments}
    \[
        (p/e)^{p/2}\le \Expect|Z|^p\le \sqrt{2}(p/e)^{p/2},\quad p\ge 0.
    \]
\end{lemma}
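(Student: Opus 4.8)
The plan is to reduce the inequality to a sharp two-sided estimate for the Gamma function, which I then obtain by a monotonicity argument rather than by crude Stirling bounds. First I would record the exact moment formula: by the substitution $t=x^2/2$ in the Gaussian integral,
\[
    \Expect|Z|^p = \sqrt{\frac{2}{\pi}}\int_0^\infty x^p e^{-x^2/2}\,\diff x = \frac{2^{p/2}}{\sqrt\pi}\,\Gamma\!\left(\frac{p+1}{2}\right).
\]
Writing $x=\frac{p+1}{2}\,(\ge \tfrac12)$ and using the identity $2^{p/2}\left(\frac{2x-1}{2e}\right)^{x-1/2}=\left(\frac pe\right)^{p/2}$ (note $2x-1=p$, $x-\tfrac12=\tfrac p2$), one has
\[
    \Expect|Z|^p = \frac{R(x)}{\sqrt\pi}\left(\frac{p}{e}\right)^{p/2},\qquad
    R(x)\triangleq\frac{\Gamma(x)}{\left(\frac{2x-1}{2e}\right)^{x-1/2}}.
\]
Thus (for $p>0$; the case $p=0$ is immediate since $\Expect|Z|^0=1\in[1,\sqrt2)$) the lemma is exactly equivalent to $\sqrt\pi\le R(x)<\sqrt{2\pi}$ for $x>\tfrac12$, and it suffices to prove $R$ is increasing on $(\tfrac12,\infty)$ with $R(\tfrac12^+)=\sqrt\pi$ and $R(x)\to\sqrt{2\pi}$ as $x\to\infty$.

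The monotonicity is the crux. Differentiating $\log R(x)=\log\Gamma(x)-(x-\tfrac12)\bigl[\log(2x-1)-\log(2e)\bigr]$ and simplifying — the term $(x-\tfrac12)\frac{2}{2x-1}=1$ cancels the constant in $\log(2e)=1+\log2$, and $-\log(2x-1)+\log2=-\log(x-\tfrac12)$ — gives the clean expression
\[
    (\log R)'(x) = \psi(x)-\log\!\left(x-\tfrac12\right),
\]
where $\psi=(\log\Gamma)'$ is the digamma function. So it remains to show $\psi(x)>\log(x-\tfrac12)$ for $x>\tfrac12$, which I would prove with no asymptotics at all, exploiting the functional equation $\psi(x+1)-\psi(x)=\frac1x$: setting $g(x):=\psi(x)-\log(x-\tfrac12)$,
\[
    g(x)-g(x+1)=\log\frac{x+1/2}{x-1/2}-\frac1x
    = 2\!\left(\frac{1}{2x}+\frac{1}{3(2x)^3}+\cdots\right)-\frac1x>0,
\]
by expanding $\log\frac{1+s}{1-s}$ at $s=\frac1{2x}\in(0,1)$; since moreover $g(x)\to0$ as $x\to\infty$ (both $\psi(x)$ and $\log(x-\tfrac12)$ equal $\log x+o(1)$), telescoping yields $g(x)=\sum_{n\ge0}\bigl(g(x+n)-g(x+n+1)\bigr)>0$.

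It then remains to pin down the two endpoints. As $x\downarrow\tfrac12$ the factor $\left(\frac{2x-1}{2e}\right)^{x-1/2}$ tends to $1$ (it equals $u^{eu}$ with $u=\frac{2x-1}{2e}\downarrow0$), so $R$ extends continuously with $R(\tfrac12)=\Gamma(\tfrac12)=\sqrt\pi$; and Stirling's expansion $\log\Gamma(x)=(x-\tfrac12)\log x-x+\tfrac12\log(2\pi)+o(1)$, combined with $\log(2x-1)=\log x+\log2-\frac1{2x}+o(1/x)$, gives $\log R(x)\to\tfrac12\log(2\pi)$, i.e.\ $R(x)\to\sqrt{2\pi}$. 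Monotonicity from the previous step then forces $\sqrt\pi\le R(x)<\sqrt{2\pi}$ for every $x>\tfrac12$, hence $\left(\frac pe\right)^{p/2}\le\Expect|Z|^p<\sqrt2\left(\frac pe\right)^{p/2}$, which is the claim (with the upper bound in fact strict and equality in the lower bound only in the limit $p\to0$).

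The main obstacle is the digamma inequality $\psi(x)>\log(x-\tfrac12)$. It is essentially sharp — the difference is $\frac{1}{24x^2}+O(x^{-3})$ — so the textbook Stirling bounds for $\Gamma$, which carry an $e^{1/(12x)}$ slack, are too lossy to recover the constant $\sqrt2$; and a brute-force calculus estimate has to cope with the delicate regime $p\to0$, where the lower bound is attained only asymptotically. The telescoping argument above sidesteps both difficulties by replacing asymptotics with the exact recursion $\psi(x+1)-\psi(x)=1/x$.
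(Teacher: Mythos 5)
Your proof is correct and follows essentially the same route as the paper: write $\Expect|Z|^p/(p/e)^{p/2}$ via the Gamma function, show this ratio is monotone between its limiting values $1$ (at $p=0$) and $\sqrt{2}$ (as $p\to\infty$), and reduce monotonicity to the digamma inequality $\psi(x)\ge\log(x-\tfrac12)$. The only difference is that where the paper cites this inequality to the literature (Diamond--Straub), you prove it directly from the recursion $\psi(x+1)-\psi(x)=1/x$ by telescoping, which makes the argument self-contained.
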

\begin{proof}
    Note that
    \[
        \frac{\Expect|Z|^p}{(p/e)^{p/2}}=\frac{2^{p/2}\Gamma(\frac{p+1}{2})}{\sqrt{\pi}(p/e)^{p/2}}\triangleq f(p),\quad \forall~p\ge 0.
    \]
    Since $f(0)=1$ and $f(\infty)=\sqrt{2}$, it suffices to show that $f$ is increasing in $[0,\infty)$.
    Equivalently, $\frac{x}{2}\log\frac{2e}{x}+\log\Gamma(\frac{x+1}{2})$ is increasing, which is equivalent to $\psi(\frac{x+1}{2})\ge \log\frac{x}{2}$ by the derivative, where $\psi(x)\triangleq \frac{\diff}{\diff x}\log\Gamma(x)$.
    The last inequality holds for any $x>0$ (see, \eg, \cite[(3)]{DS2016}).
\end{proof}

\begin{lemma}
    \label{lmm:wedge1-integral}
    Let $r\ge 2$.
    Then,
    \begin{equation*}
        \int \pth{\frac{\delta}{\prod_{i=1}^r|t-x_i|}\wedge 1}\diff t \le 4r\delta^{\frac{1}{r}}.
    \end{equation*}
\end{lemma}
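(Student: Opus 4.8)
The plan is to collapse the product $\prod_{i=1}^r|t-x_i|$ down to a single power of the distance from $t$ to its nearest atom, and then reduce to an elementary one‑dimensional tail integral.

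First I would introduce, for each $t$, an index $i(t)\in[r]$ of an atom nearest to $t$ (breaking ties by, say, smallest index, so that $t\mapsto i(t)$ is Borel). Since each factor satisfies $|t-x_j|\ge |t-x_{i(t)}|$, multiplying these $r$ inequalities gives the key bound $\prod_{i=1}^r|t-x_i|\ge |t-x_{i(t)}|^r$; hence the integrand is dominated pointwise by $\frac{\delta}{|t-x_{i(t)}|^r}\wedge 1$ (the set $\{t=x_j\text{ for some }j\}$ is null, and there both sides equal $1$ anyway). Next I would partition $\reals=\bigsqcup_{i=1}^r A_i$ with $A_i=\{t:i(t)=i\}$, so that after the shift $s=t-x_i$ in each piece,
\[
\int\Bigl(\frac{\delta}{\prod_i|t-x_i|}\wedge 1\Bigr)\diff t
\le \sum_{i=1}^r\int_{A_i}\Bigl(\frac{\delta}{|t-x_i|^r}\wedge 1\Bigr)\diff t
\le \sum_{i=1}^r\int_{\reals}\Bigl(\frac{\delta}{|s|^r}\wedge 1\Bigr)\diff s .
\]

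The remaining integral is computed by hand: the integrand equals $1$ for $|s|\le \delta^{1/r}$ and equals $\delta/|s|^r$ for $|s|>\delta^{1/r}$, and since $r\ge 2$ the tail $\int_{\delta^{1/r}}^\infty s^{-r}\diff s$ converges; one finds $\int_{\reals}(\frac{\delta}{|s|^r}\wedge 1)\diff s = 2\bigl(\delta^{1/r}+\frac{\delta^{1/r}}{r-1}\bigr)=\frac{2r}{r-1}\delta^{1/r}$. Summing the $r$ identical terms gives $\frac{2r^2}{r-1}\delta^{1/r}$, and the inequality $\frac{2r^2}{r-1}\le 4r$ holds exactly when $r\ge 2$, yielding the claimed bound $4r\delta^{1/r}$.

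There is no genuine obstacle in this argument; the only step that requires any thought is the nearest‑atom reduction $\prod_i|t-x_i|\ge|t-x_{i(t)}|^r$, which is what turns a joint product over all atoms into a sum of decoupled univariate integrals. It is lossy — it throws away the spacing between the atoms entirely — but it is exactly calibrated to the $\delta^{1/r}$ rate being claimed, so nothing is wasted for the purposes of this lemma.
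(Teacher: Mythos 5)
Your proof is correct and is essentially the paper's argument: the paper sorts the atoms and splits the line into $2r$ half-intervals bounded by atoms and midpoints, which is exactly your Voronoi (nearest-atom) partition, and on each piece it uses the same domination $\prod_i|t-x_i|\ge|t-x_{i(t)}|^r$ followed by the same elementary integral $\int_0^\infty(\delta t^{-r}\wedge 1)\diff t=\frac{r}{r-1}\delta^{1/r}$, yielding the identical constant $\frac{2r^2}{r-1}\le 4r$ for $r\ge 2$.
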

\begin{proof}
    Without loss of generality, let $x_1\le x_2\le \dots \le x_r$.
    Note that
    \begin{equation*}
        \int \pth{\frac{\delta}{\prod_{i=1}^r|t-x_i|}\wedge 1} \diff t
        =\int_{-\infty}^{x_1}+\int_{x_1}^{\frac{x_1+x_2}{2}}+\int_{\frac{x_1+x_2}{2}}^{x_2}+\dots + \int_{x_r}^{\infty}.
    \end{equation*}
    There are $2r$ terms in the summation and each term can be upper bounded by
    \begin{equation*}
        \int_{x_i}^{\infty}\pth{\frac{\delta}{|t-x_i|^{r}}\wedge 1} \diff t 
        =\int_{0}^{\infty}\pth{\frac{\delta}{t^{r}}\wedge 1} \diff t 
        =\frac{r}{r-1}\delta^{\frac{1}{r}}.
    \end{equation*}
    The conclusion follows.
\end{proof}

\begin{lemma}
    \label{lmm:match2k-2}
    Given any $2k$ distinct points $x_1<x_2<\dots<x_{2k}$, there exist two distributions $\nu$ and $\nu'$ supported on $\{x_1,x_3,\dots,x_{2k-1}\}$ and $\{x_2,x_4,\dots,x_{2k}\}$, respectively, such that ${\bf m}_{2k-2}(\nu)={\bf m}_{2k-2}(\nu')$.
\end{lemma}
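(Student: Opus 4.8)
The plan is to construct the two distributions $\nu$ and $\nu'$ explicitly by solving a homogeneous linear system and then arguing that the solution can be normalized to yield probability measures. Write $\nu = \sum_{i=1}^{k} p_i \delta_{x_{2i-1}}$ and $\nu' = \sum_{i=1}^{k} q_i \delta_{x_{2i}}$, where $p = (p_1,\dots,p_k)$ and $q = (q_1,\dots,q_k)$ are the (unknown, possibly signed) weights. The condition ${\bf m}_{2k-2}(\nu) = {\bf m}_{2k-2}(\nu')$ together with the normalization $\sum_i p_i = \sum_i q_i = 1$ amounts to $2k-1$ linear equations $\sum_i p_i x_{2i-1}^r = \sum_i q_i x_{2i}^r$ for $r = 0,1,\dots,2k-2$ in the $2k$ unknowns $(p,q)$. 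Equivalently, setting $a = (p_1,\dots,p_k,-q_1,\dots,-q_k)^\top$ and letting $V$ be the $(2k-1)\times(2k)$ Vandermonde-type matrix with columns $(1,y,y^2,\dots,y^{2k-2})^\top$ for $y$ ranging over $x_1,x_2,\dots,x_{2k}$ (in the interleaved order), we need $Va = 0$.

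First I would observe that $V$ has rank $2k-1$: any $2k-1$ of its columns form a genuine Vandermonde matrix on distinct nodes, hence are linearly independent. Therefore $\ker V$ is one-dimensional; fix a nonzero $a^* = (p^*_1,\dots,p^*_k,-q^*_1,\dots,-q^*_k)$ spanning it. The row $r=0$ of $Va^*=0$ gives $\sum_i p^*_i = \sum_i q^*_i$, call this common value $s$. The key step is to show $s \neq 0$, so that we may rescale $a^*$ by $1/s$ to enforce $\sum p_i = \sum q_i = 1$. If $s = 0$, then $\nu^* = \sum p^*_i \delta_{x_{2i-1}}$ and $\nu'^* = \sum q^*_i \delta_{x_{2i}}$ would be two \emph{signed} measures of total mass zero with matching moments up to order $2k-2$, i.e. the signed measure $\mu = \nu^* - \nu'^*$ supported on all $2k$ distinct points would have $\int y^r \, d\mu(y) = 0$ for $r = 0,\dots,2k-1$; since a signed measure on $2k$ points is determined by $2k$ moments (the relevant $2k\times 2k$ Vandermonde matrix is invertible), this forces $\mu = 0$, hence $a^* = 0$, a contradiction. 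So $s\neq 0$ and we normalize.

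The remaining step — and the one I expect to be the main obstacle — is to show that after normalization the weights $p_i$ and $q_i$ are all \emph{nonnegative}, so that $\nu,\nu'$ are bona fide probability distributions. This should follow from a sign-alternation argument: consider the polynomial $P(y) = \prod_{i=1}^{2k}(y - x_i)/(\text{leading stuff})$, or more usefully, for each index $j$ test the moment identity against a polynomial $Q_j$ of degree $\le 2k-2$ that vanishes at all nodes $x_1,\dots,x_{2k}$ except $x_j$ and one other; a standard Lagrange-interpolation / Markov-system (Chebyshev system) argument on the $2k$ interlaced nodes shows that the unique (up to scaling) null vector of $V$ has the property that $p_i$ alternate in sign among themselves in a controlled way, and likewise $q_i$, with the upshot that one of the two choices of overall sign for $a^*$ makes all of $p_1,\dots,p_k$ positive and all of $q_1,\dots,q_k$ positive. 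Concretely, I would invoke the fact that $\{1,y,\dots,y^{2k-1}\}$ is a Chebyshev system on $\reals$ and apply the theory of such systems (or simply compute $p_i$ by Cramer's rule as a ratio of Vandermonde-like determinants and track signs via the interlacing $x_1 < x_2 < \cdots < x_{2k}$) to conclude nonnegativity. Once both weight vectors are nonnegative and sum to one, $\nu$ and $\nu'$ are the desired probability distributions with disjoint supports and identical first $2k-2$ moments.
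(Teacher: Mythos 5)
Your construction follows the same route as the paper's proof: take a nonzero element of the one-dimensional kernel of the $(2k-1)\times 2k$ Vandermonde system and show, using the ordering of the nodes, that its entries are positive at the odd-indexed points and negative at the even-indexed ones (or vice versa), which after rescaling gives the two probability measures. However, your intermediate argument that $s\neq 0$ is wrong as written: if $s=0$, all you know about the signed measure $\mu$ carried by the $2k$ points is that $\int y^r\,\mathrm{d}\mu(y)=0$ for $r=0,\dots,2k-2$; the condition $s=0$ adds nothing about the moment of order $2k-1$, so you cannot appeal to the invertibility of a $2k\times 2k$ Vandermonde matrix. Indeed, a nonzero signed measure on $2k$ distinct points annihilating all polynomials of degree at most $2k-2$ always exists --- it is exactly the kernel element you are working with --- so the contradiction you seek does not materialize.

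The gap is harmless only because the step is redundant: the sign analysis you postpone to the end, once actually performed, shows that within the kernel vector the weights attached to $x_1,x_3,\dots,x_{2k-1}$ all share one sign and those attached to $x_2,x_4,\dots,x_{2k}$ the opposite sign, so $s$ is a sum of nonzero terms of equal sign and cannot vanish. This computation is precisely the paper's proof: normalize the kernel vector so that its last coordinate is $-1$ and solve the remaining $(2k-1)\times(2k-1)$ Vandermonde system by Cramer's rule (equivalently, Lagrange interpolation), which yields $w_i=\prod_{j\neq i,\,j\le 2k-1}(x_{2k}-x_j)\big/\prod_{j\neq i,\,j\le 2k-1}(x_i-x_j)$; the numerator is positive and the denominator has sign $(-1)^{2k-1-i}$, giving $w_i>0$ for odd $i$ and $w_i<0$ for even $i$. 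You should carry out this short computation rather than leave it as an appeal to Chebyshev-system generalities, since it is the crux of the lemma; also, your phrase that the $p_i$ ``alternate in sign among themselves'' is inaccurate --- within each parity class the weights all have the same sign, and it is the full interleaved vector that alternates.
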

\begin{proof}
    Consider the following linear equation
    \begin{equation*}
        \begin{pmatrix}
            1 & 1 & \cdots & 1 \\
            x_1 & x_2 & \cdots & x_{2k} \\
            \vdots  & \vdots  & \ddots & \vdots  \\
            x_1^{2k-2} & x_{2}^{2k-2} & \cdots & x_{2k}^{2k-2} 
        \end{pmatrix}
        \begin{pmatrix}
            w_1 \\
            w_2 \\
            \vdots  \\
            w_{2k}
        \end{pmatrix}
        =0,
    \end{equation*}
    This underdetermined system has a non-zero solution. Let $w$ be a solution with $\|w\|_1=2$.
    Since all weights sum up to zero, then positive weights in $w$ sum up to $1$ and negative weights sum up to $-1$.
    Let one distribution be supported on $x_i$ with weight $w_i$ for $w_i>0$, and the other one be supported on the remaining $x_i$'s with the corresponding weights $|w_i|$.
    Then these two distribution match the first $2k-2$ moments.

    It remains to show that the weights in any non-zero solution have alternating signs.
    Note that all weights are non-zero: if one $w_i$ is zero, then the solution must be all zero since the Vandermonde matrix is of full row rank. To verify the signs of the solution, without loss generality, assume that $w_{2k}=-1$ and then
    \begin{equation*}
        \begin{pmatrix}
            1 & \cdots & 1 \\
            x_1 & \cdots & x_{2k-1} \\
            \vdots & \ddots & \vdots  \\
            x_1^{2k-2} & \cdots & x_{2k-1}^{2k-2} 
        \end{pmatrix}
        \begin{pmatrix}
            w_1 \\
            w_2 \\
            \vdots  \\
            w_{2k-1}
        \end{pmatrix}
        =
        \begin{pmatrix}
            1 \\
            x_{2k} \\
            \vdots  \\
            x_{2k}^{2k-2} 
        \end{pmatrix}.
    \end{equation*}
    The solution has an explicit formula that $w_i=P_i(x_{2k})$ where $P_i$ is an interpolating polynomial of degree $2k-2$ satisfying $P_i(x_j)=1$ for $j=i$ and $P_i(x_j)=0$ for all other $j\le 2k-1$.
    Specifically, we have $w_i=\frac{\prod_{j\ne i,j\le 2k-1}(x_{2k}-x_j)}{\prod_{j\ne i,j\le 2k-1}(x_{i}-x_j)}$, which satisfies $w_i>0$ for odd $i$ and $w_i<0$ for even $i$.
    The proof is complete.
\end{proof}

\begin{lemma}[Non-existence of an unbiased estimator]
    \label{lmm:unbiased-not-exist}
    Let $X_1,\dots,X_m\iiddistr pN(s,\sigma^2)+(1-p)N(t,\sigma^2) = \nu * N(0,\sigma^2)$, where $\nu = p \delta_s + (1-p) \delta_t$ and $p,s,t,\sigma$ are the unknown parameters. 
                For any $r\ge 2$, unbiased estimator for the $r\Th$ moments of $\nu$, namely, $ps^r+(1-p)t^r$, does not exist.
\end{lemma}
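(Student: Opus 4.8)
The plan is to suppose, for contradiction, that $T=T(X_1,\dots,X_m)$ is an unbiased estimator, so that $\Expect_{\nu,\sigma}[T]=m_r(\nu)=ps^r+(1-p)t^r$ for every $2$-atomic mixing distribution $\nu=p\delta_s+(1-p)\delta_t$ and every $\sigma>0$, and then to expand the mixture density in Hermite polynomials so as to turn this into an identity between a power series in the moments of $\nu$, with $\sigma$-dependent coefficients, and the $\sigma$-free functional $m_r(\nu)$. First I would record the a priori integrability that comes for free: if $\nu$ puts weight $w$ on an atom $a$, then the one-sample density of $\nu*N(0,\sigma^2)$ dominates $w\,\varphi_\sigma(x-a)=w\,e^{-a^2/2\sigma^2}\varphi_\sigma(x)e^{ax/\sigma^2}$, so $\Expect_{\nu,\sigma}|T|<\infty$ forces $\int |T(x)|\,e^{a\sum_i x_i/\sigma^2}\,\varphi_\sigma^{\otimes m}(x)\,dx<\infty$ for all $a\in\reals$; in particular $T$ has a finite exponential moment (hence all polynomial moments) against every $N(0,\sigma^2 I_m)$, which legitimizes the termwise integrations below and supplies the moment-determinacy used at the end. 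Here $\varphi_\sigma(x)=\sigma^{-1}\varphi(x/\sigma)$.

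Using $\varphi_\sigma(x-y)=\varphi_\sigma(x)\sum_{k\ge0}\frac{y^k}{k!\sigma^k}H_k(x/\sigma)$, the one-sample density of $\nu*N(0,\sigma^2)$ is $\varphi_\sigma(x)\sum_{k\ge0}\frac{m_k(\nu)}{k!\sigma^k}H_k(x/\sigma)$, so the $m$-fold product density has Hermite coefficients $\prod_i\frac{m_{k_i}(\nu)}{k_i!\sigma^{k_i}}$. Integrating $T$ against it yields the master identity
\[
m_r(\nu)=\Expect_{\nu,\sigma}[T]=\sum_{\mathbf k}\Big(\prod_{i=1}^m\frac{m_{k_i}(\nu)}{k_i!\,\sigma^{k_i}}\Big)\,c_{\mathbf k}(\sigma),\qquad c_{\mathbf k}(\sigma):=\Expect_{X\sim N(0,\sigma^2 I_m)}\Big[T(X)\prod_{i=1}^m H_{k_i}(X_i/\sigma)\Big],
\]
where $\mathbf k=(k_1,\dots,k_m)$ runs over tuples of nonnegative integers and $c_{\mathbf k}(\sigma)$ is, up to the factor $\prod_i k_i!$, the $\mathbf k$-th Hermite coefficient of $x\mapsto T(\sigma x)$. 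Specializing $\nu$ to a point mass $\delta_y$ (so $m_k(\nu)=y^k$) collapses the identity to $\sum_{\mathbf k}\frac{c_{\mathbf k}(\sigma)}{\prod_i k_i!\,\sigma^{k_i}}\,y^{|\mathbf k|}=y^r$ for all $y$, and specializing to $\nu=t\delta_s+(1-t)\delta_u$ and matching coefficients in $(t,s,u)$ pins down the $c_{\mathbf k}(\sigma)$ further.

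For $m=1$ this already finishes the proof: the point-mass identity reads $\sum_k\frac{c_k(\sigma)}{k!\sigma^k}y^k=y^r$, whence $c_k(\sigma)=r!\,\sigma^r\,\indc{k=r}$; since only finitely many Hermite coefficients of $z\mapsto T(\sigma z)$ are then nonzero and $T(\sigma\cdot)\varphi$ is moment-determinate (by the exponential integrability above), $T(\sigma z)=\sigma^r H_r(z)$ for a.e.\ $z$, i.e.\ $T(x)=\gamma_r(x,\sigma)$ for \emph{every} $\sigma>0$; but $\gamma_r(x,\sigma)=\sigma^r H_r(x/\sigma)$ genuinely depends on $\sigma$ for $r\ge2$ — its degree-$(r-2)$ term equals $-\binom r2\sigma^2 x^{r-2}$ — a contradiction. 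The main obstacle is the case $m\ge2$, where $T$ also carries "off-diagonal" Hermite coefficients $c_{\mathbf k}$ (those $\mathbf k$ with more than one nonzero entry) that the point-mass and two-atom constraints do not determine. To close that case I would bring in the heat-equation observation: since $\partial_{\sigma^2}\varphi_\sigma=\tfrac12\partial_x^2\varphi_\sigma$, differentiating the master identity in $\sigma^2$ and integrating by parts gives $0=\partial_{\sigma^2}m_r(\nu)=\partial_{\sigma^2}\Expect_{\nu,\sigma}[T]=\tfrac12\Expect_{\nu,\sigma}[\Delta T]$, so the Laplacian $\Delta T$ — and, iterating, every $\Delta^j T$ — is an unbiased estimator of $0$ on the entire model; feeding $\Delta T$ back through the Hermite identity, together with the fact that the restriction of $T$ to the diagonal $\{x_1=\dots=x_m\}$ must equal $\gamma_r(\cdot,\sigma)$ for all $\sigma$ (which follows from the $\delta_\mu$-constraint), reproduces the same $\sigma$-dependence contradiction. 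The remaining work — mollifying $T$ so the differentiations are legal, justifying the termwise operations, and the completeness/moment-determinacy steps — is routine given the a priori exponential integrability.
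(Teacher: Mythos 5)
Your $m=1$ argument is fine and is essentially the paper's proof specialized to one sample: the point-mass constraints force $T(\cdot)=\gamma_r(\cdot,\sigma)$ simultaneously for all $\sigma$, which is impossible for $r\ge 2$. The genuine gap is the case $m\ge 2$, and the route you sketch for it cannot work as stated. First, the claim that ``the restriction of $T$ to the diagonal must equal $\gamma_r(\cdot,\sigma)$, which follows from the $\delta_\mu$-constraint'' is unjustified and in fact false in spirit: the constraints $\Expect_{N(\mu,\sigma^2)^{\otimes m}}[T]=\mu^r$ only control the sums of Hermite coefficients $c_{\mathbf k}(\sigma)$ over each level $\{|\mathbf k|=n\}$, not values of $T$ on the (null) diagonal set. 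Indeed, for $m\ge 2$ the single-Gaussian submodel admits $\sigma$-free unbiased estimators of $\mu^r$ (e.g.\ symmetrized products such as $X_1X_2$ for $r=2$), so no contradiction can ever be extracted from point-mass mixing distributions alone; any proof must decisively use genuinely two-atom $\nu$. Your heat-equation step suffers from the same problem: even granting the differentiability/mollification issues (an arbitrary estimator need not be weakly twice differentiable, and mollifying changes its expectations), the conclusion that $\Delta T$ is an unbiased estimator of zero is no contradiction, since unbiased estimators of zero abound in this model (e.g.\ $X_1-X_2$), and you never say how this ``reproduces the $\sigma$-dependence contradiction.''

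What you are missing is exactly the step the paper carries out, and it is available from the master identity you already wrote down: specialize to $\nu=p\delta_s+(1-p)\delta_t$ at $\sigma=1$ and observe that $\Expect[T]=\sum_{\mathbf n}\alpha_{\mathbf n}\prod_i\bigl(ps^{n_i}+(1-p)t^{n_i}\bigr)$ is a polynomial of degree up to $m$ in the weight $p$, while the target $ps^r+(1-p)t^r$ is affine in $p$. Matching the coefficient of $p^j$ for $j=m,m-1,\dots,2$ (after symmetrizing $\alpha$) gives identities of the form $\sum_{\mathbf n}\alpha_{\mathbf n}\prod_{i=1}^{j}(s^{n_i}-t^{n_i})\,t^{n_{j+1}+\dots+n_m}=0$ for all $s,t$, and the linear independence of the products $\prod_i(s^{n_i}-t^{n_i})$ forces, inductively, that $\alpha_{\mathbf n}$ can be nonzero only when at most one index $n_i$ is nonzero; the coefficient of $p^1$ then pins $T=\frac1m\sum_{i=1}^m H_r(X_i)$. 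This identifies the estimator completely at $\sigma=1$, and the contradiction is that this explicit function is biased once $\sigma^2\neq 1$. So, contrary to your assertion, the two-atom constraints do determine (namely, annihilate) all off-diagonal Hermite coefficients; the $p$-degree bookkeeping is the heart of the lemma for $m\ge2$, and without it (or an equivalent use of two-atom mixtures) your proposal does not close.
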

\begin{proof}
    We will derive a few necessary conditions for an unbiased estimator, denoted by $g(x_1,\dots,x_m)$, and then arrive at a contradiction.
    Expand the function under the Hermite basis
    \[
        g(x_1,\dots,x_m)=\sum_{n_1,\dots,n_m\ge 0} \alpha_{n_1,\dots,n_m} \prod_i H_{n_i}(x_i),
    \]
    and denote by $T_n(\mu,\sigma^2) $ the expected value of the Hermite polynomial $\Expect H_n(X)$ under Gaussian model $X\sim N(\mu,\sigma^2)$.
    Without loss of generality we may assume that the function $g$ and the coefficients $\alpha$ are symmetric (permutation invariant).
    Then, the expected value of the function $g$ under $\sigma^2=1$ is
    \begin{equation}
        \label{eq:g-expand}
        \Expect[g(X_1,\dots,X_m)]
        =\sum_{n_1,\dots,n_m\ge 0} \alpha_{n_1,\dots,n_m}\prod_i (ps^{n_i}+(1-p)t^{n_i}),
    \end{equation}
    which can be viewed as a polynomial in $p$, whereas the target is $ps^r+(1-p)t^r$, a linear function in $p$.
    Matching polynomial coefficients yields that
    \begin{align}
      & \sum_{n_1+\dots+n_m\ge 0} \alpha_{n_1,\dots,n_m} t^{n_1+\dots+n_m}=t^r,\label{eq:g-coeff1}\\
      & \sum_{n_1+\dots+n_m\ge 0} \alpha_{n_1,\dots,n_m} (s^{n_1}-t^{n_1})t^{n_2+\dots+n_m}\cdot m=s^r-t^r,\label{eq:g-coeff2},\\
      & \sum_{n_1+\dots+n_m\ge 0} \alpha_{n_1,\dots,n_m}\prod_{i=1}^{j}(s^{n_i}-t^{n_i})t^{n_{j+1}+\dots+n_m}=0,~\forall~j=2,\dots,m,\label{eq:g-coeff3}
    \end{align}
    where we used the symmetry of the coefficients $\alpha$.
    The equality \prettyref{eq:g-coeff3} with $j=m$ yields that $\alpha_{n_1,\dots,n_m}\ne 0$ only if at least one $n_i$ is zero;
    then \prettyref{eq:g-coeff3} with $j=m-1$ yields that $\alpha_{n_1,\dots,n_m}\ne 0$ only if at least two $n_i$ are zero;
    repeating this for $j=m,m-1,\dots,2$, we obtain that $\alpha_{n_1,\dots,n_m}$ is nonzero only if at most one $n_i$ is nonzero.
    Then the equality \prettyref{eq:g-coeff2} implies that $\alpha_{n_1,\dots,n_m}$ is nonzero only if exactly one $n_i=r$ and the coefficient is necessarily $\frac{1}{m}$.
    Therefore, it is necessary that the symmetric function is $g(x_1,\dots,x_m)=\frac{1}{m}\sum_{i=1}^m H_r(x_i)$.
    However, this function is biased when $\sigma^2\ne 1$.
\end{proof}

\begin{lemma}
    \label{lmm:hankel-det}
    Given a sequence $ \gamma_1,\gamma_2,\dots $, let $ \bfH_j $ denote the Hankel matrix of order $ j+1 $ using $ 1,\gamma_1,\dots,\gamma_{2j} $.
    Suppose $ \det(\bfH_{r-1})\ne 0 $, and $ \det(\bfH_{r})=\det(\bfH_{r+1})=0 $.
    Then,
    \begin{equation*}
        \gamma_{2r+1}=(\gamma_{r+1},\dots,\gamma_{2r})(\bfH_{r-1})^{-1}(\gamma_{r},\dots,\gamma_{2r-1})^\top.
    \end{equation*}
\end{lemma}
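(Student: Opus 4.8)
The plan is to reduce the statement to a single linear-algebra identity via the Schur complement, exploiting the Hankel (hence symmetric) structure of the matrices involved. Partition the $(r+2)\times(r+2)$ matrix $\bfH_{r+1}$ into blocks, peeling off its last two rows and columns,
\[
\bfH_{r+1}=\begin{bmatrix}\bfH_{r-1}&B\\ B^\top&C\end{bmatrix},
\]
where $B$ is the $r\times 2$ matrix whose columns are $a\triangleq(\gamma_r,\dots,\gamma_{2r-1})^\top$ and $b\triangleq(\gamma_{r+1},\dots,\gamma_{2r})^\top$, and $C$ is the symmetric $2\times2$ block with $C_{11}=\gamma_{2r}$, $C_{12}=C_{21}=\gamma_{2r+1}$, $C_{22}=\gamma_{2r+2}$. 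The Hankel symmetry of $\bfH_{r+1}$ is exactly what makes the lower-left block equal $B^\top$ and the diagonal blocks symmetric; checking this bookkeeping of indices carefully is the one routine place an error could creep in. Since $\det\bfH_{r-1}\neq 0$, the Schur complement $S\triangleq C-B^\top\bfH_{r-1}^{-1}B$ is well defined and $\det\bfH_{r+1}=\det\bfH_{r-1}\cdot\det S$.

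Next I would compute the $2\times2$ matrix $S$ entrywise. Its $(1,1)$ entry is $S_{11}=\gamma_{2r}-a^\top\bfH_{r-1}^{-1}a$, which is precisely the Schur complement of $\bfH_{r-1}$ inside $\bfH_r$; hence $S_{11}=\det\bfH_r/\det\bfH_{r-1}=0$ by the hypothesis $\det\bfH_r=0$. The off-diagonal entries are $S_{12}=\gamma_{2r+1}-a^\top\bfH_{r-1}^{-1}b$ and $S_{21}=\gamma_{2r+1}-b^\top\bfH_{r-1}^{-1}a$; since $\bfH_{r-1}$, and therefore $\bfH_{r-1}^{-1}$, is symmetric, these coincide, and I set
\[
e\triangleq S_{12}=S_{21}=\gamma_{2r+1}-(\gamma_{r+1},\dots,\gamma_{2r})\,\bfH_{r-1}^{-1}\,(\gamma_r,\dots,\gamma_{2r-1})^\top,
\]
which is exactly the quantity the lemma asserts to vanish. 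The entry $S_{22}$ plays no role and need not be evaluated.

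Finally, $\det S=S_{11}S_{22}-S_{12}S_{21}=0\cdot S_{22}-e^2=-e^2$, so $0=\det\bfH_{r+1}=-e^2\,\det\bfH_{r-1}$; since $\det\bfH_{r-1}\neq0$ this forces $e=0$, i.e.\ $\gamma_{2r+1}=(\gamma_{r+1},\dots,\gamma_{2r})\bfH_{r-1}^{-1}(\gamma_r,\dots,\gamma_{2r-1})^\top$, as claimed. There is no deep obstacle: the real content is the observation that the Hankel/symmetry structure forces $\det\bfH_{r+1}$ to equal, up to sign and the nonzero factor $\det\bfH_{r-1}$, the perfect square $e^2$, so that the extra vanishing hypothesis $\det\bfH_{r+1}=0$ propagates to $e=0$. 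If one prefers to avoid the Schur-complement formalism, the same identity $\det\bfH_{r+1}=-e^2\det\bfH_{r-1}$ can be reached by performing the column operation $C_r\mapsto C_r-\sum_{j<r}u_j C_j$ on $\bfH_{r+1}$ with $u=\bfH_{r-1}^{-1}a$ and then expanding by cofactors twice.
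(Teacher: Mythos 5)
Your proof is correct, and it follows a genuinely different route from the paper's. You block-partition $\bfH_{r+1}$ around the invertible corner $\bfH_{r-1}$ and use the Schur-complement determinant formula: with $S=C-B^\top\bfH_{r-1}^{-1}B$, the hypothesis $\det\bfH_r=0$ kills $S_{11}$ (which is exactly the Schur complement of $\bfH_{r-1}$ inside $\bfH_r$), the symmetry of $\bfH_{r-1}$ gives $S_{12}=S_{21}=e$, and then $0=\det\bfH_{r+1}=\det\bfH_{r-1}\,(S_{11}S_{22}-e^2)=-e^2\det\bfH_{r-1}$ forces $e=0$. Your index bookkeeping checks out ($B=[a\ \, b]$ with $a=(\gamma_r,\dots,\gamma_{2r-1})^\top$, $b=(\gamma_{r+1},\dots,\gamma_{2r})^\top$, lower-left block $B^\top$), and the last step $e^2=0\Rightarrow e=0$ is valid for real $\gamma_j$ (indeed over any field), so both vanishing hypotheses and the nondegeneracy of $\bfH_{r-1}$ are used exactly where they should be. The paper argues instead by rank: it first shows, by a contradiction exploiting the Hankel symmetry, that the first $r+1$ rows of $\bfH_{r+1}$ have rank $r$, concludes that the $(r+1)$st column of $\bfH_{r+1}$ lies in the span of the first $r$ columns, and then reads off the coefficient vector $\bfH_{r-1}^{-1}(\gamma_r,\dots,\gamma_{2r-1})^\top$ from the top $r$ rows so that the last row yields the stated formula for $\gamma_{2r+1}$. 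Your version is more computational and self-contained—everything reduces to the single bordered-determinant identity $\det\bfH_{r+1}=\det\bfH_{r-1}(S_{11}S_{22}-e^2)$, with no case analysis—whereas the paper's rank viewpoint makes the structural content explicit (the last column of a singular Hankel extension is already determined by the leading columns), which is closer in spirit to how the lemma is invoked in the analysis of Lindsay's algorithm; either proof fully establishes the lemma.
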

\begin{proof}
    The matrices $ \bfH_{r-1} $ and $ \bfH_{r} $ are both of rank $ r $ by their determinants.
    We first show that the rank of $ [\bfH_{r},v] $, which is the first $ r+1 $ rows of $ \bfH_{r+1} $ and is of dimension $ (r+1)\times (r+2) $, is also $ r $, where $ v\triangleq (\gamma_{r+1},\dots,\gamma_{2r+1})^\top $.
    Suppose the rank is $ r+1 $.
    Then $ v $ cannot be in the image of $ \bfH_{r} $.
    By symmetry of the Hankel matrix, the transpose of $ [\bfH_{r},v] $ is the first $ r+1 $ columns of $ \bfH_{r+1} $.
    Those $ r+1 $ columns are linearly independent when its rank is $ r+1 $.
    Since $ \det(\bfH_{r+1})=0 $, then the last column of $ \bfH_{r+1} $ must be in the image of the first $ r+1 $ columns, which is a contradiction.

    Since first $ r $ columns of $ \bfH_{r+1} $ are linearly independent, and the first $ r+1 $ columns of $ \bfH_{r+1} $ are of rank $ r $.
    Then the $ (r+1)^{\rm th} $ column of $ \bfH_{r+1} $ is in the image of the first $ r $ columns, and thus $ \gamma_{2r+1} $ is a linear combination of $ \gamma_{r+1},\dots,\gamma_{2r} $.
    Since $ \bfH_{r-1} $ is of full rank, the coefficients can be uniquely determined by $ (\bfH_{r-1})^{-1}(\gamma_{r},\dots,\gamma_{2r-1})^\top $.
\end{proof}

\begin{lemma}
    \label{lmm:hermite-ub}
    If $|x|>1$, then
    \[
        |H_r(x)|\le (\sqrt{cr}|x|)^r,
    \]
    for some absolute constant $c$.
\end{lemma}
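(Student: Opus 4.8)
The plan is to bound $|H_r(x)|$ directly from the explicit formula \prettyref{eq:Hermite1} and to use crucially that $|x|>1$. Since every monomial appearing in $H_r$ has degree $r-2j\le r$, when $|x|\ge 1$ we have $|x|^{r-2j}\le |x|^r$, so
\[
|H_r(x)|\le r!\sum_{j=0}^{\lfloor r/2\rfloor}\frac{(1/2)^j}{j!(r-2j)!}|x|^{r-2j}\le |x|^r\cdot A_r,\qquad A_r\triangleq r!\sum_{j=0}^{\lfloor r/2\rfloor}\frac{(1/2)^j}{j!(r-2j)!}.
\]
Thus the whole problem reduces to showing $A_r\le (\sqrt{cr})^r$ for an absolute constant $c$, which is now purely numerical.

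Next I would recognize $A_r$ as a Gaussian moment. Using $r!\,\frac{(1/2)^j}{j!(r-2j)!}=\binom{r}{2j}\frac{(2j)!}{2^j j!}=\binom{r}{2j}(2j-1)!!=\binom{r}{2j}\,\Expect[Z^{2j}]$ for $Z\sim N(0,1)$, and dropping the (nonnegative) odd terms in the binomial expansion,
\[
A_r=\Expect\Big[\textstyle\sum_{j}\binom{r}{2j}|Z|^{2j}\Big]\le \Expect\Big[\textstyle\sum_{k=0}^r\binom{r}{k}|Z|^{k}\Big]=\Expect\big[(1+|Z|)^r\big]=\sum_{k=0}^r\binom{r}{k}\Expect|Z|^k.
\]
Then I invoke \prettyref{lmm:normal-moments} to get $\Expect|Z|^k\le \sqrt 2\,(k/e)^{k/2}$, and note that $k\mapsto(k/e)^{k/2}$ is nondecreasing on $[1,\infty)$ (its logarithm $\frac k2(\log k-1)$ has derivative $\frac12\log k$) while being at most $1$ on $[0,1]$, so $\Expect|Z|^k\le\sqrt 2\max\{1,(r/e)^{r/2}\}$ for every $0\le k\le r$. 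Summing the binomial coefficients gives $A_r\le \sqrt 2\cdot 2^r\max\{1,(r/e)^{r/2}\}$.

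To finish, for $r\ge 3$ the maximum equals $(r/e)^{r/2}$, so $A_r\le \sqrt2\,(4r/e)^{r/2}=\sqrt2\,(\sqrt{4r/e}\,|1|)^r\le (\sqrt{2r})^r$ (since $(e/2)^{r/2}\ge (e/2)^{3/2}>\sqrt2$), and hence $|H_r(x)|\le (\sqrt{2r}\,|x|)^r$; the cases $r=1,2$ are immediate from $H_1(x)=x$ and $|H_2(x)|=|x^2-1|\le x^2$ when $|x|\ge1$. So the claim holds with $c=2$. There is essentially no real obstacle here — it is a routine estimate; the only mildly delicate points are bounding the factorial ratios uniformly in $j$ (handled by the monotonicity remark) and absorbing the constant $\sqrt2\cdot2^r$ into $(\sqrt{cr})^r$ (handled by fixing $c$ large enough). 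If one prefers to avoid the combinatorial step, the same bound follows from the representation $H_r(x)=\Expect[(x+iZ)^r]$, giving $|H_r(x)|\le \Expect(x^2+Z^2)^{r/2}\le |x|^r\Expect(1+Z^2)^{r/2}$ for $|x|\ge1$, after which the Gaussian moment bound again closes the argument.
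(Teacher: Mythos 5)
Your proof is correct and follows essentially the same route as the paper: bound each $|x|^{r-2j}$ by $|x|^r$, recognize the resulting coefficient sum as a Gaussian-moment quantity, and control it by $(\sqrt{cr})^r$ via \prettyref{lmm:normal-moments}. Your binomial identity $r!\frac{(1/2)^j}{j!(r-2j)!}=\binom{r}{2j}\Expect[Z^{2j}]$ is just a hands-on version of the paper's evaluation $|H_r({\bf i})|=|\Expect(1+Z)^r|$ through the representation $H_r(x)=\Expect[(x+{\bf i}Z)^r]$ (which you note as your alternative), with the added benefit of an explicit constant $c=2$.
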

\begin{proof}
    For $|x|>1$,
    \begin{gather*}
        |H_r(x)|\le r!\sum_{j=0}^{\floor{r/2}} \frac{(1/2)^j }{j!(r-2j)!}|x|^{r}
        = |x|^r|H_n({\bf i})|
        = |x|^r|\Expect({\bf i}+{\bf i}Z)^r|\\
        = |x|^r|\Expect(1+Z)^r|
        \le (\sqrt{cr}|x|)^r,
    \end{gather*}
    for some absolute constant $c$, where ${\bf i}=\sqrt{-1}$ and $Z\sim N(0,1)$.
\end{proof}
\begin{lemma}
    \label{lmm:normal-tail}
    Let $Z\sim N(0,1)$.
    \begin{equation*}
        \Prob[Z> M]\le e^{-\frac{M^2}{2}}.
    \end{equation*}
\end{lemma}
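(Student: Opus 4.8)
The plan is a one-line Chernoff (exponential moment) bound. The only ingredient needed is that a standard normal $Z$ satisfies $\Expect[e^{tZ}] = e^{t^2/2}$ for every $t\in\reals$, i.e.\ $N(0,1)$ is $1$-subgaussian; this is just the moment generating function of the Gaussian, already recorded in the notation section.

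First I would fix an arbitrary $t>0$ and apply Markov's inequality to the nonnegative random variable $e^{tZ}$: since $\{Z>M\}=\{e^{tZ}>e^{tM}\}$, this gives $\Prob[Z>M]\le e^{-tM}\Expect[e^{tZ}]=e^{-tM+t^2/2}$. Then I would optimize the exponent $-tM+t^2/2$ over $t>0$; as a quadratic in $t$ it is minimized at $t=M$ (a legitimate choice whenever $M>0$), producing the exponent $-M^2/2$ and hence the claimed bound $\Prob[Z>M]\le e^{-M^2/2}$. When $M=0$ the inequality is the trivial $\Prob[Z>0]=\tfrac12\le 1$, and the lemma is only used for $M>0$ in the rest of the paper.

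There is essentially no obstacle; the single point to keep straight is that the optimizing choice $t=M$ requires $M>0$, which is precisely the regime in which the inequality is both true and invoked. An alternative integral-based argument — bounding $e^{-x^2/2}\le e^{-M^2/2}e^{-M(x-M)}$ for $x\ge M$ and integrating — also works but only cleanly for $M$ bounded away from $0$, so the Chernoff route is the cleaner choice.
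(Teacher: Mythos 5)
Your argument is correct and coincides with the paper's own proof, which likewise applies the Chernoff bound $\Prob[Z>M]\le\exp(-\sup_t(tM-t^2/2))=\exp(-M^2/2)$. Your extra remark about the optimizer $t=M$ requiring $M>0$ is a fine clarification but the approach is the same.
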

\begin{proof}
    Applying Chernoff bound yields that
    \begin{equation*}
        \Prob[Z> M]\le \exp(-\sup_t(tM-t^2/2))=\exp(-M^2/2).\qedhere
    \end{equation*}
\end{proof}
\begin{lemma}
    \label{lmm:normal-tail-moments}
    For $r$ even, and $M\ge 1$,
    \begin{equation*}
        \Expect[Z^r\indc{|Z|> M}]\le r(O(\sqrt{r}))^r\pth{M^{r-1}e^{-\frac{M^2}{2}}}.
    \end{equation*}
\end{lemma}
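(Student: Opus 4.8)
The plan is to reduce everything to the one-sided incomplete Gaussian integral $I_r\triangleq \int_M^\infty z^r e^{-z^2/2}\,\diff z$. Since $r$ is even, $Z^r\indc{|Z|>M}$ is nonnegative and $z\mapsto z^r$ is even, so $\Expect[Z^r\indc{|Z|>M}]=\frac{2}{\sqrt{2\pi}}I_r$, and it suffices to prove $I_r\le r(O(\sqrt r))^r M^{r-1}e^{-M^2/2}$.

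First I would derive the recursion $I_r=M^{r-1}e^{-M^2/2}+(r-1)I_{r-2}$ by integration by parts: writing $z^r e^{-z^2/2}=z^{r-1}\cdot z e^{-z^2/2}$ and using $z e^{-z^2/2}=-\frac{\diff}{\diff z}e^{-z^2/2}$, the boundary term at infinity vanishes and the one at $M$ contributes $M^{r-1}e^{-M^2/2}$. Iterating from $r$ (even) down to $I_0=\int_M^\infty e^{-z^2/2}\,\diff z$, which I bound crudely by $\int_M^\infty (z/M)e^{-z^2/2}\,\diff z=M^{-1}e^{-M^2/2}$ (equivalently via \prettyref{lmm:normal-tail}), expresses $I_r$ as a sum of at most $r/2+1\le r$ terms, each of the form $c_k M^{r-1-2k}e^{-M^2/2}$ where $c_k=(r-1)(r-3)\cdots(r-2k+1)$ is a product of at most $r/2$ factors, each strictly less than $r$; hence $c_k\le r^{r/2}=(\sqrt r)^r$, and likewise the tail term $c_{r/2}\,I_0$ is at most $(\sqrt r)^r M^{-1}e^{-M^2/2}$.

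The final step is to collapse the sum using $M\ge 1$: each power obeys $M^{r-1-2k}\le M^{r-1}$ and $M^{-1}\le M^{r-1}$, so every one of the $\le r$ summands is bounded by $(\sqrt r)^r M^{r-1}e^{-M^2/2}$, giving $I_r\le r(\sqrt r)^r M^{r-1}e^{-M^2/2}$ and therefore $\Expect[Z^r\indc{|Z|>M}]\le \frac{2}{\sqrt{2\pi}}\,r(\sqrt r)^r M^{r-1}e^{-M^2/2}$, which is of the claimed form after absorbing the constant $\frac{2}{\sqrt{2\pi}}<1$ into $O(\sqrt r)$.

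There is no substantial obstacle here; the only points requiring care are that the hypothesis ``$r$ even'' is exactly what makes the expectation equal to twice a one-sided integral and makes the recursion terminate at the integer index $0$, and that $M\ge 1$ is used essentially only at the very end to replace all lower powers of $M$ by $M^{r-1}$ (which is also why a factor $M^{r-1}$, rather than a constant, appears on the right-hand side). I should also note that a tempting shortcut — the substitution $z=M+u$ together with $e^{-z^2/2}\le e^{-M^2/2}e^{-Mu}$ — is too lossy: expanding $(M+u)^r$ this way produces a $\Gamma$-function term of order $r!\asymp r^r$ rather than $r^{r/2}$, so it fails to give the sharp $(O(\sqrt r))^r$ dependence. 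The integration-by-parts recursion is what captures the faster Gaussian decay and yields the correct bound.
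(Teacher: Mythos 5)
Your proof is correct and follows essentially the same route as the paper's: integration by parts to get the recursion $I_r=M^{r-1}e^{-M^2/2}+(r-1)I_{r-2}$, bounding the double-factorial-type coefficients by $(\sqrt r)^r$, handling the terminal integral with a Gaussian tail bound, and using $M\ge 1$ to collapse the $\le r$ terms. The only cosmetic difference is that you bound $\int_M^\infty e^{-z^2/2}\,\diff z$ by $M^{-1}e^{-M^2/2}$ directly rather than invoking \prettyref{lmm:normal-tail}, which changes nothing of substance.
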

\begin{proof}
    Applying integral by parts yields that
    \begin{equation*}
        \int_M^\infty x^re^{-\frac{x^2}{2}}\diff x
        =M^{r-1}e^{-\frac{M^2}{2}}+(r-1)M^{r-3}e^{-\frac{M^2}{2}}+(r-1)(r-3)M^{r-5}e^{-\frac{M^2}{2}}+\dots+(r-1)!!\int_M^\infty e^{-\frac{x^2}{2}}\diff x.
    \end{equation*}
    Applying \prettyref{lmm:normal-tail} and $(r-1)!!\le (O(\sqrt{r}))^r$, the conclusion follows.
\end{proof}

\begin{lemma}
    \label{lmm:conditional-moments-normal}
    For $M\ge 1$,
    \begin{equation*}
        0\le \Expect[Z^r]-\Expect[Z^r||Z|\le M]\le r(O(\sqrt{r}))^r\pth{M^{r-1}e^{-\frac{M^2}{2}}}.
    \end{equation*}
\end{lemma}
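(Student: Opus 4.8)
The plan is to reduce the claim to the tail-moment estimate already established in \prettyref{lmm:normal-tail-moments}. Write $p \triangleq \Prob[|Z|\le M]$ and decompose
\[
\Expect[Z^r] = p\,\Expect[Z^r \mid |Z|\le M] + \Expect[Z^r\indc{|Z|>M}],
\]
so that
\[
\Expect[Z^r] - \Expect[Z^r \mid |Z|\le M] = \Expect[Z^r\indc{|Z|>M}] - (1-p)\,\Expect[Z^r\mid |Z|\le M].
\]
For odd $r$ every term vanishes by the symmetry of the standard normal, so both inequalities hold trivially (the right-hand side being nonnegative); hence it suffices to treat $r$ even.

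For even $r$, I would first note that on the event $\{|Z|\le M\}$ we have $0\le Z^r\le M^r$, while on $\{|Z|>M\}$ we have $Z^r\ge M^r$; consequently $\Expect[Z^r\mid |Z|\le M]\le M^r \le \Expect[Z^r\mid |Z|>M]$. Multiplying the latter inequality by $1-p=\Prob[|Z|>M]$ gives $(1-p)\,\Expect[Z^r\mid |Z|\le M]\le \Expect[Z^r\indc{|Z|>M}]$, which together with the displayed identity yields the lower bound $\Expect[Z^r]-\Expect[Z^r\mid|Z|\le M]\ge 0$. For the upper bound, since $\Expect[Z^r\mid|Z|\le M]\ge 0$ the subtracted term $(1-p)\,\Expect[Z^r\mid|Z|\le M]$ is nonnegative, so
\[
\Expect[Z^r] - \Expect[Z^r\mid|Z|\le M] \le \Expect[Z^r\indc{|Z|>M}] \le r(O(\sqrt r))^r\pth{M^{r-1}e^{-\frac{M^2}{2}}},
\]
where the last step is exactly \prettyref{lmm:normal-tail-moments}, applicable since $M\ge 1$ and $r$ is even.

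Since the argument is elementary, there is no serious obstacle; the only point requiring a little care is the bookkeeping of the normalizing factor $\Prob[|Z|\le M]$ — in particular, remembering that $\Expect[Z^r\mid|Z|\le M]$ is the truncated moment \emph{divided by} $p$, not the truncated moment itself — and making sure the two inequalities are chained in the correct direction. Everything else follows directly from the monotonicity observations $Z^r\le M^r$ on $\{|Z|\le M\}$ and $Z^r\ge M^r$ on $\{|Z|>M\}$ (valid for $r$ even) and the tail bound already in hand.
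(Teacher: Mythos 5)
Your proof is correct and follows essentially the same route as the paper: split into odd and even $r$, use monotonicity to get nonnegativity, and reduce the upper bound to the tail estimate of \prettyref{lmm:normal-tail-moments}. The only (harmless) difference is bookkeeping: the paper bounds the difference by $\Expect[Z^r\indc{|Z|>M}]/\Prob[|Z|\le M]$ and absorbs the factor $1/\Prob[|Z|\le M]$ into the constant, whereas your decomposition drops the nonnegative term $(1-\Prob[|Z|\le M])\,\Expect[Z^r\mid |Z|\le M]$ and bounds directly by $\Expect[Z^r\indc{|Z|>M}]$.
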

\begin{proof}
    For $r$ odd, we have $\Expect[Z^r]-\Expect[Z^r||Z|\le M]=0$.
    For $r$ even, the left inequality is immediate since $x\mapsto x^r$ is increasing.
    For the right inequality,
    \begin{equation*}
        \Expect[Z^r]-\Expect[Z^r||Z|\le M]
        =\Expect[Z^r]-\frac{\Expect[Z^r\indc{|Z|\le M}]}{\Prob[|Z|\le M]}
        \le \frac{\Expect[Z^r]-\Expect[Z^r\indc{|Z|\le M}]}{\Prob[|Z|\le M]},
    \end{equation*}
    and the conclusion follows from \prettyref{lmm:normal-tail-moments}.
\end{proof}

\begin{lemma}[Distribution of random projection]
    \label{lmm:proj}
	Let $X$ be uniformly distributed over the unit sphere $S^{d-1}$. For any $a \in S^{d-1}$ and $r>0$,
    \[
        \Prob[|\inner{a, X}|<r] < r\sqrt{d}.
    \]
\end{lemma}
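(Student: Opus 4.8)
The plan is to reduce to the one-dimensional marginal of a uniform random vector on the sphere and then bound the relevant probability by the supremum of its density.

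First I would use rotational invariance of the uniform law on $S^{d-1}$ to assume without loss of generality that $a=e_1$, so that $\inner{a,X}=X_1$. It is classical (e.g.\ via the representation $X=G/\Norm{G}$ with $G\sim N(0,I_d)$, or by slicing the sphere at height $x$) that $X_1$ has density $f(x)=\frac{\Gamma(d/2)}{\sqrt{\pi}\,\Gamma((d-1)/2)}(1-x^2)^{(d-3)/2}$ on $(-1,1)$; I would record this with a one-line derivation. The case $r\ge 1$ is immediate since then $\Prob[|X_1|<r]\le 1<r\sqrt{d}$, so it remains to treat $r<1$.

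For $r<1$ and $d\ge 3$ the exponent $(d-3)/2$ is nonnegative, hence $f$ attains its maximum at $x=0$, giving $\Prob[|X_1|<r]=\int_{-r}^r f(x)\diff x\le 2rf(0)=\frac{2r\,\Gamma(d/2)}{\sqrt{\pi}\,\Gamma((d-1)/2)}$. The remaining step is to bound the Gamma ratio: by log-convexity of $\Gamma$ on $(0,\infty)$ applied to the equally spaced points $(d-1)/2<d/2<(d+1)/2$, we get $\Gamma(d/2)^2\le \Gamma((d-1)/2)\,\Gamma((d+1)/2)=\tfrac{d-1}{2}\,\Gamma((d-1)/2)^2$, so $\frac{\Gamma(d/2)}{\Gamma((d-1)/2)}\le \sqrt{(d-1)/2}$ and therefore $\Prob[|X_1|<r]\le \sqrt{2/\pi}\,\sqrt{d-1}\,r<r\sqrt{d}$.

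Finally I would dispose of the two small dimensions by hand: for $d=1$ we have $|X_1|=1$ almost surely, so $\Prob[|X_1|<r]=0<r\sqrt{d}$ for $r\le 1$; for $d=2$, $f(x)=\frac{1}{\pi\sqrt{1-x^2}}$ gives $\Prob[|X_1|<r]=\frac{2}{\pi}\arcsin r$, and convexity of $\arcsin$ on $[0,1]$ with endpoint values $0$ and $\pi/2$ yields $\arcsin r\le \frac{\pi}{2}r$, hence $\Prob[|X_1|<r]\le r<r\sqrt{2}$. There is no serious obstacle; the only mildly delicate point is that for $d=2$ the density is not maximized at the origin (it diverges at $\pm1$), which is precisely why that case needs the separate $\arcsin$ estimate rather than the ``supremum of the density'' bound used for $d\ge 3$.
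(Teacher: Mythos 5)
Your proof is correct and follows essentially the same route as the paper: reduce by rotational symmetry to the first coordinate, write the probability as an integral of the marginal density proportional to $(1-x^2)^{(d-3)/2}$ (the paper obtains this via surface-area slicing, with $S_{d-1}=2\pi^{d/2}/\Gamma(d/2)$ playing the role of your Gamma-ratio normalization), and bound it by $r\sqrt{d}$. You are in fact more explicit than the paper, which asserts the final inequality without detail; your separate treatment of $d=2$ (where the density blows up at $\pm1$) and of the trivial cases $r\ge1$, $d=1$ fills in exactly the points the paper glosses over.
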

\begin{proof}
    Denote the surface area of the $d$-dimensional unit sphere by $S_{d-1}=\frac{2\pi^{d/2}}{\Gamma(d/2)}$.
    By symmetry,
    \[
        \Prob[|\inner{a,X}|<r] = \Prob[|X_1|<r]
        =\frac{\int_{-r}^r (\sqrt{1-x^2})^{d-2}S_{d-2}\sqrt{1-x^2}\diff x}{S_{d-1}}
        = \frac{2S_{d-2}}{S_{d-1}}\int_{0}^r(1-x^2)^{\frac{d-3}{2}}\diff x < r\sqrt{d},
    \]
    where $X_1$ is the first coordinate of $X$.
\end{proof}

\begin{lemma}[Accuracy of the spectral method]
    \label{lmm:pca}
    Let $X_1,\ldots,X_n\iiddistr \frac{1}{2}N(-\theta,I_d)+\frac{1}{2}N(\theta,I_d)$, where $\theta\in\reals^d$. Let $\hat{\lambda}$ be the largest eigenvalue of $S-I_d$, where $S=\frac{1}{n}\sum_i X_iX_i^\top$ denotes the sample covariance matrix, and $\hat v$ the corresponding normalized eigenvector, where we decree that $\theta^\top \hat v \ge 0$. Let $\hat s=\sqrt{(\hat{\lambda})_+}$ and $\hat\theta=\hat s\hat v$. If $n>d$, then, with probability $1-\exp(-c_0d)$ for some constant $c_0$,
    \[
        \Norm{\theta-\hat \theta}_2\le O(d/n)^{1/4}.
    \]
\end{lemma}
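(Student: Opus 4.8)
The plan is a perturbation analysis of the rank-one spiked covariance model; the one twist is that the stochastic perturbation grows with $\Norm{\theta}_2$, so the argument has to be organized around the signal strength. First I would write each sample as $X_i=\varepsilon_i\theta+Z_i$, where $\varepsilon_i\in\{\pm1\}$ is a uniform sign independent of $Z_i\sim N(0,I_d)$. Since $\varepsilon_i^2=1$,
\[
    S=\theta\theta^\top+\pth{\theta w^\top+w\theta^\top}+\pth{\frac1n\sum_{i=1}^n Z_iZ_i^\top-I_d}+I_d,\qquad w\triangleq\frac1n\sum_{i=1}^n\varepsilon_iZ_i,
\]
so that $E\triangleq S-I_d-\theta\theta^\top$ is the sum of the two bracketed noise terms. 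The first step is to show that, with probability $1-\exp(-c_0d)$,
\[
    \Norm{E}_{\mathrm{op}}\le\epsilon_n\triangleq C(1+\Norm{\theta}_2)\sqrt{d/n}
\]
for absolute constants $c_0,C$. For the centered sample-covariance term this is the standard non-asymptotic bound $\Norm{\frac1n\sum Z_iZ_i^\top-I_d}_{\mathrm{op}}=O(\sqrt{d/n}+d/n)=O(\sqrt{d/n})$ (using $n>d$), valid with exponentially small failure probability; for the cross term, conditionally on the signs $w\sim N(0,\frac1nI_d)$, hence $\Norm{w}_2=O(\sqrt{d/n})$ with probability $1-\exp(-\Omega(d))$ by $\chi^2$-concentration, and $\Norm{\theta w^\top+w\theta^\top}_{\mathrm{op}}\le 2\Norm{\theta}_2\Norm{w}_2$. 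A union bound gives the claim.

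Second, I would feed this into two classical perturbation inequalities applied to $\theta\theta^\top$, whose spectrum is $\Norm{\theta}_2^2$ with eigenvector $u\triangleq\theta/\Norm{\theta}_2$ together with a zero of multiplicity $d-1$, so the relevant spectral gap is $\Norm{\theta}_2^2$. Weyl's inequality gives $|\hat\lambda-\Norm{\theta}_2^2|\le\epsilon_n$; combined with $|\sqrt{(a)_+}-\sqrt{b}|\le|a-b|/\sqrt{b}$ for $b>0$, this yields both $\hat s\le\sqrt{\Norm{\theta}_2^2+\epsilon_n}$ and $|\hat s-\Norm{\theta}_2|\le\epsilon_n/\Norm{\theta}_2$. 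The Davis--Kahan $\sin\theta$ theorem, applicable once $\epsilon_n\le\frac12\Norm{\theta}_2^2$ and after fixing the sign of $\hat v$ via the stipulated $\theta^\top\hat v\ge0$ (equivalently $u^\top\hat v\ge0$), gives $\Norm{\hat v-u}_2=O(\epsilon_n/\Norm{\theta}_2^2)$.

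Finally I would combine these through $\Norm{\theta-\hat\theta}_2\le\Norm{\theta}_2\Norm{u-\hat v}_2+|\Norm{\theta}_2-\hat s|$ and split on signal strength. In the weak regime $\Norm{\theta}_2\le(d/n)^{1/4}$, Weyl forces $\hat\lambda\le\Norm{\theta}_2^2+\epsilon_n=O(\sqrt{d/n})$, hence $\Norm{\hat\theta}_2=\hat s=O((d/n)^{1/4})$, and the crude bound $\Norm{\theta-\hat\theta}_2\le\Norm{\theta}_2+\Norm{\hat\theta}_2=O((d/n)^{1/4})$ already suffices. In the strong regime $\Norm{\theta}_2>(d/n)^{1/4}$ (and $n$ at least a suitable constant multiple of $d$, so that $\epsilon_n=O(\Norm{\theta}_2\sqrt{d/n})\le\frac12\Norm{\theta}_2^2$), the Davis--Kahan bound applies and both terms are $\Norm{\theta}_2\cdot O(\sqrt{d/n}/\Norm{\theta}_2)=O(\sqrt{d/n})$ and $\epsilon_n/\Norm{\theta}_2=O(\sqrt{d/n})$, which is smaller still. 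Taking the larger of the two regimes gives $\Norm{\theta-\hat\theta}_2=O((d/n)^{1/4})$ on the intersection of the two high-probability events.

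The main obstacle is precisely this $\Norm{\theta}_2$-dependence of the perturbation: one cannot invoke Davis--Kahan uniformly, because inflating the signal also inflates $\epsilon_n$, so the proof must separate the noise-dominated regime --- where the estimator and the truth are both of size $(d/n)^{1/4}$ and that rate genuinely emerges --- from the signal-dominated regime, where the gap $\Norm{\theta}_2^2$ swamps $\epsilon_n$ and the faster rate $\sqrt{d/n}$ holds; the threshold $\Norm{\theta}_2\asymp(d/n)^{1/4}$ is exactly where the two bounds cross. Everything else (Gaussian covariance concentration, $\chi^2$ tail bounds, Weyl, Davis--Kahan) is off the shelf.
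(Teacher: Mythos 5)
Your argument is sound and starts from exactly the paper's decomposition, $S-I_d=\theta\theta^\top+B+C$ with $B=\frac1n ZZ^\top-I_d$ and the cross term $C$, and the same concentration inputs (Davidson--Szarek for $B$, $\chi^2$ concentration for $\frac1n Z\varepsilon$). Where you genuinely diverge is the eigenvector step: you invoke Davis--Kahan, which requires the eigengap $\Norm{\theta}_2^2$ to dominate the perturbation and therefore forces the split into noise-dominated and signal-dominated regimes. The paper avoids any gap condition by a variational argument: since $\hat v$ maximizes the Rayleigh quotient of $S-I_d$, comparing it with the direction $\theta/\Norm{\theta}_2$ gives $(\theta^\top\hat v)^2\ge \Norm{\theta}_2^2-O(\sqrt{d/n})$, hence $\Norm{\theta-\Norm{\theta}_2\hat v}_2^2\le O(\sqrt{d/n})$ with no case analysis, and Weyl plus the triangle inequality finish. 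The paper's route is shorter and uniform in the signal strength; yours buys the sharper $O(\sqrt{d/n})$ rate in the well-separated regime and, to its credit, tracks the $\Norm{\theta}_2$-dependence of the cross term explicitly, which the paper's one-line bound $\Norm{C}\le O(\sqrt{d/n})$ quietly suppresses (it is really $O(\Norm{\theta}_2\sqrt{d/n})$ unless $\Norm{\theta}_2$ is bounded).

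One bookkeeping repair: with $\epsilon_n=C(1+\Norm{\theta}_2)\sqrt{d/n}$, the condition $\epsilon_n\le\frac12\Norm{\theta}_2^2$ is not implied by $\Norm{\theta}_2>(d/n)^{1/4}$ alone when $C$ is large, so as written the band $(d/n)^{1/4}<\Norm{\theta}_2\lesssim \sqrt{C}\,(d/n)^{1/4}$ falls between your two cases, and your patch of assuming $n$ at least a constant multiple of $d$ goes beyond the lemma's hypothesis $n>d$. The fix is to place the threshold at $A(d/n)^{1/4}$ for a sufficiently large constant $A$ (say $A\ge 4C$): in the weak regime the crude bound still gives $O((d/n)^{1/4})$ since $\hat\lambda\le \Norm{\theta}_2^2+\epsilon_n=O(\sqrt{d/n})$ there, and in the strong regime the gap condition now follows from $\sqrt{d/n}\le (d/n)^{1/4}$ (valid for $n>d$), so no extra assumption on $n$ is needed. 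This is constant-chasing, not a gap in the approach.
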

\begin{proof}
    The samples can be represented in a matrix form $X=\theta\varepsilon^\top +Z\in\reals^{d\times n}$, where $\varepsilon\in\reals^n$ is a vector of independent Rademacher random variables, and $Z$ has independent standard normal entries. Using $\varepsilon^\top \varepsilon =n$, we have
    \[
        S-I_d=\theta\theta^\top +B+C,
    \]
    where $B=\frac{1}{n}ZZ^\top -I_d$ and $C=\frac{1}{n}(\theta\varepsilon^\top Z^\top +Z\varepsilon\theta^\top )$ are both symmetric. When $n>d$, we have $\Norm{B},\Norm{C}\le O(\sqrt{d/n})$ with probability $1-\exp(-c_0d)$ for some constant $c_0$ (see \cite[Theorem II.13]{DS2001}). Then, $|\hat{\lambda}-\Norm{\theta}_2^2|\le O(\sqrt{d/n})$ by Weyl's inequality, and thus $|\hat s-\Norm{\theta}_2|\le O(d/n)^{1/4}$. Since $\hat v$ maximizes $|u^\top (S-I_d)u|$ among all unit vectors $u\in\reals^d$, including the direction of $\theta$, then we obtain that $(\theta^\top \hat v)^2\ge \Norm{\theta}_2^2-O(\sqrt{d/n})$, and consequently,
    \[
        \Norm{\theta-\Norm{\theta}_2\hat v}_2^2\le O(\sqrt{d/n}).
    \]
    The conclusion follows from the triangle inequality.
\end{proof}



\begin{lemma}
    \label{lmm:boundary}
    The boundary of the space of the first $2k-1$ moments of all distributions on $\reals$ corresponds to distributions with fewer than $k$ atoms, while the interior corresponds to exactly $k$ atoms.
\end{lemma}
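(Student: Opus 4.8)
The plan is to describe the moment space $\calM\triangleq\calM_{2k-1}(\reals)$ through the Hankel matrix $\bfM_{k-1}$ of order $k$ formed from $(m_0,m_1,\dots,m_{2k-2})$ with $m_0=1$, and to establish, for every valid moment vector $\bfm=(m_1,\dots,m_{2k-1})$, the equivalence of: (i) $\bfm\in\mathrm{int}\,\calM$; (ii) $\bfM_{k-1}(\bfm)\succ0$; (iii) the unique distribution with at most $k$ atoms that realizes $\bfm$ (its Gauss quadrature, \prettyref{sec:moment}) has exactly $k$ atoms. The equivalence (ii)$\Leftrightarrow$(iii) is immediate: if a distribution $\pi$ with $r$ atoms realizes $\bfm$ then $\bfM_{k-1}(\bfm)=\sum_a w_a v_av_a^\top$ with $v_a=(1,x_a,\dots,x_a^{k-1})^\top$, so $\mathrm{rank}\,\bfM_{k-1}(\bfm)\le r$; conversely the $k$-point Gauss quadrature has rank exactly $\min(r,k)$, which equals $k$ iff $r=k$. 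So the substance lies in (i)$\Leftrightarrow$(ii), and the lemma follows at once: a point of $\calM$ on $\partial\calM$ has $\bfM_{k-1}$ singular, hence is realized only by a distribution with fewer than $k$ atoms, while an interior point has $\bfM_{k-1}\succ0$ and is realized by an exactly-$k$-atomic distribution (and by none with fewer). Boundary points of $\calM$ that do not themselves lie in $\calM$ are not moment vectors of any distribution and are irrelevant to the assertion; note $\calM_{2k-1}(\reals)$ need not be closed.

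For (ii)$\Rightarrow$(i) I would use a flat-extension / Schur-complement estimate. The map $\bfm'\mapsto\bfM_{k-1}(\bfm')$ is linear and the positive-definite cone is open, so $U\triangleq\{\bfm'\in\reals^{2k-1}:\bfM_{k-1}(\bfm')\succ0\}$ is an open neighborhood of $\bfm$. Fix any $\bfm'\in U$, set $b=(m_k',\dots,m_{2k-1}')^\top$, and choose $m_{2k}>b^\top\bfM_{k-1}(\bfm')^{-1}b$; then the order-$(k+1)$ Hankel matrix built from $(1,m_1',\dots,m_{2k-1}',m_{2k})$ is positive definite by the Schur complement formula, and such a choice is possible irrespective of the value of $m_{2k-1}'$. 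Hence the truncated Hamburger moment problem with data $(m_1',\dots,m_{2k-1}')$ is solvable (see, e.g., \cite{Schmudgen17}), i.e.\ $\bfm'\in\calM$. Therefore $U\subseteq\calM$ and $\bfm\in\mathrm{int}\,\calM$.

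For (i)$\Rightarrow$(ii) I would prove the contrapositive: if $\bfm\in\calM$ and $\det\bfM_{k-1}(\bfm)=0$, then $\bfm\in\partial\calM$. Let $r\in\{1,\dots,k-1\}$ be the smallest index with $\det\bfM_r=0$ (one exists); then $\det\bfM_{r-1}>0$, being a leading principal minor of the positive-semidefinite $\bfM_{k-1}(\bfm)$ together with minimality of $r$. Since $2r\le2k-2$, \prettyref{thm:supp-detM} applies to the truncation $(m_1,\dots,m_{2r})$: any distribution whose first $2r$ moments are $(m_1,\dots,m_{2r})$ has exactly $r$ support points and is unique (it is the $r$-point Gauss quadrature, cf.\ \prettyref{sec:moment}). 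Consequently every distribution with first $2k-2$ moments equal to $(m_1,\dots,m_{2k-2})$ equals this fixed $r$-atomic $\mu$, and therefore has $(2k-1)$st moment equal to $m_{2k-1}$ (this last equality because $\bfm$, being valid, is realized by some such distribution, which must then be $\mu$). Hence for every $\epsilon\neq0$ the vector $(m_1,\dots,m_{2k-2},m_{2k-1}+\epsilon)$ lies outside $\calM$, and letting $\epsilon\to0$ exhibits $\bfm$ as a topological boundary point of $\calM$ in $\reals^{2k-1}$.

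The three steps together identify $\mathrm{int}\,\calM$ with $\{\bfm\in\calM:\bfM_{k-1}(\bfm)\succ0\}$, equivalently with the moment vectors realized by exactly-$k$-atomic distributions, and the points of $\calM$ on its boundary with those for which $\bfM_{k-1}(\bfm)$ is singular, i.e.\ realized only by distributions with fewer than $k$ atoms, which is the lemma. I expect the main obstacle to be the boundary direction (i)$\Rightarrow$(ii): one must combine \prettyref{thm:supp-detM} with the uniqueness of the representing measure of a singular positive-semidefinite Hankel matrix, which is what rigidly pins down every higher moment (including $m_{2k-1}$) from the lower ones, and one must keep straight that "boundary" is the topological boundary in $\reals^{2k-1}$. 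An alternative for (ii)$\Rightarrow$(i) is to apply the inverse function theorem to the Gauss-quadrature moment map $(x_1,\dots,x_k,w_1,\dots,w_{k-1})\mapsto(m_1,\dots,m_{2k-1})$, whose Jacobian equals, up to sign, $\prod_{i=1}^k w_i\prod_{1\le i<j\le k}(x_i-x_j)^2$ and is therefore nonzero on the region of distinct nodes and positive weights; I would nonetheless keep the Schur-complement argument as the primary route, since it is shorter and uses only facts already recorded in \prettyref{sec:moment}.
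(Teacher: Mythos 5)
Your proposal is correct and follows essentially the same route as the paper's proof: both identify the interior with positive definiteness of the Hankel matrix $\bfM_{k-1}$ (and exactly $k$ atoms), use the openness of the positive-definite cone together with solvability of the truncated moment problem to show such points are interior, and exhibit boundary points by perturbing a single moment coordinate when $\bfM_{k-1}$ is singular. The only differences are cosmetic: where the paper cites \cite[Theorem 2A]{Lindsay1989} and \cite[Theorem 3.4]{Lasserre2009} (or \cite[Theorem 3.1]{CF1991}), you derive the rank fact and the representing-measure step directly via a Schur-complement extension; and for the boundary you perturb $m_{2k-1}$ and invoke rigidity of the (unique, $r$-atomic) representing measure, whereas the paper more simply decreases $m_{2r}$, which immediately destroys positive semidefiniteness of $\bfM_r$ since its null vector has nonzero last coordinate when $\bfM_{r-1}\succ 0$.
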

\begin{proof}
    Given $m=(m_1,\dots,m_{2k-1})$ that corresponds to a distribution of exactly $k$ atoms, by \cite[Theorem 2A]{Lindsay1989}, the moment matrix $\bfM_{k-1}$ is positive definite.
    For any vector $m'$ in a sufficiently small ball around $m$, the corresponding moment matrix $\bfM_{k-1}'$ is still positive definite.
    Consequently, the matrix $\bfM_{k-1}'$ is of full rank, and thus  $m'$ is a legitimate moment vector by \cite[Theorem 3.4]{Lasserre2009} (or \cite[Theorem 3.1]{CF1991}).
    If $m$ corresponds to a distribution with exactly $r<k$ atoms, by \cite[Theorem 2A]{Lindsay1989}, $\bfM_{r-1}$ is positive definite while $\bfM_{r}$ is rank deficient.
    Then, $m$ is no longer in the moment space if $m_{2r}$ is decreased.
\end{proof}

\begin{lemma}
    \label{lmm:poly-unit-circle}
    For polynomial $p$ of degree $L$ such that $|p(x)|\le 1$ on $ [-1,1] $, we have $ |p(z)|\le (1+\sqrt{2})^L $ for any $z$ on the complex unit circle.
\end{lemma}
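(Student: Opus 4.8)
The plan is to reduce the estimate on the complex unit circle to the classical growth bound for polynomials bounded on $[-1,1]$, obtained via the Joukowski substitution $z=\frac12(w+w^{-1})$. First I would record the algebraic facts underlying this change of variables: for every complex number $z$ the quadratic $w^2-2zw+1=0$ has two roots whose product equals $1$, so at least one root $w$ satisfies $|w|\ge1$ and $z=\frac12(w+w^{-1})$; moreover such a $w$ can be taken to be $z+\sqrt{z^2-1}$ for a suitable branch of the square root.

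Next, given a polynomial $p$ of degree $L$, set $q(w)=p\bigl(\frac12(w+w^{-1})\bigr)$. Since $\frac12(w+w^{-1})$ is invariant under $w\mapsto w^{-1}$, the Laurent polynomial $q$ satisfies $q(w)=q(w^{-1})$, so its coefficients are symmetric about the constant term; therefore $P(w):=w^Lq(w)$ is an ordinary polynomial of degree at most $2L$ with palindromic coefficients, equivalently $P(w)=w^{2L}P(w^{-1})$. On the unit circle $w=e^{i\theta}$ we have $w^{-1}=\bar w$, hence $q(e^{i\theta})=p(\cos\theta)$ and $|P(w)|=|q(w)|=|p(\cos\theta)|\le1$ for $|w|=1$. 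By the maximum modulus principle applied to the polynomial $P$ on the closed unit disk, $|P(w)|\le1$ for all $|w|\le1$.

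Then for $|w|\ge1$ I would invoke the functional equation: $|P(w)|=|w|^{2L}\,|P(w^{-1})|\le|w|^{2L}$ because $|w^{-1}|\le1$, and hence $|q(w)|=|w|^{-L}|P(w)|\le|w|^{L}$. Combining this with the first step, for any $z$ on the complex unit circle, choosing $w$ with $|w|\ge1$ and $z=\frac12(w+w^{-1})$ gives $|p(z)|=|q(w)|\le|w|^{L}$; and $|w|=|z+\sqrt{z^2-1}|\le|z|+|z^2-1|^{1/2}\le1+\sqrt{|z|^2+1}=1+\sqrt2$ since $|z|=1$. This yields $|p(z)|\le(1+\sqrt2)^L$.

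The routine parts are the algebra of the substitution and the palindrome identity for $P$; the only point needing a little care is the degenerate case $z=\pm1$, where $w=\pm1$ and the inequality holds trivially, together with checking that the triangle-inequality bound $|w|\le|z|+|z^2-1|^{1/2}$ does not depend on which branch of $\sqrt{z^2-1}$ is used. The main conceptual step, which I would emphasize, is recognizing that the palindromic structure of $P$ upgrades the trivial disk estimate $\|P\|_{|w|\le1}\le1$ into the sharp exterior growth bound $|q(w)|\le|w|^{L}$, which is exactly what produces the constant $1+\sqrt2$.
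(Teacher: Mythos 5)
Your proof is correct and takes essentially the same route as the paper's: the Joukowski substitution $z=\tfrac{1}{2}(w+w^{-1})$, a maximum-modulus argument, and the triangle-inequality bound $|w|\le |z|+|z^2-1|^{1/2}\le 1+\sqrt{2}$. The only cosmetic difference is that you apply the maximum modulus principle to the palindromic polynomial $P(w)=w^L q(w)$ on the closed unit disk and pass to the exterior via $P(w)=w^{2L}P(w^{-1})$, whereas the paper applies it directly to $f(y)=p\bigl(\tfrac{y+y^{-1}}{2}\bigr)/y^{L}$ on $|y|\ge 1$; since $f(y)=P(1/y)$, these are the same argument up to inversion.
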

\begin{proof}
    Let $ f(y)\triangleq p(\frac{y+y^{-1}}{2})/y^L $ which is analytic and bounded on $ |y|\ge 1 $.
    For $ y=e^{i\theta} $, $ |f(y)|=|p(\cos\theta)|\le 1 $.
    By the maximum modulus principle, $ |f(y)|\le 1 $ for any $ |y|>1 $.
    Consider $ |z|=1 $ and let $ \frac{y+y^{-1}}{2}=z $ for some $ |y|\ge 1 $.
    Then $ y=z\pm\sqrt{z^2-1} $ and by triangle inequality $ |y|\le 1+\sqrt{2} $.
    Since $ |f(y)|\le 1 $, then $ |p(z)|\le |y|^L\le (1+\sqrt{2})^L $.
\end{proof}
\begin{corollary}
    \label{cor:poly-coeffs}
    For polynomial $p(x)=\sum_{i=0}^L a_ix^i$ such that $|p(x)|\le 1$ on $ [-1,1] $, we have $\sum_i |a_i|^2\le (1+\sqrt{2})^{2L}$.
\end{corollary}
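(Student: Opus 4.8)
\textbf{Proof proposal for Corollary \ref{cor:poly-coeffs}.}
The plan is to combine \prettyref{lmm:poly-unit-circle} with the Parseval identity on the complex unit circle. First I would write $p(z)=\sum_{i=0}^L a_i z^i$ and evaluate it at $z=e^{i\theta}$. Using the orthogonality relation $\frac{1}{2\pi}\int_0^{2\pi} e^{i(j-k)\theta}\diff\theta=\indc{j=k}$, one gets the exact identity
\[
    \sum_{i=0}^L |a_i|^2 = \frac{1}{2\pi}\int_0^{2\pi} |p(e^{i\theta})|^2 \diff\theta .
\]
This is just Parseval's theorem for the trigonometric polynomial $\theta\mapsto p(e^{i\theta})$, and it holds verbatim whether the $a_i$ are real or complex.

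Next I would invoke \prettyref{lmm:poly-unit-circle}, which says that a degree-$L$ polynomial bounded by $1$ on $[-1,1]$ satisfies $|p(z)|\le (1+\sqrt 2)^L$ for every $z$ on the unit circle. Plugging the pointwise bound $|p(e^{i\theta})|^2\le (1+\sqrt2)^{2L}$ into the integral above immediately yields
\[
    \sum_{i=0}^L |a_i|^2 \le \frac{1}{2\pi}\int_0^{2\pi} (1+\sqrt 2)^{2L}\diff\theta = (1+\sqrt 2)^{2L},
\]
which is exactly the claimed inequality.

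I do not anticipate any genuine obstacle here: the entire analytic content—the maximum-modulus argument controlling $|p|$ on the unit circle via the substitution $z=\frac{y+y^{-1}}2$—has already been carried out in \prettyref{lmm:poly-unit-circle}, and the only additional ingredient is the elementary Parseval identity. The one point worth stating carefully is that Parseval gives an \emph{equality} $\sum_i|a_i|^2=\frac{1}{2\pi}\int|p(e^{i\theta})|^2\diff\theta$, so no loss is incurred in that step and the full strength of the unit-circle bound is transferred to the coefficient sum.
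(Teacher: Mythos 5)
Your proposal is correct and follows essentially the same route as the paper: the paper's proof also expresses $\sum_i|a_i|^2$ as the mean of $|p(z)|^2$ over the unit circle (Parseval) and then applies the pointwise bound from \prettyref{lmm:poly-unit-circle}. Your write-up simply spells out the orthogonality computation more explicitly.
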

\begin{proof}
The sum of squares of its coefficients is given by the following compact formula:
\begin{equation*}
    \sum_{i=0}^{L}|a_i|^2=\frac{1}{2\pi}\oint_{|z|=1}|p(z)|^2 \diff z.
\end{equation*}
The conclusion follows from \prettyref{lmm:poly-unit-circle}.
\end{proof}

\section*{Acknowledgment}
        \label{sec:ack}

We are grateful to Philippe Rigollet for bringing~\cite{HK2015} to our attention and Harry Zhou for pointing out~\cite{Chen95}. 
We thank Roger Koenker for discussions on NPMLE and sharing his experimental results.
We also thank Sivaraman Balakrishnan for helpful comments on \cite{balakrishnan2017statistical,Nguyen2013}.

\newcommand{\etalchar}[1]{$^{#1}$}

\end{document}